\documentclass[12pt,twosides]{amsart}
\usepackage{amssymb,amsmath,amsthm, amscd, enumerate, mathrsfs}
\usepackage{graphicx, hhline}
\usepackage[all]{xy}
\usepackage{braket}
\usepackage[usenames]{color}
\usepackage{hyperref}
\usepackage{fancyhdr}
\usepackage[top=30truemm,bottom=30truemm,left=30truemm,right=30truemm]{geometry}

\hypersetup{colorlinks=true}

\title{Minimal model program for normal pairs along log canonical locus}
\author{Kenta Hashizume}
\date{2025/08/01}
\keywords{minimal model theory, normal pair, non-nef locus, non-lc locus}
\subjclass[2020]{14E30}
\address{Department of 
Mathematics, Faculty of Science, Niigata University, Niigata 950-2181, Japan}
\address{Institute for Research Administration, Niigata University, Niigata 950-2181, Japan}
\email{hkenta@math.sc.niigata-u.ac.jp}



\newtheorem{thm}{Theorem}[section]

\newtheorem{lem}[thm]{Lemma}
\newtheorem{cor}[thm]{Corollary}
\newtheorem{prop}[thm]{Proposition}

\newtheorem{ques}[thm]{Question}

\theoremstyle{definition}
\newtheorem{defn}[thm]{Definition}
\newtheorem{rem}[thm]{Remark}

\newtheorem{exam}[thm]{Example}
\newtheorem*{ack}{Acknowledgments} 
\newtheorem*{divisor}{Divisors and morphisms} 
\newtheorem*{b-divisor}{b-divisors} 
 
\newtheorem*{g-pair}{Generalized pairs} 
\newtheorem*{adj-g-pair}{Divisorial adjunction for generalized pairs} 
\newtheorem*{mmp-g-pair}{MMP for generalized pairs}

\newtheorem{step1}{Step}
\newtheorem{step2}{Step}
\newtheorem{step3}{Step}
\newtheorem{step4}{Step}

\newtheorem*{claim*}{Claim}

\begin{document}

\begin{abstract}
Let $(X,\Delta)$ be a normal pair with a projective morphism $X \to Z$ and let $A$ be a relatively ample $\mathbb{R}$-divisor on $X$. 
We prove the termination of some minimal model program on $(X,\Delta+A)/Z$ and the abundance conjecture for its minimal model under assumptions that the non-nef locus of $K_{X}+\Delta+A$ over $Z$ does not intersect the non-lc locus  of $(X,\Delta)$ and that the restriction of $K_{X}+\Delta+A$ to the non-lc locus of $(X,\Delta)$ is semi-ample over $Z$. 
\end{abstract}

\maketitle

\tableofcontents

\section{Introduction}
Throughout this paper, we work over the complex number field $\mathbb{C}$. 
\subsection{Minimal model theory}
The minimal model theory is a fundamental tool to construct a variety with good geometrical properties. 
The theory is indispensable to recent development of birational geometry, especially, partial resolutions of singularities, boundedness and finiteness of various invariants, and moduli problems. 
For inductive arguments on the dimension of varieties, we usually deal with pairs of a  variety (or a scheme) and a linear combination of divisors. 
Currently, the minimal model theory is discussed in the category of log canonical (lc, for short) pairs. 

The minimal model theory consists of three principal pieces; running a minimal model program (MMP, for short), the termination of the MMP, and the abundance conjecture. 
The first one has been completely established (\cite{bchm}, \cite{fujino-fund}, \cite{birkar-flip}, \cite{haconxu-lcc}, \cite{has-mmp}). 
Though the other two pieces are still open, it is expected that they hold true for all lc pairs, and remarkable progress has been made (\cite{bchm}, \cite{hashizumehu}).  

There are research works on the minimal model theory in the framework beyond lc pairs. 
Semi-log canonical pairs appear in the compactification of moduli of lc pairs and inductive arguments to study the abundance conjecture for lc pairs. 
Partial results of minimal model theory for semi-log canonical pairs hold (\cite{fujino-fund-slc}, \cite{ambrokollar}). 
On the other hand, generalized pairs, introduced by Birkar--Zhang \cite{bz}, appear as the structures of base varieties of lc-trivial fibrations and normal pairs whose log canonical $\mathbb{R}$-divisor is anti-nef. 
The minimal model theory for generalized lc pairs is developed (\cite{bz}, \cite{clx--vanishing-g-pair}, \cite{hacon-liu}, \cite{has-iitakafibration}, \cite{has-nonvan-gpair}, \cite{ltj}, \cite{liuxie-relative-nakayama}, \cite{liuxie-semiample-gpair}, \cite{tsakanikas-xie--remarks-mmp}, \cite{xie-contraction-gpair}), and some relationships between the minimal model theory for generalized lc pairs and that for lc pairs are known (\cite{lp-g-abund-I}). 
Unfortunately, there exist some examples showing that the minimal model theory cannot be established in full generality for pairs
beyond lc pairs. 
For example, we cannot construct a step of an MMP for semi-log canonical pairs in general (\cite{fujino-fund-slc}, \cite{ambrokollar}), and we have to take a polarization when we discuss the abundance conjecture or the non-vanishing conjecture for generalized lc pairs (\cite{has-nonvan-gpair}, \cite{xie-contraction-gpair}, \cite{clx--vanishing-g-pair}).  

In this paper, we discuss the minimal model theory for normal pairs that are not necessarily lc. 
As shown in Section \ref{sec--example}, the existence of a step of an MMP, the existence of a minimal model, and the abundance conjecture do not always hold for normal pairs in general. 
For a projective normal pair $(X,\Delta)$, to construct a step of a $(K_{X}+\Delta)$-MMP, we need the cone and contraction theorem and the existence of a relative lc model with respect to a $(K_{X}+\Delta)$-negative extremal contraction.  
The cone and contraction theorem for normal pairs (more generally, quasi-log schemes) have been established by Fujino \cite{fujino-book}, and the existence of a relative lc model with respect to a $(K_{X}+\Delta)$-negative extremal contraction is a local problem (cf.~\cite[Corollary 6.7]{kollar-mori}) and known for lc pairs by the author \cite{has-mmp} (see \cite{birkar-flip} by Birkar and \cite{haconxu-lcc} by Hacon--Xu in the case of $\mathbb{Q}$-divisors). 
Hence, if any $(K_{X}+\Delta)$-negative curve does not intersect the non-lc locus of $(X,\Delta)$, then we can construct a step $(X,\Delta) \dashrightarrow (X',\Delta')$ of a $(K_{X}+\Delta)$-MMP. 
If, furthermore, any $(K_{X'}+\Delta')$-negative curve is disjoint from the non-lc locus of $(X',\Delta')$, then we can construct a next step $(X',\Delta') \dashrightarrow (X'',\Delta'')$ of a $(K_{X}+\Delta)$-MMP. 
By repeating this discussion, if $K_{X}+\Delta$ has a special positivity around the non-lc  locus of $(X,\Delta)$, we can run a $(K_{X}+\Delta)$-MMP. 

The special positivity condition in the previous paragraph is satisfied if the non-nef locus (see Definition \ref{defn--nonneflocus}) of the log canonical $\mathbb{R}$-divisor is disjoint from the non-lc locus. 
The non-nef locus of the log canonical $\mathbb{R}$-divisor contains all curves whose intersection number with the log canonical $\mathbb{R}$-divisor is negative (Theorem \ref{thm--nonnef-negativecurve}), and any MMP does not modify the complement of the non-nef locus of the log canonical $\mathbb{R}$-divisor. 
For a projective normal pair $(X,\Delta)$, assuming that the non-nef locus of  $K_{X}+\Delta$ does not intersect the non-lc locus of $(X,\Delta)$, then we can always construct a sequence of steps of a $(K_{X}+\Delta)$-MMP (Corollary \ref{cor--mmpwithscaling-normalpair}). 
Then it is natural to consider the termination of the $(K_{X}+\Delta)$-MMP. 
More specifically, we may consider the following question. 

\begin{ques}\label{ques--termination-mmp-normalpair}Let $\pi \colon X \to Z$ be a projective morphism of normal quasi-projective varieties. Let $(X,\Delta)$ be a normal pair such that $K_{X}+\Delta$ is $\pi$-pseudo-effective. Suppose that the non-nef locus of $K_{X}+\Delta$ over $Z$ (Definition \ref{defn--nonneflocus}) does not intersect the non-lc locus of $(X,\Delta)$. 
Then, is there a finite sequence of steps of a $(K_{X}+\Delta)$-MMP over $Z$ $$(X,\Delta)=:(X_{1},\Delta_{1}) \dashrightarrow\cdots \dashrightarrow  (X_{i},\Delta_{i}) \dashrightarrow \cdots \dashrightarrow (X_{m},\Delta_{m})$$ such that $K_{X_{m}}+\Delta_{m}$ is nef over $Z$?
\end{ques}

This question is natural since we can give an affirmative answer under the assumption on the termination of all MMP for klt pairs (see Remark \ref{rem--mmp-reduction}).

\subsection{Results}

The following theorem is the main result of this paper.

\begin{thm}[cf.~Theorem \ref{thm--termination-lcmmp-main}]\label{thm--termination-lcmmp-intro}
Let $\pi \colon X \to Z$ be a projective morphism of normal quasi-projective varieties. 
Let $(X,\Delta)$ be a normal pair and let $A$ be a $\pi$-ample $\mathbb{R}$-divisor on $X$ such that $K_{X}+\Delta+A$ is $\pi$-pseudo-effective. 
Suppose that the non-nef locus of $K_{X}+\Delta+A$ over $Z$ does not intersect the non-lc locus ${\rm Nlc}(X,\Delta)$ of $(X,\Delta)$. Suppose in addition that $(K_{X}+\Delta+A)|_{{\rm Nlc}(X,\Delta)}$, which we think of an $\mathbb{R}$-line bundle on ${\rm Nlc}(X,\Delta)$, is a finite $\mathbb{R}_{>0}$-linear combination of $\pi|_{{\rm Nlc}(X,\Delta)}$-globally generated invertible sheaves on ${\rm Nlc}(X,\Delta)$. 
We put $(X_{1},B_{1}):=(X,\Delta+A)$. 
Then there exists a diagram
$$
\xymatrix
{
(X_{1},B_{1})\ar@{-->}[r]&\cdots \ar@{-->}[r]& (X_{i},B_{i})\ar@{-->}[rr]\ar[dr]_{\varphi_{i}}&& (X_{i+1},B_{i+1})\ar[dl]^{\varphi'_{i}} \ar@{-->}[r]&\cdots\ar@{-->}[r]&(X_{m},B_{m}) \\
&&&W_{i}
}
$$
 over $Z$, where all varieties are normal and projective over $Z$, satisfying the following. 
\begin{itemize}
\item
For each $1 \leq i <m$, the diagram $(X_{i},B_{i}) \overset{\varphi_{i}}{\longrightarrow} W_{i} \overset{\varphi'_{i}}{\longleftarrow} (X_{i+1},B_{i+1})$ is a usual step of a $(K_{X_{1}}+B_{1})$-MMP over $Z$, i.e. $\varphi_{i}$ is birational, $\rho(X_{i}/W_{i})=1$, $\varphi'_{i}$ is small birational, and $-(K_{X_{i}}+B_{i})$ and $K_{X_{i+1}}+B_{i+1}$ are ample over $W_{i}$, 
\item
$(X_{1},B_{1}) \dashrightarrow (X_{m},B_{m})$ is an isomorphism on a neighborhood of ${\rm Nlc}(X,\Delta)$, 
\item
$K_{X_{m}}+B_{m}$ is semi-ample over $Z$. 
\end{itemize}
Moreover, if $X$ is $\mathbb{Q}$-factorial, then all $X_{i}$ in the MMP are also $\mathbb{Q}$-factorial. 
\end{thm}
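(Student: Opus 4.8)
The plan is to reduce the statement for the normal pair $(X,\Delta+A)$ to the well-understood minimal model theory for (generalized) lc pairs, by separating the problem near $\mathrm{Nlc}(X,\Delta)$ from the problem on the lc part. The key structural observation is the hypothesis that the non-nef locus of $K_X+\Delta+A$ over $Z$ is disjoint from $\mathrm{Nlc}(X,\Delta)$: this means that, in a neighbourhood $U$ of $\mathrm{Nlc}(X,\Delta)$, the divisor $K_X+\Delta+A$ is already relatively nef (indeed its restriction is prescribed to be relatively semi-ample by assumption), so any MMP we run must be an isomorphism over $U$. Thus every step will be automatically ``away from'' the non-lc locus, and the cone and contraction machinery of Fujino for quasi-log schemes — together with the existence of relative lc models of $(K_X+\Delta)$-negative extremal contractions (known for lc pairs by \cite{has-mmp}, cited in the introduction) — lets us actually perform each contraction and flip. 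The first part of the argument is therefore to set up, via the cone theorem, a $(K_{X_1}+B_1)$-MMP over $Z$ with scaling of a suitable relatively ample divisor, check that each extremal ray is generated by a curve in the complement of $\mathrm{Nlc}$, and produce the diagram in the statement.

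Next I would address termination. Since the MMP is trivial over a neighbourhood of $\mathrm{Nlc}(X,\Delta)$, after restricting to that neighbourhood we may forget the non-lc locus entirely and work with what is essentially a generalized lc (indeed klt, after absorbing $A$) pair on $X\setminus\mathrm{Nlc}(X,\Delta)$, or more precisely pass to a dlt/$\mathbb{Q}$-factorial model where the discrepancy bookkeeping of \cite{bchm} applies. Here the presence of the relatively ample summand $A$ is decisive: $(X,\Delta+A)$ can be perturbed to a klt-type situation with a big boundary, so that one can invoke the termination of MMP with scaling for pairs with relatively big boundary (the BCHM-type result) to conclude that the sequence $(X_i,B_i)$ stops after finitely many steps with $K_{X_m}+B_m$ relatively nef. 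The $\mathbb{Q}$-factoriality claim follows step-by-step: a $\mathbb{Q}$-factorial variety stays $\mathbb{Q}$-factorial under a divisorial contraction or a flip of relative Picard number one, so an easy induction on $i$ gives it for all $X_i$.

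Finally, for the semi-ampleness of $K_{X_m}+B_m$ over $Z$ — the abundance statement — I would again split along $\mathrm{Nlc}$. On a neighbourhood of $\mathrm{Nlc}(X,\Delta)\cong\mathrm{Nlc}$ in $X_m$ the divisor is, by hypothesis, a positive $\mathbb{R}$-combination of $\pi$-globally generated sheaves, hence relatively semi-ample there; on the lc part $X_m\setminus\mathrm{Nlc}$ we have a klt (or generalized klt) pair whose log canonical divisor is relatively nef and relatively big (because of $A$), so relative base-point-freeness / the basepoint-free theorem gives semi-ampleness there. The remaining work is to glue these two pieces of semi-ampleness into semi-ampleness of $K_{X_m}+B_m$ on all of $X_m$ over $Z$; this is exactly the kind of statement handled by Fujino's theory of quasi-log structures and the basepoint-free theorem for quasi-log schemes, applied with the non-lc ideal sheaf, so I would appeal to that.

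The main obstacle I expect is the gluing step for abundance: one must propagate semi-ampleness from the non-lc locus (where it is assumed) across to the lc locus and combine it with basepoint-freeness on the klt part, controlling the non-lc ideal sheaf throughout the MMP so that the quasi-log structure is preserved under each flip and divisorial contraction. A secondary subtlety is making the termination argument genuinely rigorous: one has to check that the auxiliary klt/big pair used to import BCHM-termination is compatible with the actual $(K_{X_1}+B_1)$-MMP steps — i.e. that the negativity of each contracted ray is seen by the perturbed pair — which requires a careful choice of the scaling divisor and of the perturbation of $A$.
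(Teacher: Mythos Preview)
Your setup of the MMP and the observation that each step is an isomorphism near $\mathrm{Nlc}(X,\Delta)$ are correct, and your instinct to handle abundance via Fujino's base-point-free theorem for quasi-log schemes is on the right track (indeed the paper's Theorem~\ref{thm--abundance-quasi-log} does exactly this, with no need for a delicate gluing).

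The genuine gap is in your termination argument. You propose to ``forget the non-lc locus'' and perturb $(X,\Delta+A)$ to a klt pair with big boundary so that BCHM applies. But $(X,\Delta)$ is globally non-lc: any perturbation $\Delta+A\sim_{\mathbb{R},Z}\Delta'$ will still have $\mathrm{Nlc}(X,\Delta')\supset\mathrm{Nlc}(X,\Delta)\neq\emptyset$, so BCHM does not apply to $(X,\Delta')$. Restricting to the open set $X\setminus\mathrm{Nlc}(X,\Delta)$ does not help either, since that variety is not proper over $Z$ and termination of MMP is a global statement about a sequence of proper varieties. The fact that each step is an isomorphism near $\mathrm{Nlc}$ tells you the \emph{flipping loci} live in the lc locus, but it does not by itself bound the number of steps.

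The paper's route is substantially more involved. First (Section~\ref{sec--mmp-klt}) it develops a self-contained theory for the stronger hypothesis $\mathrm{NNef}\cap\mathrm{Nklt}=\emptyset$: a non-vanishing theorem (Theorem~\ref{thm--non-vanishing-kltmmp-main}), existence of good minimal models from non-vanishing (Theorem~\ref{thm--from-nonvanish-to-minmodel}), and termination from existence (Theorem~\ref{thm--from-minimodel-to-termi}). Then (Section~\ref{sec--mmp-lc}) it passes to a dlt blow-up and runs a carefully engineered MMP for the associated quasi-log scheme (Theorem~\ref{thm--mmp-nefthreshold-strict}) whose nef thresholds strictly decrease, so that after rescaling one can apply special-termination induction on lc centers: via adjunction (Theorem~\ref{thm--pair-inductive-adjunction}) each lc center carries a normal pair whose non-nef locus misses its non-\emph{klt} locus, and the Section~\ref{sec--mmp-klt} results terminate the induced MMP there. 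This special termination is the missing idea in your proposal; there is no shortcut that reduces directly to BCHM.
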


If the non-lc locus of $(X,\Delta)$ in Theorem \ref{thm--termination-lcmmp-intro} is empty, then the statement follows from \cite[Theorem 1.5 and Theorem 1.7]{hashizumehu}. 
Thus, Theorem \ref{thm--termination-lcmmp-intro} is a generalization of \cite{hashizumehu} and \cite{bchm}. 
We note that the scheme structure of ${\rm Nlc}(X,\Delta)$ in Theorem \ref{thm--termination-lcmmp-intro} is not defined to be the reduced scheme structure. 
For the definition of ${\rm Nlc}(X,\Delta)$ as a closed subscheme of $X$, see Definition \ref{defn--nlc-nklt}. 
We also note that the MMP in Theorem \ref{thm--termination-lcmmp-intro} and the results in the introduction are natural generalizations of the non-$\mathbb{Q}$-factorial MMP in \cite[4.9.1]{fujino-book} to normal pairs. 
However, in arguments throughout this paper we usually adopt a definition of MMP weaker than \cite[4.9.1]{fujino-book}, and the definition is different from \cite[4.9.1]{fujino-book} or \cite{kollar-nonQfac-mmp}. 
More specifically, we do not assume the relative Picard number to be one in each extremal contraction of MMP. 
For details, see Definition \ref{defn--mmp-fullgeneral} and Remark \ref{rem--mmp-fullgeneral}. 
This is the reason why we say ``a usual step of a $(K_{X_{1}}+B_{1})$-MMP'' and  we explicitly state the properties of the diagram in Theorem \ref{thm--termination-lcmmp-intro}.

The following results are corollaries of Theorem \ref{thm--termination-lcmmp-intro}. 

\begin{cor}[= Corollary \ref{cor--nonvan-lc-main}]\label{cor--nonvan-intro}
Let $\pi \colon X \to Z$ be a projective morphism of normal quasi-projective varieties. 
Let $(X,\Delta)$ be a normal pair and let $A$ be a $\pi$-ample $\mathbb{R}$-divisor on $X$ such that $K_{X}+\Delta+A$ is $\pi$-pseudo-effective. 
Suppose that the non-nef locus of $K_{X}+\Delta+A$ over $Z$ does not intersect ${\rm Nlc}(X,\Delta)$ and that $(K_{X}+\Delta+A)|_{{\rm Nlc}(X,\Delta)}$, which we think of an $\mathbb{R}$-line bundle on ${\rm Nlc}(X,\Delta)$, is a finite $\mathbb{R}_{>0}$-linear combination of $\pi|_{{\rm Nlc}(X,\Delta)}$-globally generated invertible sheaves on ${\rm Nlc}(X,\Delta)$. 
Then the stable base locus ${\rm Bs}|K_{X}+\Delta+A/Z|_{\mathbb{R}}$ of $K_{X}+\Delta+A$ over $Z$ is disjoint from ${\rm Nlc}(X,\Delta)$. 
\end{cor}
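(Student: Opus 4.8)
The plan is to deduce the statement from the semi-ampleness produced at the end of the minimal model program in Theorem \ref{thm--termination-lcmmp-intro}, transporting the resulting base-point freeness back to $X$ along the locus where the MMP is an isomorphism. First I would apply Theorem \ref{thm--termination-lcmmp-intro} to $(X,\Delta)$, $A$, and $\pi$. Setting $(X_{1},B_{1}):=(X,\Delta+A)$, this yields a sequence of usual steps of a $(K_{X_{1}}+B_{1})$-MMP over $Z$,
$$(X_{1},B_{1})\dashrightarrow\cdots\dashrightarrow(X_{m},B_{m}),$$
with induced birational map $\phi\colon X_{1}\dashrightarrow X_{m}$ such that $\phi$ is an isomorphism on an open neighborhood $U$ of ${\rm Nlc}(X,\Delta)$ and $K_{X_{m}}+B_{m}$ is semi-ample over $Z$. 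Since a divisor semi-ample over $Z$ has empty relative stable base locus — writing $K_{X_{m}}+B_{m}\sim_{\mathbb{R},Z}f^{*}H$ for a contraction $f\colon X_{m}\to Y$ over $Z$ and an $\mathbb{R}$-divisor $H$ ample over $Z$ on $Y$, the relative $\mathbb{R}$-linear system of $f^{*}H$ is base-point free — we get ${\rm Bs}|K_{X_{m}}+B_{m}/Z|_{\mathbb{R}}=\emptyset$. It then suffices to show that every point $x\in U$ lies outside ${\rm Bs}|K_{X_{1}}+B_{1}/Z|_{\mathbb{R}}$, since ${\rm Nlc}(X,\Delta)\subseteq U$.

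So fix $x\in U$. I would choose a common resolution $p\colon V\to X_{1}$, $q\colon V\to X_{m}$ of $\phi$ that is an isomorphism over $U$ (possible because $\phi$ is). Running the negativity lemma step by step along the MMP — each intermediate contraction $\varphi_{i}$ contracts a $(K_{X_{i}}+B_{i})$-negative locus and each $\varphi'_{i}$ is small — one obtains $p^{*}(K_{X_{1}}+B_{1})\sim_{\mathbb{R},Z}q^{*}(K_{X_{m}}+B_{m})+E$ with $E\geq0$, every component of which is $p$-exceptional or the strict transform of a $\phi$-exceptional prime divisor on $X_{1}$; hence $p(\Supp\,E)$ lies in the non-isomorphism locus of $\phi$, so in particular $x\notin p(\Supp\,E)$. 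Choosing an effective $G\sim_{\mathbb{R},Z}K_{X_{m}}+B_{m}$ on $X_{m}$ with $\phi(x)\notin\Supp\,G$, the divisor $p_{*}(q^{*}G+E)$ is effective, is $\mathbb{R}$-linearly equivalent over $Z$ to $p_{*}p^{*}(K_{X_{1}}+B_{1})=K_{X_{1}}+B_{1}$, and has support contained in $p(q^{-1}(\Supp\,G))\cup p(\Supp\,E)$; since $p,q$ are isomorphisms over $U$ and $\phi(x)\notin\Supp\,G$, the point $x$ lies in neither set. Therefore $x\notin{\rm Bs}|K_{X_{1}}+B_{1}/Z|_{\mathbb{R}}$, and letting $x$ vary over $U$ gives ${\rm Bs}|K_{X_{1}}+B_{1}/Z|_{\mathbb{R}}\cap{\rm Nlc}(X,\Delta)=\emptyset$, which is the assertion.

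The only point requiring care is the control on $\Supp\,E$: one must check that all the modifications occurring in the MMP, both the divisors contracted by divisorial contractions and the indeterminacy loci of the flips, avoid ${\rm Nlc}(X,\Delta)$. But this is exactly the content of the second bullet of Theorem \ref{thm--termination-lcmmp-intro}, so it is not a genuine obstacle; the remainder is a formal manipulation with the negativity lemma and with the definition of the relative stable base locus.
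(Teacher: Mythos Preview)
Your proposal is correct and follows the same route as the paper: the paper's proof of Corollary~\ref{cor--nonvan-lc-main} is the single sentence ``This immediately follows from Theorem~\ref{thm--termination-lcmmp-main},'' and you have simply unpacked that implication by transporting the semi-ampleness of $K_{X_{m}}+B_{m}$ back along the isomorphism over a neighborhood of ${\rm Nlc}(X,\Delta)$ using the negativity lemma. One minor simplification: once you know $\phi$ is an isomorphism on $U$, you get $E|_{p^{-1}(U)}=0$ directly from $p^{*}(K_{X_{1}}+B_{1})|_{p^{-1}(U)}=q^{*}(K_{X_{m}}+B_{m})|_{p^{-1}(U)}$, so there is no need to track which components of $E$ are exceptional for which map.
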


\begin{cor}[= Corollary \ref{cor--mmp-from-nonvan-fullgeneral}]\label{cor--mmp-from-nonvan-intro}
Let $\pi \colon X \to Z$ be a projective morphism of normal quasi-projective varieties. 
Let $(X,\Delta)$ be a normal pair and let $A$ be a $\pi$-ample $\mathbb{R}$-divisor on $X$ such that $K_{X}+\Delta+A$ is $\pi$-pseudo-effective. 
Suppose that ${\rm Bs}|K_{X}+\Delta+A/Z|_{\mathbb{R}}$ is disjoint from ${\rm Nlc}(X,\Delta)$. 
We put $(X_{1},B_{1}):=(X,\Delta+A)$. 
Then there exists a diagram
$$
\xymatrix
{
(X_{1},B_{1})\ar@{-->}[r]&\cdots \ar@{-->}[r]& (X_{i},B_{i})\ar@{-->}[rr]\ar[dr]_{\varphi_{i}}&& (X_{i+1},B_{i+1})\ar[dl]^{\varphi'_{i}} \ar@{-->}[r]&\cdots\ar@{-->}[r]&(X_{m},B_{m}) \\
&&&W_{i}
}
$$
 over $Z$, where all varieties are normal and projective over $Z$, satisfying the following. 
\begin{itemize}
\item
For each $1 \leq i <m$, the diagram $(X_{i},B_{i}) \overset{\varphi_{i}}{\longrightarrow} W_{i} \overset{\varphi'_{i}}{\longleftarrow} (X_{i+1},B_{i+1})$ is a usual step of a $(K_{X_{1}}+B_{1})$-MMP over $Z$ (cf.~Theorem \ref{thm--termination-lcmmp-intro}), 
\item
$(X_{1},B_{1}) \dashrightarrow (X_{m},B_{m})$ is an isomorphism on a neighborhood of ${\rm Nlc}(X,\Delta)$, 
\item
$K_{X_{m}}+B_{m}$ is semi-ample over $Z$. 
\end{itemize}
Moreover, if $X$ is $\mathbb{Q}$-factorial, then all $X_{i}$ in the MMP are also $\mathbb{Q}$-factorial. 
\end{cor}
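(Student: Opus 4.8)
The plan is to reduce this corollary to Theorem~\ref{thm--termination-lcmmp-intro}. Put $D:=K_{X_{1}}+B_{1}=K_{X}+\Delta+A$ and $N:={\rm Nlc}(X,\Delta)$. Theorem~\ref{thm--termination-lcmmp-intro}, applied to $(X,\Delta)$ and $A$, produces precisely the diagram and the three bulleted properties asserted here, as well as the preservation of $\mathbb{Q}$-factoriality, so it is enough to verify the two hypotheses of that theorem that are not among the present assumptions:
(i) the non-nef locus of $D$ over $Z$ is disjoint from $N$; and
(ii) $D|_{N}$, regarded as an $\mathbb{R}$-line bundle on $N$, is a finite $\mathbb{R}_{>0}$-linear combination of $\pi|_{N}$-globally generated invertible sheaves.
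I expect both to follow formally from the single hypothesis that ${\rm Bs}|D/Z|_{\mathbb{R}}$ is disjoint from $N$.

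For (i), I would invoke the standard containment of the non-nef locus in the relative stable base locus: if $x\notin {\rm Bs}|D/Z|_{\mathbb{R}}$, choose an effective $\mathbb{R}$-divisor $E\sim_{\mathbb{R},Z}D$ with $x\notin\Supp E$; for any $\pi$-ample $\mathbb{R}$-divisor $A'$ the relative stable base locus of $A'$ is empty, so there is an effective $\mathbb{R}$-divisor $E'\sim_{\mathbb{R},Z}A'$ with $x\notin\Supp E'$; then $E+E'\sim_{\mathbb{R},Z}D+A'$ avoids $x$, so $x\notin {\rm Bs}|D+A'/Z|_{\mathbb{R}}$, and hence $x$ is not in the non-nef locus of $D$ over $Z$ (Definition~\ref{defn--nonneflocus}). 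Since by hypothesis ${\rm Bs}|D/Z|_{\mathbb{R}}$ misses $N$, so does the non-nef locus of $D$ over $Z$, which is (i).

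For (ii), I would first check that $D|_{N}$ is nef over $Z$. If $C\subseteq N$ is a curve proper over a point of $Z$ with $D\cdot C<0$, then $C$ must lie in the support of every effective $\mathbb{R}$-divisor $\mathbb{R}$-linearly equivalent to $D$ over $Z$ (otherwise its intersection number with $C$ would equal $D\cdot C<0$, impossible for an effective divisor whose support does not contain $C$), so $C\subseteq {\rm Bs}|D/Z|_{\mathbb{R}}\cap N=\emptyset$, a contradiction; alternatively, $C$ lies in the non-nef locus of $D$ over $Z$ by Theorem~\ref{thm--nonnef-negativecurve}, and we are back to (i). Now $A$ is $\pi$-ample, so $A|_{N}$ is ample over $Z$ and $D|_{N}=(K_{X}+\Delta)|_{N}+A|_{N}$ is an $\mathbb{R}$-line bundle on $N$ that is nef over $Z$ and differs from $(K_{X}+\Delta)|_{N}$ by something ample over $Z$. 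The base point free theorem for the non-lc locus of a normal pair — a consequence of the theory of quasi-log schemes in its $\mathbb{R}$-divisor form, see~\cite{fujino-book} — then shows that $D|_{N}$ is semi-ample over $Z$; equivalently, $D|_{N}$ is a finite $\mathbb{R}_{>0}$-linear combination of $\pi|_{N}$-globally generated invertible sheaves, which is (ii).

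The main point — and the only place where a substantial theorem is invoked — is the base point free theorem used in (ii); the verification of (i) and the nef-ness step in (ii) are routine manipulations of stable base loci. Once (i) and (ii) are in place, Theorem~\ref{thm--termination-lcmmp-intro} applies directly and finishes the proof. As a byproduct, combining this with Corollary~\ref{cor--nonvan-intro} shows that, under the standing pseudo-effectivity assumption, the condition ``${\rm Bs}|D/Z|_{\mathbb{R}}$ disjoint from $N$'' is equivalent to the pair of hypotheses appearing in Theorem~\ref{thm--termination-lcmmp-intro}.
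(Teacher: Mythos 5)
Your overall strategy is the same as the paper's: Corollary \ref{cor--mmp-from-nonvan-intro} is deduced from Theorem \ref{thm--termination-lcmmp-intro} by checking its two extra hypotheses, and the paper treats this as immediate. Your verification of (i) is correct and is exactly the containment ${\rm NNef}(D/Z)\subset \boldsymbol{\rm B}_{-}(D/Z)\subset {\rm Bs}|D/Z|_{\mathbb{R}}$ recorded in Remark \ref{rem--nonnef-relation}.

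Step (ii), however, has a genuine gap. You reduce to ``$D|_{N}$ is nef over $Z$ and is $(K_{X}+\Delta)|_{N}$ plus an ample class'' and then invoke ``the base point free theorem for the non-lc locus.'' No such theorem is available. The quasi-log structure that adjunction places on $N={\rm Nlc}(X,\Delta)$ has $N$ itself as its non-qlc locus, so it is not a quasi-log scheme in the sense of Definition \ref{defn--quasi-log} (which requires ${\rm Nqlc}\subsetneq X$), and the base point free theorem for quasi-log schemes (as in Theorem \ref{thm--abundance-quasi-log}) takes semi-ampleness on the non-qlc locus as a \emph{hypothesis}; applying it here would be circular. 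Nor is the implication ``nef plus an ample twist of a canonical-type class $\Rightarrow$ semi-ample'' true on a general projective scheme: the paper's Example \ref{const--for-exam} exhibits $K_{X}+S+B+A$ nef, equal to a boundary-plus-ample class, and not semi-ample, and nothing prevents such a configuration from sitting inside a larger normal pair as its non-lc locus. This is precisely why semi-ampleness on ${\rm Nlc}(X,\Delta)$ appears as a hypothesis, not a conclusion, in Theorem \ref{thm--termination-lcmmp-intro}.

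The correct verification of (ii) is elementary and uses the full strength of the hypothesis ${\rm Bs}|D/Z|_{\mathbb{R}}\cap N=\emptyset$, which your argument discards after extracting nefness. Decompose $D$ up to $\sim_{\mathbb{R},Z}$ into finitely many effective pieces controlled by the stable base locus (Lemma \ref{lem--rdiv-decom}, or the Noetherian/rational-polytope argument in the proof of Lemma \ref{lem--movablediv-decom}): one reduces to $\mathbb{Q}$-Cartier $\mathbb{Q}$-divisors $M_{i}$ with ${\rm Bs}|M_{i}/Z|_{\mathbb{Q}}\cap N=\emptyset$. For $k$ sufficiently divisible, ${\rm Bs}|M_{i}/Z|_{\mathbb{Q}}$ is the support of the cokernel of $\pi^{*}\pi_{*}\mathcal{O}_{X}(kM_{i})\otimes\mathcal{O}_{X}(-kM_{i})\to\mathcal{O}_{X}$ (as in the proof of Theorem \ref{thm--resol-R-linear-system}), so this evaluation map is surjective on a neighborhood of $N$; restricting it to $N$ shows that $kM_{i}|_{N}$ is globally generated over $Z$. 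Thus $D|_{N}$ is a finite $\mathbb{R}_{>0}$-combination of $\pi|_{N}$-globally generated invertible sheaves because sections from $X$ already generate it along $N$ --- not because of any base point free theorem on $N$.
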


\begin{cor}[= Corollary \ref{cor--mmp-lc-strictnefthreshold-main}]\label{cor--mmp-lc-strictnefthreshold-intro}
Let $\pi \colon X \to Z$ be a projective morphism of normal quasi-projective varieties. 
Let $(X,\Delta)$ be a normal pair such that $K_{X}+\Delta$ is $\pi$-pseudo-effective. 
Suppose that the non-nef locus of $K_{X}+\Delta$ over $Z$ does not intersect ${\rm Nlc}(X,\Delta)$. 
Let $A$ be a $\pi$-ample $\mathbb{R}$-divisor on $X$. 
Then there exists a diagram
$$
\xymatrix{
(X,\Delta)=:(X_{1},\Delta_{1})\ar@{-->}[r]&\cdots \ar@{-->}[r]& (X_{i},\Delta_{i})\ar@{-->}[rr]\ar[dr]_{\varphi_{i}}&& (X_{i+1},\Delta_{i+1})\ar[dl]^{\varphi'_{i}} \ar@{-->}[r]&\cdots \\
&&&W_{i}
}
$$
 over $Z$, where all varieties are normal and projective over $Z$, satisfying the following.
\begin{itemize}
\item
$(X_{i},\Delta_{i}) \overset{\varphi_{i}}{\longrightarrow} W_{i} \overset{\varphi'_{i}}{\longleftarrow} (X_{i+1},\Delta_{i+1})$ is a usual step of a $(K_{X}+\Delta)$-MMP over $Z$ with scaling of $A$ (cf.~Theorem \ref{thm--termination-lcmmp-intro}), 
\item
the non-isomorphic locus (Definition \ref{defn--mmp-fullgeneral}) of the $(K_{X}+\Delta)$-MMP is disjoint from ${\rm Nlc}(X,\Delta)$, and 
\item
if we put 
$$\lambda_{i}:={\rm inf}\{\mu \in \mathbb{R}_{\geq 0}\,| \,\text{$K_{X_{i}}+\Delta_{i}+\mu A_{i}$ is nef over $Z$} \}$$  
for each $i \geq 1$, then ${\rm lim}_{i \to \infty}\lambda_{i}=0$. 
\end{itemize}
Moreover, if $X$ is $\mathbb{Q}$-factorial, then all $X_{i}$ in the MMP are also $\mathbb{Q}$-factorial. 
\end{cor}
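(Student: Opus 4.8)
The plan is to run a $(K_{X}+\Delta)$-MMP over $Z$ with scaling of $A$, to check that it never gets stuck, and then to prove ${\rm lim}_{i\to\infty}\lambda_{i}=0$ by contradiction using Theorem \ref{thm--termination-lcmmp-main}. First I would run the $(K_{X}+\Delta)$-MMP over $Z$ with scaling of $A$. Since $K_{X}+\Delta$ is $\pi$-pseudo-effective and $A$ is $\pi$-ample, the first nef threshold $\lambda_{1}$ is finite; if $\lambda_{1}=0$ there is nothing to prove, and if $\lambda_{1}>0$ it is attained by a $(K_{X}+\Delta)$-negative extremal ray $R_{1}$ over $Z$. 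By Theorem \ref{thm--nonnef-negativecurve}, any curve generating $R_{1}$ lies in the non-nef locus of $K_{X}+\Delta$ over $Z$, hence, by hypothesis, in the open locus where $(X,\Delta)$ is lc; so the extremal contraction $\varphi_{1}$ and its flip exist (cone and contraction theorem for quasi-log schemes together with the existence of lc models away from the non-lc locus, as recalled before Question \ref{ques--termination-mmp-normalpair}), the step is an isomorphism over a neighbourhood of ${\rm Nlc}(X,\Delta)$, and $\mathbb{Q}$-factoriality is preserved because $\rho(X_{1}/W_{1})=1$. Since a step of an MMP neither enlarges the non-nef locus away from its flipping loci nor alters the pair near a set disjoint from those loci, the disjointness of the non-nef locus of the log canonical divisor from ${\rm Nlc}$ survives, so I can iterate and obtain a (possibly infinite) $(K_{X}+\Delta)$-MMP over $Z$ with scaling of $A$ all of whose steps are usual steps and whose non-isomorphic locus misses ${\rm Nlc}(X,\Delta)$. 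The $\lambda_{i}$ are non-increasing, so $\lambda:={\rm lim}_{i\to\infty}\lambda_{i}\geq 0$ exists and it remains to show $\lambda=0$.

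Suppose $\lambda>0$, so $\lambda_{i}\geq\lambda>\tfrac{\lambda}{2}$ for every $i$. For each contracted ray $R_{i}$ one has $(K_{X_{i}}+\Delta_{i}+\lambda_{i}A_{i})\cdot R_{i}=0$ and $(K_{X_{i}}+\Delta_{i})\cdot R_{i}<0$, so $A_{i}\cdot R_{i}>0$ and
\[
(K_{X_{i}}+\Delta_{i}+\tfrac{\lambda}{2}A_{i})\cdot R_{i}=\Bigl(1-\tfrac{\lambda}{2\lambda_{i}}\Bigr)(K_{X_{i}}+\Delta_{i})\cdot R_{i}<0,
\]
while $\mu_{i}:=\tfrac{2\lambda_{i}}{\lambda}-1\geq 1$ is exactly the nef threshold of $K_{X_{i}}+\Delta_{i}+\tfrac{\lambda}{2}A_{i}$ against $\tfrac{\lambda}{2}A_{i}$. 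Hence the very same sequence of steps is simultaneously a $(K_{X}+\Delta+\tfrac{\lambda}{2}A)$-MMP over $Z$ with scaling of the $\pi$-ample divisor $\tfrac{\lambda}{2}A$.

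I would then check that $(X,\Delta)$ with the $\pi$-ample divisor $\tfrac{\lambda}{2}A$ meets the hypotheses of Theorem \ref{thm--termination-lcmmp-main}: $K_{X}+\Delta+\tfrac{\lambda}{2}A$ is $\pi$-big, hence $\pi$-pseudo-effective; its non-nef locus over $Z$ is contained in that of $K_{X}+\Delta$ over $Z$ because adding a $\pi$-ample divisor only shrinks the non-nef locus, so it is disjoint from ${\rm Nlc}(X,\Delta)$; and—this is the point that needs an extra observation—since a $\pi$-vertical curve contained in ${\rm Nlc}(X,\Delta)$ with negative intersection against $K_{X}+\Delta$ would lie in the non-nef locus of $K_{X}+\Delta$ over $Z$ by Theorem \ref{thm--nonnef-negativecurve} (contradicting the hypothesis), the restriction $(K_{X}+\Delta)|_{{\rm Nlc}(X,\Delta)}$ is nef over $Z$, so $(K_{X}+\Delta+\tfrac{\lambda}{2}A)|_{{\rm Nlc}(X,\Delta)}$ is $\pi|_{{\rm Nlc}(X,\Delta)}$-ample and therefore a finite $\mathbb{R}_{>0}$-linear combination of $\pi|_{{\rm Nlc}(X,\Delta)}$-globally generated invertible sheaves. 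By Theorem \ref{thm--termination-lcmmp-main}, $(X,\Delta+\tfrac{\lambda}{2}A)$ then admits a minimal model over $Z$ whose log canonical divisor is semi-ample over $Z$. Combined with the termination of an MMP with scaling for pairs admitting a minimal model—which the methods behind Theorem \ref{thm--termination-lcmmp-main} provide in the present setting, the non-lc locus being untouched throughout; cf.~\cite{hashizumehu}, \cite{birkar-flip}—this forces the $(K_{X}+\Delta+\tfrac{\lambda}{2}A)$-MMP with scaling of $\tfrac{\lambda}{2}A$ of the previous paragraph to consist of finitely many steps, so its last pair $(X_{N},\Delta_{N})$ has $K_{X_{N}}+\Delta_{N}+\tfrac{\lambda}{2}A_{N}$ nef over $Z$, i.e.\ $\lambda_{N}\leq\tfrac{\lambda}{2}$, contradicting $\lambda_{N}\geq\lambda$. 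Hence $\lambda=0$, and the MMP produced in the first paragraph has all the asserted properties.

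The hard part will be the input that lets me apply Theorem \ref{thm--termination-lcmmp-main} to the perturbed pair: in the present statement nothing is assumed about $(K_{X}+\Delta)|_{{\rm Nlc}(X,\Delta)}$, so one must extract from ``non-nef locus $\cap\,{\rm Nlc}=\emptyset$'' that this restriction is $\pi$-nef, after which the ample perturbation $\tfrac{\lambda}{2}A$ upgrades it to $\pi$-ample and hence to a combination of relatively globally generated sheaves. The remaining ingredients—existence of each step of the scaling MMP, persistence of the disjointness along the MMP, and passing from the existence of a minimal model to termination of the reinterpreted scaling MMP—are standard facts of the lc theory that apply here precisely because every step is an isomorphism over a neighbourhood of ${\rm Nlc}(X,\Delta)$, but they still have to be invoked carefully in the non-lc formalism.
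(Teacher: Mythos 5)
Your overall strategy is close to the paper's, and you correctly identify the key observation that the hypothesis \emph{non-nef locus}\,$\cap$\,${\rm Nlc}(X,\Delta)=\emptyset$ forces $(K_{X}+\Delta)|_{{\rm Nlc}(X,\Delta)}$ to be nef over $Z$, so that adding $\frac{\lambda}{2}A$ makes the restriction $\pi$-ample and therefore semi-ample; this is precisely how the paper verifies the hypotheses of Theorem \ref{thm--termination-lcmmp-main}/\ref{thm--termination-all-lcmmp}. However, there is a genuine gap in the termination step, and it is exactly the one that Remark \ref{rem--difference-mmp-scaling} flags as the delicate point.

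You run a \emph{generic} $(K_{X}+\Delta)$-MMP with scaling of $A$ and, assuming $\lambda>0$, reinterpret it as a $(K_{X}+\Delta+\tfrac{\lambda}{2}A)$-MMP with scaling of $\tfrac{\lambda}{2}A$. Its nef thresholds are $\mu_i=\tfrac{2\lambda_i}{\lambda}-1$, which tend to $1$, not to $0$, so Theorem \ref{thm--termination-all-lcmmp} does not apply (its hypothesis is ${\rm lim}\,\lambda_i=0$). At this point you appeal to ``the termination of an MMP with scaling for pairs admitting a minimal model'', citing \cite{birkar-flip} and \cite{hashizumehu}, but the paper does \emph{not} provide this in the present generality; Remark \ref{rem--difference-mmp-scaling} says explicitly that for a generic MMP with scaling of an ample divisor it is not known whether $\lambda_i\neq\lambda$ for all $i$, and that this property is ``crucial to the proofs in Section \ref{sec--mmp-klt} and Section \ref{sec--mmp-lc}.'' Concretely: if you instead rescale to $(K_{X}+\Delta+\lambda A)$-MMP with scaling of $A$, the thresholds $\lambda_i-\lambda$ do tend to $0$, but if the generic MMP ever has $\lambda_N=\lambda$ and then continues, the subsequent steps contract $(K_{X_N}+\Delta_N+\lambda A_N)$-\emph{trivial} (not negative) rays, so the reinterpretation as a $(K_X+\Delta+\lambda A)$-MMP breaks down exactly where you would want to invoke termination. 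The paper sidesteps this by not using a generic MMP at all: it uses the MMP produced by Corollary \ref{cor--mmp-nomralpair-Qfacdlt} (which rests on Theorem \ref{thm--mmp-nefthreshold-strict}), whose defining feature is that either it terminates or $\lambda_i\neq\lambda$ for all $i$. With that extra property the rescaled MMP has thresholds tending to $0$ but never equal to $0$, so Theorem \ref{thm--termination-all-lcmmp} immediately gives the contradiction. To repair your argument you must replace the generic MMP by the one from Corollary \ref{cor--mmp-nomralpair-Qfacdlt}.
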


For applications of Corollary \ref{cor--mmp-lc-strictnefthreshold-intro}, see Corollary \ref{cor--mmp-numericaldim-zero} and Corollary \ref{cor--nonnef-diminished}.

\begin{cor}[= Corollary \ref{cor--finite-generation-main}]{\label{cor--finite-generation-intro}}
Let $\pi \colon X \to Z$ be a projective morphism of normal quasi-projective varieties. 
Let $(X,\Delta)$ be a normal pair such that $\Delta$ is a $\mathbb{Q}$-divisor on $X$. 
Let $A$ be a $\pi$-ample $\mathbb{Q}$-divisor on $X$ such that $K_{X}+\Delta+A$ is $\pi$-pseudo-effective. 
Suppose that the non-nef locus of $K_{X}+\Delta+A$ over $Z$ does not intersect ${\rm Nlc}(X,\Delta)$ and that $(K_{X}+\Delta+A)|_{{\rm Nlc}(X,\Delta)}$, which we think of a $\mathbb{Q}$-line bundle on ${\rm Nlc}(X,\Delta)$, is semi-ample over $Z$. 
Then the sheaf of graded $\pi_{*}\mathcal{O}_{X}$-algebra
$$\underset{m \in \mathbb{Z}_{\geq 0}}{\bigoplus}\pi_{*}\mathcal{O}_{X}(\lfloor m(K_{X}+\Delta+A)\rfloor)$$
is finitely generated. 
\end{cor}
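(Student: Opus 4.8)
The plan is to deduce the statement from Theorem~\ref{thm--termination-lcmmp-intro} by running the minimal model program it provides and then reducing the finite generation to that of the relative section ring of a relatively semi-ample $\mathbb{Q}$-divisor. Since finite generation of the sheaf of graded algebras can be checked locally on $Z$, I first reduce to the case where $Z$ is affine, so that the claim becomes finite generation of $\bigoplus_{k\geq 0}H^{0}(X,\mathcal{O}_{X}(\lfloor k(K_{X}+\Delta+A)\rfloor))$ over $\Gamma(Z,\mathcal{O}_{Z})$. Next I check the hypotheses of Theorem~\ref{thm--termination-lcmmp-intro}: since $(K_{X}+\Delta+A)|_{{\rm Nlc}(X,\Delta)}$ is a $\mathbb{Q}$-line bundle that is semi-ample over $Z$, a sufficiently divisible multiple of it is a $\pi|_{{\rm Nlc}(X,\Delta)}$-globally generated invertible sheaf, hence $(K_{X}+\Delta+A)|_{{\rm Nlc}(X,\Delta)}$ is in particular a finite $\mathbb{R}_{>0}$-linear combination of such sheaves. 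Applying Theorem~\ref{thm--termination-lcmmp-intro} with $(X_{1},B_{1})=(X,\Delta+A)$, I obtain a diagram $(X_{1},B_{1})\dashrightarrow\cdots\dashrightarrow(X_{m},B_{m})$ over $Z$ consisting of usual steps of a $(K_{X_{1}}+B_{1})$-MMP over $Z$ with $K_{X_{m}}+B_{m}$ semi-ample over $Z$; moreover each $B_{i}$ is a $\mathbb{Q}$-divisor, being the strict transform of the $\mathbb{Q}$-divisor $B_{1}=\Delta+A$.

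The second step is to show that this MMP does not change the relative log canonical ring, i.e., writing $\pi_{i}\colon X_{i}\to Z$ for the structure morphisms,
$$\bigoplus_{k\geq 0}\pi_{*}\mathcal{O}_{X}(\lfloor k(K_{X}+\Delta+A)\rfloor)\;\cong\;\bigoplus_{k\geq 0}(\pi_{m})_{*}\mathcal{O}_{X_{m}}(\lfloor k(K_{X_{m}}+B_{m})\rfloor)$$
as graded $\pi_{*}\mathcal{O}_{X}$-algebras. I would verify this one step at a time. A small step is an isomorphism in codimension one, and hence preserves the reflexive sheaves attached to $\lfloor k(K+B)\rfloor$ together with their pushforwards. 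In a step whose contraction $\varphi_{i}\colon X_{i}\to W_{i}$ is divisorial, each prime divisor contracted by $\varphi_{i}$ is covered by curves $C$ with $(K_{X_{i}}+B_{i})\cdot C<0$, so by Theorem~\ref{thm--nonnef-negativecurve} it lies in the non-nef locus of $K_{X_{i}}+B_{i}$ over $Z$, in particular in the relative stable base locus; since pushforward along a birational contraction is compatible with taking integral parts, contracting such divisors leaves the pushforwards of $\mathcal{O}(\lfloor k(K_{X_{i}}+B_{i})\rfloor)$ unchanged. Thus it suffices to prove finite generation of the right-hand side.

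Finally, set $D:=K_{X_{m}}+B_{m}$, a $\mathbb{Q}$-divisor that is semi-ample over $Z$. Being semi-ample, $D$ is $\mathbb{R}$-Cartier, and a $\mathbb{Q}$-divisor that is $\mathbb{R}$-Cartier on a normal variety is automatically $\mathbb{Q}$-Cartier; hence some $\ell D$ with $\ell>0$ is Cartier and $\pi_{m}$-globally generated, and it defines a contraction $g\colon X_{m}\to Y$ over $Z$ with $\ell D=g^{*}A_{Y}$ for a $\pi_{Y}$-ample Cartier divisor $A_{Y}$ on $Y$ (after passing to the Stein factorization we may assume $g_{*}\mathcal{O}_{X_{m}}=\mathcal{O}_{Y}$). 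Then the $\ell$-th Veronese subalgebra of $\bigoplus_{k}(\pi_{m})_{*}\mathcal{O}_{X_{m}}(\lfloor kD\rfloor)$ equals the relative section ring $\bigoplus_{j}(\pi_{Y})_{*}\mathcal{O}_{Y}(jA_{Y})$ of the $\pi_{Y}$-ample invertible sheaf $A_{Y}$, which is finitely generated; the finite generation of the full ring follows by a standard truncation argument, and this completes the proof.

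The step I expect to be the main obstacle is the invariance of the relative log canonical ring along the MMP. Although this is classical for $\mathbb{Q}$-factorial klt pairs, here the $X_{i}$ need not be $\mathbb{Q}$-factorial and ${\rm Nlc}(X,\Delta)$ may be nonempty, so one has to argue directly with the reflexive sheaves $\mathcal{O}(\lfloor k(K+B)\rfloor)$ and carefully control the interaction between pushforward, the floor function, and the coefficients of the contracted divisors. Once one has landed on $X_{m}$, the remaining finite generation is routine: the $\mathbb{Q}$-Cartier property of $K_{X_{m}}+B_{m}$ comes for free, and finite generation of the relative section ring of a relatively semi-ample $\mathbb{Q}$-divisor on a quasi-projective variety is standard.
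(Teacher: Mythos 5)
Your overall strategy—run the MMP from Theorem \ref{thm--termination-lcmmp-intro}, show the relative log canonical ring is unchanged step by step, and then reduce to finite generation of the section ring of a relatively semi-ample $\mathbb{Q}$-Cartier $\mathbb{Q}$-divisor—is exactly what the paper has in mind; its own proof is just the one line ``this immediately follows from Theorem \ref{thm--termination-lcmmp-main}.'' Your hypothesis-checking (a relatively semi-ample $\mathbb{Q}$-line bundle is in particular a finite $\mathbb{R}_{>0}$-combination of relatively globally generated invertible sheaves) is right, the final Veronese/truncation step is standard, and the observation that an $\mathbb{R}$-Cartier $\mathbb{Q}$-divisor on a normal variety is automatically $\mathbb{Q}$-Cartier is a true and necessary ingredient.

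The one place where your sketch is not actually an argument is the invariance across a divisorial step, and you yourself flag it as the delicate point. The justification you give—the contracted prime divisors lie in $\mathrm{NNef}$, hence in $\mathrm{Bs}|K_{X_i}+B_i/Z|_{\mathbb{R}}$, hence ``contracting such divisors leaves the pushforwards of $\mathcal{O}(\lfloor k(K_{X_i}+B_i)\rfloor)$ unchanged since pushforward along a birational contraction is compatible with taking integral parts''—does not hold up. First, pushforward of a divisorial reflexive sheaf along a divisorial contraction is not in general the reflexive sheaf of the pushforward divisor (e.g.\ $\varphi_*\mathcal{O}_{X_i}(-kP)$ is a symbolic power of the ideal of $\varphi(P)$, not $\mathcal{O}_{W_i}$, when $P$ is contracted). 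Second, membership in $\mathrm{Bs}|K_{X_i}+B_i/Z|_{\mathbb{R}}$ is an $\mathbb{R}$-linear-system statement about $K_{X_i}+B_i$ itself, while what you need concerns $\lfloor k(K_{X_i}+B_i)\rfloor$ for each integer $k$, and $\lfloor kD\rfloor$ is not $\mathbb{R}$-linearly proportional to $D$ unless $kD$ happens to be integral; so the stable base locus gives no direct control on the fixed part of the integral linear systems. Finally, a ``usual step'' in the sense of the paper may be simultaneously divisorial (for $\varphi_i$) and a nontrivial small modification on the target side, so a clean small/divisorial dichotomy is not available either.

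The correct argument, uniform over all types of steps, is the standard one on a common resolution. Take $p\colon Y\to X_i$ and $q\colon Y\to X_{i+1}$ a common resolution. Since $-(K_{X_i}+B_i)$ is $\varphi_i$-ample and $K_{X_{i+1}}+B_{i+1}$ is $\varphi_i'$-ample, the negativity lemma over $W_i$ gives
\begin{equation*}
p^{*}(K_{X_i}+B_i)=q^{*}(K_{X_{i+1}}+B_{i+1})+E,\qquad E\geq 0,\ E\ \text{$q$-exceptional.}
\end{equation*}
Here you use that $K_{X_i}+B_i$ and $K_{X_{i+1}}+B_{i+1}$ are $\mathbb{Q}$-Cartier (since $B_1$ is a $\mathbb{Q}$-divisor and $K_{X}+\Delta$ is $\mathbb{R}$-Cartier with $\mathbb{Q}$-coefficients, hence $\mathbb{Q}$-Cartier, and the steps preserve this). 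Then, comparing subsheaves of the constant sheaf of rational functions, for every $k\geq 0$ one has $p_{*}\mathcal{O}_{Y}(\lfloor k\,p^{*}(K_{X_i}+B_i)\rfloor)=\mathcal{O}_{X_i}(\lfloor k(K_{X_i}+B_i)\rfloor)$ and $q_{*}\mathcal{O}_{Y}(\lfloor k\,q^{*}(K_{X_{i+1}}+B_{i+1})+kE\rfloor)=\mathcal{O}_{X_{i+1}}(\lfloor k(K_{X_{i+1}}+B_{i+1})\rfloor)$ (the first because ${\rm div}$ of any rational function is integral so one may remove the floor after pulling back, the second additionally because $kE\geq 0$ is $q$-exceptional). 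Pushing both to $Z$ through the same morphism $Y\to Z$ gives the desired equality of graded $\pi_{*}\mathcal{O}_{X}$-algebras; the algebra structure is respected since everything is an equality of subsheaves of the constant sheaf. Once this is in place, the rest of your proof is fine.
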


\subsection{Structure of this paper and idea of proofs}
The contents of this paper are as follows.  

In Section \ref{sec--preliminaries}, we collect some definitions and basic results used in this paper. 
Relative non-nef locus for $\mathbb{R}$-Cartier $\mathbb{R}$-divisors (Definition \ref{defn--nonneflocus}) and quasi-log schemes induced by normal pairs (Definition \ref{defn--lc-trivial-fib-quasi-log}) will be defined in this section. 

In Section \ref{sec--running-mmp}, we prove fundamental results to run an MMP for normal pairs. 
We first define minimal models and a step of an MMP for normal pairs, and then we collect their basic properties. 
After that, using the cone and contraction theorem for quasi-log schemes (\cite[Chapter 6]{fujino-book}), we prove that we can run an MMP for normal pairs if the non-nef locus of the log canonical $\mathbb{R}$-divisor is disjoint from the non-lc locus. 
We also show constructions of special kinds of MMP with scaling; MMP for normal pairs with scaling whose nef threshold is not stationary, and another type is so called ``MMP for quasi-log schemes induced by normal pairs with scaling''. 
For details, see Theorem \ref{thm--mmp-nefthreshold-strict}. 
These two kinds of MMP will play crucial roles in  Section \ref{sec--mmp-klt} and  Section \ref{sec--mmp-lc}. 
Furthermore, the second kind of the MMP is used to construct the first kind of the MMP. 

In Section \ref{sec--mmp-klt}, we study the minimal model theory for polarized normal pairs such that the non-nef locus of the log canonical $\mathbb{R}$-divisor is disjoint from the non-klt locus. 
This section can be divided into three pieces. 
Firstly, we will prove the existence of a good minimal model for polarized normal pairs assuming the non-vanishing theorem. 
In the proof, the circle of ideas in \cite[Section 5]{bchm} and the special termination in \cite{fujino-sp-ter} play important roles. 
Secondary, we will prove the termination of some MMP with scaling assuming
 the existence of good minimal models. 
This result is a variant of \cite[Theorem 4.1 (iii)]{birkar-flip}, and we partially borrow the idea in its proof. 
Despite the bad singularities of a given normal pair, an appropriate partial resolution of singularities as in Proposition \ref{prop--dlt-resol-linsystem} enables us to make use of the ideas in \cite[Section 5]{bchm}, \cite{fujino-sp-ter}, and \cite[Proof of Theorem 4.1 (iii)]{birkar-flip}. 
Finally, we will prove the non-vanishing theorem. 
Using the idea from \cite[Section 7]{bchm}, we will reduce the non-vanishing theorem to the extension of sections from the non-klt locus. 
To explain the strategy, consider a special case that does not appear in the proof, which we introduce for convenience, where we are given a projective polarized normal pair $(X,\Delta+A)$ such that $\Delta$ and $A$ are $\mathbb{Q}$-divisors, ${\rm Bs}|K_{X}+\Delta+(1+t)A|_{\mathbb{R}}$ is disjoint from the non-klt locus ${\rm Nklt}(X,\Delta)$ of $(X,\Delta)$ for any $t \in \mathbb{R}_{>0}$, and the $\mathbb{Q}$-line bundle $(K_{X}+\Delta+A)|_{{\rm Nklt}(X,\Delta)}$ is semi-ample. 
The goal is to prove ${\rm Bs}|K_{X}+\Delta+ A|_{\mathbb{R}} \cap {\rm Nklt}(X,\Delta) = \emptyset$. 
We run a $(K_{X}+\Delta+A)$-MMP with scaling of $A$ whose nef threshold is not stationary, constructed in Section \ref{sec--running-mmp}. 
By an argument, we may assume that the nef threshold goes to zero, and for any integer $p \geq 2$, we get a normal pair $(X',\Delta'+A')$ such that $K_{X'}+\Delta'+\frac{p}{p-1}A'$ is nef and log big with respect to $(X',\Delta')$. 
Choosing $p$ so that $p(K_{X}+\Delta+A)$ is a Weil divisor, we can apply a vanishing theorem for quasi-log schemes (Theorem \ref{thm--vanishing-quasi-log}) to $p(K_{X'}+\Delta'+A')$. 
Then we can extend the sections from ${\rm Nklt}(X',\Delta')$ to $X'$. 
From this and the construction of the MMP, we obtain ${\rm Bs}|K_{X}+\Delta+ A|_{\mathbb{R}} \cap {\rm Nklt}(X,\Delta) = \emptyset$, as desired. 

In Section \ref{sec--mmp-lc}, we prove Theorem \ref{thm--termination-lcmmp-intro} and corollaries. 
For a polarized normal pair as in Theorem \ref{thm--termination-lcmmp-intro}, we will run an MMP and use the argument of the special termination as in \cite{fujino-sp-ter} to reduce an MMP along klt locus, proved in Section \ref{sec--mmp-klt}. 
The main difficulties to carry out the idea are that the ampleness of the polarization is not preserved under an MMP and that we cannot directly use the adjunction for lc centers. 
Hence, we deal with quasi-log schemes induced by normal pairs such that lc centers of the normal pair are geometrically well shaped, and we consider MMP for quasi-log schemes induced by normal pairs with scaling. 
Roughly speaking, this is a sequence of quasi-log schemes induced by normal pairs $f_{i}\colon (Y_{i},\Delta_{i}+f^{*}_{i}A_{i}) \to [X_{i},\omega_{i}+A_{i}]$ ($i=1,\,2,\,\cdots$) such that $(Y_{i},\Delta_{i}+f^{*}_{i}A_{i})$ (resp.~$X_{i}$) form a sequence of steps of a $(K_{Y_{1}}+\Delta_{1}+f^{*}_{1}A_{1})$-MMP with scaling (resp.~an $(\omega_{1}+A_{1})$-MMP with scaling). 
These two MMP are deeply linked, and thus the terminations of the two MMP are equivalent.  
By making use of the MMP proved in Section \ref{sec--running-mmp}, we construct an MMP for quasi-log scheme induced by normal pair with scaling such that the nef threshold of the MMP is not stationary, $A_{i}$ is ample (after replacing suitably), and we can apply the adjunction to  any lc center of each $(Y_{i},\Delta_{i})$. 
The $(K_{Y_{1}}+\Delta_{1}+f^{*}_{1}A_{1})$-MMP is used for the special termination. 
On the other hand, the $(\omega_{1}+A_{1})$-MMP is used for the abundance by using the base point free theorem for quasi-log schemes. 
Regarding the given normal pair $(X, \Delta+A)$ as a quasi-log scheme induced by a normal pair $(X,\Delta+A) \to [X,K_{X}+\Delta+A]$, we prove Theorem \ref{thm--termination-lcmmp-intro} by using the above MMP. 
Corollaries are direct consequences of Theorem \ref{thm--termination-lcmmp-intro}. 

In Section \ref{sec--example}, we collect some examples. 
We construct examples for which a step of an MMP does not exist, a minimal model does not exist, and the abundance conjecture does not hold.  

\begin{ack}
The author was partially supported by JSPS KAKENHI Grant Number JP22K13887. 
The author thanks Professor Osamu Fujino for a comment and pointing out a typo. 
The author is grateful to the referee(s) for reading the manuscript carefully and lots of suggestions. 
\end{ack}

\section{Preliminaries}\label{sec--preliminaries}

Throughout this paper, a {\em scheme} means a separated scheme of finite type over $\mathbb{C}$. 
A {\em variety} means an integral scheme, that is, an irreducible reduced separated scheme of finite type over $\mathbb C$.

\begin{divisor}
Let $\pi \colon X \to Z$ be a projective morphism from a normal variety  to a variety. 
We will use the standard definitions of $\pi$-nef $\mathbb{R}$-divisor, $\pi$-ample $\mathbb{R}$-divisor, $\pi$-semi-ample $\mathbb{R}$-divisor, and $\pi$-pseudo-effective $\mathbb{R}$-Cartier $\mathbb{R}$-divisor. 
The set of $\mathbb{R}$-divisors on $X$ is denoted by ${\rm WDiv}_{\mathbb{R}}(X)$. 
For a prime divisor $P$ over $X$, the image of $P$ on $X$ is denoted by $c_{X}(P)$. 

A {\em contraction} $f\colon X\to Y$ is a projective morphism of varieties such that $f_{*}\mathcal{O}_{X}\cong \mathcal{O}_{Y}$. 
For a variety $X$ and an $\mathbb{R}$-divisor $D$ on $X$, a {\em log resolution of} $(X,D)$ is a projective birational morphism $g \colon W\to X$ from a smooth variety $W$ such that the exceptional locus ${\rm Ex}(g)$ of $g$ is pure codimension one and ${\rm Ex}(g)\cup {\rm Supp}\,g_{*}^{-1}D$ is a simple normal crossing divisor. 

A birational map $\phi\colon X \dashrightarrow X'$ of varieties is called a {\em birational contraction} if $\phi^{-1}$ does not contract any divisor. 
We say that $\phi$ is {\em small} if $\phi$ and $\phi^{-1}$ are birational contractions.  
\end{divisor}

\begin{defn}[$\mathbb{R}$-line bundle on scheme, relative ampleness, relative semi-ampleness]\label{defn--R-line-bundle-scheme}
Let $X$ be a (not necessarily reduced or irreducible) scheme and let ${\rm Pic}(X)$ be the Picard group of $X$. 
A {\em $\mathbb{Q}$-line bundle on $X$} is an element of ${\rm Pic}(X)\otimes_{\mathbb{Z}}\mathbb{Q}$. 
Let $\pi \colon X \to Z$ be a projective morphism to a scheme $Z$. 
A $\mathbb{Q}$-line bundle $\mathcal{L}$ on $X$ is {\em $\pi$-ample} or {\em ample over $Z$} if $\mathcal{L}$ is a finite $\mathbb{Q}_{>0}$-linear combination of $\pi$-ample invertible sheaves on $X$. 
We say that a $\mathbb{Q}$-line bundle $\mathcal{L}$ on $X$ is {\em $\pi$-semi-ample} or {\em semi-ample over $Z$} if we can write $\mathcal{L}=\sum_{i=1}^{k}q_{i} \mathcal{L}_{i}$ as an element of ${\rm Pic}(X)\otimes_{\mathbb{Z}}\mathbb{Q}$ such that $q_{1},\,\cdots,\,q_{k}$ are positive rational numbers and $\mathcal{L}_{1},\,\cdots,\,\mathcal{L}_{k}$ are globally generated over $Z$, in other words, the natural morphism
$\pi^{*}\pi_{*}\mathcal{L}_{i} \to \mathcal{L}_{i}$ 
is surjective for all $1 \leq i \leq k$. 

Similarly, we define an {\em $\mathbb{R}$-line bundle on $X$} to be an element of ${\rm Pic}(X)\otimes_{\mathbb{Z}}\mathbb{R}$, and we say that an $\mathbb{R}$-line bundle $\mathcal{L}$ on $X$ is {\em $\pi$-ample} or {\em ample over $Z$} (resp.~{\em $\pi$-semi-ample} or {\em semi-ample over $Z$}) if $\mathcal{L}$ is a finite $\mathbb{R}_{>0}$-linear combination of invertible sheaves on $X$ that are $\pi$-ample (resp.~globally generated over $Z$). 
\end{defn}

\begin{defn}
Let $a$ be a real number. 
We define the {\em round up} $\lceil a \rceil$ to be the smallest integer not less than $a$, and we define the {\em round down} $\lfloor a \rfloor$ to be the largest integer not greater than $a$. 
It is easy to see that $\lfloor a \rfloor=- \lceil -a \rceil$. 

Let $D$ be an $\mathbb{R}$-divisor on a variety, and let $D=\sum_{i}d_{i}D_{i}$ be the decomposition of $D$ into distinct prime divisors. 
We define
\begin{equation*}
\begin{split}
\lceil D\rceil:=\sum_{i}\lceil d_{i} \rceil D_{i}, \qquad \lfloor D \rfloor:= \sum_{i}\lfloor d_{i} \rfloor D_{i}, \qquad {\rm and} \qquad \{D\}:=D-\lfloor D \rfloor.
\end{split}
\end{equation*}
We also define
\begin{equation*}
D^{<1} :=\sum_{d_{i}<1}d_{i}D_{i}, \quad 
D^{= 1}:=\sum_{d_{i}= 1} D_{i}, \quad 
D^{>1} :=\sum_{d_{i}>1}d_{i}D_{i}, \quad {\rm and} \quad 
D^{\geq 1}:=\sum_{d_{i}\geq 1}d_{i} D_{i}. 
\end{equation*}
By definition, we have $\lfloor D \rfloor = -\lceil -D \rceil$. 
\end{defn}

\subsection{$\mathbb{R}$-linear system}\label{subsec--linear-system}

In this subsection, we define the relative $\mathbb{R}$-linear system and the relative stable base locus, and we prove some basic results. 
Afterwards, we define the relative diminished base locus. 
Almost all results in this subsection are analogue of the results proved in \cite[Subsection 3.5]{bchm}.

\begin{defn}[Relative $\mathbb{R}$-linear system, relative stable base locus]\label{defn--R-linear-system}
Let $\pi \colon X \to Z$ be a projective morphism of normal varieties. 
Let $D$ be a (not necessarily $\mathbb{Q}$-Cartier) $\mathbb{Q}$-divisor on $X$. 
Then the {\em $\mathbb{Q}$-linear system of $D$ over $Z$}, denoted by $|D/Z|_{\mathbb{Q}}$, is defined by
$$|D/Z|_{\mathbb{Q}}:=\{E \geq 0\,|\, E \sim_{\mathbb{Q},\, Z}D\}.$$ 
The {\em stable base locus of $D$ over $Z$}, denoted by ${\rm Bs}|D/Z|_{\mathbb{Q}}$, is defined by
$${\rm Bs}|D/Z|_{\mathbb{Q}}:=\bigcap_{E \in |D/Z|_{\mathbb{Q}}}{\rm Supp}\,E.$$ 
If $|D/Z|_{\mathbb{Q}}$ is empty, then we set ${\rm Bs}|D/Z|_{\mathbb{Q}}=X$ by convention. 

For an $\mathbb{R}$-divisor $D'$ on $X$, the {\em $\mathbb{R}$-linear system of $D'$ over $Z$}, denoted by $|D'/Z|_{\mathbb{R}}$, is defined by
$$|D'/Z|_{\mathbb{R}}:=\{E' \geq 0\,|\, E' \sim_{\mathbb{R},\, Z}D'\},$$ 
 and the {\em stable base locus of $D'$ over $Z$}, denoted by ${\rm Bs}|D'/Z|_{\mathbb{R}}$, is defined by
$${\rm Bs}|D'/Z|_{\mathbb{R}}:=\bigcap_{E' \in |D'/Z|_{\mathbb{R}}}{\rm Supp}\,E'.$$ 
If $|D'/Z|_{\mathbb{R}}$ is empty, then we set ${\rm Bs}|D'/Z|_{\mathbb{R}}=X$ by convention. 

We say that an $\mathbb{R}$-divisor $D''$ on $X$ is {\em movable over $Z$} if the codimension of ${\rm Bs}|D''/Z|_{\mathbb{R}}$ in $X$ is greater than or equal to two.
\end{defn}

\begin{lem}[cf.~{\cite[Lemma 3.5.6]{bchm}}]\label{lem--movablediv-decom}
Let $X \to Z$ be a projective morphism of normal varieties and let $D$ be an $\mathbb{R}$-divisor on $X$ which is movable over $Z$. 
Then there exist positive real numbers $r_{1},\cdots,\,r_{l}$ and effective $\mathbb{Q}$-divisors $M_{1},\cdots,\,M_{l}$ on $X$ such that $D \sim_{\mathbb{R},\,Z}\sum_{i=1}^{l}r_{i}M_{i}$ and ${\rm Bs}|M_{i}/Z|_{\mathbb{Q}} \subset {\rm Bs}|D/Z|_{\mathbb{R}}$ for every $1 \leq i \leq l$. 
In particular, $M_{i}$ is movable over $Z$ for every $1 \leq i \leq l$. 
\end{lem}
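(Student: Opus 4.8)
The plan is to mimic the argument of \cite[Lemma 3.5.6]{bchm} in the relative setting, the only real point being to keep track of the relative stable base locus. Write $D = \sum_{j} d_{j} D_{j}$ as a sum of distinct prime divisors with $d_{j}\in\mathbb{R}$. Since $D$ is movable over $Z$, in particular $D$ is $\mathbb{R}$-Cartier and $|D/Z|_{\mathbb{R}}\neq\emptyset$, so we may pick some $E\in|D/Z|_{\mathbb{R}}$, say $E = D + \sum_{k} a_{k}(f_{k})$ with $a_{k}\in\mathbb{R}$ and $f_{k}\in\mathbb{C}(X)$ such that $E\geq 0$ and $\pi(\mathrm{Supp}(E-D))$ is contained in a divisor on $Z$ (here I use that $\sim_{\mathbb{R},Z}$ allows an $\mathbb{R}$-divisor pulled back from $Z$; one absorbs it into finitely many rational functions on $Z$). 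First I would choose a finite-dimensional $\mathbb{Q}$-vector subspace $V\subset\mathrm{WDiv}_{\mathbb{R}}(X)$ (spanned by $\mathbb{Q}$-divisors) that contains $D$: take $V$ to be the span of the $D_{j}$ together with all the $(f_{k})$ and the components of the divisor on $Z$ pulled back to $X$. Inside $V$, consider the set $\mathcal{L}$ of elements $D'$ with $D'\sim_{\mathbb{R},Z}D$; this is cut out inside the affine subspace $D + (\text{span of the }(f_{k})\text{ and pullbacks})$ by the finitely many linear equalities forcing the coefficients along the $D_{j}$ to match, and hence $\mathcal{L}$ is a rational affine subspace of $V$, i.e.\ a polytope's worth of directions defined over $\mathbb{Q}$, passing through the rational point(s) we will produce. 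Likewise the effectivity condition $D'\geq 0$ is a system of rational linear inequalities, so $\mathcal{L}_{\geq 0}:=\{D'\in\mathcal{L}\mid D'\geq 0\}$ is a rational polytope containing $E$ in its (relative) interior direction set but more importantly it is nonempty and rational.

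Next I would use the standard fact (a relative version of \cite[Lemma 3.5.3]{bchm}, whose proof is elementary: clearing denominators and moving a rational function) that on the rational polytope $\mathcal{L}_{\geq 0}$ the function $D'\mapsto \mathrm{Supp}(\mathrm{Bs}|D'/Z|_{\mathbb{Q}\text{ or }\mathbb{R}})$ is constant on the interior and, for $\mathbb{Q}$-points, $\mathrm{Bs}|D'/Z|_{\mathbb{Q}}\subseteq\mathrm{Bs}|D/Z|_{\mathbb{R}}$ whenever $D'\in\mathcal{L}_{\geq 0}$. Concretely: since $E\in\mathcal{L}_{\geq 0}$ and the rational points are dense in $\mathcal{L}_{\geq 0}$, I can choose finitely many $\mathbb{Q}$-divisors $M_{1},\dots,M_{l}\in\mathcal{L}_{\geq 0}$ and positive reals $r_{1},\dots,r_{l}$ with $\sum r_{i}=1$ and $\sum_{i} r_{i} M_{i} = D$ (express $D$ as a convex combination of rational points of the polytope $\mathcal{L}_{\geq 0}$ — possible because $D$ itself lies in $\mathcal{L}_{\geq 0}$ and the polytope is rational, so its rational points affinely span it). Each $M_{i}$ is an effective $\mathbb{Q}$-divisor with $M_{i}\sim_{\mathbb{R},Z}D$; in fact since $M_{i}$ and $D$ both lie in the rational affine space $\mathcal{L}$, their difference is a $\mathbb{Q}$-divisor $\mathbb{R}$-linearly trivial over $Z$, hence (clearing denominators) $\mathbb{Q}$-linearly trivial over $Z$ up to the pullback piece, giving $M_{i}\sim_{\mathbb{Q},Z}D$ after adjusting; in any case $M_{i}\sim_{\mathbb{R},Z}D$ suffices for the statement. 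And $D\sim_{\mathbb{R},Z}\sum r_{i}M_{i}$ is immediate.

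For the containment $\mathrm{Bs}|M_{i}/Z|_{\mathbb{Q}}\subseteq\mathrm{Bs}|D/Z|_{\mathbb{R}}$: given any point $x\notin\mathrm{Bs}|D/Z|_{\mathbb{R}}$, there is $E'\in|D/Z|_{\mathbb{R}}$ with $x\notin\mathrm{Supp}\,E'$; since $E'$ and $M_{i}$ are both $\sim_{\mathbb{R},Z}D$ and $M_{i}$ is rational, a convexity/denseness argument in the polytope $\mathcal{L}_{\geq 0}$ (move slightly from $M_{i}$ toward $E'$ along $\mathcal{L}$, staying effective and rational, so that the base locus only shrinks near $x$) produces a $\mathbb{Q}$-divisor in $|M_{i}/Z|_{\mathbb{Q}}$ not containing $x$ — this is exactly the relative analogue of \cite[Lemma 3.5.3]{bchm}. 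Hence $x\notin\mathrm{Bs}|M_{i}/Z|_{\mathbb{Q}}$, proving the inclusion. The ``in particular'' statement then follows: since $\mathrm{Bs}|D/Z|_{\mathbb{R}}$ has codimension $\geq 2$, so does $\mathrm{Bs}|M_{i}/Z|_{\mathbb{Q}}$, i.e.\ each $M_{i}$ is movable over $Z$. The main obstacle I anticipate is purely bookkeeping: correctly setting up the finite-dimensional rational vector space $V$ and checking that $\mathcal{L}_{\geq 0}$ is genuinely a \emph{rational} polytope containing $D$ in the relative setting (i.e.\ that the relation $\sim_{\mathbb{R},Z}$, which permits an $\mathbb{R}$-divisor from the base, does not destroy rationality) — this is where one must be slightly careful, absorbing the base contribution into finitely many rational functions on $Z$ pulled back to $X$; everything after that is the verbatim \cite{bchm} argument.
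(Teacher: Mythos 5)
Your route is genuinely different from the paper's, and there is a real gap in the key containment step.

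The paper fixes finitely many representatives $D_1,\dots,D_p\in|D/Z|_{\mathbb R}$ with $\bigcap_i{\rm Supp}\,D_i={\rm Bs}|D/Z|_{\mathbb R}$ and inducts on the total number of components of the $D_i$, peeling off one rational movable $\mathbb Q$-divisor at a time. The rational polytope argument appears there only to perturb the $D_i$ to $\mathbb Q$-divisors $N_i$ with the same supports and with $N_1\sim_{\mathbb Q,Z}N_i$, which makes ${\rm Bs}|N_1/Z|_{\mathbb Q}\subset\bigcap_i{\rm Supp}\,N_i={\rm Bs}|D/Z|_{\mathbb R}$ immediate. You instead express an effective representative of $D$ directly as a convex combination of $\mathbb Q$-points $M_i$ of a rational polyhedron of effective members of $|D/Z|_{\mathbb R}$ and then claim ${\rm Bs}|M_i/Z|_{\mathbb Q}\subset{\rm Bs}|D/Z|_{\mathbb R}$ for each $M_i$. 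That is a cleaner decomposition (no induction), but it is forced to rely on the separate fact that a $\mathbb Q$-divisor $M\sim_{\mathbb R,Z}D$ satisfies ${\rm Bs}|M/Z|_{\mathbb Q}={\rm Bs}|M/Z|_{\mathbb R}$; the paper's inductive setup with the fixed family $D_1,\dots,D_p$ exists precisely to avoid invoking this.

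Your argument for that containment does not work as written. You say: move slightly from $M_i$ toward $E'$ along $\mathcal L$, staying effective and rational, so that the base locus only shrinks near $x$. On the segment $(1-t)M_i+tE'$ with $0<t<1$, the coefficient of any prime divisor $P\ni x$ is $(1-t)\,{\rm coeff}_P(M_i)+t\,{\rm coeff}_P(E')$, which stays positive whenever ${\rm coeff}_P(M_i)>0$; so $x$ never leaves the support before $t=1$. Moreover a segment point is generically irrational, and even at a rational $t$ it is only $\sim_{\mathbb R,Z}M_i$, not $\sim_{\mathbb Q,Z}M_i$. The correct argument is anchored at $E'$, not at $M_i$: denoting the morphism by $g\colon X\to Z$, write $E'-M_i=\sum_j a_j(f_j)+g^*G$ and expand $G$ in Cartier divisors $G_k$ on $Z$; then look at the rational polyhedron in the parameters $(b,c)$ cut out by requiring $M_i+\sum_jb_j(f_j)+\sum_kc_k\,g^*G_k\geq 0$ and the coefficient along every prime divisor through $x$ to vanish. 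This polyhedron contains the real parameter point giving $E'$, hence it contains a rational point $(b',c')$, and the resulting divisor is effective, misses $x$, and differs from $M_i$ by $\sum_j b'_j(f_j)+g^*\bigl(\sum_k c'_kG_k\bigr)\sim_{\mathbb Q,Z}0$, so it lies in $|M_i/Z|_{\mathbb Q}$. Two smaller points: since $D$ need not be effective, it is a representative $E\in|D/Z|_{\mathbb R}$, not $D$ itself, that belongs to the polyhedron you wish to decompose; and the $\mathbb Q$-rationality of $\mathcal L$ inside $V$ is not automatic from your description --- work instead with the explicit rational affine subspace $D+\mathbb R\text{-span}\bigl((f_k),\,g^*G_m\bigr)$, which is rational by construction and contains $E$.
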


\begin{proof}
The argument is similar to \cite[Proof of Lemma 3.5.6]{bchm}. 
We fix elements $D_{1},\cdots,\,D_{p}$ of $|D/Z|_{\mathbb{R}}$ such that $\bigcap_{i=1}^{p}{\rm Supp}\,D_{i}={\rm Bs}|D/Z|_{\mathbb{R}}$. 
We prove Lemma \ref{lem--movablediv-decom} by induction on the sum of the numbers of components of the divisors $D_{1},\cdots,\,D_{p}$. 
For each $2 \leq i \leq p$, we consider the set
$$\mathcal{W}_{i}:=\left\{(E_{1}, E_{i})\in {\rm WDiv}_{\mathbb{R}}(X)\times {\rm WDiv}_{\mathbb{R}}(X)\,\middle|\begin{array}{l}
\text{$E_{1} \geq 0$, $E_{i} \geq0$, $E_{1}\sim_{\mathbb{R},\,Z}E_{i}$,}\\
\text{${\rm Supp}\,E_{1}={\rm Supp}\,D_{1}$, and}\\
\text{${\rm Supp}\,E_{i}={\rm Supp}\,D_{i}$}
\end{array}\right\}.$$
By the argument from convex geometry, we can find a rational polytope $\mathcal{C}_{i} \subset \mathcal{W}_{i}$ that contains $(D_{1},D_{i})$. 
Then there are effective $\mathbb{Q}$-divisors $N_{1},\cdots,\, N_{p}$ on $X$ such that $(N_{1},N_{i}) \in \mathcal{C}_{i}$ for every $2 \leq i \leq p$. 
Then 
${\rm Supp}\,N_{i}={\rm Supp}\,D_{i}$ for all $1 \leq i \leq p$, and it follows that 
$N_{1}\sim_{\mathbb{Q},\,Z}N_{i}$ for all $2 \leq i \leq p$.  
Since we have
$${\rm Bs}|N_{1}/Z|_{\mathbb{Q}}\subset \bigcap_{i=1}^{p}{\rm Supp}\,N_{i}=  \bigcap_{i=1}^{p}{\rm Supp}\,D_{i}={\rm Bs}|D/Z|_{\mathbb{R}},$$ 
it follows that $N_{1}$ is movable over $Z$ and ${\rm Bs}|N_{1}/Z|_{\mathbb{Q}} \subset {\rm Bs}|D/Z|_{\mathbb{R}}$. 
We set 
$$M_{1}:=N_{1} \quad {\rm and} \quad r_{1}:={\rm sup}\{t \in \mathbb{R}_{\geq 0}\,|\, \text{$D_{i}-t N_{i}\geq 0$ for every $1\leq i \leq p$}\}.$$
Putting $D'_{i}=D_{i}-r_{1} N_{i}$ for each $1\leq i \leq p$ and $D':=D-r_{1}M_{1}$, then the sum of the numbers of components of $D'_{1},\cdots,\,D'_{p}$ is strictly less than the sum of the numbers of components of $D_{1},\cdots,\,D_{p}$. 
By construction, the relation $D'\sim_{\mathbb{R},\,Z} D'_{i}$ holds for every $1 \leq i \leq p$, and 
$${\rm Bs}|D'/Z|_{\mathbb{R}} \subset \bigcap_{i=1}^{p}{\rm Supp}\,D'_{i}\subset \bigcap_{i=1}^{p}{\rm Supp}\,D_{i}={\rm Bs}|D/Z|_{\mathbb{R}}.$$
Thus $D'$ is movable over $Z$.
We apply the induction hypothesis to $D'$. 
Then we can find positive real numbers $r'_{1},\cdots,\,r'_{q}$ and effective $\mathbb{Q}$-divisors $N'_{1},\cdots,\,N'_{q}$ on $X$ such that $D' \sim_{\mathbb{R},\,Z}\sum_{j=1}^{q}r'_{j}N'_{j}$ and ${\rm Bs}|N'_{j}/Z|_{\mathbb{Q}} \subset {\rm Bs}|D'/Z|_{\mathbb{R}}$ for every $1 \leq j \leq q$. 
Since we have $D'=D-r_{1}N_{1}$ and ${\rm Bs}|D'/Z|_{\mathbb{R}} \subset {\rm Bs}|D/Z|_{\mathbb{R}}$, by renaming $r'_{1},\cdots,\,r'_{q}$ and $N'_{1},\cdots,\,N'_{q}$ we obtain positive real numbers $r_{1},\cdots,\,r_{l}$ and effective $\mathbb{Q}$-divisors $M_{1},\cdots,\,M_{l}$ on $X$ satisfying the conditions of Lemma \ref{lem--movablediv-decom}. 
\end{proof}

\begin{lem}[cf.~{\cite[Proposition 3.5.4]{bchm}}]\label{lem--rdiv-decom}
Let $X \to Z$ be a projective morphism of normal varieties and let $D$ be an $\mathbb{R}$-divisor on $X$ such that $|D/Z|_{\mathbb{R}} \neq \emptyset$.  
Then there exist effective $\mathbb{R}$-divisors $M$ and $F$ on $X$ satisfying the following.
\begin{itemize}
\item
$D \sim_{\mathbb{R},\, Z}M+F$,
\item
every component of $F$ is an irreducible component of ${\rm Bs}|D/Z|_{\mathbb{R}}$, and
\item
we may write $M=\sum_{i=1}^{l}r_{i}M_{i}$ for some positive real numbers $r_{1},\cdots,\,r_{l}$ and effective $\mathbb{Q}$-divisors $M_{1},\cdots,\,M_{l}$ on $X$ such that every $M_{i}$ is movable over $Z$ and ${\rm Bs}|M_{i}/Z|_{\mathbb{Q}} \subset {\rm Bs}|D/Z|_{\mathbb{R}}$ for every $1\leq i \leq l$. 
\end{itemize} 
\end{lem}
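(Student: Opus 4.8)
The plan is to argue by induction on the number $n(D)$ of codimension-one irreducible components of the stable base locus ${\rm Bs}|D/Z|_{\mathbb{R}}$, peeling off one such component at each step, the base case being handled directly by Lemma \ref{lem--movablediv-decom}.

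\emph{Base case $n(D)=0$.} Since $|D/Z|_{\mathbb{R}}\neq\emptyset$, pick $E\in|D/Z|_{\mathbb{R}}$; then ${\rm Bs}|D/Z|_{\mathbb{R}}\subset{\rm Supp}\,E$ is a proper closed subset of $X$, and by assumption it has no codimension-one component, so every irreducible component has codimension at least two, i.e. $D$ is movable over $Z$. Lemma \ref{lem--movablediv-decom} then gives $D\sim_{\mathbb{R},Z}\sum_{i=1}^{l}r_{i}M_{i}$ with each $M_{i}$ a movable effective $\mathbb{Q}$-divisor and ${\rm Bs}|M_{i}/Z|_{\mathbb{Q}}\subset{\rm Bs}|D/Z|_{\mathbb{R}}$; take $M:=\sum_{i}r_{i}M_{i}$ and $F:=0$.

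\emph{Inductive step.} Assume $n(D)\geq1$. Because $X$ is Noetherian and ${\rm Bs}|D/Z|_{\mathbb{R}}=\bigcap_{E\in|D/Z|_{\mathbb{R}}}{\rm Supp}\,E$, I may fix finitely many $D_{1},\dots,D_{p}\in|D/Z|_{\mathbb{R}}$ with $\bigcap_{j=1}^{p}{\rm Supp}\,D_{j}={\rm Bs}|D/Z|_{\mathbb{R}}$. Choose a codimension-one component $P$ of ${\rm Bs}|D/Z|_{\mathbb{R}}$; then $P\subset{\rm Supp}\,D_{j}$ for every $j$, so $c:=\min_{1\leq j\leq p}{\rm mult}_{P}(D_{j})>0$, and I set $D':=D-cP$. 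Since ${\rm mult}_{P}(D_{j})\geq c$, each $D_{j}-cP$ is effective and $\sim_{\mathbb{R},Z}D'$, so $|D'/Z|_{\mathbb{R}}\neq\emptyset$. The crucial verification is that (i) ${\rm Bs}|D'/Z|_{\mathbb{R}}\subset{\rm Bs}|D/Z|_{\mathbb{R}}$ and (ii) $P$ is not a codimension-one component of ${\rm Bs}|D'/Z|_{\mathbb{R}}$. For (i): any $x\in{\rm Bs}|D'/Z|_{\mathbb{R}}$ lies in ${\rm Supp}(D_{j}-cP)\subset{\rm Supp}\,D_{j}$ for every $j$, hence in $\bigcap_{j}{\rm Supp}\,D_{j}={\rm Bs}|D/Z|_{\mathbb{R}}$. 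For (ii): choosing $j_{0}$ with ${\rm mult}_{P}(D_{j_{0}})=c$, the element $D_{j_{0}}-cP\in|D'/Z|_{\mathbb{R}}$ has multiplicity zero along $P$. Hence $n(D')\leq n(D)-1$, and the induction hypothesis applied to $D'$ gives $D'\sim_{\mathbb{R},Z}M'+F'$ as in the statement. Taking $M:=M'$ and $F:=F'+cP$ finishes the proof: $D\sim_{\mathbb{R},Z}D'+cP\sim_{\mathbb{R},Z}M+F$; by (i) the pieces $M_{i}$ of $M'$ satisfy ${\rm Bs}|M_{i}/Z|_{\mathbb{Q}}\subset{\rm Bs}|D'/Z|_{\mathbb{R}}\subset{\rm Bs}|D/Z|_{\mathbb{R}}$; and every component of $F$ is either a component of $F'$ (a prime divisor contained in ${\rm Bs}|D'/Z|_{\mathbb{R}}\subset{\rm Bs}|D/Z|_{\mathbb{R}}\subsetneq X$, hence an irreducible component of ${\rm Bs}|D/Z|_{\mathbb{R}}$) or $P$ itself.

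The point that needs care — the reason one cannot simply ``subtract off the fixed part'' as in the $\mathbb{Q}$-divisor case — is that for an $\mathbb{R}$-divisor the infimum $\inf\{{\rm mult}_{P}(E)\,|\,E\in|D/Z|_{\mathbb{R}}\}$ need not be attained, so there is no canonical fixed divisor to remove. The device that makes the induction work is to peel off only the quantity $c$, the \emph{minimum over a fixed finite family} $D_{1},\dots,D_{p}$ cutting out ${\rm Bs}|D/Z|_{\mathbb{R}}$: such a $c$ is attained tautologically, keeps every $D_{j}-cP$ effective (which is exactly what forces ${\rm Bs}|D'/Z|_{\mathbb{R}}\subset{\rm Bs}|D/Z|_{\mathbb{R}}$), and strips $P$ from the stable base locus, so that the process strictly decreases $n(D)$ and eventually terminates in the movable case covered by Lemma \ref{lem--movablediv-decom}.
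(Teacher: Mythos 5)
Your proof is correct and uses essentially the same device as the paper: fix a finite family $D_{1},\dots,D_{p}\in|D/Z|_{\mathbb{R}}$ cutting out ${\rm Bs}|D/Z|_{\mathbb{R}}$ and subtract the minimum multiplicity along each codimension-one component of the base locus, reducing to the movable case handled by Lemma \ref{lem--movablediv-decom}. The only difference is organizational — the paper defines $F=\sum_{P}\bigl(\min_{j}{\rm coeff}_{P}(D_{j})\bigr)P$ and removes it in a single step, whereas you peel off one prime divisor at a time by induction — so the two arguments coincide in substance.
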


\begin{proof}
Let $D_{1},\cdots,\,D_{k}$ be elements of $|D/Z|_{\mathbb{R}}$ such that $\bigcap_{i=1}^{k}{\rm Supp}\,D_{i}={\rm Bs}|D/Z|_{\mathbb{R}}$. 
For any prime divisor $P$ on $X$, we define $r_{P}:=\underset{1\leq i \leq k}{\rm min}\, {\rm coeff}_{P}(D_{i})$, and we set
$$F:=\sum_{P}r_{P}P,$$
where $P$ runs over prime divisors on $X$. 
By construction, $F$ is well defined as an effective $\mathbb{R}$-divisor on $X$ and every component of $F$ is an irreducible component of ${\rm Bs}|D/Z|_{\mathbb{R}}$. 
We put 
$$D':=D-F.$$
Then $D' \sim_{\mathbb{R},\,Z}D_{i}-F$ for all $1\leq i \leq k$, and construction of $F$ implies that any prime divisor is not contained in $\bigcap_{i=1}^{k}{\rm Supp}\,(D_{i}-F)$. 
Hence, $D'$ is movable over $Z$. 
We also have
$${\rm Bs}|D'/Z|_{\mathbb{R}}\subset \bigcap_{i=1}^{k}{\rm Supp}\,(D_{i}-F) \subset \bigcap_{i=1}^{k}{\rm Supp}\,D_{i}={\rm Bs}|D/Z|_{\mathbb{R}}.$$
By applying Lemma \ref{lem--movablediv-decom} to $D'$, we get positive real numbers $r_{1},\cdots,\,r_{l}$ and effective $\mathbb{Q}$-divisors $M_{1},\cdots,\,M_{l}$ on $X$ such that 
\begin{itemize}
\item
$D' \sim_{\mathbb{R},\,Z}\sum_{i=1}^{l}r_{i}M_{i}$, and 
\item
$M_{i}$ is movable over $Z$ and ${\rm Bs}|M_{i}/Z|_{\mathbb{Q}} \subset {\rm Bs}|D'/Z|_{\mathbb{R}}$ for every $1 \leq i \leq l$. 
\end{itemize}
Putting $M=\sum_{i=1}^{l}r_{i}M_{i}$, then $M$ and $F$ satisfy the conditions of Lemma \ref{lem--rdiv-decom}. 
\end{proof}

\begin{thm}\label{thm--resol-R-linear-system}
Let $\pi \colon X \to Z$ be a projective morphism of normal quasi-projective varieties and let $\Delta$ be a reduced divisor on $X$. 
Let $D$ be an $\mathbb{R}$-Cartier $\mathbb{R}$-divisor on $X$ such that $|D/Z|_{\mathbb{R}} \neq \emptyset$. 
Then there exist a log resolution $f \colon Y \to X$ of $(X,\Delta)$ and effective $\mathbb{R}$-divisors $M_{Y}$ and $F_{Y}$ on $Y$ satisfying the following. 
\begin{itemize}
\item
$f^{*}D\sim_{\mathbb{R},\,Z}M_{Y}+F_{Y}$, 
\item
every component of $F_{Y}$ is an irreducible component of ${\rm Bs}|f^{*}D/Z|_{\mathbb{R}}$,  
\item
$M_{Y}$ is semi-ample over $Z$, and 
\item
${\rm Supp}(F_{Y}+f_{*}^{-1}\Delta)\cup{\rm Ex}(f)$ is a simple normal crossing divisor on $Y$. 
\end{itemize}
\end{thm}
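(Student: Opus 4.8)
The plan is to reduce, by an initial log resolution, to a situation where Lemma \ref{lem--rdiv-decom} applies, resolve the $\mathbb{Q}$-linear systems of the resulting movable pieces, and then pass to a common further blow-up on which the desired decomposition is transparent. First I would take any log resolution $f_{1}\colon Y_{1}\to X$ of $(X,\Delta)$ and set $D_{1}:=f_{1}^{*}D$; pulling back elements of $|D/Z|_{\mathbb{R}}$ shows $|D_{1}/Z|_{\mathbb{R}}\neq\emptyset$, so Lemma \ref{lem--rdiv-decom} gives $D_{1}\sim_{\mathbb{R},Z}F+\sum_{i=1}^{l}r_{i}M_{i}$ with $r_{i}>0$, with $F\geq 0$ whose components are irreducible components of ${\rm Bs}|D_{1}/Z|_{\mathbb{R}}$, and with effective $\mathbb{Q}$-divisors $M_{i}$ movable over $Z$ satisfying ${\rm Bs}|M_{i}/Z|_{\mathbb{Q}}\subseteq{\rm Bs}|D_{1}/Z|_{\mathbb{R}}$. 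As $Y_{1}$ is smooth, each $M_{i}$ is $\mathbb{Q}$-Cartier; for a sufficiently divisible $d_{i}\in\mathbb{Z}_{>0}$ with $d_{i}M_{i}$ Cartier, ${\rm Bs}|d_{i}M_{i}/Z|={\rm Bs}|M_{i}/Z|_{\mathbb{Q}}$ (which has codimension $\geq 2$ since $M_{i}$ is movable over $Z$), so I would resolve the relative base ideal of $|d_{i}M_{i}/Z|$: there is a projective birational $g_{i}\colon Y'_{i}\to Y_{1}$ from a smooth variety with $g_{i}^{*}(d_{i}M_{i})=P_{i}+N_{i}$, where $\mathcal{O}_{Y'_{i}}(P_{i})$ is globally generated over $Z$ and $N_{i}\geq 0$ is the fixed part, which is $g_{i}$-exceptional with $c_{Y_{1}}(N_{i})\subseteq{\rm Bs}|M_{i}/Z|_{\mathbb{Q}}$. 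Finally I would pick a log resolution $f_{2}\colon Y\to Y_{1}$ of $(Y_{1},\Delta_{1}+F)$, where $\Delta_{1}:={\rm Supp}(f_{1*}^{-1}\Delta)\cup{\rm Ex}(f_{1})$, factoring as $g_{i}\circ h_{i}$ for every $i$ and such that ${\rm Ex}(f_{2})\cup{\rm Supp}(f_{2*}^{-1}(\Delta_{1}+F))\cup\bigcup_{i}({\rm Supp}\,h_{i}^{*}P_{i}\cup{\rm Supp}\,h_{i}^{*}N_{i})$ is simple normal crossing, and set $f:=f_{1}\circ f_{2}$, $M_{Y}:=\sum_{i}\frac{r_{i}}{d_{i}}h_{i}^{*}P_{i}$, $F_{Y}:=\sum_{i}\frac{r_{i}}{d_{i}}h_{i}^{*}N_{i}+f_{2}^{*}F$.

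Then $f$ is a log resolution of $(X,\Delta)$, $f^{*}D=f_{2}^{*}D_{1}\sim_{\mathbb{R},Z}M_{Y}+F_{Y}$ with $M_{Y},F_{Y}\geq 0$, and $M_{Y}$ is semi-ample over $Z$ because each $\mathcal{O}_{Y}(h_{i}^{*}P_{i})=h_{i}^{*}\mathcal{O}_{Y'_{i}}(P_{i})$ is again globally generated over $Z$; the simple normal crossing statement is immediate from the choice of $f_{2}$ since ${\rm Supp}\,F_{Y}\cup f_{*}^{-1}\Delta\cup{\rm Ex}(f)$ is contained in the indicated divisor. The one real issue is that every component of $F_{Y}$ be an irreducible component of ${\rm Bs}|f^{*}D/Z|_{\mathbb{R}}$; since $|f^{*}D/Z|_{\mathbb{R}}\neq\emptyset$, this stable base locus is a proper closed subset of $Y$, so it suffices to show ${\rm Supp}\,F_{Y}\subseteq{\rm Bs}|f^{*}D/Z|_{\mathbb{R}}$. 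By construction each component of $F_{Y}$ lies over ${\rm Bs}|M_{i}/Z|_{\mathbb{Q}}$ for some $i$ or over a component of $F$, hence over ${\rm Bs}|D_{1}/Z|_{\mathbb{R}}$, hence over ${\rm Bs}|D/Z|_{\mathbb{R}}$. To conclude I would invoke the negativity lemma: if $G\geq 0$ on $Y$ and $G\sim_{\mathbb{R},Z}f^{*}D$, then $f_{*}G\sim_{\mathbb{R},Z}D$ is effective and $G-f^{*}(f_{*}G)$ is $f$-exceptional and $\equiv 0$ over $Z$, hence zero, so $G=f^{*}(f_{*}G)$ with $f_{*}G\in|D/Z|_{\mathbb{R}}$; consequently, for any component $P$ of $F_{Y}$ one has $c_{X}(P)\subseteq{\rm Bs}|D/Z|_{\mathbb{R}}\subseteq{\rm Supp}(f_{*}G)$, so $P$ is a component of $f^{*}(f_{*}G)=G$. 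As $G\in|f^{*}D/Z|_{\mathbb{R}}$ was arbitrary, ${\rm Supp}\,F_{Y}\subseteq{\rm Bs}|f^{*}D/Z|_{\mathbb{R}}$, as wanted.

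I expect this last point to be the main obstacle: a priori ${\rm Bs}|f^{*}D/Z|_{\mathbb{R}}$ can be much smaller than $f^{-1}({\rm Bs}|D/Z|_{\mathbb{R}})$ — even missing entire divisorial components — so one cannot simply pull back the decomposition of Lemma \ref{lem--rdiv-decom} and leave the fixed divisor alone; the negativity-lemma identity $|f^{*}D/Z|_{\mathbb{R}}=\{f^{*}E\mid E\in|D/Z|_{\mathbb{R}}\}$, equivalently ${\rm Bs}|f^{*}D/Z|_{\mathbb{R}}=f^{-1}({\rm Bs}|D/Z|_{\mathbb{R}})$, is exactly what licenses the choice of $F_{Y}$ above. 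The secondary technical points are that $N_{i}$ is $g_{i}$-exceptional (which needs $|d_{i}M_{i}/Z|$ to have no fixed divisor, i.e.\ $M_{i}$ movable over $Z$ together with $d_{i}$ sufficiently divisible) and the routine verification that $f=f_{1}\circ f_{2}$ is a log resolution of $(X,\Delta)$ with the asserted simple normal crossing property.
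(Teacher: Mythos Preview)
Your argument is correct and follows essentially the same route as the paper: reduce to a smooth model, apply Lemma~\ref{lem--rdiv-decom}, principalize the relative base ideals of the movable $\mathbb{Q}$-pieces, and pass to a common log resolution. The paper states without comment that the components of the fixed part lie in ${\rm Bs}|f^{*}D/Z|_{\mathbb{R}}$, whereas you supply the negativity-lemma justification; one small imprecision is that $G-f^{*}(f_{*}G)$ should be noted as $\equiv 0$ over $X$ (not merely over $Z$), which is immediate since $G\sim_{\mathbb{R}}f^{*}(D+\pi^{*}C)$ is a pullback from $X$.
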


\begin{proof}
Replacing $X$ by a resolution, we may assume that $X$ is smooth. 
By Lemma \ref{lem--rdiv-decom}, there exist effective $\mathbb{R}$-divisors $M$ and $F$ on $X$ satisfying the following.
\begin{itemize}
\item
$D \sim_{\mathbb{R},Z}M+F$,
\item
every component of $F$ is an irreducible component of ${\rm Bs}|D/Z|_{\mathbb{R}}$, and
\item
we may write $M=\sum_{i=1}^{l}r_{i}M_{i}$ for some positive real numbers $r_{1},\cdots,\,r_{l}$ and effective $\mathbb{Q}$-divisors $M_{1},\cdots,\,M_{l}$ on $X$ such that every $M_{i}$ is movable over $Z$ and ${\rm Bs}|M_{i}/Z|_{\mathbb{Q}} \subset {\rm Bs}|D/Z|_{\mathbb{R}}$ for every $1\leq i \leq l$. 
\end{itemize} 
We take a positive integer $k$ such that $kM_{i}$ is Cartier for every $1 \leq i \leq l$. 
Since $Z$ is quasi-projective, choosing $k$ sufficiently large and divisible, we may assume
$${\rm Bs}|M_{i}/Z|_{\mathbb{Q}}={\rm Supp}\bigl({\rm Coker}(\pi^{*}\pi_{*}\mathcal{O}_{X}(kM_{i})\otimes_{\mathcal{O}_{X}} \mathcal{O}_{X}(-kM_{i}) \longrightarrow \mathcal{O}_{X})\bigr)$$
as Zariski closed subsets of $X$. 
Define $\mathcal{I}_{i}:={\rm Im}(\pi^{*}\pi_{*}\mathcal{O}_{X}(kM_{i})\otimes_{\mathcal{O}_{X}} \mathcal{O}_{X}(-kM_{i}) \to \mathcal{O}_{X})$ for each $1 \leq i \leq l$. 
We take a log resolution $f \colon Y \to X$ of $(X,\Delta+F)$ such that $\mathcal{I}_{i}\cdot \mathcal{O}_{Y}$ is an invertible sheaf on $Y$ for every $1 \leq i \leq l$. 
We can write $\mathcal{I}_{i}\cdot \mathcal{O}_{Y}=\mathcal{O}_{Y}(-E_{i})$ for some effective Cartier divisor $E_{i}$ on $Y$. 
Since ${\rm Bs}|M_{i}/Z|_{\mathbb{Q}} \subset {\rm Bs}|D/Z|_{\mathbb{R}}$, every component of $E_{i}$ is an irreducible component of ${\rm Bs}|f^{*}D/Z|_{\mathbb{R}}$.  
By taking some blow-ups if necessary, we may assume that ${\rm Supp}(f_{*}^{-1}(\Delta+F)+\sum_{i=1}^{l}E_{i})\cup{\rm Ex}(f)$ is a simple normal crossing divisor on $Y$. 
Now we put 
$$M_{Y}:=f^{*}M-\frac{1}{k}\sum_{i=1}^{l}r_{i}E_{i} \quad {\rm and} \quad F_{Y}:=f^{*}F+\frac{1}{k}\sum_{i=1}^{l}r_{i}E_{i}.$$
Then $M_{Y}=\frac{1}{k}\sum_{i=1}^{l}r_{i}\bigl( f^{*}(kM_{i})-E_{i}\bigr)$. 
Hence $M_{Y}$ is semi-ample over $Z$. 
By construction, $f \colon Y \to X$, $M_{Y}$, and $F_{Y}$ satisfy the conditions of Theorem \ref{thm--resol-R-linear-system}. 
\end{proof}

Finally, we define the relative diminished base locus. 
We note that we only use the diminished base locus in the projective case as in \cite{bbp}, \cite{tsakanikas-xie}. However, we define the relative version of the diminished base locus for the future use.

\begin{defn}[Relative diminished base locus, cf.~{\cite{bbp}}, {\cite{tsakanikas-xie}}]\label{defn--dinimished-base-locus}
Let $\pi \colon X \to Z$ be a projective morphism from a normal quasi-projective variety $X$ to a quasi-projective scheme $Z$, and let $D$ be a $\pi$-pseudo-effective $\mathbb{R}$-Cartier $\mathbb{R}$-divisor on $X$. 
Then the {\em diminished base locus of $D$ over $Z$}, which we denote by $\boldsymbol{\rm B}_{-}(D/Z)$, is defined to be
$$\boldsymbol{\rm B}_{-}(D/Z):=\bigcup_{\epsilon>0}{\rm Bs}|D+\epsilon A/Z|_{\mathbb{R}}$$
for a $\pi$-ample $\mathbb{R}$-divisor $A$ on $X$. 
Note that $\boldsymbol{\rm B}_{-}(D/Z)$ does not depend on $A$. 
When $Z$ is a point, we denote the diminished base locus over $Z$ by $\boldsymbol{\rm B}_{-}(D)$. 
\end{defn}

\subsection{Non-nef locus}\label{subsec--non-nef-locus}

The goal of this subsection is to define the relative non-nef locus and prove some properties used in this paper. 

\begin{defn}[Relative asymptotic vanishing order, {\cite[III, \S 4.a]{nakayama}}]\label{defn--asymp-van-order}
Let $\pi \colon X \to Z$ be a projective morphism from a normal quasi-projective variety $X$ to a quasi-projective scheme $Z$. 
Let $D$ be a $\pi$-pseudo-effective $\mathbb{R}$-Cartier $\mathbb{R}$-divisor on $X$. 
For a prime divisor $P$ over $X$, we define the {\em asymptotic vanishing order of $D$ along $P$ over $Z$}, denoted by $\sigma_{P}(D/Z)$, as follows (\cite[III, \S 4.a]{nakayama}): 
We take a resolution of $f \colon X' \to X$ of $X$ on which $P$ appears as a prime divisor. 
We put 
$$m_{f^{*}D}:={\rm max}\left\{m \in \mathbb{Z}_{\geq 0}\,\middle|\begin{array}{l}(\pi\circ f)_{*}\mathcal{O}_{Y}(\lfloor f^{*}D \rfloor-mP)\hookrightarrow(\pi\circ f)_{*}\mathcal{O}_{Y}(\lfloor f^{*}D \rfloor)\\ 
\text{is an isomorphism}\end{array}\right\}$$
if $(\pi\circ f)_{*}\mathcal{O}_{Y}(\lfloor f^{*}D \rfloor)$ is not the zero sheaf. 
When $D$ is big over $Z$, then
$$\sigma_{P}(f^{*}D; X'/Z)_{\mathbb{Z}}:=\left\{\begin{array}{ll}
+\infty& ((\pi\circ f)_{*}\mathcal{O}_{Y}(\lfloor f^{*}D \rfloor)=0)\\
m_{f^{*}D}+{\rm coeff}_{P}(\{f^{*}D\})& ((\pi\circ f)_{*}\mathcal{O}_{Y}(\lfloor f^{*}D \rfloor)\neq0)
\end{array}\right.$$
and $\sigma_{P}(D/Z)$ is defined to be
$$\sigma_{P}(D/Z):=\underset{m \to \infty}{\rm lim}\frac{1}{m}\sigma_{P}(mf^{*}D; X'/Z)_{\mathbb{Z}}.$$
Note that $\sigma_{P}(D/Z)$ is independent of the resolution $f \colon X' \to X$. 
If $D$ is not necessarily $\pi$-big, then
$$\sigma_{P}(D/Z)=\underset{\epsilon\to 0+}{\rm lim}\sigma_{P}(D+\epsilon A/Z)$$
for a $\pi$-ample $\mathbb{R}$-divisor $A$ on $X$. 
Note that $\sigma_{P}(D/Z)$ does not depend on $A$.  
However, we may have $\sigma_{P}(D/Z)=\infty$ for some $P$. 
This is the main difference between the relative asymptotic vanishing order (\cite[III, \S 4.a]{nakayama}) and the asymptotic vanishing order in the projective case (\cite[1.6 Definition]{nakayama}). 
We regard $\sigma_{P}(D/Z)$ as a value in $[0,\infty]$. 
\end{defn}

\begin{defn}[Relative Nakayama--Zariski decomposition, cf.~{\cite[III, \S 4.a]{nakayama}}]\label{defn--relative-nakayama-zariski-decom}
Let $\pi \colon X \to Z$ be a projective morphism from a normal quasi-projective variety $X$ to a quasi-projective scheme $Z$, and let $D$ be a $\pi$-pseudo-effective $\mathbb{R}$-Cartier $\mathbb{R}$-divisor on $X$. 

We first define the {\em negative part of the Nakayama--Zariski decomposition of $D$ over $Z$}, denoted by $N_{\sigma}(D; X/Z)$, as follows: 
We take a resolution of $f \colon X' \to X$ of $X$, and consider the formal sum
$$\sum_{\substack {P':{\rm \,prime\,divisor}\\{\rm on\,}X'}}\sigma_{P'}(f^{*}D/Z)P'$$
of prime divisors on $X'$ with the coefficients in $[0,\infty]$ (see \cite[III, \S 4.a]{nakayama}). 
By regarding $\sigma_{P'}(f^{*}D/Z)f_{*}P':=0$ whenever $f_{*}P'=0$, we define $N_{\sigma}(D; X'/Z)$ by 
\begin{equation*}
\begin{split}
N_{\sigma}(D; X/Z):&=\sum_{\substack {P':{\rm \,prime\,divisor}\\{\rm on\,}X'}}\sigma_{P'}(f^{*}D/Z)f_{*}P'
=\sum_{\substack {P:{\rm \,prime\,divisor}\\{\rm on\,}X}}\sigma_{P}(D/Z)P.
\end{split}
\end{equation*}
We note that $N_{\sigma}(D; X/Z)$ is independent of the resolution $f \colon X' \to X$. 

Suppose that $N_{\sigma}(D; X/Z)$ is well defined as an $\mathbb{R}$-divisor on $X$. 
Then we define
$$P_{\sigma}(D; X/Z):=D-N_{\sigma}(D; X/Z)$$
and call it the {\em positive part of the Nakayama--Zariski decomposition of $D$ over $Z$}. 
We call the relation $D=P_{\sigma}(D; X/Z)+N_{\sigma}(D; X/Z)$ the {\em Nakayama--Zariski decomposition of $D$ over $Z$}. 
\end{defn}

\begin{rem}\label{rem--relative-nakayama-zariski-decom}
With notations as in Definition \ref{defn--relative-nakayama-zariski-decom}, if $X$ is smooth then the relative Nakayama--Zariski decomposition in Definition \ref{defn--relative-nakayama-zariski-decom} coincide with that in \cite[III, \S 4.a]{nakayama}. 
If $Z$ is a point, then $N_{\sigma}(D; X/Z)$ is always well defined (\cite[III, 1.5 Lemma]{nakayama}), and $N_{\sigma}(D; X/Z)$ and $P_{\sigma}(D; X/Z)$ are the same as those in \cite[Section 4]{bhzariski}. 
When $Z$ is a variety, Definition \ref{defn--relative-nakayama-zariski-decom} coincides with \cite[Definition 3.1]{liuxie-relative-nakayama}. 
Note that we may use \cite[Lemma 3.2]{liuxie-relative-nakayama} since the base scheme is quasi-projective. 
Although not explicitly mentioned in \cite[Lemma 3.2]{liuxie-relative-nakayama}, the quasi-projectivity of the base scheme is necessary in its proof. 
Anyway, we may freely use the results in \cite[Section 3]{liuxie-relative-nakayama}. 
\end{rem}

In this paper, we discuss the relative Nakayama--Zariski decomposition only when  the relative Nakayama--Zariski decomposition is well defined. 
Moreover, we only deal with the relative Nakayama--Zariski decompositions whose negative parts and positive parts are well defined as $\mathbb{R}$-Cartier $\mathbb{R}$-divisors.

\begin{defn}[Relative non-nef locus, cf.~{\cite[III, 2.6 Definition]{nakayama}}]\label{defn--nonneflocus}
Let $\pi \colon X \to Z$ be a projective morphism from a normal quasi-projective variety $X$ to a quasi-projective scheme $Z$, and let $D$ be a $\pi$-pseudo-effective $\mathbb{R}$-Cartier $\mathbb{R}$-divisor on $X$. 
We define the {\em non-nef locus of $D$ over $Z$}, denoted by ${\rm NNef}(D/Z)$, to be
$${\rm NNef}(D/Z):=\underset{\sigma_{P}(D/Z)>0}{\bigcup}c_{X}(P),$$
where $P$ runs over prime divisors over $X$ and $\sigma_{P}(D/Z)$ is the asymptotic vanishing order of $D$ along $P$ over $Z$ in Definition \ref{defn--asymp-van-order}.   
\end{defn}

\begin{rem}\label{rem--nonnef-semiample}
Let $\pi \colon X \to Z$, $P$, and $D$ be as in Definition \ref{defn--asymp-van-order}. 
Then
$$\sigma_{P}(D+H/Z) \leq \sigma_{P}(D/Z)$$
for every $\pi$-semi-ample $\mathbb{R}$-divisor $H$ on $X$. 
This implies that
$${\rm NNef}(D/Z)=  \bigcup_{\epsilon>0}{\rm NNef}(D+\epsilon H/Z).$$
\end{rem}

\begin{rem}[cf.~{\cite[Lemma 2.6]{bbp}}, {\cite[Remark 2.9]{tsakanikas-xie}}]\label{rem--nonnef-relation}
By definition, we have
$${\rm NNef}(D/Z) \subset \boldsymbol{\rm B}_{-}(D/Z) \subset {\rm Bs}|D/Z|_{\mathbb{R}}$$
for every $\pi \colon X \to Z$ and $D$ as in Definition \ref{defn--nonneflocus}. 
Furthermore, we have 
$${\rm NNef}(D/Z)=f({\rm NNef}(f^{*}D/Z))$$ 
for any projective birational morphism $f \colon Y \to X$ from a normal variety $Y$.  
\end{rem}

In this paper we use the following results without mentioning explicitly.

\begin{thm}\label{thm--nonnef-negativecurve}
Let $\pi \colon X \to Z$ be a projective morphism of normal quasi-projective varieties. 
Let $D$ be a $\pi$-pseudo-effective $\mathbb{R}$-Cartier $\mathbb{R}$-divisor on $X$. 
Let $C$ be a curve on $X$ such that $\pi(C)$ is a point and $C \not\subset {\rm NNef}(D/Z)$. 
Then $(D \,\cdot\, C) \geq 0$. 
\end{thm}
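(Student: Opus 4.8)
The plan is to reduce to the case in which $D$ is $\pi$-big, to resolve the relative $\mathbb{R}$-linear system by means of Theorem~\ref{thm--resol-R-linear-system}, and to split $f^{*}D\cdot\widetilde{C}$ into a nef part and a ``fixed'' part supported on the non-nef locus.

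First I would reduce to $D$ being $\pi$-big. Fix a $\pi$-ample $\mathbb{R}$-divisor $A$ on $X$. For every $\epsilon>0$ the divisor $D+\epsilon A$ is $\pi$-big, and by Remark~\ref{rem--nonnef-semiample} one has ${\rm NNef}(D+\epsilon A/Z)\subseteq {\rm NNef}(D/Z)$, so $C\not\subseteq {\rm NNef}(D+\epsilon A/Z)$. Since $(D+\epsilon A)\cdot C=(D\cdot C)+\epsilon\,(A\cdot C)$, it suffices to prove $(D+\epsilon A)\cdot C\geq 0$ for every $\epsilon>0$ and then let $\epsilon\to 0$. Hence we may assume $D$ is $\pi$-big; in particular $|D/Z|_{\mathbb{R}}\neq\emptyset$.

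Next I would apply Theorem~\ref{thm--resol-R-linear-system} to $D$ (taking $\Delta=0$), obtaining a log resolution $f\colon Y\to X$ and effective $\mathbb{R}$-divisors $M_{Y},F_{Y}$ on $Y$ with $f^{*}D\sim_{\mathbb{R},Z}M_{Y}+F_{Y}$, where $M_{Y}$ is semi-ample over $Z$ and every component of $F_{Y}$ is a component of ${\rm Bs}|f^{*}D/Z|_{\mathbb{R}}$. I would then choose a curve $\widetilde{C}$ on $Y$ with $f(\widetilde{C})=C$ (such a curve exists since $f$ is surjective; if $f^{-1}(C)$ has no one-dimensional component dominating $C$, cut a suitable component down by general hyperplane sections). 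Then $f|_{\widetilde{C}}$ is finite, say of degree $d\geq 1$, so $f_{*}\widetilde{C}=d\,C$ and $f^{*}D\cdot\widetilde{C}=d\,(D\cdot C)$. Because $M_{Y}$ is semi-ample (hence nef) over $Z$ and $\widetilde{C}$ is contracted over $Z$, we get $M_{Y}\cdot\widetilde{C}\geq 0$. The crucial point is that $\widetilde{C}\not\subseteq{\rm Supp}\,F_{Y}$: since $f^{*}D$ is big over $Z$, every prime divisor contained in ${\rm Bs}|f^{*}D/Z|_{\mathbb{R}}$ has positive asymptotic vanishing order over $Z$, i.e. the divisorial part of ${\rm Bs}|f^{*}D/Z|_{\mathbb{R}}$ is contained in ${\rm NNef}(f^{*}D/Z)$; hence ${\rm Supp}\,F_{Y}\subseteq{\rm NNef}(f^{*}D/Z)$, and if $\widetilde{C}$ were contained in ${\rm Supp}\,F_{Y}$ we would get $C=f(\widetilde{C})\subseteq f\bigl({\rm NNef}(f^{*}D/Z)\bigr)={\rm NNef}(D/Z)$ by Remark~\ref{rem--nonnef-relation}, contradicting the hypothesis. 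Consequently $F_{Y}\cdot\widetilde{C}\geq 0$, and
$$d\,(D\cdot C)=f^{*}D\cdot\widetilde{C}=M_{Y}\cdot\widetilde{C}+F_{Y}\cdot\widetilde{C}\geq 0,$$
so $D\cdot C\geq 0$, as desired.

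The step I expect to be the main obstacle is the one I called crucial: for a divisor $H$ that is big over $Z$, a prime divisor contained in the relative stable base locus ${\rm Bs}|H/Z|_{\mathbb{R}}$ must have positive asymptotic vanishing order $\sigma_{\Gamma}(H/Z)$ (equivalently, the codimension-one part of ${\rm Bs}|H/Z|_{\mathbb{R}}$ agrees with that of ${\rm NNef}(H/Z)$). This is the only input that goes beyond the formal properties of $\sigma$ and ${\rm NNef}$ recorded above and uses genuine content of Nakayama--Zariski theory for relatively big divisors; in the projective case it is contained in \cite{nakayama}, Chapter III, and in the relative setting one may invoke \cite{liuxie-relative-nakayama}, Section~3, whose results are available here since the base is quasi-projective. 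Alternatively one can simply cite the statement of this theorem from \cite{liuxie-relative-nakayama}; the argument above records how it follows from Theorem~\ref{thm--resol-R-linear-system} together with that single input.
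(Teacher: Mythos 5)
Your reduction to the $\pi$-big case and the use of Theorem \ref{thm--resol-R-linear-system} are fine as far as they go, but the step you yourself flagged as the crucial one is false, and the proof breaks there. It is not true that for a relatively big divisor $H$ every prime divisor contained in ${\rm Bs}|H/Z|_{\mathbb{R}}$ has positive asymptotic vanishing order. What is true is that the \emph{divisorial} part of the \emph{diminished} base locus $\boldsymbol{\rm B}_{-}(H/Z)$ coincides with ${\rm Supp}\,N_{\sigma}(H;\cdot/Z)$; the stable base locus ${\rm Bs}|H/Z|_{\mathbb{R}}$ can be strictly larger and can contain prime divisors $\Gamma$ with $\sigma_{\Gamma}(H/Z)=0$. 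Zariski's classical example already kills the claim: on the blow-up $X$ of $\mathbb{P}^{2}$ at twelve very general points of a smooth cubic, the divisor $D=4H-\sum E_{i}$ is nef and big, its restriction to the strict transform $\Gamma$ of the cubic is a non-torsion degree-zero class, so $\Gamma\subset {\rm Bs}|D|_{\mathbb{R}}$ (any effective member not containing $\Gamma$ would restrict to an effective degree-zero, hence zero, $\mathbb{R}$-divisor on $\Gamma$), yet $\sigma_{\Gamma}(D)=0$ and ${\rm NNef}(D)=\emptyset$ because $D$ is nef. Running your argument with $C=\Gamma$ in this example, Theorem \ref{thm--resol-R-linear-system} produces $F_{Y}\geq r\Gamma$ with $r>0$ and $\Gamma^{2}<0$, so $F_{Y}\cdot\Gamma$ may be negative and the final display does not yield $D\cdot C\geq 0$; you cannot rule out $\widetilde{C}\subset{\rm Supp}\,F_{Y}$, because ${\rm Supp}\,F_{Y}\subset{\rm NNef}(f^{*}D/Z)$ simply fails.

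The paper's proof avoids this by perturbing \emph{before} taking base loci, i.e.\ by working with the diminished rather than the stable base locus. One takes a resolution $f\colon Y\to X$ with $f^{-1}(C)$ of pure codimension one and an ample $A$ on $Y$; every component $\Gamma$ of $f^{-1}(C)$ dominating $C$ satisfies $c_{X}(\Gamma)=C\not\subset{\rm NNef}(D/Z)$, hence $\sigma_{\Gamma}(f^{*}D/Z)=0$, and then the argument of \cite[III, 1.7 Lemma]{nakayama} (trading a small multiple of $\Gamma$ against the ample part $\tfrac{1}{k}A$) shows $\Gamma\not\subset{\rm Bs}|f^{*}D+\tfrac{1}{k}A/Z|_{\mathbb{R}}$ for every $k$. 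A very general curve $C'\subset\Gamma$ mapping onto $C$ then avoids all these countably many base loci, giving $(f^{*}D+\tfrac{1}{k}A)\cdot C'\geq 0$ for all $k$ and hence $D\cdot C\geq 0$ in the limit. If you want to keep your structure, replace your ``crucial point'' by exactly this perturbation argument; as written, the proof has a genuine gap.
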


\begin{proof}
Let $f \colon Y \to X$ be a resolution of $X$ such that $f^{-1}(C)$ is pure codimension one. 
Let $A$ be an ample Cartier divisor on $Y$. 
Since $C \not\subset {\rm NNef}(D/Z)$, by the same argument as in \cite[Proof of III, 1.7 Lemma]{nakayama}, for any $k \in \mathbb{Z}_{>0}$ we see that ${\rm Bs}|f^{*}D+\frac{1}{k}A/Z|_{\mathbb{R}}$ does not contain any irreducible component of $f^{-1}(C)$ mapping onto $C$. 
We can find a curve $C' \subset Y$ such that $f(C')=C$ and $C' \not\subset {\rm Bs}|f^{*}D+\frac{1}{k}A/Z|_{\mathbb{R}}$ for any $k \in \mathbb{Z}_{>0}$. 
Then $(f^{*}D+\frac{1}{k}A)\cdot C' \geq 0$ for all $k \in \mathbb{Z}_{>0}$, and therefore
$(f^{*}D\,\cdot\, C') \geq 0$. 
Thus $(D \,\cdot\, C) \geq 0$. 
\end{proof}

\begin{lem}
Let $\pi \colon X \to Z$ be a projective morphism of normal quasi-projective varieties, and let $D$ be a $\pi$-pseudo-effective $\mathbb{R}$-Cartier $\mathbb{R}$-divisor on $X$. 
Let $f \colon Y \to X$ be a projective surjective morphism from a normal variety $Y$. 
Then we have the inclusion ${\rm NNef}(D/Z)\supset f({\rm NNef}(f^{*}D/Z))$. 
\end{lem}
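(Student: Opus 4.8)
The plan is to reduce everything to the already-established equality $\operatorname{NNef}(D/Z) = g(\operatorname{NNef}(g^{*}D/Z))$ for \emph{birational} morphisms $g$ (Remark \ref{rem--nonnef-relation}), by inserting a suitable resolution. Concretely, suppose $P$ is a prime divisor over $Y$ with $\sigma_{P}(f^{*}D/Z)>0$ and $c_{Y}(P)\neq\emptyset$; I must show $f(c_{Y}(P))\subset\operatorname{NNef}(D/Z)$, i.e.\ there is a prime divisor $Q$ over $X$ with $\sigma_{Q}(D/Z)>0$ and $f(c_{Y}(P))\subset c_{X}(Q)$. The main point is therefore to relate asymptotic vanishing orders of $D$ on $X$ and of $f^{*}D$ on $Y$ along a divisor whose center meets $f(c_{Y}(P))$.

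First I would choose a commutative diagram resolving $f$: take a normal variety $Y'$ with projective birational morphisms $\mu\colon Y'\to Y$ and a projective (still surjective) morphism $f'\colon Y'\to X'$ to a resolution $h\colon X'\to X$ of $X$, such that $P$ appears as a divisor on $Y'$ (replacing $P$ by its strict transform) and $h\circ f' = f\circ\mu$. By Remark \ref{rem--nonnef-relation} applied to $\mu$ and to $h$, we have $\operatorname{NNef}(f^{*}D/Z)=\mu(\operatorname{NNef}(\mu^{*}f^{*}D/Z))$ and $\operatorname{NNef}(D/Z)=h(\operatorname{NNef}(h^{*}D/Z))$; since $\mu^{*}f^{*}D = f'^{*}h^{*}D$, we may replace $X$ by $X'$, $Y$ by $Y'$, $D$ by $h^{*}D$, and so assume that $X$ and $Y$ are smooth, $f$ is a surjective morphism of smooth varieties, and $P$ is an actual prime divisor on $Y$ whose image $f(P)$ is a subvariety of $X$.

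Next, at a point $x$ in the (nonempty) open locus where $f$ is flat with irreducible general fibre over $f(P)$, I would pick a prime divisor $Q$ on $X$ through $f(P)$ meeting that locus (for instance a general hyperplane section through a general point of $f(P)$, pulled into $X$), arranged so that $f^{*}Q$ has a component dominating $Q$ and contained in a specified divisor; one then compares $\sigma_{P}(f^{*}D/Z)$ with a positive multiple of $\sigma_{Q'}(D/Z)$ for a component $Q'$ of the cycle-theoretic image, using that $f_{*}\mathcal{O}_{Y}\hookrightarrow$ a torsion-free sheaf on $X$ forces $(\pi\circ f)_{*}\mathcal{O}_{Y}(\lfloor mf^{*}D\rfloor - j\,P)$ to detect a drop in $\pi_{*}\mathcal{O}_{X}(\lfloor mD\rfloor)$ along the divisorial part of $f(P)$. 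The cleanest route is: if $\sigma_{Q}(D/Z)=0$ for every prime divisor $Q$ on $X$ with $f(P)\subset Q$, then $D$ is ``numerically mobile'' near the generic point of $f(P)$ in the precise sense that $\lfloor mf^{*}D\rfloor$ pulls back sections of $\mathcal{O}_{X}(\lfloor mD\rfloor)$ (after a bounded correction) with no forced vanishing along $P$, giving $\sigma_{P}(f^{*}D/Z)=0$, a contradiction. Hence some such $Q$ has $\sigma_{Q}(D/Z)>0$, so $f(P)\subset c_{X}(Q)\subset\operatorname{NNef}(D/Z)$, which is what we need.

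The hard part will be the comparison of asymptotic vanishing orders under the surjective (non-birational) morphism $f$: unlike the birational case, $f$ can both raise and, more dangerously, \emph{create} vanishing, so one must control $m_{f^{*}D}$ in terms of the $m_{D}$'s along divisors passing through $f(P)$. I expect this to require a flatness/base-change reduction to the generic point of $f(P)$ and a careful bookkeeping of the bounded discrepancy between $f^{*}\lfloor mD\rfloor$ and $\lfloor mf^{*}D\rfloor$, together with the fact (from $f_{*}\mathcal{O}_{Y}=\mathcal{O}_{X}$ after Stein factorization, or torsion-freeness in general) that pushing forward does not gain sections. Everything else — the resolution diagram and the two invocations of Remark \ref{rem--nonnef-relation} — is formal.
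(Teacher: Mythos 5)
There is a genuine gap, and the pivotal step as you state it is in fact false. Your ``cleanest route'' asserts the dichotomy: either $\sigma_{P}(f^{*}D/Z)=0$, or some prime divisor $Q$ \emph{on} (the smooth model of) $X$ with $f(P)\subset Q$ has $\sigma_{Q}(D/Z)>0$. This fails whenever $f(P)$ has codimension at least two and the non-nef locus of $D$ is detected only by divisors over $X$ with small center. Concretely, let $X$ be a smooth projective threefold admitting a flop and let $D$ be a big divisor in the interior of the movable cone that is not nef: then $\sigma_{Q}(D)=0$ for every prime divisor $Q$ on $X$, yet there is a curve $C$ with $D\cdot C<0$, hence (by Theorem \ref{thm--nonnef-negativecurve} and a countability argument) a prime divisor $P$ over $X$ with $c_{X}(P)=C$ and $\sigma_{P}(D)>0$. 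Realizing $P$ as a prime divisor on a birational model $f\colon Y\to X$, every $Q$ on $X$ containing $f(P)=C$ has $\sigma_{Q}(D)=0$ while $\sigma_{P}(f^{*}D)=\sigma_{P}(D)>0$. The paper avoids exactly this trap by choosing the resolution $g\colon X'\to X$ so that $g^{-1}(f(c_{Y}(P)))$ has pure codimension one; only after that blow-up do divisors on the base suffice, and the hypothesis ``$x\notin{\rm NNef}(D/Z)$'' --- which quantifies over all divisors \emph{over} $X$ --- is what controls their vanishing orders.

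Beyond this, the actual comparison of vanishing orders, which you explicitly defer as ``the hard part,'' is where the proof lives, and the tools you propose point in the wrong direction. After first reducing to the case where $D$ is $\pi$-big via Remark \ref{rem--nonnef-semiample} (a reduction you omit, and without which $\sigma_{P}$ is not computed by a single linear system), the order $\sigma_{P}(f'^{*}g^{*}D/Z)$ is an infimum of coefficients of $P$ in effective divisors $\mathbb{Q}$-linearly equivalent to $f'^{*}g^{*}D$ up to pullbacks from $Z$. An \emph{upper} bound is therefore obtained simply by pulling back effective divisors $E_{\epsilon}\sim_{\mathbb{Q}}g^{*}D+g^{*}\pi^{*}H'_{Z}$ from $X'$: the coefficient ${\rm coeff}_{P}(f'^{*}E_{\epsilon})$ involves only the components of $E_{\epsilon}$ containing $f'(P)$, all of which pass through a point of $X'$ lying over $x\notin{\rm NNef}(D/Z)$ and hence can be arranged to have coefficient at most $\epsilon$. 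No flatness, base change, or control of pushforwards is needed; extra sections on $Y$ can only decrease $\sigma_{P}$, so your worry that $f$ ``creates vanishing'' is misplaced --- only the easy pullback direction enters. Your resolution diagram and the two uses of Remark \ref{rem--nonnef-relation} are correct, but they are the purely formal part of the argument.
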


\begin{proof}
By Remark \ref{rem--nonnef-semiample}, it is enough to prove ${\rm NNef}(D+A/Z)\supset f({\rm NNef}(f^{*}(D+A)/Z))$ for any $\pi$-ample divisor $A$ on $X$. 
In particular, we may assume that $D$ is $\pi$-big. 

We will prove $X\setminus {\rm NNef}(D/Z)\subset X \setminus f({\rm NNef}(f^{*}D/Z))$. 
We fix $x \in X\setminus {\rm NNef}(D/Z)$. 
We pick $y \in f^{-1}(x)$ and an arbitrary prime divisor $P$ over $Y$ such that $c_{Y}(P)\ni y$. 
We will prove that $\sigma_{P}(f^{*}D/Z)=0$. 
Let $g \colon X' \to X$ be a resolution of $X$ such that $g^{-1}(f(c_{Y}(P)))$ is pure codimension one in $X'$, and let $h \colon Y' \to Y$ be a resolution of $Y$ such that $P$ appears as a prime divisor on $Y'$ and the map $f' \colon Y' \dashrightarrow X'$ is a morphism. 
Since $D$ is $\pi$-big and $Z$ is quasi-projective, by Definition \ref{defn--asymp-van-order}, the equality
$$\sigma_{P}(f^{*}D/Z)={\rm inf}\left\{{\rm coeff}_{P}(D')\,\middle|\begin{array}{l}\text{$D'\geq 0$ and $\mathbb{R}$-Cartier,}\, D'\sim_{\mathbb{Q}}h^{*}f^{*}D+h^{*}f^{*}\pi^{*}H_{Z}\\ 
\text{for some $\mathbb{Q}$-Cartier $\mathbb{Q}$-divisor $H_{Z}$ on $Z$}\end{array}\right\}
$$
holds. 
Let $g^{-1}(f(c_{Y}(P)))=\sum_{i=1}^{l}Q_{i}$ be the prime decomposition. 
As above, we have 
$$\sigma_{Q_{i}}(D/Z)={\rm inf}\left\{{\rm coeff}_{P}(E')\,\middle|\begin{array}{l}\text{$E'\geq 0$ and $\mathbb{R}$-Cartier,}\, E'\sim_{\mathbb{Q}}g^{*}D+g^{*}\pi^{*}H'_{Z}\\ 
\text{for some $\mathbb{Q}$-Cartier $\mathbb{Q}$-divisor $H'_{Z}$ on $Z$}\end{array}\right\}
$$
for all $i$. 
Since $x \in X\setminus {\rm NNef}(D/Z)$, we have $\sigma_{Q_{i}}(D/Z)=0$ for all $i$ such that $g(Q_{i})\ni x$. 
For any $\epsilon \in \mathbb{R}_{>0}$, by taking a general effective $\mathbb{R}$-Cartier $\mathbb{R}$-divisor $E_{\epsilon}$ on $X'$ such that $E_{\epsilon}\sim_{\mathbb{Q}}g^{*}D+g^{*}\pi^{*}H'_{Z}$ for some $\mathbb{Q}$-Cartier $\mathbb{Q}$-divisor $H'_{Z}$ on $Z$, we have ${\rm coeff}_{Q_{i}}(E_{\epsilon}) \leq \epsilon$ for all $i$ such that $g(Q_{i})\ni x$. 
By construction, $P$ is a component of $f'^{*}(\sum_{j}Q_{j})$, where $j$ runs over the indices such that $g(Q_{j})\ni x$. 
Therefore, for any $\epsilon \in \mathbb{R}_{>0}$, we can find $D_{\epsilon} \geq 0$ such that $D_{\epsilon}\sim_{\mathbb{Q}}h^{*}f^{*}D+h^{*}f^{*}\pi^{*}H_{Z}$ for some $\mathbb{Q}$-Cartier $\mathbb{Q}$-divisor $H_{Z}$ on $Z$ and ${\rm coeff}_{P}(D_{\epsilon}) \leq \epsilon$. 
This implies $\sigma_{P}(f^{*}D/Z)=0$. 
Therefore, $y \not\in {\rm NNef}(f^{*}D/Z)$ for any $y \in f^{-1}(x)$. 
Then $f^{-1}(y) \cap {\rm NNef}(f^{*}D/Z) =\emptyset$. 
Hence $x \in X \setminus f({\rm NNef}(f^{*}D/Z))$. 
We finish the proof. 
\end{proof}

\subsection{Singularities of pairs}

In this subsection, we collect some definitions and basic results on singularities of pairs.

A {\em sub normal pair} $(X,\Delta)$ consists of a normal variety $X$ and an $\mathbb{R}$-divisor $\Delta$ on $X$ such that $K_{X}+\Delta$ is $\mathbb{R}$-Cartier. 
A {\em normal pair} is a sub normal pair $(X,\Delta)$ such that $\Delta$ is effective. 
We use these terms to explicitly state the normality of $X$. 
For a sub normal pair $(X,\Delta)$ and a prime divisor $P$ over $X$, the {\em discrepancy of $P$ with respect to $(X,\Delta)$} is denoted by $a(P,X,\Delta)$. 
A normal pair $(X,\Delta)$ is {\em Kawamata log terminal} ({\em klt}, for short) if $a(P,X,\Delta)>-1$ for all prime divisors $P$ over $X$. 
A normal pair $(X,\Delta)$ is {\em log canonical} ({\em lc}, for short) if $a(P,X,\Delta) \geq -1$ for all prime divisors $P$ over $X$. 
A normal pair $(X,\Delta)$ is {\em divisorially log terminal} ({\em dlt}, for short) if $(X,\Delta)$ is lc and there exists a log resolution $f \colon Y \to X$ of $(X,\Delta)$ such that every $f$-exceptional prime divisor $E$ on $Y$ satisfies $a(E,X,\Delta) > -1$. 
When $(X,\Delta)$ is an lc pair, an {\em lc center} of $(X,\Delta)$ is $c_{X}(P)$ for some prime divisor $P$ over $X$ such that $a(P,X,\Delta)=-1$. 
We freely use properties of lc centers of dlt pairs in \cite[Proposition 3.9.2]{fujino-what-log-ter} and \cite[Theorem 4.16]{kollar-mmp}.  

\begin{defn}[Non-lc locus, non-klt locus]\label{defn--nlc-nklt}
Let $(X,\Delta)$ be a normal pair. 
We define the {\em non-lc locus} and the {\em non-klt locus} of $(X,\Delta)$, denoted by ${\rm Nlc}(X,\Delta)$ and ${\rm Nklt}(X,\Delta)$ respectively, to be closed subschemes of $X$ by the following construction: 
We take a log resolution $f \colon Y \to X$ of $(X,\Delta)$. 
We may write $K_{Y}+\Gamma = f^{*}(K_{X}+\Delta)$ with an $\mathbb{R}$-divisor $\Gamma$ on $Y$. 
Then the natural isomorphism $\mathcal{O}_{X} \to f_{*}\mathcal{O}_{Y}(\lceil -(\Gamma^{<1})\rceil)$
defines ideal sheaves
\begin{equation*}
\begin{split}
&\mathcal{I}_{{\rm Nlc}(X,\Delta)}:=f_{*}\mathcal{O}_{Y}(\lceil -(\Gamma^{<1})\rceil-\lfloor \Gamma^{>1}\rfloor)=f_{*}\mathcal{O}_{Y}(-\lfloor \Gamma\rfloor+\Gamma^{=1}),\quad {\rm and}\\
&\mathcal{I}_{{\rm Nklt}(X,\Delta)}:=f_{*}\mathcal{O}_{Y}(-\lfloor \Gamma\rfloor)=f_{*}\mathcal{O}_{Y}(\lceil -(\Gamma^{<1})\rceil-\lfloor \Gamma^{>1}\rfloor-\Gamma^{=1})
\end{split}
\end{equation*}
of $\mathcal{O}_{X}$. 
These ideal sheaves are independent of the log resolution $f \colon Y \to X$ (\cite{fst-suppli-nonlc-ideal}). 
Then ${\rm Nlc}(X,\Delta)$ and ${\rm Nklt}(X,\Delta)$ are closed subschemes of $X$ defined by $\mathcal{I}_{{\rm Nlc}(X,\Delta)}$ and $\mathcal{I}_{{\rm Nklt}(X,\Delta)}$, respectively. 
\end{defn}

${\rm Nlc}(X,\Delta)$ and ${\rm Nklt}(X,\Delta)$ sometimes mean the support of the non-lc locus and the non-klt locus of $(X,\Delta)$ respectively if there is no risk of confusion.

\begin{defn}[Lc center]\label{defn--lccenter}
Let $(X,\Delta)$ be a normal pair. 
A subset $S \subset X$ is called an {\em lc center} of $(X,\Delta)$ if $S \not\subset {\rm Nlc}(X,\Delta)$ and there exists a prime divisor $P$ over $X$ such that $S=c_{X}(P)$ and $a(P,X,\Delta)=-1$. 
Unless otherwise stated, the scheme structure of an lc center is the naturally induced reduced scheme structure.
\end{defn}

\begin{thm}[Dlt blow-up, {\cite[Theorem 3.9]{fujino-morihyper}}]\label{thm--dlt-blowup}
Let $(X,\Delta)$ be a normal pair such that $X$ is quasi-projective.  
Then there exists a projective birational morphism $f \colon Y \to X$ from a normal quasi-projective variety $Y$ with the following properties.
\begin{itemize}
\item
$Y$ is $\mathbb{Q}$-factorial,
\item
$a(E,X,\Delta)\leq -1$ for every $f$-exceptional prime divisor $E$ on $Y$, and
\item
if we define an $\mathbb{R}$-divisor $\Gamma$ on $Y$ by 
$$K_{Y}+\Gamma=f^{*}(K_{X}+\Delta),$$
then $(Y,\Gamma^{<1}+{\rm Supp}\,\Gamma^{\geq 1})$ is dlt.
\end{itemize}
We call the morphism $f\colon (Y,\Gamma) \to (X,\Delta)$ a dlt blow-up. 
\end{thm}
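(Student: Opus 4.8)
The statement is the standard existence result for dlt blow‑ups, and I would prove it by producing $Y$ as the output of a relative minimal model program over $X$ started from a log resolution. First I would fix a log resolution $g\colon W\to X$ of $(X,\Delta)$ and put
$$\Theta:=g_{*}^{-1}(\Delta^{<1})+{\rm Supp}\,g_{*}^{-1}(\Delta^{\geq 1})+\sum_{E}E,$$
where $E$ runs over the $g$-exceptional prime divisors. Since $W$ is smooth and the support of $\Theta$ is a simple normal crossing divisor with all coefficients in $[0,1]$, the pair $(W,\Theta)$ is $\mathbb{Q}$-factorial and dlt.

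Next I would run a $(K_{W}+\Theta)$-MMP over $X$ with scaling of a general ample divisor on $W$. As $g$ is birational, no Mori fibre space can occur in this program (a Mori fibre space would yield a variety of dimension $<\dim X$ admitting a generically finite dominant morphism to $X$), so it is a sequence of flips and divisorial contractions over $X$, and by \cite{bchm} it terminates. Let $f\colon Y\to X$ be the resulting model and $\Theta_{Y}$ the strict transform of $\Theta$. Then $Y$ is $\mathbb{Q}$-factorial, $(Y,\Theta_{Y})$ is dlt, and $K_{Y}+\Theta_{Y}$ is nef over $X$; moreover the $f$-exceptional prime divisors on $Y$ are exactly the strict transforms of the $g$-exceptional prime divisors that are not contracted by the MMP, and one has $\Theta_{Y}=f_{*}^{-1}(\Delta^{<1})+{\rm Supp}\,f_{*}^{-1}(\Delta^{\geq 1})+\sum_{E}E$, the last sum now over the $f$-exceptional prime divisors.

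Now write $K_{Y}+\Gamma=f^{*}(K_{X}+\Delta)$. Since discrepancies are computed on $X$ and are independent of the model, a coefficient comparison along each prime divisor of $Y$ gives $\Theta_{Y}-\Gamma=A_{Y}-N_{Y}$, where $A_{Y}\geq 0$ is supported on the $f$-exceptional prime divisors $E$ with $a(E,X,\Delta)>-1$, with coefficient $1+a(E,X,\Delta)$ there (so $A_{Y}$ is $f$-exceptional), and $N_{Y}\geq 0$ is supported on the $f$-exceptional prime divisors $E$ with $a(E,X,\Delta)\leq -1$ together with the strict transforms of the components of $\Delta$ of coefficient greater than $1$; in particular $A_{Y}$ and $N_{Y}$ share no prime component. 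Because $f^{*}(K_{X}+\Delta)$ is numerically trivial over $X$ and $K_{Y}+\Theta_{Y}$ is nef over $X$, the $\mathbb{R}$-Cartier divisor $A_{Y}-N_{Y}=(K_{Y}+\Theta_{Y})-(K_{Y}+\Gamma)$ is nef over $X$. I would then apply the negativity lemma (\cite{kollar-mori}) on the $\mathbb{Q}$-factorial variety $Y$ to $D:=N_{Y}-A_{Y}$: here $-D=A_{Y}-N_{Y}$ is $f$-nef and $f_{*}D=f_{*}N_{Y}\geq 0$ (as $A_{Y}$ is $f$-exceptional), hence $D\geq 0$, i.e.\ $N_{Y}\geq A_{Y}$, which forces $A_{Y}=0$ since the two have no common component. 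Thus every $g$-exceptional divisor of discrepancy $>-1$ has been contracted, so $a(E,X,\Delta)\leq -1$ for every $f$-exceptional prime divisor $E$.

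Finally, with this last property the displayed formula for $\Theta_{Y}$ shows $\Gamma^{<1}+{\rm Supp}\,\Gamma^{\geq 1}=\Theta_{Y}$, which is dlt, and $Y$ is $\mathbb{Q}$-factorial, so $f\colon(Y,\Gamma)\to(X,\Delta)$ has all the asserted properties. The only serious obstacle here is the termination of the relative MMP used above — equivalently, the existence of the relevant minimal model — for which one invokes \cite{bchm}; the coefficient bookkeeping and the application of the negativity lemma are routine.
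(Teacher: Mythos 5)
Your proof is correct and follows the standard construction of a dlt blow-up for normal pairs, which is also what the cited result \cite[Theorem 3.9]{fujino-morihyper} uses: take a log resolution $g\colon W\to X$, run a $(K_W+\Theta)$-MMP over $X$ with scaling, and then use the negativity lemma to kill the exceptional divisors of discrepancy $>-1$. The paper itself does not reprove this theorem but cites it, so there is no ``paper's proof'' to compare against; your reconstruction is the expected one.

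Two points worth tightening. First, the appeal to \cite{bchm} for termination is slightly imprecise: the termination-with-scaling statements there (Theorem E, Corollary 1.4.2) are formulated for \emph{klt} pairs with big boundary, while $(W,\Theta)$ is only dlt. The dlt case requires an extra step — either a perturbation to the klt setting using the generality of the ample scaling divisor over $X$, or the combination of \cite{bchm} with special termination as in \cite{fujino-sp-ter}; both reductions are standard, and the needed dlt/lc termination is also available from \cite{birkar-flip}, \cite{haconxu-lcc}, and \cite{has-mmp}, all already in the paper's bibliography. This is a gap in justification rather than a gap in the argument, since the conclusion of that step is true. Second, a small coefficient slip: along an $f$-exceptional prime divisor $E$, $N_Y$ has coefficient $-(1+a(E,X,\Delta))$, which vanishes when $a(E,X,\Delta)=-1$, so $N_Y$ is supported on exceptional divisors with strict inequality $a(E,X,\Delta)<-1$ (together with the strict transforms of $\Delta^{>1}$). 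You wrote ``$\leq -1$,'' but since the coefficient at $a=-1$ is zero this does not affect anything; $A_Y$ is $f$-exceptional, $A_Y$ and $N_Y$ are still disjoint in support, and $f_{*}(N_Y-A_Y)\geq 0$, so the negativity lemma gives $A_Y=0$ exactly as you say, and the identification $\Gamma^{<1}+{\rm Supp}\,\Gamma^{\geq 1}=\Theta_Y$ follows.
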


We will prove some results on the defining ideal sheaf of the non-lc locus of a normal pair.

\begin{lem}\label{lem--nonlc-sheaf-isom}
Let $(Y,\Delta)$ be a sub normal pair such that $K_{Y}+\{\Delta\}+\Delta^{=1}$ is $\mathbb{R}$-Cartier. 
Let $g \colon W \to Y$ be a projective birational morphism from a normal variety $W$ such that all $g$-exceptional prime divisors $P$ on $W$ satisfy $a(P,Y,\{\Delta\}+\Delta^{=1})>-1$. 
We define an $\mathbb{R}$-divisor $\Gamma$ on $W$ by 
$K_{W}+\Gamma=g^{*}(K_{Y}+\Delta).$ 
We put 
$$D=\lceil -(\Delta^{<1}) \rceil-\lfloor \Delta^{>1}\rfloor \qquad {\rm and} \qquad G=\lceil -(\Gamma^{<1}) \rceil-\lfloor \Gamma^{>1} \rfloor.$$
Then $D$ is a $\mathbb{Q}$-Cartier divisor and $G-\lfloor g^{*}D \rfloor$ is effective and $g$-exceptional. 
\end{lem}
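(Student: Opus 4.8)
The plan is to rewrite $D$ as a difference of log canonical divisors and then reduce the assertion to a coefficient-wise comparison on $W$. Using the identity $D=-\lfloor\Delta\rfloor+\Delta^{=1}$ (as in Definition~\ref{defn--nlc-nklt}), write $D=(K_{Y}+\{\Delta\}+\Delta^{=1})-(K_{Y}+\Delta)$. The divisor $K_{Y}+\{\Delta\}+\Delta^{=1}$ is $\mathbb{R}$-Cartier by hypothesis and $K_{Y}+\Delta$ is $\mathbb{R}$-Cartier since $(Y,\Delta)$ is a sub normal pair, so $D$ is $\mathbb{R}$-Cartier; being an integral Weil divisor, it is then $\mathbb{Q}$-Cartier. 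Next, define an $\mathbb{R}$-divisor $\Gamma'$ on $W$ by $K_{W}+\Gamma'=g^{*}(K_{Y}+\{\Delta\}+\Delta^{=1})$; subtracting $K_{W}+\Gamma=g^{*}(K_{Y}+\Delta)$ gives $\Gamma'-\Gamma=g^{*}D$. The hypothesis that every $g$-exceptional prime divisor $P$ on $W$ satisfies $a(P,Y,\{\Delta\}+\Delta^{=1})>-1$ is exactly the statement ${\rm coeff}_{P}(\Gamma')<1$ for all such $P$.

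It then suffices to compare the coefficients of $G$ and of $\lfloor g^{*}D\rfloor=\lfloor\Gamma'-\Gamma\rfloor$ along each prime divisor $P$ of $W$. If $P$ is not $g$-exceptional, say it is the strict transform of a prime divisor $P_{Y}$ on $Y$, then ${\rm coeff}_{P}(\Gamma)={\rm coeff}_{P_{Y}}(\Delta)$ and ${\rm coeff}_{P}(g^{*}D)={\rm coeff}_{P_{Y}}(D)\in\mathbb{Z}$; feeding ${\rm coeff}_{P_{Y}}(\Delta)$ into the definitions of $G$ and $D$ shows ${\rm coeff}_{P}(G)={\rm coeff}_{P_{Y}}(D)={\rm coeff}_{P}(\lfloor g^{*}D\rfloor)$, so the coefficient of $G-\lfloor g^{*}D\rfloor$ is zero there, and in particular $G-\lfloor g^{*}D\rfloor$ is $g$-exceptional. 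If $P$ is $g$-exceptional, set $\gamma:={\rm coeff}_{P}(\Gamma)$ and $\gamma':={\rm coeff}_{P}(\Gamma')<1$. One checks directly from the definition that ${\rm coeff}_{P}(G)\geq\lceil-\gamma\rceil$ for every value of $\gamma$ (the only case needing care is $\gamma=1$, where ${\rm coeff}_{P}(G)=0>-1=\lceil-\gamma\rceil$). Since ${\rm coeff}_{P}(g^{*}D)=\gamma'-\gamma<1-\gamma$, and the floor of a number strictly less than $1-\gamma$ is at most $\lceil 1-\gamma\rceil-1=\lceil-\gamma\rceil$, we obtain ${\rm coeff}_{P}(\lfloor g^{*}D\rfloor)\leq\lceil-\gamma\rceil\leq{\rm coeff}_{P}(G)$. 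Hence $G-\lfloor g^{*}D\rfloor\geq 0$, which together with the previous case gives effectivity and $g$-exceptionality.

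I expect the work to be essentially bookkeeping rather than conceptual, and the only points deserving care are: the trichotomy ${\rm coeff}_{P}(\Gamma)<1,\,=1,\,>1$ in the formula for $G$, together with the observation that the $\Gamma^{=1}$-term raises ${\rm coeff}_{P}(G)$ from $-1$ to $0$ in the boundary case so that the bound ${\rm coeff}_{P}(G)\geq\lceil-\gamma\rceil$ survives; and the fact that $g^{*}D$ need not be integral even though $D$ is, which is precisely why one cannot estimate with $g^{*}D$ itself but must route the discrepancy bound through $\Gamma'$. I would also record the standard fact, used in the first step, that an integral Weil divisor which is $\mathbb{R}$-Cartier is automatically $\mathbb{Q}$-Cartier.
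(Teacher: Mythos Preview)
Your proof is correct and follows essentially the same approach as the paper: both express $D=(K_{Y}+\{\Delta\}+\Delta^{=1})-(K_{Y}+\Delta)$, introduce the auxiliary crepant divisor (your $\Gamma'$), and exploit the discrepancy bound $\gamma'<1$ on exceptional components. The only cosmetic difference is that the paper packages the final estimate globally via the subadditivity inequality $\lceil G\rceil+\lceil -g^{*}D\rceil\geq\lceil G-g^{*}D\rceil\geq 0$, whereas you carry out the equivalent computation coefficient by coefficient.
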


\begin{proof}
We have 
$$\Delta=\{\Delta\}+\Delta^{=1}+\lfloor \Delta^{<1}\rfloor+\lfloor \Delta^{>1}\rfloor=\{\Delta\}+\Delta^{=1}-D.$$
Since $K_{Y}+\Delta$ and $K_{Y}+\{\Delta\}+\Delta^{=1}$ are $\mathbb{R}$-Cartier, $D$ is $\mathbb{Q}$-Cartier. 

We will prove that $G-\lfloor g^{*}D \rfloor$ is effective and $g$-exceptional. 
Clearly we have $g_{*}\Gamma=\Delta$, $g_{*}\{\Gamma\}= \{\Delta\}$, and $g_{*}\Gamma^{=1}=\Delta^{=1}$. 
Thus $g_{*}G=D$. 
It is also clear that $g_{*}^{-1}D$ is a Weil divisor, and therefore $\lfloor g^{*}D\rfloor-g^{*}D$ is $g$-exceptional. 
From these facts, it follows that $G-\lfloor g^{*}D\rfloor$ is $g$-exceptional. 
We also have
\begin{equation*}
\begin{split}
K_{W}+\{\Gamma\}+\Gamma^{=1}+\lfloor \Gamma^{<1}\rfloor+\lfloor \Gamma^{>1} \rfloor=g^{*}(K_{Y}+\{\Delta\}+\Delta^{=1})+g^{*}(\lfloor \Delta^{<1}\rfloor+\lfloor \Delta^{>1}\rfloor).
\end{split}
\end{equation*}
By the definitions of $D$ and $G$, we obtain
$$G-g^{*}D=K_{W}+\{\Gamma\}+\Gamma^{=1}-g^{*}(K_{Y}+\{\Delta\}+\Delta^{=1}).$$
Since all $g$-exceptional prime divisors $P$ on $W$ satisfy $a(P,Y,\{\Delta\}+\Delta^{=1})>-1$, we have $\lceil (K_{W}+\{\Gamma\}+\Gamma^{=1}-g^{*}(K_{Y}+\{\Delta\}+\Delta^{=1}))\rceil \geq 0$. 
Therefore, we obtain
\begin{equation*}
\begin{split}
\lceil G \rceil+\lceil -g^{*}D\rceil\geq
\lceil(G-g^{*}D)\rceil=\lceil (K_{W}+\{\Gamma\}+\Gamma^{=1}-g^{*}(K_{Y}+\{\Delta\}+\Delta^{=1}))\rceil\geq0.
\end{split}
\end{equation*}
Since $G$ is a Weil divisor, it follows that $G-\lfloor g^{*}D\rfloor \geq 0$. 

By the above arguments, $D$ is $\mathbb{Q}$-Cartier and $G-\lfloor g^{*}D\rfloor$ is effective and $g$-exceptional. 
Thus, Lemma \ref{lem--nonlc-sheaf-isom} holds. 
\end{proof}

\begin{cor}\label{cor--dlt-nonlc-ideal}
Let $(Y,\Delta)$ be a normal pair such that $(Y,\Delta^{<1}+{\rm Supp}\,\Delta^{\geq 1})$ is a dlt pair.
Let $g \colon W \to Y$ be a projective birational morphism. 
We define an $\mathbb{R}$-divisor $\Gamma$ on $W$ by $K_{W}+\Gamma=g^{*}(K_{Y}+\Delta).$
Suppose that $(W,{\rm Supp}\,\Gamma)$ is log smooth. 
Then the natural isomorphism $\mathcal{O}_{Y}\to g_{*}\mathcal{O}_{W}(\lceil -(\Gamma^{<1})\rceil)$ induces an isomorphism
$$\mathcal{O}_{Y}(-\lfloor \Delta^{>1}\rfloor)\longrightarrow g_{*}\mathcal{O}_{W}(\lceil -(\Gamma^{<1})\rceil-\lfloor \Gamma^{>1} \rfloor)$$
as subsheaves of $\mathcal{O}_{Y} \cong g_{*}\mathcal{O}_{W}(\lceil -(\Gamma^{<1})\rceil)$. 
\end{cor}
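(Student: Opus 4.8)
The plan is to deduce this from Lemma \ref{lem--nonlc-sheaf-isom} together with the definition of $\mathcal{I}_{{\rm Nlc}(Y,\Delta)}$ in Definition \ref{defn--nlc-nklt}. First I would observe that since $(Y,\Delta^{<1}+{\rm Supp}\,\Delta^{\geq 1})$ is dlt, the pair $(Y,\{\Delta\}+\Delta^{=1})$ has a log resolution extracting only divisors of discrepancy $>-1$; in fact $\{\Delta\}+\Delta^{=1}$ is exactly the ``dlt boundary'' $\Delta^{<1}+\Delta^{=1}$ up to the integral part, so being dlt forces $K_{Y}+\{\Delta\}+\Delta^{=1}$ to be $\mathbb{R}$-Cartier and, more to the point, forces every divisor $P$ with $a(P,Y,\{\Delta\}+\Delta^{=1})=-1$ to already be a divisor on $Y$. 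Hence, after possibly factoring $g\colon W\to Y$ through a common resolution (which does not change the pushforward sheaves by \cite{fst-suppli-nonlc-ideal}, or by a direct cofinality argument), I may assume every $g$-exceptional prime divisor $P$ on $W$ satisfies $a(P,Y,\{\Delta\}+\Delta^{=1})>-1$, which is precisely the hypothesis needed to invoke Lemma \ref{lem--nonlc-sheaf-isom}.

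Next I would set $D:=\lceil -(\Delta^{<1})\rceil-\lfloor \Delta^{>1}\rfloor$ and $G:=\lceil -(\Gamma^{<1})\rceil-\lfloor \Gamma^{>1}\rfloor$ as in the lemma. The key computation is that $D=-\lfloor\Delta\rfloor+\Delta^{=1}$: indeed $\lceil-(\Delta^{<1})\rceil=-\lfloor\Delta^{<1}\rfloor$ on the components with coefficient $<1$, and combining the three ranges of coefficients gives $D=-\lfloor\Delta^{<1}\rfloor-\Delta^{=1}+\Delta^{=1}-\lfloor\Delta^{>1}\rfloor=-\lfloor\Delta\rfloor+\Delta^{=1}$. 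Since $\Delta$ is effective, $-\lfloor\Delta\rfloor\le 0$, and in fact $\Delta^{=1}$ consists of prime divisors with coefficient exactly one so $-\lfloor\Delta\rfloor+\Delta^{=1}=-\lfloor\Delta^{>1}\rfloor$ on those components and $-\lfloor\Delta^{<1}\rfloor=0$ on the rest; more carefully, $D=-\lfloor\Delta^{>1}\rfloor$ because $\lfloor\Delta^{<1}\rfloor=0$ (coefficients in $[0,1)$) and the $\Delta^{=1}$ contributions cancel. Thus $\mathcal{O}_{Y}(D)=\mathcal{O}_{Y}(-\lfloor\Delta^{>1}\rfloor)$, and since $D$ is a $\mathbb{Q}$-Cartier Weil divisor it equals $\lfloor g^{*}D\rfloor$ pushed forward. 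By Lemma \ref{lem--nonlc-sheaf-isom}, $G-\lfloor g^{*}D\rfloor$ is effective and $g$-exceptional, so $g_{*}\mathcal{O}_{W}(\lfloor g^{*}D\rfloor)=g_{*}\mathcal{O}_{W}(G)$ (an effective $g$-exceptional divisor may be freely added inside a pushforward of a reflexive sheaf that is pulled back). Since $g_{*}\mathcal{O}_{W}(\lfloor g^{*}D\rfloor)=\mathcal{O}_{Y}(D)=\mathcal{O}_{Y}(-\lfloor\Delta^{>1}\rfloor)$ and $G=\lceil-(\Gamma^{<1})\rceil-\lfloor\Gamma^{>1}\rfloor$, this gives the desired isomorphism $\mathcal{O}_{Y}(-\lfloor\Delta^{>1}\rfloor)\xrightarrow{\ \sim\ }g_{*}\mathcal{O}_{W}(\lceil-(\Gamma^{<1})\rceil-\lfloor\Gamma^{>1}\rfloor)$.

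Finally I would check compatibility with the inclusions into $\mathcal{O}_{Y}\cong g_{*}\mathcal{O}_{W}(\lceil-(\Gamma^{<1})\rceil)$: both sides sit inside $\mathcal{O}_{Y}$ via the natural maps coming from $-\lfloor\Delta^{>1}\rfloor\le 0$ on $Y$ and $\lceil-(\Gamma^{<1})\rceil-\lfloor\Gamma^{>1}\rfloor\le\lceil-(\Gamma^{<1})\rceil$ on $W$, and the isomorphism just constructed is induced by restricting the natural isomorphism $\mathcal{O}_{Y}\to g_{*}\mathcal{O}_{W}(\lceil-(\Gamma^{<1})\rceil)$, so it is automatically an isomorphism of subsheaves. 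The main obstacle I anticipate is the bookkeeping in the first paragraph: justifying that one may reduce to the situation where Lemma \ref{lem--nonlc-sheaf-isom} applies, i.e. that $g\colon W\to Y$ (with $(W,{\rm Supp}\,\Gamma)$ log smooth) extracts no divisor of discrepancy $-1$ with respect to $(Y,\{\Delta\}+\Delta^{=1})$. This uses precisely that the pair $(Y,\Delta^{<1}+{\rm Supp}\,\Delta^{\geq1})$ is dlt — so that all its lc centers, hence all divisors over $Y$ computing discrepancy $-1$ for the ``truncated'' boundary, are divisorial on $Y$ — combined with the invariance of the non-lc ideal under log resolution from \cite{fst-suppli-nonlc-ideal}.
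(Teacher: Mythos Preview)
Your overall plan---reduce to a situation where Lemma~\ref{lem--nonlc-sheaf-isom} applies, then push forward---is the paper's approach, and your computation $D=\lceil -(\Delta^{<1})\rceil-\lfloor\Delta^{>1}\rfloor=-\lfloor\Delta^{>1}\rfloor$ (using $\Delta\geq 0$) together with the divisorial-sheaf argument in the last two paragraphs is correct. However, the reduction in your first paragraph has a genuine gap.

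You assert that dlt-ness of $(Y,\Delta^{<1}+{\rm Supp}\,\Delta^{\geq 1})$ ``forces $K_{Y}+\{\Delta\}+\Delta^{=1}$ to be $\mathbb{R}$-Cartier''. This is exactly the hypothesis needed to invoke Lemma~\ref{lem--nonlc-sheaf-isom}, but it is not justified: since $\{\Delta\}+\Delta^{=1}=\Delta-\lfloor\Delta^{>1}\rfloor$, what you need is that $\lfloor\Delta^{>1}\rfloor$ is $\mathbb{Q}$-Cartier, and dlt pairs are \emph{not} $\mathbb{Q}$-factorial in general (the paper's definition of dlt does not include $\mathbb{Q}$-factoriality; e.g.\ any klt singularity such as an ordinary double point is dlt with zero boundary). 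The hypotheses only give you that $K_{Y}+\Delta$ and $K_{Y}+\Delta^{<1}+{\rm Supp}\,\Delta^{\geq 1}$ are $\mathbb{R}$-Cartier, hence $\Delta^{>1}-{\rm Supp}\,\Delta^{>1}$ is $\mathbb{R}$-Cartier, but this says nothing about $\lfloor\Delta^{>1}\rfloor$ separately. The paper fixes this by first shrinking to a quasi-projective $Y$, taking a small $\mathbb{Q}$-factorialization $h\colon Y'\to Y$ (which exists because $(Y,\Delta^{<1}+{\rm Supp}\,\Delta^{\geq 1})$ is dlt), checking that dlt-ness is preserved and that $h_{*}\mathcal{O}_{Y'}(-\lfloor\Delta'^{>1}\rfloor)=\mathcal{O}_{Y}(-\lfloor\Delta^{>1}\rfloor)$ since $h$ is small, and only then applying Lemma~\ref{lem--nonlc-sheaf-isom} on the $\mathbb{Q}$-factorial model.

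A smaller point: your claim that ``every divisor $P$ with $a(P,Y,\{\Delta\}+\Delta^{=1})=-1$ is already a divisor on $Y$'' is false as stated (blow up an lc center of a dlt pair of dimension $\geq 2$). What you actually need, and what dlt-ness gives, is the existence of \emph{one} log resolution $g'\colon W'\to Y$ all of whose exceptional divisors satisfy $a(P,Y,\Delta^{<1}+{\rm Supp}\,\Delta^{\geq 1})>-1$; since $\{\Delta\}+\Delta^{=1}\leq \Delta^{<1}+{\rm Supp}\,\Delta^{\geq 1}$, the same holds for the smaller boundary, and then one replaces $g$ by $g'$ using the independence of the non-lc ideal on the resolution. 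Your reference to \cite{fst-suppli-nonlc-ideal} is the right one for this last step.
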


\begin{proof}
As in \cite[Lemma 5.2]{fst-suppli-nonlc-ideal}, the sheaf $g_{*}\mathcal{O}_{W}(\lceil -(\Gamma^{<1})\rceil-\lfloor \Gamma^{>1} \rfloor)$ does not depend on $g \colon W \to Y$. 
From this fact, we may freely replace $g$ without loss of generality. 

Since the problem is local, we may shrink $Y$, and therefore we may assume that $Y$ is quasi-projective. 
Since $(Y,\Delta^{<1}+{\rm Supp}\,\Delta^{\geq 1})$ is dlt, there is a small $\mathbb{Q}$-factorialization $h \colon Y' \to Y$. 
The quasi-projectivity of $Y$ was used to construct this $h \colon Y' \to Y$. 
Put $\Delta'=h_{*}^{-1}\Delta$. 
Since $h$ is small, $h$ is an isomorphism over the smooth locus of $Y$ (see, for example, \cite[Corollary 2.63]{kollar-mori}). 
Thus, the dlt property (\cite[Definition 2.37]{kollar-mori}) of $(Y,\Delta^{<1}+{\rm Supp}\,\Delta^{\geq 1})$ is preserved after we replace $(Y,\Delta)$ by $(Y',\Delta')$. 
Since $h$ is a small projective birational morphism, we have
$$h_{*}\mathcal{O}_{Y'}(-\lfloor \Delta'^{>1} \rfloor)= \mathcal{O}_{Y}(-\lfloor \Delta^{>1}\rfloor).$$
From these facts, we may replace $(Y,\Delta)$ and $g \colon W\to Y$ by $(Y',\Delta')$ and some log resolution of $(Y',\Delta')$, respectively. 
Therefore, we may assume that $Y$ is $\mathbb{Q}$-factorial. 

Since $(Y,\Delta^{<1}+{\rm Supp}\,\Delta^{\geq 1})$ is a $\mathbb{Q}$-factorial dlt pair, $K_{Y}+\{\Delta\}+\Delta^{=1}$ is an $\mathbb{R}$-Cartier $\mathbb{R}$-divisor and there is a log resolution $g' \colon W' \to Y$ of $(Y,\Delta)$ such that all $g'$-exceptional prime divisors $P$ on $W'$ satisfy $a(P,Y,\Delta^{<1}+{\rm Supp}\,\Delta^{\geq 1})>-1$. 
It is easy to check the relation $\{\Delta\}+\Delta^{=1} \leq \Delta^{<1}+{\rm Supp}\,\Delta^{\geq 1}$, and thus all the $g'$-exceptional prime divisors $P$ on $W'$ satisfy $a(P,Y,\{\Delta\}+\Delta^{=1})>-1$. 
By replacing $g$ by $g'$, we may assume that all $g$-exceptional prime divisors $P$ on $W$ satisfy $a(P,Y,\{\Delta\}+\Delta^{=1})>-1$. 
We put 
$$D=-\lfloor \Delta^{>1}\rfloor \qquad {\rm and} \qquad G=\lceil -(\Gamma^{<1})\rceil-\lfloor \Gamma^{>1} \rfloor.$$
By Lemma \ref{lem--nonlc-sheaf-isom}, $D$ is $\mathbb{Q}$-Cartier and $G-\lfloor g^{*}D\rfloor$ is effective and $g$-exceptional. 
Then
$$\mathcal{O}_{Y}(-\lfloor \Delta^{>1}\rfloor) = \mathcal{O}_{Y}(D)  \cong g_{*}\mathcal{O}_{W}(G) = g_{*}\mathcal{O}_{W}(\lceil -(\Gamma^{<1}) \rceil-\lfloor \Gamma^{>1} \rfloor)$$ by the standard argument of divisorial sheaves. 
\end{proof}

\begin{thm}\label{thm--dlt-nonlc-locus}
Let $(Y,\Delta)$ be a normal pair such that $(Y,\Delta^{<1}+{\rm Supp}\,\Delta^{\geq 1})$ is a dlt pair.
Let $S$ be an lc center of $(Y,\Delta)$. 
We define $\mathbb{R}$-divisors $G_{S}$ and $B_{S}$ on $S$ by 
\begin{equation*}
\begin{split}
&G_{S}=(\Delta^{>1}-{\rm Supp}\,\Delta^{>1})|_{S}=(K_{Y}+\Delta-(K_{Y}+\Delta^{<1}+{\rm Supp}\,\Delta^{\geq 1}))|_{S}, \qquad {\rm and} \\
&K_{S}+B_{S}=(K_{Y}+\Delta^{<1}+{\rm Supp}\,\Delta^{\geq 1})|_{S},
\end{split}
\end{equation*}
respectively. 
We put $\Delta_{S}=B_{S}+G_{S}$. 
Then $\Delta_{S}$ satisfies the following properties.
\begin{itemize}
\item
$(S,\Delta_{S})$ is a normal pair satisfying $K_{S}+\Delta_{S}=(K_{Y}+\Delta)|_{S}$, 
\item 
$\Delta^{<1}_{S}+{\rm Supp}\,\Delta^{\geq 1}_{S}=B_{S}$, in particular,  $(S, \Delta^{<1}_{S}+{\rm Supp}\,\Delta^{\geq 1}_{S})$ is a dlt pair, and
\item
the morphism $ \mathcal{O}_{Y} \to  \mathcal{O}_{Y}\otimes_{\mathcal{O}_{Y}} \mathcal{O}_{S}= \mathcal{O}_{S}$ induces a morphism 
$$\mathcal{O}_{Y}(- \lfloor \Delta^{>1} \rfloor)\longrightarrow \mathcal{O}_{S}( - \lfloor \Delta_{S}^{>1} \rfloor).$$ 
\end{itemize}
\end{thm}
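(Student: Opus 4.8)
The plan is to prove the three assertions by adjunction computations, working on a log resolution where everything becomes transparent, and then descending. First I would fix a log resolution $g\colon W \to Y$ of $(Y,\Delta)$ that simultaneously resolves the lc center $S$ in the strong sense dictated by the dlt hypothesis: since $(Y,\Delta^{<1}+\operatorname{Supp}\Delta^{\ge 1})$ is dlt, by standard properties of lc centers of dlt pairs (\cite[Proposition 3.9.2]{fujino-what-log-ter}, \cite[Theorem 4.16]{kollar-mmp}) the center $S$ is normal and one may choose the resolution so that the strict transform $S_W$ of some component of $\Delta^{=1}_{<1\text{-part}}$ (more precisely, of $\Delta^{<1}+\operatorname{Supp}\Delta^{\ge1}$) dominating $S$ is a smooth prime divisor on $W$ with $g|_{S_W}\colon S_W\to S$ birational. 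Writing $K_W+\Gamma=g^*(K_Y+\Delta)$ and also $K_W+\Gamma'=g^*(K_Y+\Delta^{<1}+\operatorname{Supp}\Delta^{\ge1})$ with $\Gamma'=\Gamma-(\Delta^{>1}-\operatorname{Supp}\Delta^{>1})$ pulled back appropriately, the coefficient of $S_W$ in both $\Gamma$ and $\Gamma'$ is $1$, and classical adjunction on the smooth pair $(W,S_W)$ gives $(K_W+\Gamma')|_{S_W}=K_{S_W}+\Gamma'_{S_W}$ where $\Gamma'_{S_W}$ has snc support and coefficients $\le 1$ (the divisors glued along $S_W$ are those meeting $S_W$, which are snc). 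Pushing down by $g|_{S_W}$ yields the first two bullets: $K_S+B_S=(K_Y+\Delta^{<1}+\operatorname{Supp}\Delta^{\ge1})|_S$ with $(S,B_S)$ dlt, and adding the restriction of $\Delta^{>1}-\operatorname{Supp}\Delta^{>1}$ (which is $\mathbb{R}$-Cartier and effective, being supported away from... no — it is supported exactly on $\operatorname{Supp}\Delta^{>1}$, so I must check $S\not\subset\operatorname{Supp}\Delta^{>1}$, which holds because $S$ is an lc center, hence $S\not\subset\operatorname{Nlc}(Y,\Delta)\supset\operatorname{Supp}\Delta^{>1}$) gives $K_S+\Delta_S=(K_Y+\Delta)|_S$ with $\Delta_S$ effective, so $(S,\Delta_S)$ is a normal pair, and since $G_S$ is supported in the fractional-coefficient locus its addition does not change the $\ge 1$ part: $\Delta^{<1}_S+\operatorname{Supp}\Delta^{\ge1}_S=B_S$.

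Next I would prove the third bullet, the sheaf map $\mathcal{O}_Y(-\lfloor\Delta^{>1}\rfloor)\to\mathcal{O}_S(-\lfloor\Delta_S^{>1}\rfloor)$. The idea is to realize both sides via the defining ideals of non-lc loci using Corollary \ref{cor--dlt-nonlc-ideal} and Lemma \ref{lem--nonlc-sheaf-isom}, applied on $Y$ and on $S$ respectively, and then restrict. By Corollary \ref{cor--dlt-nonlc-ideal}, $\mathcal{O}_Y(-\lfloor\Delta^{>1}\rfloor)\cong g_*\mathcal{O}_W(\lceil-(\Gamma^{<1})\rceil-\lfloor\Gamma^{>1}\rfloor)$ as a subsheaf of $\mathcal{O}_Y$, and similarly on $S$ using the dlt pair $(S,B_S)$ together with the resolution $g|_{S_W}$ (after further blow-ups of $S_W$ to make $({\rm Supp}\,\Gamma_{S_W})$ log smooth). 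The natural restriction $\mathcal{O}_Y\to\mathcal{O}_S$ composed with the restriction of sections from $W$ to $S_W$ should carry the first ideal into the second; concretely, one computes that on $W$ the divisor $\lceil-(\Gamma^{<1})\rceil-\lfloor\Gamma^{>1}\rfloor$ restricted to $S_W$ dominates $\lceil-(\Gamma_{S_W}^{<1})\rceil-\lfloor\Gamma_{S_W}^{>1}\rfloor$ up to an effective $g|_{S_W}$-exceptional divisor, using adjunction $\Gamma|_{S_W}=\Gamma_{S_W}$ (valid because $S_W$ is not a component of $\Gamma-S_W$, i.e. $S_W$ appears with coefficient exactly $1$ and is disjoint from... transverse to the rest). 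This inequality of divisors on $S_W$, pushed forward by $g|_{S_W}$, gives the desired map of subsheaves of $\mathcal{O}_S$.

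The main obstacle I anticipate is the bookkeeping in this last step: one must be careful that the resolution chosen for $Y$ is compatible with a resolution of $S$ that satisfies the hypotheses of Corollary \ref{cor--dlt-nonlc-ideal} (log smoothness of the total transform of $\Gamma_S$), and that the "inversion of adjunction"-type comparison between $\Gamma^{>1}$, $\Gamma^{<1}$ on $W$ and their analogues on $S_W$ is correctly signed — in particular that the $g|_{S_W}$-exceptional correction divisor really is effective, which is where the dlt hypothesis (forcing $a(P,Y,\Delta^{<1}+\operatorname{Supp}\Delta^{\ge1})>-1$ for exceptional $P$, hence by adjunction the analogous positivity over $S$) does the essential work. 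A secondary technical point is to justify, as in the proof of Corollary \ref{cor--dlt-nonlc-ideal}, that one may pass to a small $\mathbb{Q}$-factorialization to reduce to the $\mathbb{Q}$-factorial case so that $K_Y+\{\Delta\}+\Delta^{=1}$ and its restriction to $S$ are $\mathbb{R}$-Cartier, allowing Lemma \ref{lem--nonlc-sheaf-isom} to be applied on $S$ as well.
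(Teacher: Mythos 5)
Your overall strategy --- divisorial adjunction for the first two bullets, and a comparison of non-lc ideal sheaves via Corollary \ref{cor--dlt-nonlc-ideal} and a restriction sequence on a log resolution for the third --- is the same mechanism the paper uses, but as written it has two genuine gaps.

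First, your argument only works when $S$ has codimension one in $Y$. For an lc center of codimension $k\geq 2$ there is no smooth prime divisor $S_W$ on a log resolution mapping birationally onto $S$ (a prime divisor on $W$ has dimension $\dim Y-1>\dim S$); the locus over $S$ is a codimension-$k$ stratum of the snc divisor. Consequently neither the ``classical adjunction on the smooth pair $(W,S_W)$'' nor the restriction exact sequence $0\to\mathcal{O}_W(D-S_W)\to\mathcal{O}_W(D)\to\mathcal{O}_{S_W}(D|_{S_W})\to 0$ that your third step needs is available in one stroke. The paper's proof is organized precisely to get around this: it argues by induction on $\dim Y$, first carrying out the adjunction to a component $T$ of $\Delta^{=1}$ containing $S$ --- there $T'=g_*^{-1}T$ is a genuine divisor, the exact sequence applies, and Corollary \ref{cor--dlt-nonlc-ideal} identifies the outer pushforwards with $\mathcal{O}_Y(-\lfloor\Delta^{>1}\rfloor-T)$ and $\mathcal{O}_Y(-\lfloor\Delta^{>1}\rfloor)$ and the target with $\mathcal{O}_T(-\lfloor\Delta_T^{>1}\rfloor)$ --- and then applies the induction hypothesis to $(T,\Delta_T)$ and the lc center $S$ of $(T,\Delta_T)$, composing the resulting maps. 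This iterative structure is the main content of the proof beyond the codimension-one case, and your sketch is missing it.

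Second, your justification of the second bullet is incorrect. You claim that $G_S$ is ``supported in the fractional-coefficient locus,'' so adding it does not change the $\geq 1$ part. In fact $G_S=(\Delta^{>1}-\operatorname{Supp}\Delta^{>1})|_S$ is supported exactly where $B_S$ has coefficient equal to $1$: a component $P$ of $G_S$ is a component of $D'\cap S$ for some component $D'$ of $\Delta^{>1}$, and since $(Y,\Delta^{<1}+\operatorname{Supp}\Delta^{\geq 1})$ is dlt it is log smooth at the generic point of $P$, whence $\operatorname{coeff}_P(B_S)=1$ by adjunction. The equality $\Delta_S^{<1}+\operatorname{Supp}\Delta_S^{\geq 1}=B_S$ holds because at such $P$ both sides equal $1$ (the coefficient in $\Delta_S$ jumps above $1$, but its support contributes exactly $1$), while away from $\operatorname{Supp}G_S$ one has $\Delta_S=B_S$ with coefficients at most $1$. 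If $G_S$ could meet the locus where $B_S$ has coefficient strictly less than $1$, the asserted equality would fail, so this coefficient analysis --- which is exactly what the paper carries out with the decomposition $\Delta^{\geq 1}=T+\sum_iD_i+\sum_j(1+\gamma_j)D'_j$ --- cannot be skipped.
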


\begin{proof}
We prove the theorem by induction on ${\rm dim}\,Y$. 
The theorem is clear if ${\rm dim}\,Y=1$. 
From now on, we assume that ${\rm dim}\,Y>1$. 
Since $S$ is not contained in ${\rm Supp}\,\Delta^{>1}$, it follows that $S$ is an lc center of $(Y,\Delta^{<1}+{\rm Supp}\,\Delta^{\geq 1})$. 
Then we can find a component $T$ of $\Delta^{=1}$ containing $S$. 
Since $T$ is not contained in ${\rm Supp}\,\Delta^{>1}$, the divisor 
$$G_{T}:=(\Delta^{>1}-{\rm Supp}\,\Delta^{>1})|_{T}=(K_{Y}+\Delta-(K_{Y}+\Delta^{<1}+{\rm Supp}\,\Delta^{\geq 1}))|_{T}$$
 is well defined as an effective $\mathbb{R}$-Cartier $\mathbb{R}$-divisor on $T$. 
Let $(T,B_{T})$ be a dlt pair defined by adjunction $K_{T}+B_{T}=(K_{Y}+\Delta^{<1}+{\rm Supp}\,\Delta^{\geq 1})|_{T}$. 
We set
$$\Delta_{T}:=B_{T}+G_{T}.$$
We will prove that $\Delta_{T}$ satisfies all the properties of Theorem \ref{thm--dlt-nonlc-locus}. 
We have
$$K_{T}+\Delta_{T}=(K_{Y}+\Delta^{<1}+{\rm Supp}\,\Delta^{\geq 1})|_{T}+(\Delta^{>1}-{\rm Supp}\,\Delta^{>1})|_{T}=(K_{Y}+\Delta)|_{T}.$$
Hence $K_{T}+\Delta_{T}$ is $\mathbb{R}$-Cartier and we see that $(T,\Delta_{T})$ is a pair. 
Therefore $\Delta_{T}$ satisfies the first property of Theorem \ref{thm--dlt-nonlc-locus}. 
We may write 
$$\Delta^{\geq 1}=T+\sum_{i}D_{i}+\sum_{j}(1+\gamma_{j})D'_{j}$$
for some prime divisors $D_{i}$ and $D'_{j}$ and positive real numbers $\gamma_{j}$. 
By the dlt property of $(Y,\Delta^{<1}+{\rm Supp}\,\Delta^{\geq 1})$, we see that $(Y,\Delta)$ is log smooth at the generic point of $T\cap D_{i}$ and $T\cap D_{j}$ for all $i$ and $j$. 
By considering the locus on which $(Y,\Delta)$ is log smooth, we see that any component $P$ of $G_{T}=(\Delta^{>1}-{\rm Supp}\,\Delta^{>1})|_{T}$ is an irreducible component of $T \cap D'_{j}$ for some $j$. 
This fact implies that any component $P$ of $G_{T}$ satisfies
$${\rm coeff}_{P}(\Delta_{T})=1+\gamma_{j}>1, \quad {\rm coeff}_{P}(B_{T})=1, \quad {\rm and} \quad {\rm coeff}_{P}(\Delta_{T}^{<1}+{\rm Supp}\,\Delta^{\geq 1}_{T})=1.$$
Therefore, any component $P$ of $G_{T}$ satisfies 
$${\rm coeff}_{P}(B_{T})={\rm coeff}_{P}(\Delta_{T}^{<1}+{\rm Supp}\,\Delta^{\geq 1}_{T}).$$ 
Since $\Delta_{T}=B_{T}+G_{T}$, if a prime divisor $Q$ is not a component of $G_{T}$, then we have 
$${\rm coeff}_{Q}(\Delta_{T})={\rm coeff}_{Q}(B_{T}) \leq 1,$$ 
where the inequality follows from the dlt property of $(T,B_{T})$. 
This shows that $Q$ is not a component of $\Delta_{T}^{>1}$. 
Hence we have ${\rm coeff}_{Q}(\Delta_{T})={\rm coeff}_{Q}(\Delta_{T}^{<1}+{\rm Supp}\,\Delta^{\geq 1}_{T}).$
Then
$${\rm coeff}_{Q}(B_{T})={\rm coeff}_{Q}(\Delta_{T}^{<1}+{\rm Supp}\,\Delta^{\geq 1}_{T}).$$
By the discussion, we obtain
$$B_{T}=\Delta_{T}^{<1}+{\rm Supp}\,\Delta^{\geq 1}_{T},$$
which is the second property of Theorem \ref{thm--dlt-nonlc-locus}. 

In this paragraph, we will prove the third property of Theorem \ref{thm--dlt-nonlc-locus} for $T$ and $\Delta_{T}^{>1}$. 
Since $(Y,\Delta^{<1}+{\rm Supp}\,\Delta^{\geq 1})$ is dlt, there is a log resolution $g \colon W \to Y$ of $(Y,\Delta)$ such that all $g$-exceptional prime divisor $E$ on $W$ satisfy $a(E,Y,\Delta^{<1}+{\rm Supp}\,\Delta^{\geq 1})>-1$. 
We put $T'=g_{*}^{-1}T$ and $g_{T'}=g|_{T'}\colon T' \to T$, and we define an $\mathbb{R}$-divisor $\Gamma$ on $W$ by 
$$K_{W}+T'+\Gamma=g^{*}(K_{Y}+\Delta).$$ 
By the first property of Theorem \ref{thm--dlt-nonlc-locus} for $(T,\Delta_{T})$, we have $K_{T'}+\Gamma|_{T'}=g_{T'}^{*}(K_{T}+\Delta_{T})$. 
By the exact sequence
\begin{equation*}
\begin{split}
0 \longrightarrow \mathcal{O}_{W}(\lceil -(\Gamma^{<1})\rceil-\lfloor \Gamma^{>1} \rfloor-T')&\longrightarrow \mathcal{O}_{W}(\lceil -(\Gamma^{<1})\rceil-\lfloor \Gamma^{>1} \rfloor)\\
& \longrightarrow \mathcal{O}_{T'}(\lceil -(\Gamma^{<1})|_{T'}\rceil-\lfloor \Gamma^{>1}|_{T'} \rfloor) \longrightarrow 0,
\end{split}
\end{equation*}
we get the exact sequence 
\begin{equation*}
\begin{split}
 0 \longrightarrow g_{*}\mathcal{O}_{W}(\lceil -(\Gamma^{<1})\rceil-\lfloor \Gamma^{>1} \rfloor-T')&\longrightarrow g_{*}\mathcal{O}_{W}(\lceil -(\Gamma^{<1})\rceil-\lfloor \Gamma^{>1} \rfloor)\\ & \longrightarrow g_{T'*}\mathcal{O}_{T'}(\lceil -(\Gamma^{<1})|_{T'}\rceil-\lfloor \Gamma^{>1}|_{T'} \rfloor). 
 \end{split}
 \end{equation*}
By Corollary \ref{cor--dlt-nonlc-ideal}, the isomorphism $\mathcal{O}_{Y} \to g_{*}\mathcal{O}_{W}(\lceil -(\Gamma^{<1})\rceil)$ induces an isomorphism
$\mathcal{O}_{Y}(- \lfloor \Delta^{>1} \rfloor) \cong g_{*}\mathcal{O}_{W}(\lceil -(\Gamma^{<1})\rceil-\lfloor \Gamma^{>1} \rfloor)$. 
This isomorphism and $T'=g_{*}^{-1}T$ imply that $\mathcal{O}_{Y}(- \lfloor \Delta^{>1} \rfloor-T) \to g_{*}\mathcal{O}_{W}(\lceil -(\Gamma^{<1})\rceil-\lfloor \Gamma^{>1} \rfloor-T')$ is an isomorphism. 
Now $(T',\Gamma|_{T'})$ is log smooth by construction, and $(T, \Delta^{<1}_{T}+{\rm Supp}\,\Delta^{\geq 1}_{T})$ is dlt by the second property of Theorem \ref{thm--dlt-nonlc-locus} for $T$. 
Since $g_{T'} \colon T' \to T$ is birational and $K_{T'}+\Gamma|_{T'}=g_{T'}^{*}(K_{T}+\Delta_{T})$, Corollary \ref{cor--dlt-nonlc-ideal} implies that the isomorphism $\mathcal{O}_{T} \to g_{T'*}\mathcal{O}_{T'}(\lceil -(\Gamma^{<1})|_{T'}\rceil)$ induces an isomorphism
$\mathcal{O}_{T}( - \lfloor \Delta_{T}^{>1} \rfloor) \cong g_{T'*}\mathcal{O}_{T'}(\lceil -(\Gamma^{<1})|_{T'}\rceil-\lfloor \Gamma^{>1}|_{T'} \rfloor)$. 
By these isomorphisms, we get the exact sequence
$$0 \longrightarrow \mathcal{O}_{Y}(- \lfloor \Delta^{>1} \rfloor-T)\longrightarrow \mathcal{O}_{Y}(- \lfloor \Delta^{>1} \rfloor) \longrightarrow \mathcal{O}_{T}( - \lfloor \Delta_{T}^{>1} \rfloor)$$
such that the morphism $\mathcal{O}_{Y}(- \lfloor \Delta^{>1} \rfloor) \to \mathcal{O}_{T}( - \lfloor \Delta_{T}^{>1} \rfloor)$ is induced by $\mathcal{O}_{Y} \to \mathcal{O}_{T}$. 
This shows that $T$ and $\Delta_{T}^{>1}$ satisfy the third property of Theorem \ref{thm--dlt-nonlc-locus}. 

The arguments in the previous paragraphs show that $\Delta_{T}$ satisfies all the properties of Theorem \ref{thm--dlt-nonlc-locus}. 
We recall that $T$ is a component of $\Delta^{=1}$ containing $S$. 
Therefore $S$ is an lc center of $(T, \Delta_{T})$. 
By the second property of Theorem \ref{thm--dlt-nonlc-locus} for $\Delta_{T}$, we obtain
$$B_{T}=\Delta_{T}^{<1}+{\rm Supp}\,\Delta^{\geq 1}_{T} \qquad {\rm and} \qquad G_{T}=\Delta_{T}^{>1}-{\rm Supp}\,\Delta_{T}^{>1}.$$
Therefore, we have
\begin{equation*}
\begin{split}
&G_{S}=\bigl((\Delta^{>1}-{\rm Supp}\,\Delta^{>1})|_{T}\bigr)|_{S}=(\Delta_{T}^{>1}-{\rm Supp}\,\Delta_{T}^{>1})|_{S},\quad {\rm and}\\
&K_{S}+B_{S}=\bigl((K_{Y}+\Delta^{<1}+{\rm Supp}\,\Delta^{\geq 1})|_{T}\bigr)|_{S}
=(K_{T}+\Delta_{T}^{<1}+{\rm Supp}\,\Delta^{\geq 1}_{T})|_{S}. \\
\end{split}
\end{equation*}
By the induction hypothesis of Theorem \ref{thm--dlt-nonlc-locus}, the divisor $\Delta_{S}$ satisfies the first and the second properties of Theorem \ref{thm--dlt-nonlc-locus}, and moreover we obtain the morphism
$$\mathcal{O}_{Y}(- \lfloor \Delta^{>1} \rfloor) \longrightarrow \mathcal{O}_{T}( - \lfloor \Delta_{T}^{>1} \rfloor) \longrightarrow \mathcal{O}_{S}( - \lfloor \Delta_{S}^{>1} \rfloor)$$
induced by $\mathcal{O}_{Y} \to \mathcal{O}_{T} \to\mathcal{O}_{S}$. 
From this, $\Delta_{S}$ satisfies all properties of Theorem \ref{thm--dlt-nonlc-locus}. 
We finish the proof. 
\end{proof}

\subsection{Quasi-log scheme}\label{subsec--quasi-log}

The goal of this subsection is to define a {\em quasi-log scheme induced by a normal pair} (Definition \ref{defn--lc-trivial-fib-quasi-log}) and prove some results used in this paper. 

\begin{defn}[{Quasi-log scheme, \cite[Definition 6.2.2]{fujino-book}}]\label{defn--quasi-log}
A {\em{quasi-log scheme}} is a scheme $X$ endowed with an $\mathbb R$-Cartier $\mathbb{R}$-divisor (or an $\mathbb R$-line bundle) $\omega$ on $X$, a closed subscheme ${\rm Nqlc}(X, \omega)\subsetneq X$, and a finite collection $\{C\}$ of reduced and irreducible subschemes of $X$, such that there is a proper morphism $f\colon (Y, B_Y)\to X$ from a globally embedded simple normal crossing pair satisfying the following properties: 
\begin{itemize}
\item $f^*\omega\sim_{\mathbb R}K_Y+B_Y$, 

\item the natural map $\mathcal O_X \to f_*\mathcal O_Y(\lceil -(B_Y^{<1})\rceil)$ induces an isomorphism 
$$
\mathcal I_{{\rm Nqlc}(X, \omega)}\overset{\cong}{\longrightarrow} f_*\mathcal O_Y(\lceil -(B_Y^{<1})\rceil-\lfloor B_Y^{>1}\rfloor),  
$$ 
where $\mathcal I_{{\rm Nqlc}(X, \omega)}$ is the defining ideal sheaf of ${\rm Nqlc}(X, \omega)$, and

\item the collection of reduced and irreducible subschemes 
$\{C\}$ coincides with the images 
of the strata of $(Y, B_Y)$ that are not included in ${\rm Nqlc}(X, \omega)$. 
\end{itemize}

We simply write $[X, \omega]$ to denote the above data 
$$
\left(X, \omega, f\colon (Y, B_Y)\to X\right)
$$ 
if there is no risk of confusion. 
An element of $\{C\}$ is called a {\em qlc center of $[X,\omega]$}. 

The {\rm non-qklt locus of $X$}, denoted by ${\rm Nqklt}(X, \omega)$, is the union of ${\rm Nqlc}(X, \omega)$ and all qlc centers of $[X,\omega]$. 
We note that ${\rm Nqklt}(X, \omega)$ has the scheme structure naturally induced by ${\rm Nqlc}(X, \omega)$ and all qlc centers of $[X,\omega]$ (cf.~\cite[Notation 6.3.10]{fujino-book}).  
\end{defn}

\begin{thm}\label{thm--abundance-quasi-log}
Let $[X,\omega]$ be a quasi-log scheme and $\pi \colon X \to Z$ a projective morphism to a scheme. 
Let $A$ be a $\pi$-ample $\mathbb{R}$-divisor on $X$. 
Suppose that $\omega+A$ is $\pi$-nef and $(\omega+A)|_{{\rm Nqlc}(X, \omega)}$, which we think of an $\mathbb{R}$-line bundle on ${\rm Nqlc}(X, \omega)$, is semi-ample over $Z$. 
Then $\omega+A$ is $\pi$-semi-ample.
\end{thm}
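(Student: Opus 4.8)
The plan is to reduce the statement to the base point free theorem for quasi-log schemes (\cite[Chapter 6]{fujino-book}), applied to $L:=\omega+A$ over $Z$. The numerical hypotheses are immediate: $L$ is $\pi$-nef by assumption, and for every real number $a>1$ the $\mathbb{R}$-line bundle $aL-\omega=(a-1)L+A$ is $\pi$-ample, being the sum of the $\pi$-nef class $(a-1)L$ and the $\pi$-ample $\mathbb{R}$-divisor $A$. Moreover $(a-1)L+A$ is $\pi$-log big with respect to $[X,\omega]$: its restriction to any qlc center $C$ of $[X,\omega]$ is the sum of $(a-1)L|_{C}$, which is $\pi$-nef, and $A|_{C}$, which is $\pi$-ample, hence is $\pi$-ample and in particular $\pi$-big on $C$. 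So $[X,\omega]$ and $L$ satisfy the hypotheses of both the quasi-log base point free theorem and the quasi-log vanishing theorem.

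Granting these, I would argue as follows. The quasi-log base point free theorem shows that $L$ is $\pi$-semi-ample away from ${\rm Nqlc}(X,\omega)$, i.e.\ the base locus over $Z$ of a suitable multiple of $L$ is contained in ${\rm Nqlc}(X,\omega)$. To promote this across the non-qlc locus, twist the exact sequence $0\to\mathcal{I}_{{\rm Nqlc}(X,\omega)}\to\mathcal{O}_X\to\mathcal{O}_{{\rm Nqlc}(X,\omega)}\to0$ by $\mathcal{O}_X(L)$ and use the vanishing $R^1\pi_*(\mathcal{I}_{{\rm Nqlc}(X,\omega)}\otimes\mathcal{O}_X(L))=0$ coming from the $\pi$-log bigness verified above; this gives a surjection $\pi_*\mathcal{O}_X(L)\twoheadrightarrow\pi_*\mathcal{O}_{{\rm Nqlc}(X,\omega)}(L|_{{\rm Nqlc}(X,\omega)})$. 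Since $L|_{{\rm Nqlc}(X,\omega)}$ is $\pi$-semi-ample by hypothesis, sections that generate $L|_{{\rm Nqlc}(X,\omega)}$ lift to sections of $L$ over $Z$, so by Nakayama's lemma $L$ is $\pi$-generated in a neighbourhood of ${\rm Nqlc}(X,\omega)$; together with the previous step this yields that $L=\omega+A$ is $\pi$-semi-ample. This combined argument is precisely the content of Fujino's base point free theorem for quasi-log schemes in the formulation that takes the semi-ampleness on ${\rm Nqlc}(X,\omega)$ as part of the input, so in the write-up it suffices to invoke that theorem with $L=\omega+A$.

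The main point requiring care is the $\mathbb{R}$-coefficient bookkeeping. Since $\omega+A$ is only an $\mathbb{R}$-line bundle, $m(\omega+A)$ is not literally a divisor, so one must use the $\mathbb{R}$-coefficient versions of the quasi-log base point free and vanishing theorems in \cite{fujino-book}, or else approximate by writing $A$ as a finite $\mathbb{R}_{>0}$-combination of $\pi$-ample $\mathbb{Q}$-Cartier divisors and running a Shokurov-type polytope argument, using the hypothesis that $(\omega+A)|_{{\rm Nqlc}(X,\omega)}$ is semi-ample as an $\mathbb{R}$-line bundle to keep the restrictions under control. Beyond this, I expect the genuine obstacle to be the $\pi$-log bigness condition on the qlc centers: it is what forces the assumption that $A$ be ample rather than merely that $\omega+A$ be nef, because ampleness is inherited by restriction to the qlc centers whereas nefness of $\omega+A$ alone would not give bigness there, and the quasi-log vanishing theorem would then be unavailable.
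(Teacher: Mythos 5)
Your overall plan — reduce to the base point free theorem for quasi-log schemes after a Shokurov-type rational-polytope reduction — is the same one the paper follows, and your verification that $(a-1)L+A = aL-\omega$ is $\pi$-ample and $\pi$-log big on each qlc center is correct. Your middle paragraph, which re-derives the lift of sections from ${\rm Nqlc}(X,\omega)$ via the exact sequence and the vanishing of $R^{1}\pi_{*}(\mathcal I_{{\rm Nqlc}(X,\omega)}\otimes\mathcal O_{X}(L))$, is redundant: as you yourself observe, this is exactly the content of the quasi-log BPF theorem \cite[Theorem 6.5.1]{fujino-book}, which already takes semi-ampleness on ${\rm Nqlc}(X,\omega)$ as a hypothesis and delivers $\pi$-semi-ampleness of $L$ outright, so unpacking it adds nothing and should be cut.

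The genuine gap is in your handling of the $\mathbb{R}$-coefficient reduction, which you treat as mere ``bookkeeping'' and delegate to an unspecified polytope argument ``to keep the restrictions under control.'' The actual difficulty is not the restriction to ${\rm Nqlc}(X,\omega)$; it is that after decomposing $\omega+A=\sum_{i} r_{i}(\omega_{i}+A_{i})$ into $\mathbb{Q}$-line bundles $\omega_{i}$ and $\pi$-ample $\mathbb{Q}$-divisors $A_{i}$ (which is straightforward via \cite[Lemma 4.25]{fujino-morihyper} once $Z$ is reduced to the affine case), the individual pieces $\omega_{i}+A_{i}$ have no reason to be $\pi$-nef, so \cite[Theorem 6.5.1]{fujino-book} is not yet applicable to them. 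Fixing this is the substantive part of the proof. The paper does it in two moves you do not mention: first, it applies the cone theorem \cite[Theorem 6.7.4]{fujino-book} to each $[X,\omega_{i}]$ and uses the semi-ampleness of $(\omega_{i}+A_{i})|_{{\rm Nqlc}(X,\omega)}$ precisely to show that $\overline{\rm NE}(X/Z)^{(i)}_{-\infty}$ lies in the $(\omega_{i}+A_{i})$-nonnegative part, so the $(\omega_{i}+A_{i})$-negative extremal rays are generated by finitely many curves $C_{i,j}$; second, it intersects the rational polyhedra $\mathcal H_{i,j}$ of coefficient vectors nonnegative against all $C_{i,j}$ to produce a refined rational decomposition $\omega+A=\sum_{l}r'_{l}(\omega'_{l}+A'_{l})$ whose pieces are each $\pi$-nef, still have the correct ${\rm Nqlc}$, and still restrict semi-amply to ${\rm Nqlc}(X,\omega)$. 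Only then is BPF invoked for each $\omega'_{l}+A'_{l}$. You should identify that nefness of the $\mathbb{Q}$-pieces is the point at issue and that the cone theorem is the tool that makes the polytope argument go through; as written, your proposal leaves this step unaddressed.
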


\begin{proof}
We may assume that $Z$ is affine. 
By using \cite[Lemma 4.25]{fujino-morihyper} and the argument from convex geometry, we can find positive real numbers $r_{1},\,\cdots, \,r_{m}$, $\mathbb{Q}$-line bundles $\omega_{1},\,\cdots,\,\omega_{m}$ on $X$, and $\pi$-ample $\mathbb{Q}$-divisors $A_{1},\,\cdots,\,A_{m}$ on $X$ such that
\begin{itemize}
\item
$\sum_{i=1}^{m}r_{i}=1$, $\sum_{i=1}^{m}r_{i}\omega_{i}=\omega$, and $\sum_{i=1}^{m}r_{i}A_{i}=A$, 
\item
for every $1\leq i \leq m$, there is the structure of a quasi-log scheme $[X,\omega_{i}]$ that has the same non-qlc locus as ${\rm Nqlc}(X, \omega)$, and
\item
all $(\omega_{i}+A_{i})|_{{\rm Nqlc}(X, \omega)}$ are semi-ample over $Z$. 
\end{itemize}
For each $1\leq i \leq m$, the cone theorem \cite[Theorem 6.7.4]{fujino-book} implies that we may write
$$\overline{\rm NE}(X/Z)=\overline{\rm NE}(X/Z)_{(\omega_{i}+A_{i}) \geq 0}+\overline{\rm NE}(X/Z)^{(i)}_{-\infty}+\sum_{j=1}^{p_{i}}\mathbb{R}_{\geq 0}[C_{i,j}]$$
for some $p_{i} \in \mathbb{Z}_{>0}$ and curves $C_{i,j} \subset X$ contained in a fiber of $\pi$, where 
$$\overline{\rm NE}(X/Z)^{(i)}_{-\infty}:={\rm Im}(\overline{\rm NE}({\rm Nqlc}(X, \omega_{i})/Z) \to \overline{\rm NE}(X/Z))\qquad (\text{\cite[Definition 6.7.1]{fujino-book}}).$$
Since $(\omega_{i}+A_{i})|_{{\rm Nqlc}(X, \omega)}$ is semi-ample over $Z$, we have $\overline{\rm NE}(X/Z)^{(i)}_{-\infty} \subset \overline{\rm NE}(X/Z)_{(\omega_{i}+A_{i}) \geq 0}$ for any $1 \leq i \leq m$. 
For each $1 \leq i \leq m$ and $1 \leq j \leq p_{i}$, we consider the set
$$\mathcal{H}_{i,j}:=\left\{(t_{1},\cdots,\,t_{m})\in (\mathbb{R}_{\geq 0})^{m}\,\middle|\,\sum_{k=1}^{m}t_{k}=1, \, \Bigl(\sum_{k=1}^{m}t_{k} (\omega_{k}+A_{k})\Bigr)\cdot C_{i,j}\geq 0\right\}.$$
Since all $\omega_{k}+A_{k}$ are $\mathbb{Q}$-Cartier and all $\mathcal{H}_{i,j}$ contain $(r_{1}, \cdots,\,r_{m})$, we see that $\bigcap_{i,j}\mathcal{H}_{i,j}$ is a rational polytope that contains $(r_{1}, \cdots,\,r_{m})$. 
By the standard argument from convex geometry, we can find positive real numbers $r'_{1},\,\cdots, \,r'_{n}$, $\mathbb{Q}$-line bundles $\omega'_{1},\,\cdots,\,\omega'_{n}$ on $X$, and $\pi$-ample $\mathbb{Q}$-divisors $A'_{1},\,\cdots,\,A'_{n}$ on $X$ such that
\begin{itemize}
\item
$\sum_{l=1}^{n}r'_{l}=1$, $\sum_{l=1}^{n}r'_{l}\omega'_{l}=\omega$, and $\sum_{l=1}^{n}r'_{l}A'_{l}=A$, 
\item
every $\omega'_{l}+A'_{l}$ is a convex linear combination of $\omega_{1}+A_{1}, \,\cdots,\,\omega_{m}+A_{m}$, 
\item
for every $1\leq l \leq n$, there is the structure of a quasi-log scheme $[X,\omega'_{l}]$ that has the same non-qlc locus as ${\rm Nqlc}(X, \omega)$, 
\item
all $(\omega'_{l}+A'_{l})|_{{\rm Nqlc}(X, \omega)}$ are semi-ample over $Z$, and
\item
$(\omega'_{l}+A'_{l})\cdot C_{i,j} \geq 0$ for all $l$, $i$, and $j$.  
\end{itemize}
Then $\omega'_{l}+A'_{l}$ is $\pi$-nef for all $l$. 
Indeed, if $\omega'_{l}+A'_{l}$ is not $\pi$-nef for some $l$, then there exists an $(\omega'_{l}+A'_{l})$-negative extremal ray $R$ of $\overline{\rm NE}(X/Z)$. 
By construction, there is an index $i$ such that $R$ is an $(\omega_{i}+A_{i})$-negative extremal ray of $\overline{\rm NE}(X/Z)$. 
Then $R$ is generated by $C_{i,j}$ for some $j$. 
However, it contradicts the condition $(\omega'_{l}+A'_{l})\cdot C_{i,j} \geq 0$. 
Therefore, $\omega'_{l}+A'_{l}$ is $\pi$-nef for all $l$. 
By the base point free theorem \cite[Theorem 6.5.1]{fujino-book}, every $\omega'_{l}+A'_{l}$ is $\pi$-semi-ample. 
Therefore, $\omega+A=\sum_{l=1}^{n}r'_{l}(\omega'_{l}+A'_{l})$ is also $\pi$-semi-ample. 
\end{proof}

In this paper, we use the notion of quasi-log scheme induced by normal pair. 

\begin{lem}\label{lem--str-quasi-log}
Let $f \colon Y \to X$ be a contraction of normal varieties and let $(Y,\Delta)$ be a normal pair such that $f({\rm Nlc}(Y,\Delta))\subsetneq X$ and $K_{Y}+\Delta\sim_{\mathbb{R}}f^{*}\omega$ for some $\mathbb{R}$-Cartier $\mathbb{R}$-divisor $\omega$ on $X$. 
Let $g \colon W \to Y$ be a log resolution of $(Y,\Delta)$. 
We define an $\mathbb{R}$-divisor $\Gamma$ on $W$ by 
$K_{W}+\Gamma=g^{*}(K_{Y}+\Delta).$ 
Let $(X,\omega, f\circ g \colon (W,\Gamma) \to X)$ be the structure of a quasi-log scheme. 
Then the structure does not depend on the choice of $g \colon W \to Y$, in other words, ${\rm Nqlc}(X, \omega)$ and the set of qlc centers do not depend on $g$.  
\end{lem}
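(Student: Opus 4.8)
The plan is to compare two log resolutions and reduce everything to the well-definedness of the non-lc locus of a normal pair. First I would take two log resolutions $g_1\colon W_1\to Y$ and $g_2\colon W_2\to Y$ of $(Y,\Delta)$, and reduce to the case where one dominates the other: by taking a common log resolution $W_3$ with morphisms $h_1\colon W_3\to W_1$ and $h_2\colon W_3\to W_2$ (which is again a log resolution of $(Y,\Delta)$), it suffices to compare the quasi-log structure coming from $g_i$ with the one coming from $g_i\circ h_i$. So from now on I may assume there is a projective birational morphism $h\colon W'\to W$ over $Y$ between two log resolutions, with $\Gamma$ on $W$ and $\Gamma'$ on $W'$ defined by $K_W+\Gamma=g^*(K_Y+\Delta)$ and $K_{W'}+\Gamma'=(g\circ h)^*(K_Y+\Delta)=h^*(K_W+\Gamma)$.

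Next I would identify $\mathrm{Nqlc}(X,\omega)$ in both cases. For the resolution $g\circ h$ it is the closed subscheme of $X$ defined by $(f\circ g\circ h)_*\mathcal O_{W'}(\lceil-(\Gamma'^{<1})\rceil-\lfloor\Gamma'^{>1}\rfloor)$, viewed as an ideal of $\mathcal O_X$ via the natural map, and similarly with $g$ and $\Gamma$. The key point is that these two sheaves agree inside $\mathcal O_X$: by the projection formula it is enough to show $g_*\mathcal O_W(\lceil-(\Gamma^{<1})\rceil-\lfloor\Gamma^{>1}\rfloor)\cong (g\circ h)_*\mathcal O_{W'}(\lceil-(\Gamma'^{<1})\rceil-\lfloor\Gamma'^{>1}\rfloor)$, i.e.\ that $h_*\mathcal O_{W'}(\lceil-(\Gamma'^{<1})\rceil-\lfloor\Gamma'^{>1}\rfloor)=\mathcal O_W(\lceil-(\Gamma^{<1})\rceil-\lfloor\Gamma^{>1}\rfloor)$. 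This is exactly the statement that the non-lc ideal sheaf of a normal (or sub normal) pair is independent of the log resolution, which is the content of \cite{fst-suppli-nonlc-ideal} and is essentially reproved in Lemma \ref{lem--nonlc-sheaf-isom} and Corollary \ref{cor--dlt-nonlc-ideal} of this paper; applied on $W$ with the sub normal pair $(W,\Gamma)$ and the log resolution $h$, it gives precisely the desired equality of subsheaves of $\mathcal O_W$. Pushing forward by $g$ and then by $f$ (using $f_*\mathcal O_Y\cong\mathcal O_X$ via the contraction hypothesis, and that $f(\mathrm{Nlc}(Y,\Delta))\subsetneq X$ so the non-qlc loci are proper closed subschemes), the two $\mathrm{Nqlc}(X,\omega)$ coincide as closed subschemes.

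Finally I would treat the qlc centers. By definition a qlc center of $[X,\omega]$ coming from $g$ is the image $f(c_Y(E))=f(g(c_W(E)))$ where $E$ is a stratum of $(W,\Gamma)$ — i.e.\ an lc place of $(W,\Gamma^{=1})$ in the log smooth sense, equivalently a prime divisor over $Y$ with $a(E,Y,\Delta)=-1$ — not contained in $\mathrm{Nqlc}(X,\omega)$. This description refers only to divisorial valuations over $Y$ with discrepancy $-1$ and to their images on $X$, which manifestly does not involve the chosen resolution $g$; the only subtlety is that ``not contained in the non-qlc locus'' must mean the same thing for both structures, and this is guaranteed by the identification of $\mathrm{Nqlc}(X,\omega)$ in the previous step, together with the fact that every divisorial lc place of $(Y,\Delta)$ not lying over $\mathrm{Nlc}(Y,\Delta)$ appears as a stratum on a high enough log resolution. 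I would remark that the images of strata of lower codimension are themselves images of divisorial strata (intersections of components of $\Gamma^{=1}$ are generically cut out by further components on a blow-up), so nothing new is produced by choosing a different $g$. The main obstacle, and the only genuinely technical point, is the resolution-independence of the non-lc ideal sheaf on $W$; but this is already available to us from Corollary \ref{cor--dlt-nonlc-ideal} together with \cite[Lemma 5.2]{fst-suppli-nonlc-ideal}, so the proof is a matter of assembling these ingredients.
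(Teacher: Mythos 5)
Your proof is correct, but it takes a genuinely different route from the paper's: the paper disposes of the lemma in one line by citing Fujino's general well-definedness result for quasi-log structures (\cite[Proposition 6.3.1]{fujino-book}), whereas you unpack the argument directly in the special case of a quasi-log scheme induced by a normal pair. Your key reduction — pass to a common log resolution, identify ${\rm Nqlc}(X,\omega)$ via the resolution-independence of the non-lc ideal sheaf of $(Y,\Delta)$ (the correct reference is \cite[Lemma 5.2]{fst-suppli-nonlc-ideal}; Lemma \ref{lem--nonlc-sheaf-isom} and Corollary \ref{cor--dlt-nonlc-ideal} treat more restrictive situations and are not quite what you want here, though they are in the same spirit), and then observe that the collection of images of strata is determined by divisorial valuations over $Y$ of discrepancy $-1$ together with ${\rm Nqlc}(X,\omega)$ — is exactly the right decomposition. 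What your approach buys is self-containedness and transparency (the reader sees precisely which invariants of $(Y,\Delta)$ determine the quasi-log structure on $X$), at the cost of some length and of needing to be slightly more careful about the step comparing strata of the two log-smooth models (one should note that under $h\colon W'\to W$ every stratum of $(W',\Gamma')$ maps onto a stratum of $(W,\Gamma)$ and, conversely, every stratum of $(W,\Gamma)$ is dominated by one of $(W',\Gamma')$, so the sets of images on $X$ agree once ${\rm Nqlc}(X,\omega)$ is known to agree); the paper's approach buys brevity by appealing to a general result that already handles all the bookkeeping. Also, applying the resolution-independence to the sub pair $(W,\Gamma)$ (rather than to the honest pair $(Y,\Delta)$) is unnecessary and slightly delicate since $\Gamma$ may have negative coefficients; you already note that the weaker statement about $g_*$ suffices via the projection formula, and that is the cleaner way to phrase it.
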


\begin{proof}
This is a direct consequence of \cite[Proposition 6.3.1]{fujino-book}. 
\end{proof}

\begin{defn}[Quasi-log scheme induced by normal pair]\label{defn--lc-trivial-fib-quasi-log}
A {\em quasi-log scheme induced by a normal pair}, which we denote by $f\colon (Y,\Delta) \to [X,\omega]$ in this paper, consists of a normal pair $(Y,\Delta)$, a contraction $f \colon Y \to X$ of normal varieties, and the structure of a quasi-log scheme $[X,\omega]$ on $X$ which is defined with a log resolution of $(Y,\Delta)$. 
By definition, it follows that
\begin{itemize}
\item
$f({\rm Nlc}(Y,\Delta))\subsetneq X$, and 
\item
$K_{Y}+\Delta\sim_{\mathbb{R}}f^{*}\omega$. 
\end{itemize}
By Lemma \ref{lem--str-quasi-log}, the structure of $[X,\omega]$ does not depend on the log resolution of $(Y,\Delta)$.

In the case of $X=Y$ and $\omega=K_{Y}+\Delta$, we may identify $[Y,K_{Y}+\Delta]$ with the normal pair $(Y,\Delta)$. 
In this situation, it follows that ${\rm Nqlc}(Y,K_{Y}+\Delta)={\rm Nlc}(Y,\Delta)$ and ${\rm Nqklt}(Y,K_{Y}+\Delta)={\rm Nklt}(Y,\Delta)$ as closed subschemes of $Y$. 
\end{defn}

By definition, $(Y,\Delta)$ and $f \colon Y \to X$ in a quasi-log scheme induced by a normal pair $f\colon (Y,\Delta) \to [X,\omega]$ also form an lc-trivial fibration.  
However, we will often focus on the structure of the quasi-log scheme of $[X,\omega]$ and use ${\rm Nqklt}(X, \omega)$ and ${\rm Nqlc}(X, \omega)$. 
Hence, we regard $f\colon (Y,\Delta) \to [X,\omega]$ as a quasi-log scheme rather than an lc-trivial fibration in this paper. 

\begin{rem}
Let $f\colon (Y,\Delta) \to [X,\omega]$ be a quasi-log scheme induced by a normal pair $(Y,\Delta)$. 
Let $\mathcal{I}_{{\rm Nklt}(Y,\Delta)}$ and $\mathcal{I}_{{\rm Nlc}(Y,\Delta)}$ be the defining ideal sheaves of the non-klt locus and non-lc locus of $(Y,\Delta)$, respectively. 
By definition, ${\rm Nqklt}(X,\omega)$ and ${\rm Nqlc}(X,\omega)$ are the closed subschemes defined by $f_{*}\mathcal{I}_{{\rm Nklt}(Y,\Delta)}$ and $f_{*}\mathcal{I}_{{\rm Nlc}(Y,\Delta)}$, respectively. 
In particular, we have $f({\rm Nklt}(Y,\Delta))={\rm Nqklt}(X,\omega)$ and $f({\rm Nlc}(Y,\Delta))={\rm Nqlc}(X,\omega)$ set-theoretically.  
\end{rem}

\begin{thm}\label{thm--pair-inductive-adjunction}
Let $f\colon (Y,\Delta) \to [X,\omega]$ be a quasi-log scheme induced by a normal pair such that $(Y,\Delta^{<1}+{\rm Supp}\,\Delta^{\geq 1})$ is a dlt pair. 
Let $S$ be an lc center of $(Y,\Delta)$ such that $f(S)\not\subset{\rm Nqlc}(X, \omega)$. 
Let $(S,\Delta_{S})$ be a normal pair defined by using adjunction $K_{S}+\Delta_{S}=(K_{Y}+\Delta)|_{S}$. 
Let $f_{S} \colon S \to T$ be the Stein factorization of $f|_{S} \colon S \to X$, and let $\omega_{T}$ be the pullback of $\omega$ to $T$. 
Then there exists the structure of a quasi-log scheme induced by a normal pair $f_{S} \colon (S,\Delta_{S}) \to [T,\omega_{T}]$ as in Definition \ref{defn--lc-trivial-fib-quasi-log} such that the natural morphism $\tau \colon T \to X$ induces a morphism ${\rm Nqlc}(T, \omega_{T}) \to {\rm Nqlc}(X, \omega)$ of closed subschemes and the image of any qlc center of $[T ,\omega_{T}]$ by $\tau$ is a qlc center of $[X,\omega]$. 
\end{thm}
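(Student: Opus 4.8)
The plan is to reduce everything to Theorem \ref{thm--dlt-nonlc-locus} and Lemma \ref{lem--str-quasi-log}. Since $S$ is an lc center of $(Y,\Delta)$ we have $S\not\subset {\rm Nlc}(Y,\Delta)$, so Theorem \ref{thm--dlt-nonlc-locus} applies to $(Y,\Delta)$ and $S$: it produces the normal pair $(S,\Delta_{S})$ with $K_{S}+\Delta_{S}=(K_{Y}+\Delta)|_{S}$, shows $(S,\Delta^{<1}_{S}+{\rm Supp}\,\Delta^{\geq 1}_{S})$ is dlt, and gives a morphism $\mathcal{O}_{Y}(-\lfloor\Delta^{>1}\rfloor)\to\mathcal{O}_{S}(-\lfloor\Delta_{S}^{>1}\rfloor)$ compatible with the natural surjection $\mathcal{O}_{Y}\to\mathcal{O}_{S}$. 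Because $f|_{S}=\tau\circ f_{S}$ and $\omega_{T}=\tau^{*}\omega$, restricting $K_{Y}+\Delta\sim_{\mathbb{R}}f^{*}\omega$ to $S$ gives $K_{S}+\Delta_{S}\sim_{\mathbb{R}}f_{S}^{*}\omega_{T}$. Using Corollary \ref{cor--dlt-nonlc-ideal} for $(Y,\Delta)$ and for $(S,\Delta_{S})$, the above morphism reads $\mathcal{I}_{{\rm Nlc}(Y,\Delta)}\cdot\mathcal{O}_{S}\subset\mathcal{I}_{{\rm Nlc}(S,\Delta_{S})}$, hence ${\rm Nlc}(S,\Delta_{S})\subset S\cap{\rm Nlc}(Y,\Delta)$ as sets; therefore $\tau$ sends $f_{S}({\rm Nlc}(S,\Delta_{S}))$ into $f({\rm Nlc}(Y,\Delta))={\rm Nqlc}(X,\omega)$, and since $\tau(T)=f(S)\not\subset{\rm Nqlc}(X,\omega)$ by hypothesis we get $f_{S}({\rm Nlc}(S,\Delta_{S}))\subsetneq T$. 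As $f_{S}$ is a contraction of normal varieties, Lemma \ref{lem--str-quasi-log} then shows that a log resolution of $(S,\Delta_{S})$ determines a quasi-log structure $[T,\omega_{T}]$ independent of the choice, and $f_{S}\colon (S,\Delta_{S})\to[T,\omega_{T}]$ is a quasi-log scheme induced by a normal pair in the sense of Definition \ref{defn--lc-trivial-fib-quasi-log}.

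For the morphism of closed subschemes ${\rm Nqlc}(T,\omega_{T})\to{\rm Nqlc}(X,\omega)$, I would use that by the remark following Definition \ref{defn--lc-trivial-fib-quasi-log} and Corollary \ref{cor--dlt-nonlc-ideal} one has $\mathcal{I}_{{\rm Nqlc}(X,\omega)}=f_{*}\mathcal{O}_{Y}(-\lfloor\Delta^{>1}\rfloor)$ inside $\mathcal{O}_{X}=f_{*}\mathcal{O}_{Y}$, and likewise $\tau_{*}\mathcal{I}_{{\rm Nqlc}(T,\omega_{T})}=(f|_{S})_{*}\mathcal{O}_{S}(-\lfloor\Delta_{S}^{>1}\rfloor)$. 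Pushing the commutative square of Theorem \ref{thm--dlt-nonlc-locus} forward by $f$ shows that the composite $\mathcal{I}_{{\rm Nqlc}(X,\omega)}\hookrightarrow\mathcal{O}_{X}\to\tau_{*}\mathcal{O}_{T}$ factors through $\tau_{*}\mathcal{I}_{{\rm Nqlc}(T,\omega_{T})}$; equivalently $\mathcal{I}_{{\rm Nqlc}(X,\omega)}\cdot\mathcal{O}_{T}\subset\mathcal{I}_{{\rm Nqlc}(T,\omega_{T})}$, which is exactly the statement that $\tau$ sends the closed subscheme ${\rm Nqlc}(T,\omega_{T})$ scheme-theoretically into ${\rm Nqlc}(X,\omega)$.

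For the assertion that $\tau$ sends qlc centers of $[T,\omega_{T}]$ to qlc centers of $[X,\omega]$, the idea is to compute both structures from one common model. Writing $S=c_{Y}(P)$, choose a log resolution $g\colon W\to Y$ of $(Y,\Delta)$ on which $P$ appears as a prime divisor $P_{W}$, and put $K_{W}+\Gamma=g^{*}(K_{Y}+\Delta)$; then $P_{W}$ is a component of $\Gamma^{=1}$, $f\circ g\colon(W,\Gamma)\to X$ realizes $[X,\omega]$, and adjunction together with $K_{S}+\Delta_{S}=(K_{Y}+\Delta)|_{S}$ gives $K_{P_{W}}+\Gamma_{P_{W}}=\bar g^{*}(K_{S}+\Delta_{S})$ for the induced morphism $\bar g\colon P_{W}\to S$, where $\Gamma_{P_{W}}=(\Gamma-P_{W})|_{P_{W}}$. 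After further blow-ups of $W$ (propagated to $P_{W}$) and using Lemma \ref{lem--str-quasi-log} and \cite[Proposition 6.3.1]{fujino-book}, we may assume $f_{S}\circ\bar g\colon(P_{W},\Gamma_{P_{W}})\to T$ realizes $[T,\omega_{T}]$. The strata of $(P_{W},\Gamma_{P_{W}})$ are then exactly the strata of $(W,\Gamma)$ contained in $P_{W}$, and $(f\circ g)(\Sigma)=\tau\bigl((f_{S}\circ\bar g)(\Sigma)\bigr)$ for every such stratum $\Sigma$. So it suffices to check that $\Sigma$ is not contained in ${\rm Nqlc}(X,\omega)$ whenever $(f_{S}\circ\bar g)(\Sigma)$ is not contained in ${\rm Nqlc}(T,\omega_{T})$; this is obtained by pushing the restriction exact sequence used in the proof of Theorem \ref{thm--dlt-nonlc-locus} (the one relating $\mathcal{O}_{W}(\lceil -(\Gamma^{<1})\rceil-\lfloor\Gamma^{>1}\rfloor)$, its twist by $-P_{W}$, and $\mathcal{O}_{P_{W}}(\lceil -(\Gamma_{P_{W}}^{<1})\rceil-\lfloor\Gamma_{P_{W}}^{>1}\rfloor)$) forward along $f\circ g$ and $f_{S}\circ\bar g$, which on the subvariety swept out by $P_{W}$ identifies the ideal of ${\rm Nqlc}(T,\omega_{T})$ with the restriction of the ideal of ${\rm Nqlc}(X,\omega)$. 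Then $(f\circ g)(\Sigma)$ is the image under $f\circ g$ of a stratum of $(W,\Gamma)$ not contained in ${\rm Nqlc}(X,\omega)$, i.e.\ a qlc center of $[X,\omega]$.

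The step I expect to be the main obstacle is precisely this last comparison: establishing, on the locus $(f\circ g)(P_{W})$, that the non-qlc locus of the restricted quasi-log scheme $[T,\omega_{T}]$ coincides with the pull-back of ${\rm Nqlc}(X,\omega)$ — equivalently, that a stratum of $(W,\Gamma)$ lying in $P_{W}$ maps into ${\rm Nqlc}(X,\omega)$ only when its image in $T$ already lies in ${\rm Nqlc}(T,\omega_{T})$. This needs the precise behaviour of the ideal sheaf $\mathcal{O}_{W}(\lceil -(\Gamma^{<1})\rceil-\lfloor\Gamma^{>1}\rfloor)$ under restriction to the component $P_{W}$ together with the structure of the Stein factorization $\tau$, and is the point that genuinely uses the dlt-adjunction results of Section \ref{sec--preliminaries} rather than formal properties of quasi-log schemes; the remaining verifications — that $(S,\Delta_{S})$ is a normal pair, the $\mathbb{R}$-linear equivalence, and the properness statement $f_{S}({\rm Nlc}(S,\Delta_{S}))\subsetneq T$ — are then routine.
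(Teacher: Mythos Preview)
Your treatment of the quasi-log structure on $[T,\omega_T]$ and of the morphism ${\rm Nqlc}(T,\omega_T)\to{\rm Nqlc}(X,\omega)$ matches the paper's proof. Where you diverge is the qlc-center statement, and there you are making it much harder than necessary. The paper does not pass to a log resolution or compare strata at all: it simply invokes the standard dlt adjunction fact (cited at the start of the singularities subsection, \cite[Proposition 3.9.2]{fujino-what-log-ter} and \cite[Theorem 4.16]{kollar-mmp}) that since $(Y,\Delta^{<1}+{\rm Supp}\,\Delta^{\geq 1})$ is dlt and $S$ is an lc center, every lc center of $(S,\Delta_S)$ is already an lc center of $(Y,\Delta)$ contained in $S$. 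A qlc center $C$ of $[T,\omega_T]$ is of the form $f_S(S')$ with $S'$ an lc center of $(S,\Delta_S)$ (or $S'=S$); hence $S'$ is an lc center of $(Y,\Delta)$, and $\tau(C)=f(S')$ is a qlc center of $[X,\omega]$. That is the entire argument.

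Your ``main obstacle'' --- that the non-qlc ideal on $T$ should \emph{coincide} with the pullback of the non-qlc ideal on $X$ over the image of $P_W$ --- is asking for more than is proved or needed: only the inclusion $\mathcal{I}_{{\rm Nqlc}(X,\omega)}\cdot\mathcal{O}_T\subset\mathcal{I}_{{\rm Nqlc}(T,\omega_T)}$ is established, and the reverse inclusion may well fail. The paper's route through lc centers of $(Y,\Delta)$ and $(S,\Delta_S)$ sidesteps this comparison entirely; the point is that being an lc center of $(Y,\Delta)$ already carries the condition $S'\not\subset{\rm Nlc}(Y,\Delta)$ built into Definition \ref{defn--lccenter}, so no further ideal-sheaf comparison is required on the resolution.
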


\begin{proof}
Let $\tau \colon T \to X$ and $f_{S}\colon S \to T$ be as in the theorem. 
To define the structure of a quasi-log scheme induced by a normal pair $f_{S} \colon (S,\Delta_{S}) \to [T,\omega_{T}]$, we need to check $f_{S}({\rm Nlc}(S,\Delta_{S})) \neq T$. 
By Theorem \ref{thm--dlt-nonlc-locus}, the pair $(S,\Delta_{S}^{<1}+{\rm Supp}\,\Delta_{S}^{\geq 1})$ is a dlt pair and
the natural morphism $\mathcal{O}_{Y} \to \mathcal{O}_{S}$ induces a morphism $\mathcal{O}_{Y}(- \lfloor \Delta^{>1} \rfloor)\to \mathcal{O}_{S}( - \lfloor \Delta_{S}^{>1} \rfloor)$. 
By Corollary \ref{cor--dlt-nonlc-ideal}, the support of ${\rm Nlc}(S,\Delta_{S})$ is contained in ${\rm Nlc}(Y,\Delta)\cap S$. 
Then $f({\rm Nlc}(S,\Delta_{S})) \subset {\rm Nqlc}(X, \omega) \cap f(S)$. 
This fact and the hypothesis $f(S)\not\subset{\rm Nqlc}(X, \omega)$ imply $f_{S}({\rm Nlc}(S,\Delta_{S})) \neq T$. 
Therefore, we may define the structure of a quasi-log scheme induced by a normal pair $f_{S} \colon (S,\Delta_{S}) \to [T,\omega_{T}]$. 

Since $(Y,\Delta^{<1}+{\rm Supp}\,\Delta^{\geq 1})$ is a dlt pair and $S$ is an lc center of $(Y,\Delta)$, any lc center of $(S,\Delta_{S})$ is an lc center of $(Y,\Delta)$ contained in $S$. 
From this, the image $\tau(C)$ of any qlc center $C$ of $[T ,\omega_{T}]$ is a qlc center of $[X,\omega]$. 

By Corollary \ref{cor--dlt-nonlc-ideal}, the defining ideal sheaf $\mathcal{I}_{X}$ of ${\rm Nqlc}(X, \omega)$ is $f_{*}\mathcal{O}_{X}(-\lfloor \Delta^{>1}\rfloor)$, and the defining ideal sheaf $\mathcal{I}_{T}$ of ${\rm Nqlc}(T, \omega_{T})$ is $f_{S*}\mathcal{O}_{S}(-\lfloor \Delta^{>1}_{S}\rfloor)$. 
By Theorem \ref{thm--dlt-nonlc-locus}, the natural morphism $\mathcal{O}_{Y} \to \mathcal{O}_{S}$ induces a morphism $\mathcal{I}_{X} \to \tau_{*}\mathcal{I}_{T}$. 
By the following diagram
 $$
\xymatrix@C=16pt{
0 \ar[r]& \tau_{*}\mathcal{I}_{T} \ar[r]& \tau_{*}\mathcal{O}_{T} \ar[r]& \tau_{*}\mathcal{O}_{{\rm Nqlc}(T, \omega_{T})}\\
0 \ar[r]& \mathcal{I}_{X} \ar[r]\ar[u]& \mathcal{O}_{X} \ar[r]\ar[u]& \mathcal{O}_{{\rm Nqlc}(X, \omega)}  \ar[r]& 0,
}
$$
there exists a natural morphism $\mathcal{O}_{{\rm Nqlc}(X, \omega)}\to \tau_{*}\mathcal{O}_{{\rm Nqlc}(T, \omega_{T})}$. 
This induces the desired morphism ${\rm Nqlc}(T, \omega_{T}) \to {\rm Nqlc}(X, \omega)$ as a morphism between closed subschemes. 
\end{proof}

\begin{lem}\label{lem--can-bundle-formula}
Let $f\colon (Y,\Delta) \to [X,\omega]$ be a quasi-log scheme induced by a normal pair. 
Let $\pi \colon X \to Z$ be a projective morphism to a quasi-projective scheme $Z$, and let $A$ be a $\pi$-ample $\mathbb{R}$-divisor on $X$. 
Then there exists a normal pair $(X,G)$ such that the relation $K_{X}+G \sim_{\mathbb{R},\,Z}\omega+A$ holds and ${\rm Nklt}(X,G) = {\rm Nqklt}(X,\omega)$ as closed subschemes of $X$. 
\end{lem}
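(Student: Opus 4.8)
The plan is to run the canonical bundle formula (Ambro's subadjunction for lc-trivial fibrations, cf.~\cite{fujino-book}) on the lc-trivial fibration underlying $f\colon (Y,\Delta)\to X$, and then to convert the resulting moduli part into an effective divisor by exploiting the $\pi$-ampleness of $A$. First I would replace $(Y,\Delta)$ by a dlt blow-up (Theorem \ref{thm--dlt-blowup}), which does not change the quasi-log structure $[X,\omega]$ by Lemma \ref{lem--str-quasi-log}, so that $(Y,\Delta^{<1}+{\rm Supp}\,\Delta^{\geq 1})$ is dlt and the lc centers of $(Y,\Delta)$ are geometrically well behaved. Taking a log resolution $g\colon W\to Y$ of $(Y,\Delta)$, setting $h=f\circ g$ and $K_{W}+\Gamma=g^{*}(K_{Y}+\Delta)$, one gets an lc-trivial fibration $h\colon (W,\Gamma)\to X$ with $K_{W}+\Gamma\sim_{\mathbb{R}}h^{*}\omega$ realizing $[X,\omega]$. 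The hypotheses $f({\rm Nlc}(Y,\Delta))\subsetneq X$ and $K_{Y}+\Delta\sim_{\mathbb{R}}f^{*}\omega$ ensure that the discriminant and moduli parts are defined, and the canonical bundle formula gives an effective $\mathbb{R}$-divisor $B_{X}$ on $X$ and an $\mathbb{R}$-Cartier $\mathbb{R}$-divisor $M_{X}$ on $X$ with $\omega\sim_{\mathbb{R}}K_{X}+B_{X}+M_{X}$, where $M_{X}$ is $b$-nef, hence $\pi$-pseudo-effective. I would choose $B_{X}$ so that its construction is compatible with the non-lc ideal sheaves; concretely, ${\rm Nqlc}(X,\omega)$ is recovered from $B_{X}^{>1}$ and the lc centers of $(X,B_{X})$ are exactly the qlc centers of $[X,\omega]$, so that ${\rm Nklt}(X,B_{X})={\rm Nqklt}(X,\omega)$ as closed subschemes, matching the identity ${\mathcal I}_{{\rm Nqklt}(X,\omega)}=f_{*}{\mathcal I}_{{\rm Nklt}(Y,\Delta)}$ recorded in the Remark preceding the lemma.

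Since $M_{X}$ is $\pi$-pseudo-effective and $A$ is $\pi$-ample, $M_{X}+A$ is $\pi$-big; writing $M_{X}+A\sim_{\mathbb{R},Z}A_{0}+E_{0}$ with $A_{0}$ a $\pi$-ample $\mathbb{R}$-divisor and $E_{0}\geq 0$, and then representing $A_{0}$ by a general member $A_{0}'$ of $|A_{0}/Z|_{\mathbb{R}}$ written as a positive $\mathbb{R}$-combination of general members of $\pi$-base-point-free systems with arbitrarily small coefficients, I would take $N:=A_{0}'+E_{0}\geq 0$, so that $N\sim_{\mathbb{R},Z}M_{X}+A$ and adding $A_{0}'$ to $B_{X}+E_{0}$ does not alter the non-klt locus. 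Then $G:=B_{X}+N$ is a normal pair with $K_{X}+G\sim_{\mathbb{R},Z}\omega+A$, and one verifies ${\rm Nklt}(X,G)={\rm Nklt}(X,B_{X})={\rm Nqklt}(X,\omega)$ as closed subschemes by computing both defining ideal sheaves on a common log resolution refining $W\to X$ and invoking once more the compatibility of the canonical bundle formula with the non-klt ideal sheaf.

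The main obstacle is the moduli part $M_{X}$: it is only known to be $b$-nef, not $b$-semi-ample, so the effective representative $N$ of $M_{X}+A$ necessarily carries a fixed part $E_{0}$ whose support need not lie in ${\rm Nqklt}(X,\omega)$, and keeping the \emph{scheme-theoretic} non-klt locus of $(X,G)$ exactly equal to ${\rm Nqklt}(X,\omega)$ is the delicate point. Resolving it requires using, more carefully than sketched above, both the freedom to modify the discriminant within the $\mathbb{R}$-linear equivalence class of $K_{Y}+\Delta$ over $X$ and the $\pi$-positivity of $A$, so that the fixed part $E_{0}$ is confined to ${\rm Nqlc}(X,\omega)$ (or at least to the prescribed non-klt locus); this is the step I expect to require the most work.
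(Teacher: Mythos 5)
Your high-level strategy is the right one and matches the paper's: apply the canonical bundle formula to the lc-trivial fibration underlying $f\colon (Y,\Delta)\to[X,\omega]$ to write $\omega\sim_{\mathbb{R},Z}K_{X}+B+M$ with $B$ the discriminant and $M$ the moduli part, arrange that the generalized non-klt locus of $(X,B+M)$ agrees with ${\rm Nqklt}(X,\omega)$, and then use the $\pi$-ampleness of $A$ to replace $A+M$ by an effective $\mathbb{R}$-divisor without disturbing the non-klt locus. The dlt blow-up you do at the start is unnecessary (the quasi-log structure on $[X,\omega]$ is already independent of the choice of log resolution of $(Y,\Delta)$ by Lemma~\ref{lem--str-quasi-log}), but it is harmless.

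The genuine gap is in how you handle the effective part of the decomposition of $M+A$, and you have misdiagnosed what the fix has to be. You decompose $M_{X}+A\sim_{\mathbb{R},Z}A_{0}+E_{0}$ on $X$ itself, where $M_{X}$ is merely $\pi$-pseudo-effective (it is a pushforward of a nef divisor from a resolution, and need not be nef on $X$). Consequently $M_{X}+A$ is $\pi$-big but need not be $\pi$-nef on $X$, so you cannot shrink $E_{0}$ by scaling: the Kodaira-type fixed divisor $E_{0}$ has coefficients you do not control, and adding it to $B_{X}$ can very well change the round-down and hence the non-klt ideal. Your proposed repair — somehow confining $E_{0}$ to ${\rm Nqlc}(X,\omega)$ or to the prescribed non-klt locus by tweaking the discriminant — is not what is needed and there is no obvious way to arrange it. The correct fix, which the paper uses, is to pass to a log resolution $g\colon X'\to X$ of $(X,B)$ on which the moduli $\mathbb{R}$-b-divisor descends as a nef (over $Z$) divisor $M'$, and write $K_{X'}+B'+M'=g^{*}(K_{X}+B+M)$. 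Then $g^{*}A+M'$ is nef and big over $Z$ \emph{on $X'$}. Choosing $g^{*}A+M'\sim_{\mathbb{R},Z}H_{1}+E_{1}$ with $H_{1}$ ample and $E_{1}\geq 0$, one has, for every $t\in(0,1]$,
\begin{equation*}
g^{*}A+M'\sim_{\mathbb{R},Z}\bigl[(1-t)(g^{*}A+M')+tH_{1}\bigr]+tE_{1},
\end{equation*}
where the bracketed divisor is ample over $Z$ because nef plus ample is ample. Taking $t$ sufficiently small makes the coefficients of $E':=tE_{1}$ as small as one likes, and taking $H'$ a general member of the ample part (after a further log resolution of $(X',B'+E')$ if needed) one can arrange $\lfloor B'\rfloor=\lfloor B'+E'+H'\rfloor$. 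Setting $G:=g_{*}(B'+E'+H')$ then gives $K_{X}+G\sim_{\mathbb{R},Z}\omega+A$ and ${\rm Nklt}(X,G)={\rm Nqklt}(X,\omega)$ as closed subschemes, because the defining ideal is $g_{*}\mathcal{O}_{X'}(-\lfloor B'+E'+H'\rfloor)=g_{*}\mathcal{O}_{X'}(-\lfloor B'\rfloor)$. The essential point you missed is that the ampleness of $A$ buys you an arbitrarily small multiple of the fixed part, provided you perform the decomposition on the model where the moduli part is actually nef; there is no need to control the support of the fixed part, only the size of its coefficients.
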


\begin{proof}
By the definitions of the discriminant $\mathbb{R}$-b-divisors and the moduli $\mathbb{R}$-b-divisors as in \cite[Corollary 5.2]{fujino-hashizume-adjunction} (see \cite{ambro1} for the case of $\mathbb{Q}$-divisors), we obtain a generalized pair $(X,B+M)$ as in \cite{bz} such that $\omega \sim_{\mathbb{R},\,Z} K_{X}+B+M$ and the generalized non-klt locus of $(X,B+M)$ is equal to ${\rm Nqklt}(X,\omega)$ (cf.~\cite[Lemma 3.2]{ambro1}). 
Let $g \colon X' \to X$ be a log resolution of $(X,B)$ such that the moduli part $M'$ on $X'$ is nef over $Z$. 
We define an $\mathbb{R}$-divisor $B'$ on $X'$ by $K_{X'}+B'+M'=g^{*}(K_{X}+B+M)$. 
Since $g^{*}A+M'$ is nef and big over $Z$, we can find an effective $\mathbb{R}$-divisor $E'$ on $X'$ and a general $(\pi\circ g)$-ample $\mathbb{R}$-divisor $H'$ on $X'$ such that $E'+H' \sim_{\mathbb{R},\,Z}g^{*}A+M'$ and $\lfloor B' \rfloor= \lfloor (B'+E'+H') \rfloor$. 
By replacing $X'$ with a log resolution of $(X',B'+E')$, we may assume that $(X',B'+E'+H')$ is log smooth. 
Put $G:=g_{*}(B'+E'+H')$. 
By construction, we have $K_{X}+G\sim_{\mathbb{R},\,Z}\omega+A$ and ${\rm Nklt}(X,G)$ coincides with the generalized non-klt locus of $(X,B+M)$, and therefore we have ${\rm Nklt}(X,G)={\rm Nqklt}(X,\omega)$ as closed subschemes of $X$. 
\end{proof}

\begin{thm}\label{thm--vanishing-quasi-log}
Let $f\colon (Y,\Delta) \to [X,\omega]$ be a quasi-log scheme induced by a normal pair. 
Fix $X'$ a union of ${\rm Nqlc}(X, \omega)$ and (possibly empty) some qlc centers of $[X,\omega]$. 
Let $\pi \colon X \to Z$ be a projective morphism to a scheme $Z$, and let $L$ be a $\mathbb{Q}$-Cartier Weil divisor on $X$ such that 
\begin{itemize}
\item
$L$ is Cartier on a neighborhood of $X'$, 
\item
$f^{*}L$ is a Weil divisor on $Y$, and 
\item
$L-\omega$ is $\pi$-nef and $\pi$-log big with respect to $[X,\omega]$ (\cite[Definition 3.7]{fujino-bpf-quasi-log}). 
In other words, $L-\omega$ is $\pi$-nef and $\pi$-big and the restriction of $L-\omega$ to any qlc center of $[X,\omega]$ is $\pi$-big. 
\end{itemize}
Then $R^{i}\pi_{*}(\mathcal{I}_{X'}\otimes_{\mathcal{O}_{X}} \mathcal{O}_{X}(L))=0$ for every $i>0$, where $\mathcal{I}_{X'}$ is the defining ideal sheaf of $X'$.
\end{thm}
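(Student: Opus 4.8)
The plan is to reduce the statement to the vanishing theorem for (globally embedded) simple normal crossing pairs and the resulting vanishing theorem for quasi-log schemes, both due to Fujino. Since the claim is local on $Z$, we may assume $Z$ is affine. Fix a log resolution $g\colon W\to Y$ of $(Y,\Delta)$ defining the quasi-log structure of $[X,\omega]$, put $h:=f\circ g\colon W\to X$, and write $K_{W}+\Gamma=g^{*}(K_{Y}+\Delta)$; then $(W,\Gamma)$ is a simple normal crossing pair and $h^{*}\omega\sim_{\mathbb{R}}K_{W}+\Gamma$ because $K_{Y}+\Delta\sim_{\mathbb{R}}f^{*}\omega$. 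The closed subscheme $X'$, being a union of ${\rm Nqlc}(X,\omega)$ and some qlc centers of $[X,\omega]$, corresponds under $h$ to a union of strata of $(W,\Gamma)$; by the structure theory of quasi-log schemes (the computation of the non-qlc ideal sheaf in Definition \ref{defn--quasi-log} together with adjunction, cf.~\cite[Proposition 6.3.1]{fujino-book}) there is a sum $S_{W}$ of prime components of $\Gamma^{=1}$ with $\mathcal{I}_{X'}\cong h_{*}\mathcal{O}_{W}(\lceil-(\Gamma^{<1})\rceil-\lfloor\Gamma^{>1}\rfloor-S_{W})$.

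Next I would realize $\mathcal{I}_{X'}\otimes_{\mathcal{O}_{X}}\mathcal{O}_{X}(L)$ as a single pushforward from $W$. Set $N:=\lceil h^{*}L-\Gamma^{<1}\rceil-\lfloor\Gamma^{>1}\rfloor-S_{W}$, a Weil divisor on $W$. On a neighborhood of $X'$ the divisor $L$, hence $h^{*}L$, is Cartier, so there $N=h^{*}L+\lceil-(\Gamma^{<1})\rceil-\lfloor\Gamma^{>1}\rfloor-S_{W}$ and the projection formula gives $h_{*}\mathcal{O}_{W}(N)\cong\mathcal{O}_{X}(L)\otimes\mathcal{I}_{X'}$ near $X'$. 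Away from the support of $X'$ we have $\mathcal{I}_{X'}=\mathcal{O}_{X}$, and, using that $f^{*}L$ is a Weil divisor on $Y$ so that $\{h^{*}L\}=\{g^{*}(f^{*}L)\}$ is $g$-exceptional, together with $f_{*}\mathcal{O}_{Y}(f^{*}L)=\mathcal{O}_{X}(L)$ and the standard divisorial-sheaf manipulations used in the proof of Corollary \ref{cor--dlt-nonlc-ideal}, one checks $h_{*}\mathcal{O}_{W}(N)=\mathcal{O}_{X}(L)$ there. These local descriptions patch to a global isomorphism $h_{*}\mathcal{O}_{W}(N)\cong\mathcal{I}_{X'}\otimes\mathcal{O}_{X}(L)$.

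Finally, unwinding the definitions and using $h^{*}\omega\sim_{\mathbb{R}}K_{W}+\Gamma$, one has $N\sim_{\mathbb{R}}K_{W}+B_{W}-S_{W}+h^{*}(L-\omega)$ for a divisor $B_{W}$ with ${\rm Supp}\,B_{W}\subset{\rm Supp}\,\Gamma$ having the shape required by Fujino's setup, after a $g$-exceptional modification not affecting $h_{*}\mathcal{O}_{W}(N)$ (here the negativity lemma and the $g$-exceptionality of $\{h^{*}L\}$ are used). Since $L-\omega$ is $\pi$-nef and $\pi$-log big with respect to $[X,\omega]$, the class $h^{*}(L-\omega)$ is nef over $Z$, big on $W$, and big over $Z$ on every stratum of $(W,\Gamma)$. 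This is precisely the hypothesis of Fujino's vanishing theorem for embedded simple normal crossing pairs, and of the ensuing vanishing theorem for quasi-log schemes (\cite[Chapters 5 and 6]{fujino-book}); applying it — together with the torsion-freeness theorem to control the Leray spectral sequence of $h\colon W\to X$ — gives $R^{i}\pi_{*}h_{*}\mathcal{O}_{W}(N)=0$ for all $i>0$. Combined with $h_{*}\mathcal{O}_{W}(N)\cong\mathcal{I}_{X'}\otimes\mathcal{O}_{X}(L)$, this proves $R^{i}\pi_{*}(\mathcal{I}_{X'}\otimes\mathcal{O}_{X}(L))=0$ for all $i>0$, as desired.

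The main obstacle is the second paragraph: realizing $\mathcal{I}_{X'}\otimes\mathcal{O}_{X}(L)$ as one pushforward $h_{*}\mathcal{O}_{W}(N)$ with $N$ a genuine integral divisor. Because $L$ is only $\mathbb{Q}$-Cartier globally, $\mathcal{O}_{X}(L)$ is not invertible away from $X'$, so the projection formula is unavailable there and one must instead argue through $f_{*}\mathcal{O}_{Y}(f^{*}L)=\mathcal{O}_{X}(L)$ while carefully tracking how the $g$-exceptional fractional part of $h^{*}L$ interacts with the rounding of $h^{*}L-\Gamma^{<1}$; the hypotheses that $L$ is Cartier near $X'$ and that $f^{*}L$ is a Weil divisor on $Y$ are present exactly to make the near-$X'$ and away-from-$X'$ descriptions of $h_{*}\mathcal{O}_{W}(N)$ agree and to guarantee that $N$ has the fractional behavior required by the simple-normal-crossing vanishing theorem.
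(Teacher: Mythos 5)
Your strategy matches the paper's: push forward a carefully chosen Weil divisor from a log resolution $g\colon W\to Y$ and apply Fujino's vanishing theorem for simple normal crossing pairs. However, the divisor you write down, $N:=\lceil h^{*}L-\Gamma^{<1}\rceil-\lfloor\Gamma^{>1}\rfloor-S_{W}$, does not satisfy the hypotheses of that vanishing theorem, and the throwaway remark about a ``$g$-exceptional modification not affecting $h_{*}\mathcal{O}_{W}(N)$'' is exactly where the real content lies. Computing the divisor $B_{W}$ in $N+S_{W}\sim_{\mathbb{R}}K_{W}+B_{W}+h^{*}(L-\omega)$ gives
\[
B_{W}=\bigl(\lceil h^{*}L-\Gamma^{<1}\rceil-(h^{*}L-\Gamma^{<1})\bigr)+\Gamma^{=1}+\{\Gamma^{>1}\},
\]
and on any $g$-exceptional prime $P\subset{\rm Supp}\,\Gamma^{=1}$ where $\{h^{*}L\}_{P}\neq 0$ (such $P$ generally exist) one has $\Gamma^{<1}_{P}=\Gamma^{>1}_{P}=0$, so ${\rm coeff}_{P}(B_{W})=(1-\{h^{*}L\}_{P})+1=2-\{h^{*}L\}_{P}>1$. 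Thus $B_{W}$ is not a boundary and the vanishing theorem cannot be invoked. The culprit is the combined rounding $\lceil h^{*}L-\Gamma^{<1}\rceil$.

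The paper keeps the two roundings separate: it sets $A:=\lfloor h^{*}L\rfloor+E+\lceil-(\Gamma^{<1})\rceil$ and $T:=S+\lfloor\Gamma^{>1}\rfloor$, where $E$ is the reduced divisor supported exactly on the primes where $B-\{h^{*}L\}$ has negative coefficient (with $B:=\Gamma^{=1}+\{\Gamma\}-S$), and checks that $B':=B-\{h^{*}L\}+E$ is a genuine boundary with $B'^{=1}\leq B^{=1}$; on the problematic components of $\Gamma^{=1}$ the coefficient becomes $1-\{h^{*}L\}_{P}<1$ rather than exceeding $1$. This $E$ is the modification you are implicitly appealing to; the difference $N-(A-T)=\lceil\{h^{*}L\}-\{\Gamma\}\rceil-E$ is effective and $g$-exceptional, so the pushforwards do agree, but you must actually pass to $A-T$ before invoking the vanishing theorem and re-derive the pushforward identity for $A-T$ rather than for $N$. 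Two further points you gloss over: the paper establishes $(f\circ g)_{*}\mathcal{O}_{W}(A-T)\cong\mathcal{I}_{X'}\otimes_{\mathcal{O}_{X}}\mathcal{O}_{X}(L)$ by pushing forward the short exact sequence $0\to\mathcal{O}_{W}(A-T)\to\mathcal{O}_{W}(A)\to\mathcal{O}_{T}(A|_{T})\to0$ and killing the connecting morphism with the torsion-free theorem, which is more robust than the near/away-from-$X'$ gluing you sketch; and the preliminary blow-up ensuring that the strata of $(W,{\rm Supp}\,\Gamma^{\geq 1})$ lying over $X'$ are exactly ${\rm Supp}(\Gamma^{>1}+S)$ must be made explicit before $\mathcal{I}_{X'}\cong h_{*}\mathcal{O}_{W}(\lceil-(\Gamma^{<1})\rceil-\lfloor\Gamma^{>1}\rfloor-S_{W})$ is valid.
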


\begin{proof}
We may assume that $Z$ is affine. 
Let $U \supset X'$ be an open subset of $X$ on which $L$ is Cartier. 
Let $g \colon W \to Y$ be a log resolution of $(Y,\Delta)$, and we define $\Gamma$ by 
$$K_{W}+\Gamma=g^{*}(K_{Y}+\Delta).$$
Then $g^{*}f^{*}L=\lfloor g^{*}f^{*}L \rfloor + \{g^{*}f^{*}L\}$ and $\{g^{*}f^{*}L\}$ is an effective $g$-exceptional $\mathbb{Q}$-divisor such that $\{g^{*}f^{*}L\}|_{(f \circ g)^{-1}(U)}=0$. 
By taking some blow-ups on $W$ if necessary, we may assume that the union of all strata of $(W,{\rm Supp}\,\Gamma^{\geq 1})$ mapping into $X'$ is ${\rm Supp}\,(\Gamma^{>1}+S)$  for some union $S$ of components of $\Gamma^{= 1}$. 
We put
$$T:=S+\lfloor \Gamma^{>1}\rfloor \qquad {\rm and}\qquad B:= \Gamma^{=1}+\{\Gamma\}-S.$$
Then $B$ is an snc boundary $\mathbb{R}$-divisor, and $B^{=1}$ and $T$ have no common components. 
Moreover, $\Gamma=T+B-\lceil-(\Gamma^{<1})\rceil$ and $\mathcal{I}_{X'}=(f\circ g)_{*}\mathcal{O}_{W}(-T+\lceil-(\Gamma^{<1})\rceil)$.  
We have
\begin{equation*}
\begin{split}
\lfloor g^{*}f^{*}L \rfloor=&g^{*}f^{*}(L-\omega)+g^{*}f^{*}\omega-\{g^{*}f^{*}L\}\\
\sim_{\mathbb{R}}&g^{*}f^{*}(L-\omega)+K_{W}+\Gamma-\{g^{*}f^{*}L\}\\
\sim_{\mathbb{R}}&g^{*}f^{*}(L-\omega)+K_{W}+T+B-\lceil-(\Gamma^{<1})\rceil-\{g^{*}f^{*}L\}.
\end{split}
\end{equation*}
We take a reduced divisor $E$ on $W$ such that if we define
$$B':=B-\{g^{*}f^{*}L\}+E,$$
then $B'$ is a boundary $\mathbb{R}$-divisor and $B'^{=1}\leq B^{=1}$. 
More precisely, for each prime divisor $P$ on $W$, the coefficient of $P$ in $E$ is defined by 
\begin{equation*}
{\rm coeff}_{P}(E)=\left\{  \begin{array}{l}{0 \qquad ({\rm coeff}_{P}(B-\{g^{*}f^{*}L\})\geq 0)} \\ {1 \qquad  ({\rm coeff}_{P}(B-\{g^{*}f^{*}L\})< 0)} \end{array}\right. \end{equation*}
We note that $E$ is $g$-exceptional and $E|_{(f \circ g)^{-1}(U)}=0$ because any component of $E$ is a component of $\{g^{*}f^{*}L\}$. 
By the above relation, we have
\begin{equation*}\tag{$\spadesuit$}\label{thm--vanishing-quasi-log-(spade)}
\lfloor g^{*}f^{*}L \rfloor+E+\lceil-(\Gamma^{<1})\rceil-T\sim_{\mathbb{R}}g^{*}f^{*}(L-\omega)+K_{W}+B'.
\end{equation*}

We put 
$$A:=\lfloor g^{*}f^{*}L \rfloor+E+\lceil-(\Gamma^{<1})\rceil\sim_{\mathbb{R}}g^{*}f^{*}(L-\omega)+K_{W}+T+B',$$ and we consider the exact sequence
$$0 \longrightarrow \mathcal{O}_{W}(A-T) \longrightarrow \mathcal{O}_{W}(A) \longrightarrow \mathcal{O}_{T}(A|_{T}) \longrightarrow 0.$$
Now $T$ is an snc Weil divisor, and $B^{=1}$ and $T$ have no common components. 
By this fact and the log smoothness of $(W,{\rm Supp}\,\Gamma)$, any stratum of $(W,B^{=1})$ is not contained in ${\rm Supp}\,T$. 
By these facts, we can check that the image of any lc center of $(W,B)$ by $f \circ g$ is not contained in $X'$. 
Indeed, if some lc center $C$ of $(W,B)$ is mapped into $X'$, then $C$ is a stratum of $(W,{\rm Supp}\,\Gamma^{\geq 1})$ by $B^{=1} \leq {\rm Supp}\,\Gamma^{\geq 1}$.
Then $C \subset {\rm Supp}\,B^{=1}\cap {\rm Supp}\,T$ since the union of all strata of $(W,{\rm Supp}\,\Gamma^{\geq 1})$ mapping into $X'$ is ${\rm Supp}\,T$, which follows from the definitions of $T$ and $S$. 
This contradicts the log smoothness of $(W,{\rm Supp}\,\Gamma)$ and the fact that $B^{=1}$ and $T$ have no common components. 
Thus, the image of any lc center of $(W,B)$ is not mapped into $X'$. 
Since $B'^{=1}\leq B^{=1}$, any lc center of $(W,B')$ is not mapped into $X'$. 
By the torsion-free theorem \cite[Theorem 5.6.2 (i)]{fujino-book}, the connecting morphism
$(f\circ g)_{*}\mathcal{O}_{T}(A|_{T}) \to R^{1}(f\circ g)_{*}\mathcal{O}_{W}(A-T)$ is the zero morphism. 
Therefore,
$$0 \longrightarrow (f\circ g)_{*}\mathcal{O}_{W}(A-T) \longrightarrow (f\circ g)_{*}\mathcal{O}_{W}(A) \longrightarrow (f\circ g)_{*}\mathcal{O}_{T}(A|_{T}) \longrightarrow 0$$
is exact. 

Now we have the exact sequence
$$0 \longrightarrow \mathcal{I}_{X'} \longrightarrow \mathcal{O}_{X} \longrightarrow \mathcal{O}_{X'} \longrightarrow 0.$$
We recall that $L|_{U}$ is Cartier and $U \supset X'$. 
From this, we have the exact sequence
$$0 \longrightarrow \mathcal{I}_{X'}\otimes_{\mathcal{O}_{X}} \mathcal{O}_{X}(L) \longrightarrow \mathcal{O}_{X}(L) \longrightarrow \mathcal{O}_{X'}(L|_{X'}) \longrightarrow 0.$$
We recall that $f^{*}L$ is a Weil divisor on $Y$, which is the hypothesis of Theorem \ref{thm--vanishing-quasi-log}, and $A=\lfloor g^{*}f^{*}L \rfloor+E+\lceil-(\Gamma^{<1})\rceil$ such that $E+\lceil-(\Gamma^{<1})\rceil$ is an effective $g$-exceptional divisor. 
Thus, we have 
$$(f\circ g)_{*}\mathcal{O}_{W}(A) \cong f_{*}\mathcal{O}_{Y}(f^{*}L)\cong \mathcal{O}_{X}(L)$$
via the natural morphism $\mathcal{O}_{X}(L) \to (f\circ g)_{*}\mathcal{O}_{W}(A)$ (cf.~\cite[II, 2.11. Lemma]{nakayama}). 

Recall that $\mathcal{I}_{X'}=(f\circ g)_{*}\mathcal{O}_{W}(-T+\lceil-(\Gamma^{<1})\rceil)$.  
Since $U \supset X'=(f \circ g)({\rm Supp}\,T)$, we have ${\rm Supp}\,T \subset (f \circ g)^{-1}(U)$. 
By recalling the fact $E|_{(f \circ g)^{-1}(U)}=0$, we have
\begin{equation*}
\begin{split}
(f\circ g)_{*}\mathcal{O}_{T}(A|_{T}) =&(f\circ g)_{*}\mathcal{O}_{T}\bigl((\lfloor g^{*}f^{*}L \rfloor+E+\lceil-(\Gamma^{<1})\rceil)|_{T}\bigr)\\
\cong&(f\circ g)_{*}\mathcal{O}_{T}\bigl(\lceil-(\Gamma^{<1})\rceil|_{T}\bigr)\otimes_{\mathcal{O}_{X}} \mathcal{O}_{X}(L).
\end{split}
\end{equation*}
Note that this isomorphism is induced by the restriction of $\mathcal{O}_{X}(L) \to (f\circ g)_{*}\mathcal{O}_{W}(A)$ to $X'$. 
By the definitions of $T$ and $B$, we also have
$$\lceil-(\Gamma^{<1})\rceil-T\sim_{\mathbb{R}}-g^{*} f^{*}\omega+K_{W}+\Gamma+\lceil-(\Gamma^{<1})\rceil-T\sim_{\mathbb{R},\,X}K_{W}+B.$$
In the second paragraph of this proof, we have checked that any lc center of $(W,B)$ is not mapped into $X'$. 
By the torsion-free theorem \cite[Theorem 5.6.2 (i)]{fujino-book} and applying $(f \circ g)_{*}$ to the exact sequence
$$0 \longrightarrow \mathcal{O}_{W}(\lceil-(\Gamma^{<1})\rceil-T) \longrightarrow \mathcal{O}_{W}(\lceil-(\Gamma^{<1})\rceil) \longrightarrow \mathcal{O}_{T}(\lceil-(\Gamma^{<1})\rceil|_{T}) \longrightarrow 0,$$
we have the exact sequence
\begin{equation*}
\begin{split}
0 \longrightarrow \mathcal{I}_{X'} \longrightarrow \mathcal{O}_{X} \longrightarrow (f \circ g)_{*}\mathcal{O}_{T}(\lceil-(\Gamma^{<1})\rceil|_{T}) \longrightarrow 0. 
\end{split}
\end{equation*}
This shows $(f \circ g)_{*}\mathcal{O}_{T}(\lceil-(\Gamma^{<1})\rceil|_{T})= \mathcal{O}_{X'}$. 
Therefore we have 
$$(f\circ g)_{*}\mathcal{O}_{T}(A|_{T}) \cong (f\circ g)_{*}\mathcal{O}_{T}\bigl(\lceil-(\Gamma^{<1})\rceil|_{T}\bigr)\otimes_{\mathcal{O}_{X}} \mathcal{O}_{X}(L) \cong \mathcal{O}_{X'}\otimes_{\mathcal{O}_{X}} \mathcal{O}_{X}(L) \cong \mathcal{O}_{X'}(L|_{X'}).$$
Here, the final isomorphism follows from the Cartier property of $L$ around $X'$. 

From the above argument, we obtain the diagram
 $$
\xymatrix@C=16pt{
0 \ar[r]& (f\circ g)_{*}\mathcal{O}_{W}(A-T) \ar[r]& (f\circ g)_{*}\mathcal{O}_{W}(A) \ar[r]& (f\circ g)_{*}\mathcal{O}_{T}(A|_{T}) \ar[r]& 0\\
0 \ar[r]& \mathcal{I}_{X'}\otimes_{\mathcal{O}_{X}} \mathcal{O}_{X}(L) \ar[r]& \mathcal{O}_{X}(L) \ar[r]\ar[u]^{\cong}& \mathcal{O}_{X'}(L|_{X'}) \ar[r]\ar[u]^{\cong}& 0.
}
$$
From this, we have $(f\circ g)_{*}\mathcal{O}_{W}(A-T) \cong \mathcal{I}_{X'}\otimes_{\mathcal{O}_{X}} \mathcal{O}_{X}(L)$. 
By (\ref{thm--vanishing-quasi-log-(spade)}) and the definition of $A$, we have $A-T \sim_{\mathbb{R}}g^{*}f^{*}(L-\omega)+K_{W}+B'.$
By construction of $B'$, any stratum of $B'^{=1}$ is a stratum of $\Gamma^{=1}$. 
Since $L-\omega$ is $\pi$-nef and $\pi$-log big with respect to $[X,\omega]$ (see \cite[Definition 3.7]{fujino-bpf-quasi-log}), which is the hypothesis of Theorem \ref{thm--vanishing-quasi-log}, the vanishing theorem for simple normal crossing varieties \cite[Theorem 5.7.3 (ii)]{fujino-book} implies 
$$R^{i}\pi_{*}(\mathcal{I}_{X'}\otimes_{\mathcal{O}_{X}} \mathcal{O}_{X}(L)) \cong R^{i}\pi_{*}\bigl((f\circ g)_{*}\mathcal{O}_{W}(A-T)\bigr)=0$$
for all $i>0$. 
This is what we wanted to prove. 
\end{proof}

\section{Running minimal model program}\label{sec--running-mmp}

In this section, we define a minimal model and a good minimal model for normal pairs, and we study how to construct a sequence of an MMP for normal pairs whose non-nef locus is disjoint from the non-lc locus. 
Corollary \ref{cor--mmpwithscaling-normalpair} and Corollary \ref{cor--mmp-nomralpair-Qfacdlt} are the main results of this section.

\subsection{Minimal model}

In this subsection, we define minimal models for normal pairs and prove some basic results. 

\begin{defn}[Minimal model]\label{defn--minmodel}
Let $X \to Z$ be a projective morphism from a normal quasi-projective variety $X$ to a quasi-projective scheme $Z$, and let $(X,\Delta)$ be a normal pair. 
Let $(X',\Delta')$ be a normal pair with a projective morphism $X' \to Z$, and let $\phi \colon X \dashrightarrow X'$ be a birational map over $Z$. 
We say that $(X',\Delta')$ is a {\em minimal model of $(X,\Delta)$ over $Z$} if 
\begin{itemize}
\item
for any prime divisor $P$ on $X$, we have 
$$a(P,X,\Delta) \leq a(P,X',\Delta'),$$
and the strict inequality holds if $P$ is $\phi$-exceptional,   
\item
for any prime divisor $P'$ on $X'$, we have 
$${\rm coeff}_{P'}(\Delta')=-a(P',X,\Delta),$$  
and the inequality $a(P',X,\Delta)\leq -1$ holds if $P'$ is $\phi^{-1}$-exceptional, and
\item
$K_{X'}+\Delta'$ is nef over $Z$. 
\end{itemize}
We say that a minimal model $(X',\Delta')$ of $(X,\Delta)$ over $Z$ is a {\em good minimal model} if $K_{X'}+\Delta'$ is semi-ample over $Z$. 
A {\em $\mathbb{Q}$-factorial minimal model} (resp.~{\em a $\mathbb{Q}$-factorial good minimal model}) of $(X,\Delta)$ over $Z$ is a  minimal model (resp.~a good minimal model) $(X',\Delta')$ of $(X,\Delta)$ over $Z$ such that $X'$ is $\mathbb{Q}$-factorial. 
\end{defn}

Even if $(X,\Delta)$ is lc, the above definition of minimal model is different from the definition of log minimal model in the sense of Birkar--Shokurov because $(X',\Delta')$ is not necessarily $\mathbb{Q}$-factorial dlt. 
However, if $(X,\Delta)$ is an lc pair then $(X',\Delta')$ is also an lc pair and any $\mathbb{Q}$-factorial dlt model $(X'',\Delta'')$ of $(X',\Delta')$ is a log minimal model of $(X,\Delta)$ over $Z$ in the sense of Birkar--Shokurov.

\begin{lem}\label{lem--min-model-nonlc-crepant}
Let $X \to Z$ be a projective morphism of normal quasi-projective varieties and $(X,\Delta)$ a normal pair. 
Let $(X',\Delta')$ be a normal pair with a projective birational morphism $f \colon X' \to X$ such that $K_{X'}+\Delta'=f^{*}(K_{X}+\Delta)$ and any $f$-exceptional prime divisor $E$ on $X'$ satisfies $a(E,X,\Delta) \leq -1$. 
If a normal pair $(X'',\Delta'')$ is a minimal model (resp.~a good minimal model) of $(X',\Delta')$ over $Z$, then $(X'',\Delta'')$ is a minimal model (resp.~a good minimal model) of $(X,\Delta)$ over $Z$. 
\end{lem}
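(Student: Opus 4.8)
The plan is to verify directly that the composed birational map $\psi := \phi \circ f^{-1} \colon X \dashrightarrow X''$, where $\phi \colon X' \dashrightarrow X''$ denotes the birational map attached to the given minimal model of $(X',\Delta')$, exhibits $(X'',\Delta'')$ as a minimal model of $(X,\Delta)$ over $Z$ in the sense of Definition~\ref{defn--minmodel}. Everything rests on the crepant relation $K_{X'}+\Delta' = f^{*}(K_{X}+\Delta)$: it forces $a(P,X',\Delta') = a(P,X,\Delta)$ for every prime divisor $P$ over $X'$, and in particular ${\rm coeff}_{P'}(\Delta') = -a(P',X,\Delta)$ for every prime divisor $P'$ on $X'$. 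Using this identification of discrepancies, I would transport each of the three defining conditions for $(X'',\Delta'')$ over $(X',\Delta')$ to the corresponding condition over $(X,\Delta)$.

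First, the nefness (resp.\ semi-ampleness) of $K_{X''}+\Delta''$ over $Z$ is a property of $(X'',\Delta'')$ alone and is inherited verbatim, which settles the third bullet and handles the good-minimal-model variant at the same time. For the first bullet, a prime divisor $P$ on $X$ has a strict transform $P'$ on $X'$ that is a genuine, non-$f$-exceptional prime divisor, so $a(P,X,\Delta) = a(P',X',\Delta') \le a(P',X'',\Delta'') = a(P,X'',\Delta'')$ by the inequality built into the minimal model of $(X',\Delta')$; since $f$ is an isomorphism near the generic point of $P$, one has that $P$ is $\psi$-exceptional if and only if $P'$ is $\phi$-exceptional, so the strict inequality propagates correctly. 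The coefficient part of the second bullet is equally formal: for any prime divisor $P''$ on $X''$ we get ${\rm coeff}_{P''}(\Delta'') = -a(P'',X',\Delta') = -a(P'',X,\Delta)$.

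The one step that needs genuine care is the final clause of the second bullet: if $P''$ is $\psi^{-1}$-exceptional, then $a(P'',X,\Delta) \le -1$. Here I would split according to whether $P''$ is $\phi^{-1}$-exceptional. If it is, the minimal model condition for $(X',\Delta')$ gives $a(P'',X',\Delta') \le -1$, hence $a(P'',X,\Delta) \le -1$ by the crepant relation. If it is not, then $P''$ has a strict transform $P'$ on $X'$ defining the same divisorial valuation; since $\psi^{-1} = f \circ \phi^{-1}$ contracts $P''$ while $\phi^{-1}$ does not, the morphism $f$ must contract $P'$, so $P'$ is $f$-exceptional, and the hypothesis yields $a(P'',X,\Delta) = a(P',X,\Delta) \le -1$. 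This dichotomy is exhaustive because, when $P'$ is not $f$-exceptional, $f$ is an isomorphism near the generic point of $P'$, so $P''$ would then fail to be $\psi^{-1}$-exceptional. Once the three bullets are checked, $(X'',\Delta'')$ is a minimal model (resp.\ good minimal model) of $(X,\Delta)$ over $Z$. The main obstacle, such as it is, lies entirely in bookkeeping the exceptional loci of $f$, $\phi$, and $\psi$ against one another; there is no substantive geometric input beyond the crepant relation.
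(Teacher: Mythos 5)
Your proposal is correct and follows essentially the same route as the paper's proof: use the crepant relation to identify discrepancies over $X$ with those over $X'$, transport the three defining conditions of a minimal model, and handle the final clause by splitting on whether the divisor on $X''$ is already exceptional over $X'$ or instead has an $f$-exceptional transform on $X'$. The only cosmetic difference is that you organize things around the composite map $\psi = \phi\circ f^{-1}$, while the paper speaks directly of divisors being exceptional over $X'$ or $X$; the case analysis and the invocation of the hypothesis $a(E,X,\Delta)\leq -1$ are identical.
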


\begin{proof}
We will prove the case of minimal model because the case of good minimal model can be proved similarly. 

Let $(X'',\Delta'')$ be a minimal model of $(X',\Delta')$ over $Z$. 
For any prime divisor $P$ on $X$, we have 
$$a(P,X,\Delta)=a(P,X', \Delta') \leq a(P,X'',\Delta'').$$
Moreover, if $P$ is exceptional over $X''$ then $f_{*}^{-1}P$ is exceptional over $X''$. 
Hence
$$a(P,X,\Delta)< a(P,X'',\Delta'').$$
Therefore, the first condition of a minimal model in Definition \ref{defn--minmodel} holds. 
For any prime divisor $Q''$ on $X''$, we have
$${\rm coeff}_{Q''}(\Delta'')=-a(Q'',X',\Delta')=-a(Q'',X,\Delta).$$
Suppose that $Q''$ is exceptional over $X$. 
If $Q''$ is not exceptional over $X'$, then the birational transform $Q'$ of $Q''$ on $X'$ is $f$-exceptional, and therefore 
$$a(Q'',X,\Delta)=a(Q',X,\Delta)\leq -1.$$ 
If $Q''$ is exceptional over $X'$, then 
$$a(Q'',X,\Delta)=a(Q'',X',\Delta')\leq -1.$$ 
In any case, we have $a(Q'',X,\Delta)\leq -1$. 
Thus the second condition of a minimal model in Definition \ref{defn--minmodel} holds. 
By definition, $K_{X''}+\Delta''$ is nef over $Z$, and thus the third condition of a minimal model in Definition \ref{defn--minmodel} holds. 
Therefore, $(X'',\Delta'')$ is a minimal model of $(X,\Delta)$ over $Z$. 
From this, we see that Lemma \ref{lem--min-model-nonlc-crepant} holds. 
\end{proof}

\begin{lem}\label{lem--good-min-model-makayama-zariski-decomp}
Let $X \to Z$ be a projective morphism of normal quasi-projective varieties and $(X,\Delta)$ a normal pair. 
Suppose that $(X,\Delta)$ has a good minimal model over $Z$. 
Then $K_{X}+\Delta$ birationally has the Nakayama--Zariski decomposition over $Z$ whose positive part is semi-ample over $Z$. 
In other words, there exists a projective birational morphism $f \colon X' \to X$ from a normal variety $X'$ such that $N_{\sigma}(f^{*}(K_{X}+\Delta); X'/Z)$ is well defined as an $\mathbb{R}$-divisor on $X'$ and $P_{\sigma}(f^{*}(K_{X}+\Delta); X'/Z)$ is semi-ample over $Z$. 
In particular, $N_{\sigma}(f^{*}(K_{X}+\Delta); X'/Z)$ and $P_{\sigma}(f^{*}(K_{X}+\Delta); X'/Z)$ are both $\mathbb{R}$-Cartier. 
\end{lem}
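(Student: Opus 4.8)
The plan is to transport the Nakayama--Zariski decomposition from a good minimal model to a common resolution. Let $(X',\Delta')$ be a good minimal model of $(X,\Delta)$ over $Z$ with associated birational map $\phi\colon X\dashrightarrow X'$ as in Definition \ref{defn--minmodel}, and let $p\colon W\to X$ and $q\colon W\to X'$ be projective birational morphisms from a normal variety $W$, projective over $Z$, resolving the indeterminacy of $\phi$; the morphism $f$ in the statement will be $p$. The first step is the crepant comparison
$$p^{*}(K_{X}+\Delta)=q^{*}(K_{X'}+\Delta')+E,\qquad E\ge 0,\quad E\text{ is }q\text{-exceptional}.$$
Writing $K_{W}+\Gamma=p^{*}(K_{X}+\Delta)$ and $K_{W}+\Gamma'=q^{*}(K_{X'}+\Delta')$, one has ${\rm coeff}_{P}(E)=a(P,X',\Delta')-a(P,X,\Delta)$ for every prime divisor $P$ on $W$; the condition ${\rm coeff}_{P'}(\Delta')=-a(P',X,\Delta)$ in Definition \ref{defn--minmodel} forces this to vanish whenever $P$ is not $q$-exceptional, so $E$ is $q$-exceptional, and the discrepancy inequality $a(P,X,\Delta)\le a(P,X',\Delta')$ for every divisorial valuation $P$ over $X$ — the standard consequence of Definition \ref{defn--minmodel} together with the negativity lemma, in the same bookkeeping spirit as Lemma \ref{lem--min-model-nonlc-crepant} — gives $E\ge 0$.

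Granting this, since $(X',\Delta')$ is a \emph{good} minimal model the divisor $q^{*}(K_{X'}+\Delta')$ is semi-ample over $Z$, hence $\sigma_{P}(q^{*}(K_{X'}+\Delta')/Z)=0$ for every prime divisor $P$ on $W$. I would then prove $N_{\sigma}(p^{*}(K_{X}+\Delta);W/Z)=E$. For ``$\le$'', Remark \ref{rem--nonnef-semiample} applied to $p^{*}(K_{X}+\Delta)=E+q^{*}(K_{X'}+\Delta')$, together with the obvious inequality $\sigma_{P}(E/Z)\le{\rm coeff}_{P}(E)$, yields $\sigma_{P}(p^{*}(K_{X}+\Delta)/Z)\le{\rm coeff}_{P}(E)$ for all $P$; in particular $N_{\sigma}(p^{*}(K_{X}+\Delta);W/Z)$ is well defined as an $\mathbb{R}$-divisor bounded above by $E$. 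For ``$\ge$'', one must rule out that the relatively semi-ample divisor $q^{*}(K_{X'}+\Delta')$ absorbs any part of $E$: this is the negativity lemma, since any effective divisor numerically equivalent over $Z$ to $q^{*}(K_{X'}+\Delta')+E$ (even after adding a small divisor ample over $Z$) meets the curves contracted by $q$ in essentially the same way as $E$ does, which forces its coefficient along each component of $E$ to be at least that of $E$. Then $P_{\sigma}(p^{*}(K_{X}+\Delta);W/Z)=p^{*}(K_{X}+\Delta)-E=q^{*}(K_{X'}+\Delta')$ is semi-ample over $Z$, and both parts are $\mathbb{R}$-Cartier; this is the assertion of the lemma with $f=p$.

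The main obstacle is the lower bound $N_{\sigma}(p^{*}(K_{X}+\Delta);W/Z)\ge E$. Once isolated it is a formal statement about how Nakayama--Zariski decompositions behave under birational pullback — namely that for a projective birational morphism $q$, a relatively nef $\mathbb{R}$-divisor $L$ on the target, and an effective $q$-exceptional divisor $E$ on the source, one has $N_{\sigma}(q^{*}L+E)=E$ — which is standard (cf.~\cite[III, \S\S 4--5]{nakayama} and \cite[Section 3]{liuxie-relative-nakayama}, using that the base is quasi-projective), but it is the only non-bookkeeping input. A secondary point requiring care, because $(X,\Delta)$ is not assumed lc, is that the comparison $E\ge 0$ must be read off from the general Definition \ref{defn--minmodel} (allowing coefficients $>1$), which is exactly why Lemma \ref{lem--min-model-nonlc-crepant} and the negativity lemma are invoked in place of the classical lc statements.
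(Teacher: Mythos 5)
Your proposal matches the paper's argument: take a common resolution $p\colon W\to X$, $q\colon W\to X''$ of the birational map to a good minimal model $(X'',\Delta'')$, write $p^{*}(K_{X}+\Delta)=q^{*}(K_{X''}+\Delta'')+E$ with $E$ effective and $q$-exceptional, and identify $E$ with the negative part of the relative Nakayama--Zariski decomposition so that the positive part is $q^{*}(K_{X''}+\Delta'')$, which is semi-ample over $Z$. The paper compresses the two-inequality computation of $\sigma_{P}$ you sketch into a single invocation of \cite[Lemma 3.4]{liuxie-relative-nakayama} together with the semi-ampleness of $K_{X''}+\Delta''$, but the route, the key references, and the bookkeeping for $E\geq 0$ and $q$-exceptionality from Definition \ref{defn--minmodel} are the same.
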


\begin{proof}
The proof is the same as the lc case. 
Let $(X'',\Delta'')$ be a good minimal model of $(X,\Delta)$ over $Z$. 
Take a common resolution $f \colon X' \to X$ and $g \colon X' \to X''$ of $X \dashrightarrow X''$. 
By Definition \ref{defn--minmodel}, we can write
$$f^{*}(K_{X}+\Delta)=g^{*}(K_{X''}+\Delta'')+E$$
for some effective $g$-exceptional $\mathbb{R}$-divisor on $X'$. 
By \cite[Lemma 3.4]{liuxie-relative-nakayama}, we have
$$\sigma_{P}(f^{*}(K_{X}+\Delta)/Z)=\sigma_{P}(g^{*}(K_{X''}+\Delta'')+E/Z)={\rm coeff}_{P}(E)$$
for any prime divisor $P$ on $X'$, where the final equality follows from the semi-ampleness of $K_{X''}+\Delta''$. 
By Definition \ref{defn--relative-nakayama-zariski-decom}, we have $N_{\sigma}(f^{*}(K_{X}+\Delta); X'/Z)=E$, and therefore we have $P_{\sigma}(f^{*}(K_{X}+\Delta); X'/Z)=g^{*}(K_{X''}+\Delta'')$, which is semi-ample over $Z$. 
\end{proof}

\begin{prop}\label{prop--minmodel-biratmodel}
Let $X \to Z$ be a projective morphism of normal quasi-projective varieties and let $(X,\Delta)$ be a normal pair such that $|K_{X}+\Delta/Z|_{\mathbb{R}} \neq \emptyset$.  
Let $f \colon Y \to X$ be a projective birational morphism from a normal variety $Y$ such that any $f$-exceptional prime divisor $P$ on $Y$ satisfies $a(P,X,\Delta) \leq -1$ or $P \subset {\rm Bs}|f^{*}(K_{X}+\Delta)/Z|_{\mathbb{R}}$. 
We write
$$K_{Y}+\Gamma=f^{*}(K_{X}+\Delta)+E$$
for some effective $\mathbb{R}$-divisors $\Gamma$ and $E$ on $Y$ having no common components. 
Let $B$ be an effective $\mathbb{R}$-divisor on $Y$ such that ${\rm Supp}\,B \subset {\rm Bs}|f^{*}(K_{X}+\Delta)/Z|_{\mathbb{R}}$. 
Suppose that $(Y,\Gamma+B)$ is a normal pair. 
In other words, suppose that $K_{Y}+\Gamma+B$ is $\mathbb{R}$-Cartier. 
Let $\phi \colon Y \dashrightarrow Y'$ be a birational contraction over $Z$, where $Y'$ is a normal variety and projective over $Z$, such that $\phi_{*}(K_{Y}+\Gamma+B)$ is semi-ample over $Z$ and $\phi$ only contracts some prime divisors contained in ${\rm Bs}|K_{Y}+\Gamma+B/Z|_{\mathbb{R}}$. 
Then $\phi$ exactly contracts all prime divisors contained in ${\rm Bs}|f^{*}(K_{X}+\Delta)/Z|_{\mathbb{R}}$, and $(Y',\phi_{*}(\Gamma+B))$ is a good minimal model of $(X,\Delta)$ over $Z$.  
\end{prop}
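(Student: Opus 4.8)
The plan is to run the circle of ideas of \cite{bchm} for producing minimal models out of semi-ample push-forwards; the only new feature, namely that $(X,\Delta)$ need not be lc, is controlled through the relation $K_{Y}+\Gamma+B=f^{*}(K_{X}+\Delta)+(E+B)$ together with the hypothesis on $f$.

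First I would record preliminary facts. Pushing forward $K_{Y}+\Gamma=f^{*}(K_{X}+\Delta)+E$ gives $f_{*}\Gamma-f_{*}E=\Delta\geq0$, and since $\Gamma$ and $E$ share no component, $E$ is $f$-exceptional and every component $P$ of $E$ satisfies ${\rm coeff}_{P}(E)=a(P,X,\Delta)\geq0>-1$. By the assumption on $f$ such a $P$ lies in ${\rm Bs}|f^{*}(K_{X}+\Delta)/Z|_{\mathbb{R}}$; combined with ${\rm Supp}\,B\subset{\rm Bs}|f^{*}(K_{X}+\Delta)/Z|_{\mathbb{R}}$ this shows that the effective divisor $E+B$ is supported in ${\rm Bs}|f^{*}(K_{X}+\Delta)/Z|_{\mathbb{R}}$. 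In particular $|K_{Y}+\Gamma+B/Z|_{\mathbb{R}}\neq\emptyset$ (pull back an element of $|K_{X}+\Delta/Z|_{\mathbb{R}}$ and add $E+B$), and for every prime divisor $P$ on $Y$ one has ${\rm coeff}_{P}(\Gamma)=\max\{-a(P,X,\Delta),0\}$ and ${\rm coeff}_{P}(E)=\max\{a(P,X,\Delta),0\}$.

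The heart of the argument is a base-locus comparison: a prime divisor on $Y$ lies in ${\rm Bs}|K_{Y}+\Gamma+B/Z|_{\mathbb{R}}$ if and only if it lies in ${\rm Bs}|f^{*}(K_{X}+\Delta)/Z|_{\mathbb{R}}$. The inclusion ``$\subset$'' is immediate, because adding $E+B$, which is supported in ${\rm Bs}|f^{*}(K_{X}+\Delta)/Z|_{\mathbb{R}}$, cannot produce a new divisorial component of the base locus. For ``$\supset$'' I would first establish the purely formal fact, using Lemma \ref{lem--rdiv-decom} and the convexity behind Lemma \ref{lem--movablediv-decom}: \emph{if $D$ is $\mathbb{R}$-Cartier with $|D/Z|_{\mathbb{R}}\neq\emptyset$ and $\Theta\geq0$ is supported in ${\rm Bs}|D/Z|_{\mathbb{R}}$, then every member of $|D+\Theta/Z|_{\mathbb{R}}$ dominates $\Theta$}. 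Applying it to $D=f^{*}(K_{X}+\Delta)$ and $\Theta=E+B$ shows that every member of $|K_{Y}+\Gamma+B/Z|_{\mathbb{R}}$, minus $E+B$, is an effective representative of $f^{*}(K_{X}+\Delta)$, so a prime divisor in ${\rm Bs}|f^{*}(K_{X}+\Delta)/Z|_{\mathbb{R}}$ is contained in every member of $|K_{Y}+\Gamma+B/Z|_{\mathbb{R}}$. I expect this formal $\mathbb{R}$-linear-system fact to be the main obstacle.

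Finally I would assemble the statement. Since $\phi_{*}(K_{Y}+\Gamma+B)$ is semi-ample over $Z$, the standard argument (take a common resolution $p\colon W\to Y$, $q\colon W\to Y'$, pull back a general member of $|\phi_{*}(K_{Y}+\Gamma+B)/Z|_{\mathbb{R}}$, and argue as in the proof of Lemma \ref{lem--good-min-model-makayama-zariski-decomp}) shows that $\phi$ is $(K_{Y}+\Gamma+B)$-non-positive, that $G:=p^{*}(K_{Y}+\Gamma+B)-q^{*}(K_{Y'}+\phi_{*}(\Gamma+B))=N_{\sigma}(p^{*}(K_{Y}+\Gamma+B);W/Z)\geq0$, and, together with the hypothesis that $\phi$ contracts only prime divisors in ${\rm Bs}|K_{Y}+\Gamma+B/Z|_{\mathbb{R}}$, that $\phi$ contracts exactly the prime divisors in ${\rm Bs}|K_{Y}+\Gamma+B/Z|_{\mathbb{R}}$, hence by the base-locus comparison exactly the prime divisors in ${\rm Bs}|f^{*}(K_{X}+\Delta)/Z|_{\mathbb{R}}$; this is the first assertion. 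For the second I would verify the three conditions of Definition \ref{defn--minmodel} for the birational map $(X,\Delta)\dashrightarrow(Y',\phi_{*}(\Gamma+B))$: for a prime divisor not contracted by $\phi$ the coefficient identity and the equality case of the discrepancy inequality follow from the coefficient formulas above together with ${\rm coeff}_{P}(B)=0$ (since such $P\notin{\rm Bs}|f^{*}(K_{X}+\Delta)/Z|_{\mathbb{R}}\supset{\rm Supp}\,B$) and, when $P$ is $f$-exceptional, $a(P,X,\Delta)\leq-1$ (forced by the assumption on $f$, as a surviving divisor is not in the base locus); for a $\phi$-contracted prime divisor $P=f_{*}^{-1}D$, writing $P_{W}$ for its strict transform on $W$ one has $a(P,Y',\phi_{*}(\Gamma+B))=a(P,Y,\Gamma+B)+{\rm coeff}_{P_{W}}(G)$, and applying the displayed formal fact once more (to the effective member of $|K_{Y}+\Gamma+B/Z|_{\mathbb{R}}$ obtained by pulling back a general element of $|\phi_{*}(K_{Y}+\Gamma+B)/Z|_{\mathbb{R}}$ avoiding $\phi(P)$) gives ${\rm coeff}_{P_{W}}(G)>{\rm coeff}_{P}(B)$, whence $a(P,Y',\phi_{*}(\Gamma+B))>a(P,X,\Delta)$, the required strict inequality; and $K_{Y'}+\phi_{*}(\Gamma+B)$ is semi-ample over $Z$ by hypothesis. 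Therefore $(Y',\phi_{*}(\Gamma+B))$ is a good minimal model of $(X,\Delta)$ over $Z$.
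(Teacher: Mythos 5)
Your proposal isolates as a stand-alone ``formal fact'' the same convexity idea that the paper deploys inline; so the route is essentially the paper's, repackaged, but two steps need attention. First, the formal fact you flag as ``the main obstacle'' is in fact true and elementary: if $D''\in|D+\Theta/Z|_{\mathbb{R}}$ had ${\rm coeff}_P(D'')=c<t:={\rm coeff}_P(\Theta)$ for some component $P$ of $\Theta$, then choosing any $D'\in|D/Z|_{\mathbb{R}}$ with $d:={\rm coeff}_P(D')>0$ and setting $\alpha=\tfrac{d}{d+t-c}$, the divisor $\alpha D''+(1-\alpha)D'-\alpha tP$ is effective, lies in $|D/Z|_{\mathbb{R}}$, and has ${\rm coeff}_P=0$, contradicting $P\subset{\rm Bs}|D/Z|_{\mathbb{R}}$; iterate over components. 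This is precisely the averaging trick the paper uses to show $(f\circ g)_{*}F=0$ in the third paragraph of its proof, so there is no real obstacle here and no need to route through Lemma~\ref{lem--rdiv-decom} or~\ref{lem--movablediv-decom}.

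The genuine gap is the sentence claiming that ``the standard argument\ldots as in the proof of Lemma~\ref{lem--good-min-model-makayama-zariski-decomp}'' shows $\phi$ is $(K_Y+\Gamma+B)$-non-positive with $G=N_{\sigma}(\cdot)\geq 0$. That lemma's proof begins by invoking Definition~\ref{defn--minmodel} to obtain an effective $q$-exceptional difference, so using it here would be circular: $G\geq 0$ (with strictness at $\phi$-exceptionals) is essentially the minimal-model inequality you are trying to prove. One also cannot apply the negativity lemma over $Y'$ directly, since $-G=q^{*}(\text{s.a.})-p^{*}(K_Y+\Gamma+B)$ need not be $q$-nef (for a curve $C$ contracted by $q$ but not by $p$ the sign of $(K_Y+\Gamma+B)\cdot p_{*}C$ is not controlled). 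The fix, which closes the loop and again rests on your formal fact, is the following: write $G=G^{+}-G^{-}$ with $G^{\pm}\geq 0$ sharing no components; for a general $H'\in|\phi_{*}(K_Y+\Gamma+B)/Z|_{\mathbb{R}}$ the divisor $p_{*}(q^{*}H'+G^{+})$ is an effective member of $|K_Y+\Gamma+B+p_{*}G^{-}/Z|_{\mathbb{R}}$ missing ${\rm Supp}\,p_{*}G^{-}$; since every component of $p_{*}G^{-}$ is $\phi$-contracted, hence in ${\rm Bs}|K_Y+\Gamma+B/Z|_{\mathbb{R}}$, the formal fact forces $p_{*}G^{-}=0$. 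Now $-G$ \emph{is} $p$-nef (pullbacks from $Y$ are $p$-trivial, and $q^{*}$ of a nef divisor is nef) and $p_{*}G=p_{*}G^{+}\geq 0$, so the negativity lemma over $Y$ gives $G\geq 0$. With this in hand your verification of Definition~\ref{defn--minmodel} goes through. In summary, the skeleton is sound, but as written the non-positivity step begs the question; the paper avoids this by working instead with the two effective divisors $F,F'$ (no common components) in its display $(**)$, showing $F'$ is $g'$-exceptional by a push-forward computation and then proving $F=0$ by the same convexity trick you state abstractly.
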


\begin{proof}
We have
$${\rm Bs}|K_{Y}+\Gamma+B/Z|_{\mathbb{R}} \subset {\rm Bs}|K_{Y}+\Gamma-E/Z|_{\mathbb{R}}\cup{\rm Supp}\,(B+E).$$
Since $K_{Y}+\Gamma=f^{*}(K_{X}+\Delta)+E$ and $\Gamma \geq 0$ and $E \geq 0$ have no common components, any component $\tilde{E}$ of $E$ satisfies $a(\tilde{E},X,\Delta)>0$. 
Then ${\rm Supp}\,E \subset  {\rm Bs}|f^{*}(K_{X}+\Delta)/Z|_{\mathbb{R}}$ by the assumption of $f \colon Y \to X$. 
Since ${\rm Supp}\,B \subset {\rm Bs}|f^{*}(K_{X}+\Delta)/Z|_{\mathbb{R}}$, we obtain
\begin{equation*}\tag{$*$}\label{prop--minmodel-biratmodel-(*)}
\begin{split}
{\rm Supp}\,(B+E) \subset  {\rm Bs}|f^{*}(K_{X}+\Delta)/Z|_{\mathbb{R}}. 
\end{split}
\end{equation*}
From this fact and the relation $K_{Y}+\Gamma-E=f^{*}(K_{X}+\Delta)$, we have
$${\rm Bs}|K_{Y}+\Gamma+B/Z|_{\mathbb{R}} \subset {\rm Bs}|K_{Y}+\Gamma-E/Z|_{\mathbb{R}}\cup{\rm Supp}\,(B+E) \subset {\rm Bs}|f^{*}(K_{X}+\Delta)/Z|_{\mathbb{R}}.$$
Therefore, any prime divisor contracted by $\phi$ is contained in ${\rm Bs}|f^{*}(K_{X}+\Delta)/Z|_{\mathbb{R}}$. 
We put $\Gamma'=\phi_{*}\Gamma$ and $B'=\phi_{*}B$.
Let $g \colon W \to Y$ and $g' \colon W \to Y'$ be a common resolution of $\phi$. 
We may write
\begin{equation*}\tag{$**$}\label{prop--minmodel-biratmodel-(**)}
\begin{split}
g^{*}f^{*}(K_{X}+\Delta)+F=g'^{*}(K_{Y'}+\Gamma'+B')+F'
\end{split}
\end{equation*}
for some effective $\mathbb{R}$-divisors $F$ and $F'$ on $W$ that have no common components. 
Then $F'$ is $g'$-exceptional since $g'_{*}(F-F')=B'+\phi_{*}E$ and $g'_{*}F$ and $g'_{*}F'$ have no common components. 

In this paragraph, we will prove that $F=0$. 
We assume that $F$ is $(f \circ g)$-exceptional. 
Then the semi-ampleness of $K_{Y'}+\Gamma'+B'$ over $Z$ implies
\begin{equation*}
\begin{split}
{\rm Supp}\,F'={\rm Bs}|g'^{*}(K_{Y'}+\Gamma'+B')+F'/Z|_{\mathbb{R}} =&{\rm Bs}|g^{*}f^{*}(K_{X}+\Delta)+F/Z|_{\mathbb{R}} \\
=& {\rm Bs}|g^{*}f^{*}(K_{X}+\Delta)/Z|_{\mathbb{R}}\cup{\rm Supp}\,F.
\end{split}
\end{equation*}
From this relation and the fact that $F$ and $F'$ have no common components, we have $F=0$.  
Therefore, it is sufficient to prove that $F$ is $(f \circ g)$-exceptional. 
By definition, we have $g'_{*}F=B'+\phi_{*}E$. 
Hence every component of $g_{*}F$ is $\phi$-exceptional or a component of $B+E$. 
By the inclusion (\ref{prop--minmodel-biratmodel-(*)}) and the fact that $\phi$ only contracts divisors contained in ${\rm Bs}|f^{*}(K_{X}+\Delta)/Z|_{\mathbb{R}}$, we have ${\rm Supp}\,g_{*}F\subset {\rm Bs}|f^{*}(K_{X}+\Delta)/Z|_{\mathbb{R}}$. 
Thus 
$${\rm Supp}\,(f \circ g)_{*}F\subset {\rm Bs}|K_{X}+\Delta/Z|_{\mathbb{R}}.$$
From now on, we suppose that $(f \circ g)_{*}F \neq 0$ and we will get a contradiction. 
By (\ref{prop--minmodel-biratmodel-(**)}) and the semi-ampleness of $K_{Y'}+\Gamma'+B'$ over $Z$, any component of $(f \circ g)_{*}F$ is not an irreducible component of ${\rm Bs}|K_{X}+\Delta+(f \circ g)_{*}F/Z|_{\mathbb{R}}$. 
Note that $K_{X}+\Delta+(f \circ g)_{*}F$ may not be $\mathbb{R}$-Cartier.  
We pick $D \in {\rm Bs}|K_{X}+\Delta/Z|_{\mathbb{R}}$, and let $D=\sum_{i=1}^{l}a_{i}D_{i}$ be the prime decomposition, where $a_{1},\cdots,\, a_{l}$ are positive real numbers. 
Since every component of ${\rm Supp}\,(f \circ g)_{*}F$ is contained in ${\rm Bs}|K_{X}+\Delta/Z|_{\mathbb{R}}$, we can write $(f \circ g)_{*}F=\sum_{i=1}^{l}b_{i}D_{i}$ for some nonnegative real numbers $b_{1},\cdots,\, b_{l}$. 
Relabeling the indices, we may assume $\frac{b_{1}}{a_{1}}\geq \frac{b_{i}}{a_{i}}$ for all $2 \leq i \leq l$. 
Then $b_{1}>0$ because otherwise the relation $\frac{b_{1}}{a_{1}}\geq \frac{b_{i}}{a_{i}}$ implies $b_{i}=0$ for all $2 \leq i \leq l$, which shows $(f \circ g)_{*}F=0$. 
Moreover
\begin{equation*}
\begin{split}
\left(1+\frac{b_{1}}{a_{1}}\right)D=\left(1+\frac{b_{1}}{a_{1}}\right)\sum_{i=1}^{l}a_{i}D_{i}=&D+b_{1}D_{1}+\sum_{i=2}^{l}\frac{a_{i}b_{1}}{a_{1}}D_{i}\\
=&D+(f \circ g)_{*}F+\sum_{i=2}^{l}a_{i}\left(\frac{b_{1}}{a_{1}}-\frac{b_{i}}{a_{i}}\right)D_{i}. 
\end{split}
\end{equation*}
Using $D \sim_{\mathbb{R},\,Z}K_{X}+\Delta$ and ${\rm Bs}|K_{X}+\Delta/Z|_{\mathbb{R}}={\rm Bs}|(1+\frac{b_{1}}{a_{1}})(K_{X}+\Delta)/Z|_{\mathbb{R}}$, we obtain
$${\rm Supp}\,(f \circ g)_{*}F\subset {\rm Bs}|K_{X}+\Delta/Z|_{\mathbb{R}}\subset {\rm Bs}|K_{X}+\Delta+(f \circ g)_{*}F/Z|_{\mathbb{R}} \cup \bigcup_{i=2}^{l}{\rm Supp}\,D_{i}.$$
Recalling the fact that any component of $(f \circ g)_{*}F$ is not an irreducible component of ${\rm Bs}|K_{X}+\Delta+(f \circ g)_{*}F/Z|_{\mathbb{R}}$, we see that the right hand side does not contain ${\rm Supp}\,D_{1}$ as an irreducible component. 
Then we get a contradiction because $(f \circ g)_{*}F=\sum_{i=1}^{l}b_{i}D_{i}$ and $b_{1}>0$. 
Therefore we have $(f \circ g)_{*}F = 0$, and therefore $F$ is $(f \circ g)$-exceptional. 
Then $F=0$ as discussed above. 

By (\ref{prop--minmodel-biratmodel-(**)}) and the argument in the previous paragraph, we may write 
\begin{equation*}\tag{$*\!*\!*$}\label{prop--minmodel-biratmodel-(***)}
\begin{split}
g^{*}f^{*}(K_{X}+\Delta)=g'^{*}(K_{Y'}+\Gamma'+B')+F'
\end{split}
\end{equation*}
such that $F'$ is effective and $g'$-exceptional. 
By the semi-ampleness of $K_{Y'}+\Gamma'+B'$ over $Z$, the equality
${\rm Supp}\,F'={\rm Bs}|g^{*}f^{*}(K_{X}+\Delta)/Z|_{\mathbb{R}}$ holds. 
This shows that $\phi \colon Y \dashrightarrow Y'$ contracts all divisorial components of ${\rm Bs}|f^{*}(K_{X}+\Delta)/Z|_{\mathbb{R}}$. 
Since $\phi$ only contracts some divisors contained in ${\rm Bs}|f^{*}(K_{X}+\Delta)/Z|_{\mathbb{R}}$, it follows that $\phi$ exactly contracts divisors contained in ${\rm Bs}|f^{*}(K_{X}+\Delta)/Z|_{\mathbb{R}}$. 
This is the first assertion of Proposition \ref{prop--minmodel-biratmodel}. 
By this fact and (\ref{prop--minmodel-biratmodel-(*)}), we also see that $B+E$ is contracted by $\phi$. 

Finally, we check that $(Y',\Gamma')$ is a good minimal model of $(X,\Delta)$ over $Z$. 
Let $Q$ be a prime divisor on $X$. 
By (\ref{prop--minmodel-biratmodel-(***)}), we have 
$$a(Q,X,\Delta) \leq a(Q,Y',\Gamma').$$ 
If $Q$ is exceptional over $Y'$, then $f^{-1}_{*}Q$ is contracted by $\phi$. 
Therefore, $f^{-1}_{*}Q$ is contained in ${\rm Bs}|f^{*}(K_{X}+\Delta)/Z|_{\mathbb{R}}$ by the first assertion of Proposition \ref{prop--minmodel-biratmodel}. 
Then $(f \circ g)^{-1}_{*}Q$ is a component of $F'$, and therefore 
$$a(Q,X,\Delta)<a(Q,Y',\Gamma').$$ 
This implies the first condition of the good minimal model in Definition \ref{defn--minmodel}. 
We recall the hypothesis of Proposition \ref{prop--minmodel-biratmodel} that $K_{Y}+\Gamma=f^{*}(K_{X}+\Delta)+E$ and any $f$-exceptional prime divisor $P$ on $Y$ satisfies $a(P,X,\Delta) \leq -1$ or $P \subset {\rm Bs}|f^{*}(K_{X}+\Delta)/Z|_{\mathbb{R}}$. 
Since $\phi$ exactly contracts divisors contained in ${\rm Bs}|f^{*}(K_{X}+\Delta)/Z|_{\mathbb{R}}$, for any prime divisor $Q'$ on $Y'$, we have
$$a(Q',Y',\Gamma')=-{\rm coeff}_{Q'}(\Gamma')=-{\rm coeff}_{\phi^{-1}_{*}Q'}(\Gamma-E)=a(Q',X,\Delta)$$
and $a(Q',Y',\Gamma') \leq -1$ if $Q'$ is exceptional over $X$. 
This is the second condition of the good minimal model in Definition \ref{defn--minmodel}. 
The third condition of the good minimal model is clear. 
From these facts, $(Y',\Gamma')$ is a good minimal model of $(X,\Delta)$ over $Z$. 
\end{proof}

\subsection{Minimal model program}

In this subsection, we define a step of a minimal model program and we discuss construction of a minimal model program for normal pairs.

\begin{defn}[Minimal model program]\label{defn--mmp-fullgeneral}
Let $\pi \colon X \to Z$ be a projective morphism from a normal quasi-projective variety $X$ to a quasi-projective scheme $Z$, and let $D$ be an $\mathbb{R}$-Cartier $\mathbb{R}$-divisor on $X$. 

A {\em step of a $D$-MMP over $Z$} is a diagram
 $$
\xymatrix@R=16pt{
X\ar@{-->}[rr]^-{\phi}\ar[dr]\ar[ddr]_-{\pi}&&X'\ar[dl]\ar[ddl]^-{\pi'}\\
&V\ar[d]\\
&Z
}
$$
consisting of normal quasi-projective varieties $X$, $X'$, and $V$, which are projective over $Z$, such that
\begin{itemize}
\item
$X \to V$ is a birational morphism and $X' \to V$ is a small birational morphism, 
\item
$-D$ is ample over $V$, and 
\item
$\phi_{*}D$ is $\mathbb{R}$-Cartier and ample over $V$. 
\end{itemize}
Sometimes we call $X \to V$ a {\em $D$-negative extremal contraction}.

A {\em sequence of steps of a $D$-MMP over $Z$} is a sequence of birational contractions 
$$X=:X_{1}\dashrightarrow X_{2} \dashrightarrow \cdots \dashrightarrow X_{i} \dashrightarrow \cdots$$
such that each birational contraction $X_{i}\dashrightarrow X_{i+1}$ forms a step of a $D_{i}$-MMP over $Z$, where $D_{i}$ is the birational transform of $D$ on $X_{i}$. 
When $D$ is of the form $K_{X}+B$ for some normal pair $(X,B)$, then a sequence of steps of a $(K_{X}+B)$-MMP over $Z$ is often denoted by
$$(X,B)=:(X_{1},B_{1})\dashrightarrow (X_{2},B_{2}) \dashrightarrow \cdots \dashrightarrow (X_{i},B_{i}) \dashrightarrow \cdots.$$
With notation as above, the {\em non-isomorphic locus} of a $D$-MMP is the union of points $x \in X$ such that $X \dashrightarrow X_{i}$ is not an isomorphism at $x$ for some $i$. 
The non-isomorphic locus is a countable union of closed subsets of $X$, and furthermore, in the case of finitely many steps of a $D$-MMP, the non-isomorphic locus is a closed subset of $X$. 

Let $A$ be an $\mathbb{R}$-Cartier $\mathbb{R}$-divisor on $X$ such that $D+\lambda A$ is nef over $Z$ for some $\lambda \in \mathbb{R}_{\geq 0}$. 
We say that a sequence of steps of a $D$-MMP over $Z$ 
$$X=:X_{1}\dashrightarrow X_{2} \dashrightarrow \cdots \dashrightarrow X_{i} \dashrightarrow \cdots$$
is a {\em sequence of steps of a $D$-MMP over $Z$ with scaling of $A$} if we put $D_{i}$ (resp.~$A_{i}$) as the birational transform of $D$ and (resp.~$A$) on $X_{i}$, then the following conditions hold.
\begin{itemize}
\item
$A_{i}$ is $\mathbb{R}$-Cartier, 
\item
the nonnegative real number
$$\lambda_{i}:={\rm inf}\{\mu \in \mathbb{R}_{\geq 0}\,|\, \text{$D_{i}+\mu A_{i}$ is nef over $Z$}\}$$
is well defined, and 
\item
$(D_{i}+\lambda_{i}A_{i})\cdot C_{i}=0$ for any curve $C_{i} \subset X_{i}$ that is contracted by the $D_{i}$-negative extremal contraction of the MMP. 
\end{itemize}
\end{defn}

\begin{rem}\label{rem--mmp-fullgeneral}
With notation as in Definition \ref{defn--mmp-fullgeneral}, the morphism $X \to V$ in a step of a $D$-MMP over $Z$ does not necessarily satisfy $\rho(X/V)=1$. 
This is the difference between a step of an MMP in Definition \ref{defn--mmp-fullgeneral} and a usual step of an MMP as in \cite[4.9.1]{fujino-book}. 
In particular, $X \to V$ and $X' \to V$ in Definition \ref{defn--mmp-fullgeneral} can be isomorphisms. 
We adopt this definition for the convenience of proofs of results in this paper. 
\end{rem}

\begin{rem}\label{rem--mmp-basic}
Let 
$$X=:X_{1}\dashrightarrow X_{2} \dashrightarrow \cdots \dashrightarrow X_{i} \dashrightarrow \cdots$$
be a sequence of steps of a $D$-MMP over $Z$. 
Then the following statements hold. 
\begin{itemize}
\item
For any $\mathbb{R}$-Cartier $\mathbb{R}$-divisor $D'$ on $X$ such that $D' \sim_{\mathbb{R},\,Z}uD$ for some $u \in \mathbb{R}_{>0}$, the $D$-MMP is also a sequence of steps of a $D'$-MMP over $Z$. 

\item
Suppose that the $D$-MMP is a sequence of steps of a $D$-MMP over $Z$ with scaling of an $\mathbb{R}$-Cartier $\mathbb{R}$-divisor $A$. 
We set
$$\lambda_{i}:={\rm inf}\{\mu \in \mathbb{R}_{\geq 0}\,|\, \text{$D_{i}+\mu A_{i}$ is nef over $Z$}\}$$
for each $i \geq 1$. 
By taking a common resolution of $X_{i} \dashrightarrow X_{i+1}$ and using the negativity lemma, we see that $D_{i+1}+\lambda_{i} A_{i+1}$ is nef over $Z$. 
In particular, we have $\lambda_{i} \geq \lambda_{i+1}$. 

\item
Suppose that the $D$-MMP is a sequence of steps of a $D$-MMP over $Z$ with scaling of an $\mathbb{R}$-Cartier $\mathbb{R}$-divisor $A$. 
Then, for any $t \in \mathbb{R}_{\geq 0}$, the $D$-MMP is a sequence of steps of a $(D-tA)$-MMP over $Z$ with scaling of $A$. 

\item
Suppose that the $D$-MMP is a sequence of steps of a $D$-MMP over $Z$ with scaling of an $\mathbb{R}$-Cartier $\mathbb{R}$-divisor $A$. 
Let $\lambda$ be a nonnegative real number such that $D+\lambda A$ is nef over $Z$. 
Let $\lambda'$ be a positive real number such that $\lambda' > \lambda$.
Then the equality 
$D+ \mu A=\frac{\lambda'-\mu}{\lambda'}(D+\frac{\mu}{\lambda'-\mu}(D+\lambda' A))$ 
holds for all $\mu < \lambda'$, and therefore the $D$-MMP is also a $D$-MMP over $Z$ with scaling of $D+\lambda'A$. 
\item
By the same argument as in \cite[Proof of Lemma 3.38]{kollar-mori}, we can check that for any $i \geq 1$, the birational map $X \dashrightarrow X_{i}$ is an isomorphism on an open subset $U_{i} \subset X$ whose complement is contained in ${\rm NNef}(D/Z)$.  
\end{itemize}
\end{rem}

\begin{lem}\label{lem--extremal-ray} 
Let $[X,\omega]$ be a quasi-log scheme such that $X$ is a normal variety. 
Let $\pi \colon X \to Z$ be a projective morphism to a quasi-projective scheme $Z$. 
Suppose that $\omega$ is $\pi$-pseudo-effective and ${\rm NNef}(\omega/Z) \cap {\rm Nqlc}(X, \omega)=\emptyset$. 
Let $A$ be an effective $\mathbb{R}$-Cartier $\mathbb{R}$-divisor on $X$ such that $\omega+A$ is $\pi$-nef and we have ${\rm Nqlc}(X,\omega+A) = {\rm Nqlc}(X, \omega)$ set theoretically. 
We put 
$$\lambda:={\rm inf}\{t\in \mathbb{R}_{\geq0}\,|\, \omega+tA \text{ is nef over $Z$}\,\}.$$
Then $\lambda=0$ or there exists an $\omega$-negative extremal ray $R$ of $\overline{\rm NE}(X/Z)$ such that $R$ is rational and relatively ample at infinity (\cite[Definition 6.7.2]{fujino-book}) and $(\omega+\lambda A)\cdot R=0$. 
\end{lem}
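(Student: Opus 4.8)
The plan is to read off the statement from the cone and contraction theorem for quasi-log schemes \cite[Theorem 6.7.4]{fujino-book}. First I would translate the hypothesis ${\rm NNef}(\omega/Z)\cap{\rm Nqlc}(X,\omega)=\emptyset$ into a statement about $\overline{\rm NE}(X/Z)$: any curve $C$ with $\pi(C)$ a point and $C\subset{\rm Nqlc}(X,\omega)$ is not contained in ${\rm NNef}(\omega/Z)$, so $\omega\cdot C\geq 0$ by Theorem \ref{thm--nonnef-negativecurve}; hence ${\rm Im}(\overline{\rm NE}({\rm Nqlc}(X,\omega)/Z)\to\overline{\rm NE}(X/Z))\subset\overline{\rm NE}(X/Z)_{\omega\geq 0}$, and the cone theorem gives
$$\overline{\rm NE}(X/Z)=\overline{\rm NE}(X/Z)_{\omega\geq 0}+\sum_{j}R_{j},$$
where the $R_{j}$ are the $\omega$-negative extremal rays of $\overline{\rm NE}(X/Z)$, each rational and relatively ample at infinity (\cite[Definition 6.7.2]{fujino-book}), and they do not accumulate inside the open half space $\{z\mid \omega\cdot z<0\}$.

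If $\lambda=0$ there is nothing to prove, so assume $\lambda>0$ (note $\lambda\leq 1$ since $\omega+A$ is $\pi$-nef). For $t\in(\lambda,1]$ the divisor $\omega+tA$ is $\pi$-nef: writing $t$ as a convex combination of some $t'\in(\lambda,t)$, for which $\omega+t'A$ is already $\pi$-nef, and of $1$, we get $\omega+tA$ as a convex combination of $\pi$-nef classes; letting $t\to\lambda^{+}$ shows $\omega+\lambda A$ is $\pi$-nef. For each $j$ put $\mu_{j}:={\rm inf}\{t\geq0\mid (\omega+tA)\cdot R_{j}\geq 0\}$; since $\omega\cdot R_{j}<0$ and $(\omega+A)\cdot R_{j}\geq 0$ we have $A\cdot R_{j}>0$, $\mu_{j}\in(0,1]$, and $(\omega+\mu_{j}A)\cdot R_{j}=0$. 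On $\overline{\rm NE}(X/Z)_{\omega\geq 0}$ the class $\omega+tA$ is a convex combination of $\omega$ and $\omega+A$ for $t\in[0,1]$, hence nonnegative; combined with the cone decomposition this shows that $\omega+tA$ is $\pi$-nef if and only if $t\geq\mu_{j}$ for all $j$, so $\lambda=\sup_{j}\mu_{j}$.

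The remaining, and main, point is that this supremum is attained. When $A$ is $\pi$-ample this is immediate from the cone theorem applied with the $\pi$-ample divisor $\tfrac{\lambda}{4}A$: only finitely many $R_{j}$ are negative for $(\omega+\tfrac{\lambda}{4}A)+\tfrac{\lambda}{4}A$, hence finitely many satisfy $\mu_{j}>\tfrac{\lambda}{2}$. For a general effective $A$ I would argue as follows. Using subadditivity and homogeneity of the asymptotic vanishing orders together with $\sigma_{P}(\omega+\lambda A/Z)=0$ (as $\omega+\lambda A$ is $\pi$-nef), one gets $\sigma_{P}(\omega+\tfrac{\lambda}{2}A/Z)=\sigma_{P}\big(\tfrac12\omega+\tfrac12(\omega+\lambda A)/Z\big)\leq\tfrac12\sigma_{P}(\omega/Z)$ for every prime divisor $P$ over $X$, so ${\rm NNef}(\omega+\tfrac{\lambda}{2}A/Z)\subset{\rm NNef}(\omega/Z)$. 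Since ${\rm Nqlc}(X,\omega+A)={\rm Nqlc}(X,\omega)$, by interpolating the defining data of the two quasi-log structures (as in the proof of Theorem \ref{thm--abundance-quasi-log}, via \cite[Lemma 4.25]{fujino-morihyper}) one obtains a quasi-log structure $[X,\omega+\tfrac{\lambda}{2}A]$ with ${\rm Nqlc}(X,\omega+\tfrac{\lambda}{2}A)={\rm Nqlc}(X,\omega)$, which therefore satisfies ${\rm NNef}(\omega+\tfrac{\lambda}{2}A/Z)\cap{\rm Nqlc}(X,\omega+\tfrac{\lambda}{2}A)=\emptyset$. Applying the cone theorem to $[X,\omega+\tfrac{\lambda}{2}A]$ and using the nefness of $\omega+\lambda A$ to rule out accumulation of its negative rays, only finitely many $R_{j}$ satisfy $\mu_{j}>\tfrac{\lambda}{2}$; as $\lambda>\tfrac{\lambda}{2}$, the supremum $\lambda=\sup_{j}\mu_{j}$ is attained by some $R_{j_{0}}$, and $R:=R_{j_{0}}$ is rational, relatively ample at infinity, $\omega$-negative, and satisfies $(\omega+\lambda A)\cdot R=0$. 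The delicate part of the argument is precisely this finiteness statement for general effective $A$; the rest is routine bookkeeping with the cone decomposition.
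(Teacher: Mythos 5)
Your reduction to showing that $\lambda=\sup_{j}\mu_{j}$ is attained is sound, and your treatment of the case where $A$ is $\pi$-ample (finiteness via the cone theorem with the $\pi$-ample perturbation $\tfrac{\lambda}{2}A$) is correct and in fact simpler than what the paper does. The preliminary reductions for general $A$ — that ${\rm NNef}(\omega+\tfrac{\lambda}{2}A/Z)\subset{\rm NNef}(\omega/Z)$ via subadditivity of $\sigma_{P}$, and that $[X,\omega+\tfrac{\lambda}{2}A]$ carries a quasi-log structure with the same non-qlc locus via \cite[Lemma 4.25]{fujino-morihyper} — are also fine.

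The gap is precisely where you flag it: the claim that only finitely many $R_{j}$ satisfy $\mu_{j}>\lambda/2$ does not follow from the cone theorem applied to $[X,\omega']$ with $\omega'=\omega+\tfrac{\lambda}{2}A$. The finiteness statement in \cite[Theorem 6.7.4]{fujino-book} (and likewise the non-accumulation statement) is for $(\omega'+H)$-negative rays with $H$ $\pi$-\emph{ample}; it allows the $\omega'$-negative extremal rays to accumulate at the boundary hyperplane $\{\omega'\cdot z=0\}$. Since $\tfrac{\lambda}{2}A$ is merely effective, nefness of $\omega'+\tfrac{\lambda}{2}A=\omega+\lambda A$ gives no control over such boundary accumulation: normalizing the minimal curves $C_{j}$ by a fixed ample $L$, the quantities $-\omega\cdot C_{j}$ and $A\cdot C_{j}$ can both tend to $0$ while $\mu_{j}=-\omega\cdot C_{j}/(A\cdot C_{j})$ stays near $\lambda$, so the intersection $\omega'\cdot C_{j}$ approaches $0$ from below without any contradiction with the cone theorem. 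Hence the ``supremum is attained'' step is not established for non-ample $A$, which is exactly the case required by Corollary \ref{cor--mmpwithscaling-normalpair}.

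The paper's proof never asserts finiteness of $\{R_{j}:\mu_{j}>\lambda/2\}$; instead it produces, via \cite[Lemma 4.25]{fujino-morihyper} and a DCC argument on the integers $(p'\Theta'_{i}\cdot C_{j})$ (using the length bound $0<-\omega\cdot C_{j}\leq 2\dim X$ for minimal curves $C_{j}$ to make DCC available), a decomposition $\omega+\lambda A=\sum_{i}r'_{i}\Theta'_{i}$ with $\Theta'_{i}$ rational and $(\Theta'_{i}\cdot C_{j})\geq 0$ for all $i,j$. This forces a \emph{gap}: each $(\omega+\lambda A)\cdot C_{j}$ is either $0$ or at least $\tfrac{1}{p'}\min_{i}r'_{i}$. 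Combined with the estimate $(\omega+\lambda A)\cdot C_{j}<\tfrac{2(\lambda-\lambda')}{\lambda'}\dim X$ for a suitable $C_{j}$ as $\lambda'\to\lambda^{-}$, some $C_{j}$ is forced into the ``zero'' case, giving $R$. That quantitative gap is what substitutes for the finiteness you wanted; without it (or without ampleness of $A$) the attainment of the supremum genuinely needs an argument.
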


\begin{proof}
The proof is very similar to the argument in the lc case.
We may assume $\lambda>0$ because otherwise there is nothing to prove. 
By the cone theorem \cite[Theorem 6.7.4]{fujino-book}, we may write
$$\overline{\rm NE}(X/Z)=\overline{\rm NE}(X/Z)_{\omega \geq 0}+\overline{\rm NE}(X/Z)_{-\infty}+\sum_{j}R_{j}$$
for $\omega$-negative extremal rays $R_{j}$ of $\overline{\rm NE}(X/Z)$ that are rational and relatively ample at infinity. 
Then $\overline{\rm NE}(X/Z)_{-\infty} \subset \overline{\rm NE}(X/Z)_{\omega \geq 0}$ since ${\rm NNef}(\omega/Z) \cap {\rm Nqlc}(X, \omega)=\emptyset$.  
From this, we see that any $\omega$-negative extremal ray of $\overline{\rm NE}(X/Z)$ is rational and relatively ample at infinity. 
Since $\lambda>0$, there exists at least one $\omega$-negative extremal ray $R_{j}$ of $\overline{\rm NE}(X/Z)$ that is rational and relatively ample at infinity. 

We fix a $\pi$-ample Cartier divisor $L$ on $X$. 
For each index $j$, let $C_{j}$ be a curve on $X$ such that the numerical class of $C_{j}$ lies in $R_{j}$ and 
$$(L \cdot C_{j})={\rm min}\{(L \cdot C)\,|\, \text{the numerical class of $C$ lies in $R_{j}$}\}.$$
We call $C_{j}$ a minimal curve of $R_{j}$. 
Such $C_{j}$ exists and 
$0<-(\omega\cdot C_{j})\leq 2\cdot {\rm dim}\,X$ 
because \cite[Theorem 1.6 (iii)]{fujino-morihyper} shows the existence of a rational curve $C'_{j}$ spanning $R_{j}$ such that $0<-(\omega\cdot C'_{j})\leq 2\cdot {\rm dim}\,X$. 

By applying \cite[Lemma 4.25]{fujino-morihyper} to $[X,\omega+\lambda A]$, we can find positive real numbers $r_{1},\,\cdots, \,r_{m}$ and $\mathbb{Q}$-Cartier $\mathbb{Q}$-divisors $\Theta_{1},\,\cdots,\,\Theta_{m}$ on $X$ such that
\begin{itemize}
\item
$\sum_{i=1}^{m}r_{i}=1$ and $\sum_{i=1}^{m}r_{i}\Theta_{i}=\omega+\lambda A$, and
\item
for all $1\leq i \leq m$, the structure of a quasi-log scheme $[X,\Theta_{i}]$ has the same non-qlc locus as ${\rm Nqlc}(X, \omega+\lambda A)$. 
\end{itemize} 
By the conditions $\lambda \leq 1$ and ${\rm Nqlc}(X,\omega+A)={\rm Nqlc}(X,\omega)$ set theoretically, for any indices $i$ and $j$, the second condition implies that $R_{j}$ is rational and relatively ample at infinity with respect to $[X,\Theta_{i}]$. 
By \cite[Theorem 1.6 (iii)]{fujino-morihyper}, if $R_{j}$ is $\Theta_{i}$-negative then there is a rational curve $C_{j}^{(i)}$ spanning $R_{j}$ such that $0<-(\Theta_{i}\cdot C_{j}^{(i)})\leq 2 \cdot {\rm dim}\,X$. 
By the definition of the minimal curve $C_{j}$ of $R_{j}$, we have $C_{j}^{(i)}= \alpha \cdot C_{j}$ in $N_{1}(X/Z)$ for some real number $\alpha \geq 1$. 
Hence, 
$-(\Theta_{i}\cdot C_{j})\leq 2 \cdot {\rm dim}\,X.$ 
If $R_{j}$ is not $\Theta_{i}$-negative, then clearly we have $(\Theta_{i}\cdot C_{j})\geq -2 \cdot {\rm dim}\,X$. 
From this, we have 
$(\Theta_{i}\cdot C_{j})\geq -2 \cdot {\rm dim}\,X$
for any $i$ and $j$. 

Let $V \subset {\rm WDiv}_{\mathbb{R}}(X)$ be a rational polytope spanned by $\Theta_{1},\,\cdots,\,\Theta_{m}$. 
We will prove that the set
$$V':=\{\Theta' \in V\,|\, \text{$(\Theta'\cdot C_{j})\geq 0$ for all $j$}\}$$
is a rational polytope.
Fix $p \in \mathbb{Z}_{>0}$ such that all $p\Theta_{i}$ are Cartier. 
For each $j$, we set
$$\mathcal{H}_{j}:=\left\{(t_{1},\cdots,\,t_{m})\in (\mathbb{R}_{\geq 0})^{m}\,\middle|\,\sum_{i=1}^{m}t_{i}=1, \, \sum_{i=1}^{m}t_{i} (p\Theta_{i}\cdot C_{j})\geq 0\right\}.$$
Then it is sufficient to prove that $\bigcap_{j}\mathcal{H}_{j}$ is a rational polytope because the linear map 
$$\mathbb{R}^{m}\ni (t_{1},\cdots,\,t_{m}) \mapsto \sum_{i=1}^{m}t_{i}\Theta_{i} \in {\rm WDiv}_{\mathbb{R}}(X)$$
induces a surjective map from $\bigcap_{j}\mathcal{H}_{j}$ to $V'$. 
We will prove that $\bigcap_{j}\mathcal{H}_{j}$ is the intersection of finitely many $\mathcal{H}_{j}$. 
Suppose by contradiction that $\bigcap_{j}\mathcal{H}_{j}$ cannot be represented by the intersection of any finitely many $\mathcal{H}_{j}$. 
By taking a subset of $\{\mathcal{H}_{j}\}_{j}$, we get an infinite sequence $\{\mathcal{H}_{k}\}_{k \in \mathbb{Z}_{>0}}$ such that $\bigcap_{k=1}^{n}\mathcal{H}_{k} \supsetneq \bigcap_{k=1}^{n+1}\mathcal{H}_{k}$ for all $n$. 
For every $1 \leq i \leq m$, by the facts $(\Theta_{i}\,\cdot\, C_{k})\geq -2 \cdot {\rm dim}\,X$ and $(p\Theta_{i}\,\cdot\, C_{k}) \in \mathbb{Z}$,
the set $\{ (p\Theta_{i}\cdot C_{k})\, |\, k \in \mathbb{Z}_{>0} \}$ satisfies the descending chain condition. 
Replacing $\{\mathcal{H}_{k}\}_{k \in \mathbb{Z}_{>0}}$ by a subsequence, we may assume $(p\Theta_{1}\cdot C_{k}) \leq (p\Theta_{1}\cdot C_{k+1})$ for all $k$. 
The property $\bigcap_{k=1}^{n}\mathcal{H}_{k} \supsetneq \bigcap_{k=1}^{n+1}\mathcal{H}_{k}$ is preserved after this replacement. 
Replacing $\{\mathcal{H}_{k}\}_{k \in \mathbb{Z}_{>0}}$ by a subsequence again, we may assume $(p\Theta_{2}\cdot C_{k}) \leq (p\Theta_{2}\cdot C_{k+1})$ for all $k$. 
Repeating this discussion and replacing $\{\mathcal{H}_{k}\}_{k \in \mathbb{Z}_{>0}}$ with a subsequence, we may assume that
$(p\Theta_{i}\cdot C_{k}) \leq (p\Theta_{i}\cdot C_{k+1})$
for every $i$ and $k$. 
Then $\mathcal{H}_{1} \subset \mathcal{H}_{2}$ by the definition of $\mathcal{H}_{k}$, which is a contradiction. 
From this argument, $\bigcap_{j}\mathcal{H}_{j}$ is the intersection of finitely many $\mathcal{H}_{j}$. 
Thus $\bigcap_{j}\mathcal{H}_{j}$ is a rational polytope, and so is $V'$. 

By the above argument, we can find positive real numbers $r'_{1},\,\cdots, \,r'_{l}$ and $\mathbb{Q}$-Cartier $\mathbb{Q}$-divisors $\Theta'_{1},\,\cdots,\,\Theta'_{l}$ on $X$ such that 
\begin{itemize}
\item
$\sum_{i=1}^{l}r'_{i}=1$ and $\sum_{i=1}^{l}r'_{i}\Theta'_{i}=\omega+\lambda A$, and 
\item
$(\Theta'_{i}\cdot C_{j})\geq 0$ for any $1 \leq i \leq l$ and $j$.
\end{itemize}
For any $\lambda'<\lambda$, there is an index $j$ such that $(\omega+\lambda'A) \cdot C_{j}<0$. 
We also recall that $0<-(\omega\cdot C_{j})\leq 2\cdot {\rm dim}\,X$. 
From these facts, we have 
\begin{equation*}
\begin{split}
(\omega+\lambda A)\cdot C_{j}=&(\omega+\lambda' A)\cdot C_{j}+(\lambda-\lambda')(A\cdot C_{j})<\frac{(\lambda-\lambda')}{\lambda'}(\lambda' A\cdot C_{j})\\
=& \frac{(\lambda-\lambda')}{\lambda'}\bigl((\omega+\lambda' A)\cdot C_{j}-(\omega\cdot C_{j})\bigr)<2\frac{(\lambda-\lambda')}{\lambda'}\cdot {\rm dim}\,X
\end{split}
\end{equation*}
for any $\lambda'$ and $C_{j}$ as above. 
On the other hand, putting $p' \in \mathbb{Z}_{>0}$ so that all $p'\Theta'_{i}$ are Cartier, then the two conditions stated at the start of this paragraph imply that
$$(\omega+\lambda A)\cdot C_{j}=\frac{1}{p'}\sum_{i=1}^{l}r'_{i}(p'\Theta'_{i}\cdot C_{j})$$
is zero or not less than $\frac{1}{p'}{\rm min}\{r'_{i}\,|\,1\leq i \leq l\}$. 
By choosing $\lambda'< \lambda$ sufficiently close to $\lambda$, we obtain an index $j$ such that $(\omega+\lambda A)\cdot C_{j}=0$. 
Then the corresponding $\omega$-negative extremal ray $R:=R_{j}$ of $\overline{\rm NE}(X/Z)$ satisfies the condition of Lemma \ref{lem--extremal-ray}. 
\end{proof}

\begin{thm}\label{thm--mmpstep-quasi-log}
Let $f\colon (Y,\Delta) \to [X,\omega]$ be a quasi-log scheme induced by a normal pair. 
Let $\pi \colon X \to Z$ be a projective morphism to a quasi-projective scheme $Z$. 
Let $\varphi\colon X \to V$ be a birational morphism over $Z$, where $V$ is normal and projective over $Z$, such that $-\omega$ is $\varphi$-ample and $\varphi$ is an isomorphism on a neighborhood of ${\rm Nqlc}(X, \omega)$. 
Then we can construct a diagram
$$
\xymatrix@R=16pt{
(Y,\Delta)\ar[d]_{f}\ar@{-->}[rr]&& (Y',\Delta')\ar[d]^{f'} \\
[X,\omega] \ar@{-->}[rr]\ar[dr]_{\varphi}&& [X',\omega'] \ar[dl]^{\varphi'}\\
&V
}
$$
over $Z$ such that
\begin{itemize}
\item
$f'\colon (Y',\Delta') \to [X',\omega']$ is a quasi-log scheme induced by a normal pair such that $Y'$ and $X'$ are projective over $V$, 
\item
$(Y,\Delta)\dashrightarrow (Y',\Delta')$ is a sequence of steps of a $(K_{Y}+\Delta)$-MMP over $V$, and 
\item
$\varphi'\colon X' \to V$ is a projective small birational morphism and the $\mathbb{R}$-divisor $\omega'$ is the birational transform of $\omega$ on $X'$ and $\varphi'$-ample. 
\end{itemize}
Furthermore, if $Y$ is $\mathbb{Q}$-factorial, then $Y'$ is also $\mathbb{Q}$-factorial. 
\end{thm}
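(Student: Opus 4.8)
The plan is to lift the problem to the normal pair $(Y,\Delta)$ and run a $(K_{Y}+\Delta)$-MMP over $V$, exploiting that every $(K_{Y}+\Delta)$-negative curve over $V$ stays away from the non-lc locus. First I would dispose of the trivial case in which $\varphi$ is an isomorphism (take $X'=X$, $\varphi'=\varphi$) and localize so that $V$ is affine. Writing $\psi=\varphi\circ f\colon Y\to V$ and setting $A:=f^{*}(-\omega)$, the relation $K_{Y}+\Delta\sim_{\mathbb{R}}f^{*}\omega$ gives $K_{Y}+\Delta+A\sim_{\mathbb{R}}0$; hence $A$ is semi-ample over $V$ with relative ample model $f\colon Y\to X$ (because $-\omega$ is $\varphi$-ample), and a curve $C$ contracted over $V$ satisfies $(K_{Y}+\Delta)\cdot C<0$ exactly when $(A\cdot C)>0$, i.e. exactly when $f(C)$ is a curve, which then lies in ${\rm Exc}(\varphi)$. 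Since $\varphi$ is an isomorphism on a neighbourhood of ${\rm Nqlc}(X,\omega)$ and $f({\rm Nlc}(Y,\Delta))={\rm Nqlc}(X,\omega)$ set-theoretically, every such curve — and therefore the whole locus contracted by any $(K_{Y}+\Delta)$-negative extremal contraction over $V$ — is disjoint from ${\rm Nlc}(Y,\Delta)$; in particular $(Y,\Delta)$ is log canonical there.

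Next I would run a $(K_{Y}+\Delta)$-MMP over $V$ with scaling of $A$; because $K_{Y}+\Delta+A\sim_{\mathbb{R}}0$, the scaling number is constantly $1$ and any $(K_{Y_{i}}+\Delta_{i})$-negative extremal ray is admissible. By the previous paragraph each extremal contraction meets only the log canonical locus, so the step exists by the cone and contraction theorem for quasi-log schemes (\cite[Chapter 6]{fujino-book}) together with the existence of a relative lc model with respect to a negative extremal contraction for lc pairs (\cite{has-mmp}; see also \cite{birkar-flip}, \cite{haconxu-lcc}). The MMP is an isomorphism on a neighbourhood of ${\rm Nlc}(Y,\Delta)$ at every stage, it contracts exactly the $\varphi$-exceptional divisors, and $\mathbb{Q}$-factoriality of the intermediate models is preserved if $Y$ is $\mathbb{Q}$-factorial; assuming it terminates, it reaches $(Y',\Delta')$ with $K_{Y'}+\Delta'$ semi-ample over $V$ (here the base point free theorem for quasi-log schemes, Theorem \ref{thm--abundance-quasi-log}, is used, after the standard perturbation that reduces to the polarized situation and using that the restriction to ${\rm Nqlc}$ is unchanged and semi-ample over $V$). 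I would then take $X'$ to be the relative ample model of $K_{Y'}+\Delta'$ over $V$, let $f'\colon Y'\to X'$ be the induced contraction and $\omega'$ the relatively ample generator; $\omega'$ coincides with the birational transform of $\omega$, and $\varphi'\colon X'\to V$ is small and birational since $X\dashrightarrow X'$ is a birational contraction contracting precisely the $\varphi$-exceptional divisors. Finally $f'\colon(Y',\Delta')\to[X',\omega']$ is endowed with the quasi-log structure induced by the lc-trivial fibration $(Y',\Delta')\to X'$ as in Definition \ref{defn--lc-trivial-fib-quasi-log}, yielding the required diagram.

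The main obstacle will be the termination of the $(K_{Y}+\Delta)$-MMP over $V$, together with the assertion that it ends with $K_{Y'}+\Delta'$ semi-ample over $V$. Since $A=f^{*}(-\omega)$ is merely relatively semi-ample and — whenever $f$ has positive-dimensional fibres — not relatively big over $V$, termination cannot be obtained from the usual ``an MMP with a relatively big divisor terminates'' argument. The way around this is to observe that every contracted ray is $A$-positive with $A$ pulled back from $X$, so that the $Y$-side MMP is linked step by step to a relative MMP on $X$ over $V$; because $X\to V$ is birational and $-\omega$ is relatively big there, this latter process converges to the relative canonical (equivalently, relatively ample) model of $\omega$ over $V$, whose existence, finiteness, and the resulting semi-ampleness reduce to finite generation / the existence of relative lc models for lc pairs (\cite{has-mmp}, \cite{birkar-flip}, \cite{haconxu-lcc}) — legitimately, since the whole procedure takes place in the complement of ${\rm Nlc}(Y,\Delta)$, where the pair is log canonical. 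Carrying out this reduction carefully — tracking ${\rm Nlc}(Y,\Delta)$ throughout and checking that the $Y$-side and $X$-side programs terminate simultaneously — is the technical heart of the argument.
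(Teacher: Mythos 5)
Your opening observation --- that every $(K_Y+\Delta)$-negative extremal ray over $V$ is $f^*(-\omega)$-positive, hence lies over ${\rm Ex}(\varphi)\subset X\setminus{\rm Nqlc}(X,\omega)$, so each step of the MMP exists by the cone and contraction theorem for quasi-log schemes together with the existence of relative lc models for lc pairs --- coincides with the paper's. Where you diverge, and where your argument has a genuine gap, is termination, which you yourself flag. The paper sidesteps termination over $V$ by a gluing trick: set $U_1\subset V$ equal to the largest open subset over which $\varphi$ is an isomorphism and $U_2:=V\setminus\varphi({\rm Nqlc}(X,\omega))$; since $\varphi$ is an isomorphism near ${\rm Nqlc}(X,\omega)$ these two opens cover $V$. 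Over $U_2$ the restriction $(Y_2,\Delta_2)$ is honestly lc and projective over $U_2$, and after choosing a $\pi$-ample $H$ on $X$ with $\omega+H\sim_{\mathbb{R},V}0$ one has $K_{Y_2}+\Delta_2+H_2\sim_{\mathbb{R},U_2}0$ with $(Y_2,\Delta_2+H_2)$ lc, so a $(K_{Y_2}+\Delta_2)$-MMP over $U_2$ reaching a good minimal model exists by \cite{has-mmp}, \cite{birkar-flip}, \cite{hashizumehu}. Because $K_Y+\Delta$ is relatively $\mathbb{R}$-linearly trivial over $U_1\cap U_2$, this MMP is an isomorphism there and glues with the identity over $U_1$ to give the global $(Y',\Delta')\to V$ and hence $X'$, $\omega'$, $\varphi'$.

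Your proposed substitute for this --- linking the $Y$-side MMP to an ``$\omega$-MMP on $X$ over $V$'' and invoking ``the relative lc model of $\omega$ over $V$'' --- does not close the gap as written. The divisor $\omega$ on $X$ is an $\mathbb{R}$-line bundle equipped with a quasi-log structure, not the log canonical divisor of a normal pair on $X$, so ``relative lc model of $\omega$'' has no off-the-shelf meaning and the lc-pair theorems you want to cite do not apply; you never upgrade $[X,\omega]$ to an actual pair on $X$. The observation that ``the whole procedure takes place in the complement of ${\rm Nlc}(Y,\Delta)$, where the pair is log canonical'' is true, but by itself it does not reduce a global-over-$V$ MMP to a projective lc MMP, because $Y\setminus{\rm Nlc}(Y,\Delta)$ is merely quasi-projective, not projective, over $V$. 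Localizing the base as above is the device that makes this remark usable, and once you have it you need neither the $X$-side program nor the extra base-point-free step via Theorem \ref{thm--abundance-quasi-log}: the lc results already deliver the good minimal model over $U_2$.
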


\begin{proof}
Let $U_{1} \subset V$ be the largest open subset over which $\varphi$ is an isomorphism. 
Then $U_{1} \supset \varphi({\rm Nqlc}(X, \omega))$ by our assumption. 
We put $U_{2}=V\setminus \varphi({\rm Nqlc}(X, \omega))$. 
Note that $U_{2}$ is open and $V=U_{1} \cup U_{2}$.
We put
\begin{equation*}
\begin{split}
Y_{1}:=(\varphi \circ f)^{-1}(U_{1}),\quad Y_{2}:=(\varphi \circ f)^{-1}(U_{2}), \quad \Delta_{1}:=\Delta|_{Y_{1}},\quad {\rm and} \quad \Delta_{2}:=\Delta|_{Y_{2}}.
\end{split}
\end{equation*}
Then $(Y_{2},\Delta_{2})$ is lc and $(K_{Y}+\Delta)|_{Y_{1}\cap Y_{2}}\sim_{\mathbb{R},\,U_{1}\cap U_{2}}0$. 
By our assumption on $\varphi\colon X \to V$, there is a $\pi$-ample $\mathbb{R}$-divisor $H$ on $X$ such that $\omega +H \sim_{\mathbb{R},\,V}0$. 
Put $H_{2}:=f^{*}H|_{Y_{2}}$. 
Then $K_{Y_{2}}+\Delta_{2}+H_{2}\sim_{\mathbb{R},\,U_{2}}0$. 
Taking $H$ generally, we may assume that $H$ is effective and $(Y_{2},\Delta_{2}+H_{2})$ is lc. 
By \cite[Theorem 1.1]{has-mmp}, \cite[Remark 2.7]{birkar-flip}, and \cite[Theorem 1.7]{hashizumehu}, there exists a sequence $(Y_{2},\Delta_{2})\dashrightarrow (Y'_{2},\Delta'_{2})$ of steps of a $(K_{Y_{2}}+\Delta_{2})$-MMP over $U_{2}$ 
to a good minimal model $(Y'_{2},\Delta'_{2})$ over $U_{2}$. 
If $Y$ is $\mathbb{Q}$-factorial, then $Y_{2}$ is  $\mathbb{Q}$-factorial, and therefore $Y'_{2}$ is also $\mathbb{Q}$-factorial by construction of a sequence of steps of the standard log MMP for $\mathbb{Q}$-factorial lc pairs \cite[4.8.16]{fujino-book}. 
Since the relation $(K_{Y}+\Delta)|_{Y_{1}\cap Y_{2}}\sim_{\mathbb{R},\,U_{1}\cap U_{2}}0$ holds, the $(K_{Y_{2}}+\Delta_{2})$-MMP does not modify $Y_{1}\cap Y_{2}$. 
$$
\xymatrix
{
Y_{1}\ar[d]\ar@{}[r]|*{\hspace{-13pt}\supset}&Y_{1} \cap Y_{2}\ar[d]\ar@{}[r]|*{\hspace{13pt}\subset}&Y'_{2}\ar[d]\\
U_{1}\ar@{}[r]|*{\hspace{-13pt}\supset}&U_{1} \cap U_{2}\ar@{}[r]|*{\hspace{13pt}\subset}&U_{2}
}
$$
Therefore, we can glue $Y_{1} \to U_{1}$ and $Y'_{2} \to U_{2}$ along $Y_{1}\cap Y_{2}\to U_{1} \cap U_{2}$ and we obtain a projective morphism $g\colon (Y',\Delta') \to V$ such that the inverse image of $U_{1}$ (resp.~$U_{2}$) is $(Y_{1},\Delta_{1})$ (resp.~$(Y'_{2},\Delta'_{2})$). 
By the same argument, over $U_{1} \cap U_{2}$ we may glue $Y_{1} \to U_{1}$ and any variety appearing in the $(K_{Y_{2}}+\Delta_{2})$-MMP over $U_{2}$. 
Thus, the map
$$(Y,\Delta)\dashrightarrow (Y',\Delta')$$ is a finite sequence of steps of a $(K_{Y}+\Delta)$-MMP over $V$. 
If $Y$ is $\mathbb{Q}$-factorial, then $Y_{1}$ is $\mathbb{Q}$-factorial. 
Since $Y'_{2}$ is also $\mathbb{Q}$-factorial, we can directly check that $Y'=Y_{1}\cup Y'_{2}$ is $\mathbb{Q}$-factorial. 

By construction, $K_{Y'}+\Delta'$ is semi-ample over $V$.  
Let $f' \colon Y' \to X'$  be the contraction over $V$ induced by $K_{Y'}+\Delta'$. 
Let $\varphi' \colon X' \to V$ be the induced morphism.
We will check that $\varphi'$ is a projective small birational morphism. 
By construction, $\varphi'$ is an isomorphism over $U_{1}$, and therefore $\varphi'$ is birational. 
If there is a $\varphi'$-exceptional prime divisor $Q'$ on $X'$, then there is a prime divisor $P'$ on $Y'$ such that $f'(P')=Q'$. 
Then the inverse map $Y' \dashrightarrow Y$ is not an isomorphism on the generic point of $P'$. 
This contradicts the fact that $(Y,\Delta)\dashrightarrow (Y',\Delta')$ is a sequence of steps of a $(K_{Y}+\Delta)$-MMP over $V$. 
Thus, there is no $\varphi'$-exceptional prime divisor, and $\varphi'$ is a projective small birational morphism. 

Let $\omega'$ be the birational transform of $\omega$ on $X'$. 
We will check that the diagram
  $$
\xymatrix@R=16pt{
Y\ar[d]_{f}\ar@{-->}[rr]&& Y'\ar[d]^{f'} \\
X \ar@{-->}[rr]\ar[dr]_{\varphi}&& X' \ar[dl]^{\varphi'}\\
&V
}
$$
satisfies all the conditions of Theorem \ref{thm--mmpstep-quasi-log}. 
The birational map $(Y,\Delta)\dashrightarrow (Y',\Delta')$ is a finite sequence of steps of a $(K_{Y}+\Delta)$-MMP over $V$, which is the second condition of Theorem \ref{thm--mmpstep-quasi-log}. 
We have already check that $\varphi'$ is small, and this fact also shows that $X \dashrightarrow X'$ is a birational contraction. 
Let $\omega_{Y'}$ be the birational transform of $f^{*}\omega$ on $Y'$. 
Then $K_{Y'}+\Delta' \sim_{\mathbb{R}}\omega_{Y'}$. 
Therefore, $\omega_{Y'}$ is $\mathbb{R}$-Cartier and semi-ample over $V$, and $f' \colon Y' \to X'$ is the contraction over $V$ induced by $\omega_{Y'}$. 
Thus, there exists a $\varphi'$-ample $\mathbb{R}$-divisor $D'$ on $X'$ such that $\omega_{Y'} \sim_{\mathbb{R}}f'^{*}D'$. 
We recall that $U_{1} \subset V$ is the largest open subset over which $\varphi$ is an isomorphism. 
This implies that $\varphi'$ is an isomorphism over $U_{1}$ and ${\rm codim}_{V}(V \setminus U_{1}) \geq 2$. 
By restricting the above diagram over $U_{1}$, we have
$$f'|_{Y_{1}}^{*}(\omega'|_{\varphi'^{-1}(U_{1})})=\omega_{Y'}|_{Y_{1}}\sim_{\mathbb{R}}f'|_{Y_{1}}^{*}(D'|_{\varphi'^{-1}(U_{1})}).$$
This implies $\omega'|_{\varphi'^{-1}(U_{1})}\sim_{\mathbb{R}} D'|_{\varphi'^{-1}(U_{1})}$. 
Moreover, we have ${\rm codim}_{X'}(X' \setminus \varphi'^{-1}(U_{1}))\geq 2$ since $\varphi'$ is small. 
From these facts, we have $\omega' \sim_{\mathbb{R}} D'$. 
Thus shows that $\omega'$ is $\mathbb{R}$-Cartier and $\varphi'$-ample. 
Therefore, the third condition of Theorem \ref{thm--mmpstep-quasi-log} holds.
We also see that $K_{Y'}+\Delta' \sim_{\mathbb{R}}f'^{*}\omega'$. 
This induces the structure of a quasi-log scheme induced by a normal pair $f'\colon (Y',\Delta') \to [X',\omega']$, which is the first condition of Theorem \ref{thm--mmpstep-quasi-log}. 

We have already checked that the $\mathbb{Q}$-factoriality of $Y$ implies the $\mathbb{Q}$-factoriality of $Y'$.
By these arguments, we obtain the desired diagram of Theorem \ref{thm--mmpstep-quasi-log}.
\end{proof}

\begin{rem}\label{rem--mmp-fibration}
Let
$$
\xymatrix@R=16pt{
(Y,\Delta)\ar[d]_{f}\ar@{-->}[rr]&& (Y',\Delta')\ar[d]^{f'} \\
[X,\omega] \ar@{-->}[rr]\ar[dr]_{\varphi}&& [X',\omega'] \ar[dl]^{\varphi'}\\
&V
}
$$
be the diagram over $Z$ in Theorem \ref{thm--mmpstep-quasi-log}. 
By construction in the proof of Theorem \ref{thm--mmpstep-quasi-log}, we can write $V=U_{1} \cup U_{2}$ with $Y_{2}:=(\varphi \circ f)^{-1}(U_{2})$ and $Y'_{2}:=(\varphi' \circ f')^{-1}(U_{2})$
such that the $(K_{Y}+\Delta)$-MMP $(Y,\Delta) \dashrightarrow (Y',\Delta')$ over $V$ is an isomorphism on $(\varphi \circ f)^{-1}(U_{1})$ and $(Y_{2},\Delta|_{Y_{2}}) \dashrightarrow (Y'_{2},\Delta'|_{Y'_{2}})$ is the usual log MMP over $U_{2}$ for the lc pair $(Y_{2},\Delta|_{Y_{2}})$. 
The  $i$-th step of the $(K_{Y}+\Delta)$-MMP over $V$ can be written as
$$
\xymatrix{
 (Y^{(i)},\Delta^{(i)})\ar@{-->}[rr]\ar[dr]&& (Y^{(i+1)},\Delta^{(i+1)})\ar[dl]\\
&W^{(i)},
}
$$
 where $-(K_{Y^{(i)}}+\Delta^{(i)})$ and $K_{Y^{(i+1)}}+\Delta^{(i+1)}$ are ample over $W^{(i)}$. 
Then $Y^{(i)} \to W^{(i)}$ is an isomorphism on the inverse image of $U_{1}$. 
For any $\mathbb{R}$-Cartier divisor $D$ on $Y^{(i)}$, we can find $r \in \mathbb{R}$ such that $D-r(K_{Y^{(i)}}+\Delta^{(i)})$ is numerically trivial over $W^{(i)}$. 
In particular, the equality $\rho(Y^{(i)}/W^{(i)})=1$ holds. 
\end{rem}

\begin{thm}\label{thm--mmp-nefthreshold-strict}
Let $f\colon (Y,\Delta) \to [X,\omega]$ be a quasi-log scheme induced by a normal pair. 
Let $\pi \colon X \to Z$ be a projective morphism to a quasi-projective scheme $Z$. 
Suppose that $\omega$ is $\pi$-pseudo-effective and ${\rm NNef}(\omega/Z) \cap {\rm Nqlc}(X, \omega)=\emptyset$. 
Let $A$ be an $\mathbb{R}$-Cartier $\mathbb{R}$-divisor on $X$ such that $\omega+\lambda_{0} A$ is $\pi$-ample for some positive real number $\lambda_{0}$. 
Then there exists a diagram
$$
\xymatrix{
(Y,\Delta)=:(Y_{k_{1}},\Delta_{k_{1}})\ar@<4.5ex>[d]_{f=:f_{1}}\ar@{-->}[r]& (Y_{k_{2}},\Delta_{k_{2}})\ar[d]_{f_{2}} \ar@{-->}[r]&\cdots \ar@{-->}[r]&(Y_{k_{i}},\Delta_{k_{i}})\ar[d]_{f_{i}}\ar@{-->}[r]&\cdots\\
[X,\omega]=:[X_{1},\omega_{1}] \ar@{-->}[r]& [X_{2},\omega_{2}] \ar@{-->}[r]&\cdots \ar@{-->}[r]&[X_{i},\omega_{i}]\ar@{-->}[r]&\cdots
}
$$
over $Z$, where all $Y_{k_{i}}$ and $X_{i}$ are projective over $Z$, such that 
\begin{itemize}
\item
$f_{i}\colon (Y_{k_{i}},\Delta_{k_{i}}) \to [X_{i},\omega_{i}]$ are quasi-log schemes induced by normal pairs,  
\item
the sequence of upper horizontal maps is a sequence of steps of a $(K_{Y}+\Delta)$-MMP over $Z$ with scaling of $f^{*}A$, 
\item
the lower horizontal sequence of maps is a sequence of steps of an $\omega$-MMP over $Z$ with scaling of $A$, and
\item
if we put 
$$\lambda_{i}:={\rm inf}\{\mu \in \mathbb{R}_{\geq 0}\,| \,\text{$\omega_{i}+\mu A_{i}$ is nef over $Z$} \}$$ 
for each $i \geq 1$, then the following properties hold.
\begin{itemize}
\item[$\circ$]
$\lambda_{i}>\lambda_{i+1}$ for all $i \geq 1$, 
\item[$\circ$]
$\omega_{i}+\lambda_{i-1} A_{i}$ is semi-ample over $Z$ for all $i \geq 1$, and  
\item[$\circ$]
$\omega_{i}+t A_{i}$ is ample over $Z$ for all $i \geq 1$ and $t \in (\lambda_{i},\lambda_{i-1})$.  
\end{itemize}
\end{itemize}
Furthermore, if $Y$ is $\mathbb{Q}$-factorial, then $Y_{k_{i}}$ is also $\mathbb{Q}$-factorial for every $i \geq 1$. 
\end{thm}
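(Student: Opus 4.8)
The plan is to build the two MMPs one ``batch'' at a time, using Theorem \ref{thm--mmpstep-quasi-log} as the engine that produces, over a fixed base where $-\omega$ is relatively ample, a finite sequence of MMP steps on $(Y,\Delta)$ together with the small modification on the $X$-side. First I would reduce to the case that $A$ is $\pi$-ample: replacing $A$ by $\omega+\lambda_{0}A$, an $\omega$-MMP over $Z$ with scaling of $\omega+\lambda_{0}A$ is, by Remark \ref{rem--mmp-basic}, also one with scaling of $A$, the nef thresholds with respect to $A$ and $\omega+\lambda_{0}A$ are related by a strictly increasing bijection, the three sub-bullets of the conclusion transfer verbatim under this reparametrisation, and the hypotheses of the theorem are preserved. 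From now on $A$ is $\pi$-ample, so the birational transform $A_{i}$ of $A$ on any birational model of $X$ over $Z$ is $\pi$-big. The induction produces, at stage $i$, a quasi-log scheme induced by a normal pair $f_{i}\colon (Y_{k_{i}},\Delta_{k_{i}})\to [X_{i},\omega_{i}]$ with $Y_{k_{i}},X_{i}$ projective over $Z$, a positive real number $\lambda_{i-1}$ (the given $\lambda_{0}$ when $i=1$), and a \emph{small} birational morphism $\xi_{(i)}\colon X_{i}\to V_{i}$ over $Z$ such that $\omega_{i}$ is $\pi$-pseudo-effective, ${\rm NNef}(\omega_{i}/Z)\cap{\rm Nqlc}(X_{i},\omega_{i})=\emptyset$, $\omega_{i}$ is $\xi_{(i)}$-ample, and $\omega_{i}+\lambda_{i-1}A_{i}=\xi_{(i)}^{*}\mathcal{L}_{i}$ for some $\mathbb{R}$-line bundle $\mathcal{L}_{i}$ ample over $Z$ on $V_{i}$. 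For $i=1$ we take $V_{1}=X_{1}$, $\xi_{(1)}={\rm id}$, $\mathcal{L}_{1}=\omega+\lambda_{0}A$, and everything holds by hypothesis.

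\textbf{The threshold at stage $i$.} Given the stage-$i$ data, set $\lambda_{i}:={\rm inf}\{\mu\in\mathbb{R}_{\geq0}\,|\,\omega_{i}+\mu A_{i}\text{ is nef over }Z\}\leq\lambda_{i-1}$. Let $F:=\overline{\rm NE}(X_{i}/V_{i})$, which is the face of $\overline{\rm NE}(X_{i}/Z)$ cut out by the nef divisor $\xi_{(i)}^{*}\mathcal{L}_{i}$; on $F\setminus\{0\}$ one has $\omega_{i}\cdot z>0$, hence $A_{i}\cdot z<0$. Since $\mathcal{L}_{i}$ is ample over $Z$ and $\xi_{(i)}$ is small birational, the slice $\{z\in\overline{\rm NE}(X_{i}/Z)\,|\,\xi_{(i)}^{*}\mathcal{L}_{i}\cdot z\leq1\}$ is contained in $F+K$ for a suitable compact set $K$. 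Writing $\omega_{i}+\mu A_{i}=\xi_{(i)}^{*}\mathcal{L}_{i}+(\mu-\lambda_{i-1})A_{i}$ and using Kleiman's criterion, I would then check that $\omega_{i}+\mu A_{i}$ is strictly positive on $\overline{\rm NE}(X_{i}/Z)\setminus\{0\}$ for $\mu$ in a left neighbourhood of $\lambda_{i-1}$, and that the set of $\mu<\lambda_{i-1}$ for which this holds is precisely the interval $(\lambda_{i},\lambda_{i-1})$. This gives $\lambda_{i}<\lambda_{i-1}$, the ampleness of $\omega_{i}+\mu A_{i}$ over $Z$ for $\mu\in(\lambda_{i},\lambda_{i-1})$, and (since $\xi_{(i)}^{*}\mathcal{L}_{i}$ is a pullback of an ample divisor) the semi-ampleness of $\omega_{i}+\lambda_{i-1}A_{i}$ over $Z$. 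If $\lambda_{i}=0$ we stop: then $\omega_{i}$ is nef over $Z$ and the finite diagram built so far has all the asserted properties.

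\textbf{Passing to stage $i+1$.} Assume $\lambda_{i}>0$. Then $\omega_{i}+\lambda_{i}A_{i}$ is nef over $Z$, it is big over $Z$ (as $A_{i}$ is $\pi$-big and $\omega_{i}$ is $\pi$-pseudo-effective), and it is semi-ample over $Z$: by Lemma \ref{lem--can-bundle-formula} replace $[X_{i},\omega_{i}]$ near ${\rm Nqlc}(X_{i},\omega_{i})$ by a normal pair, then the base point free theorem for quasi-log schemes \cite[Theorem 6.5.1]{fujino-book} applies because ${\rm NNef}(\omega_{i}/Z)$ misses ${\rm Nqlc}(X_{i},\omega_{i})$. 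Let $\xi_{i}\colon X_{i}\to T_{i}$ be the birational contraction over $Z$ defined by $\omega_{i}+\lambda_{i}A_{i}=\xi_{i}^{*}\mathcal{L}_{i+1}$ with $\mathcal{L}_{i+1}$ ample over $Z$ on $T_{i}$. Every $\xi_{i}$-contracted curve $C$ satisfies $A_{i}\cdot C>0$ (otherwise $\xi_{(i)}^{*}\mathcal{L}_{i}\cdot C=(\lambda_{i-1}-\lambda_{i})A_{i}\cdot C\leq0$ forces $C\in F$, whence $\omega_{i}\cdot C>0$, contradicting $(\omega_{i}+\lambda_{i}A_{i})\cdot C=0$), hence $\omega_{i}\cdot C=-\lambda_{i}A_{i}\cdot C<0$; so $-\omega_{i}$ is $\xi_{i}$-ample. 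Moreover $\xi_{i}$-contracted curves lie in ${\rm NNef}(\omega_{i}/Z)$ by Theorem \ref{thm--nonnef-negativecurve}, so $\xi_{i}$ is an isomorphism on a neighbourhood of ${\rm Nqlc}(X_{i},\omega_{i})$. Thus Theorem \ref{thm--mmpstep-quasi-log} applies with $V=T_{i}$: it yields a finite sequence of steps of a $(K_{Y}+\Delta)$-MMP over $T_{i}$ from $(Y_{k_{i}},\Delta_{k_{i}})$ to some $(Y_{k_{i+1}},\Delta_{k_{i+1}})$, a small birational morphism $\varphi'=:\xi_{(i+1)}\colon X_{i+1}\to T_{i}$ with the birational transform $\omega_{i+1}$ of $\omega_{i}$ being $\xi_{(i+1)}$-ample, and $\mathbb{Q}$-factoriality of $Y_{k_{i+1}}$ if $Y$ is $\mathbb{Q}$-factorial. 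Putting $V_{i+1}:=T_{i}$ we get $\omega_{i+1}+\lambda_{i}A_{i+1}=\xi_{(i+1)}^{*}\mathcal{L}_{i+1}$; since the MMP over $T_{i}$ and $\xi_{i}$ are isomorphisms near ${\rm Nqlc}$ and an $\omega$-MMP over $Z$ is an isomorphism off ${\rm NNef}(\omega_{i}/Z)$, the relation ${\rm NNef}(\omega_{i+1}/Z)\cap{\rm Nqlc}(X_{i+1},\omega_{i+1})=\emptyset$ persists. Hence the stage-$(i+1)$ data is in place.

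\textbf{Conclusion and main obstacle.} Concatenating the finite MMPs over the $T_{i}$ produces the two horizontal sequences. Each $X_{i}\dashrightarrow X_{i+1}$ is itself a single step of an $\omega$-MMP over $Z$ in the sense of Definition \ref{defn--mmp-fullgeneral} with $W=T_{i}$, using $-\omega_{i}$ $\xi_{i}$-ample and $\omega_{i+1}$ $\xi_{(i+1)}$-ample; the contracted curves $C$ satisfy $(\omega_{i}+\lambda_{i}A_{i})\cdot C=0$ and the threshold at $X_{i}$ is $\lambda_{i}$, so this is an $\omega$-MMP over $Z$ with scaling of $A$, and $\lambda_{i}>\lambda_{i+1}$. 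Because $K_{Y}+\Delta\sim_{\mathbb{R}}f^{*}\omega$ is preserved along the MMP, the upper sequence is correspondingly a $(K_{Y}+\Delta)$-MMP over $Z$ with scaling of $f^{*}A$, and the $\mathbb{Q}$-factoriality statement propagates from Theorem \ref{thm--mmpstep-quasi-log}. Finally we transfer the three sub-bullets back from the $\pi$-ample $\omega+\lambda_{0}A$ to the original $A$ via the reparametrisation of thresholds. I expect the main obstacle to be the second paragraph: proving $\lambda_{i}<\lambda_{i-1}$ together with the ampleness of $\omega_{i}+\mu A_{i}$ on the open interval $(\lambda_{i},\lambda_{i-1})$. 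This rests on the precise shape carried along by the induction---that $\omega_{i}+\lambda_{i-1}A_{i}$ is the pullback of an ample-over-$Z$ class by a \emph{small} birational morphism along which $\omega_{i}$ itself is relatively ample---a shape supplied at each stage exactly by Theorem \ref{thm--mmpstep-quasi-log} (whose proof in turn encodes the termination of log MMPs for lc pairs possessing good minimal models).
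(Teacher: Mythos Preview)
Your inductive framework matches the paper's proof: both iterate Theorem~\ref{thm--mmpstep-quasi-log} over the contraction defined by $\omega_i+\lambda_iA_i$, carrying forward the datum that $\omega_{i+1}+\lambda_iA_{i+1}$ is the pullback of an ample class along a small morphism over which $\omega_{i+1}$ is relatively ample. Your reduction to $A$ $\pi$-ample is harmless but unnecessary; the paper works directly with the rewriting
\[
\omega_i+\lambda_iA_i=\tfrac{\mu-\lambda_i}{\mu}\Bigl(\omega_i+\tfrac{\lambda_i}{\mu-\lambda_i}(\omega_i+\mu A_i)\Bigr)
\]
for any $\mu\in(\lambda_i,\lambda_{i-1})$ at which $\omega_i+\mu A_i$ is ample.

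There is a genuine gap in your semi-ampleness argument. You invoke Lemma~\ref{lem--can-bundle-formula} and then \cite[Theorem~6.5.1]{fujino-book}, but Lemma~\ref{lem--can-bundle-formula} produces a normal pair $(X_i,G_i)$ with ${\rm Nklt}(X_i,G_i)={\rm Nqklt}(X_i,\omega_i)$, not ${\rm Nqlc}$, and the base point free theorem still requires the difference $(\omega_i+\lambda_iA_i)-\omega_i=\lambda_iA_i$ to be ample over $Z$. After your reduction $A_1$ is ample, but for $i\geq 2$ the birational transform $A_i$ is only big, so neither ingredient does what you need. The paper instead applies Theorem~\ref{thm--abundance-quasi-log} directly to $[X_i,\omega_i]$ with the ample perturbation $\tfrac{\lambda_i}{\mu-\lambda_i}(\omega_i+\mu A_i)$ coming from the rewriting above; since ${\rm NNef}(\omega_i/Z)\cap{\rm Nqlc}(X_i,\omega_i)=\emptyset$ forces $(\omega_i+\lambda_iA_i)|_{{\rm Nqlc}(X_i,\omega_i)}$ to be ample over $Z$, the hypotheses of Theorem~\ref{thm--abundance-quasi-log} are met. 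Once semi-ampleness at $\lambda_i$ is in hand, ampleness on the full interval $(\lambda_i,\lambda_{i-1})$ follows by taking convex combinations with an interior ample point and the nef endpoints, which is simpler than your cone-slicing argument.

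A smaller point: your argument that $A_i\cdot C>0$ for $\xi_i$-contracted curves does not exclude the case $A_i\cdot C<0$ (if $C\in F$ with $\omega_i\cdot C>0$ and $A_i\cdot C<0$, then $(\omega_i+\lambda_iA_i)\cdot C=0$ is not a contradiction). Use instead that $\omega_i+\mu A_i$ is ample for some $\mu\in(\lambda_i,\lambda_{i-1})$, so $(\omega_i+\mu A_i)\cdot C>0$ and $(\omega_i+\lambda_iA_i)\cdot C=0$ give $(\mu-\lambda_i)A_i\cdot C>0$.
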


\begin{proof}
We put $X_{1}:=X$, $\omega_{1}:=\omega$, $Y_{k_{1}}:=Y$, $\Delta_{k_{1}}:=\Delta$, $f_{1}:=f$,  and $A_{1}:=A$. 
We define
$$\lambda_{1}:={\rm inf}\{\mu \in \mathbb{R}_{\geq 0}\,| \,\text{$\omega_{1}+\mu A_{1}$ is nef over $Z$} \}.$$
Then $\lambda_{1}<\lambda_{0}$. 
If $\lambda_{1}=0$, there is nothing to prove. 
Thus we may assume $\lambda_{1}>0$. 
Then
$$\omega_{1}+\lambda_{1} A_{1}=\frac{\lambda_{0}-\lambda_{1}}{\lambda_{0}}\left(\omega_{1}+\frac{\lambda_{1}}{\lambda_{0}-\lambda_{1}}(\omega_{1}+\lambda_{0} A_{1})\right). $$
Since $\omega_{1}+\lambda_{0} A_{1}$ is ample over $Z$ and  the non-nef locus of $\omega_{1}$ over $Z$ does not intersect ${\rm Nqlc}(X_{1}, \omega_{1})$, the $\mathbb{R}$-line bundle $(\omega_{1}+\lambda_{1} A_{1})|_{{\rm Nqlc}(X_{1}, \omega_{1})}$ is ample over $Z$. 
By applying Theorem \ref{thm--abundance-quasi-log} to $[X_{1},\omega_{1}]$ and $\frac{\lambda_{1}}{\lambda_{0}-\lambda_{1}}(\omega_{1}+\lambda_{0} A_{1})$, we see that $\omega_{1}+\lambda_{1} A_{1}$ is $\pi$-semi-ample. 
Thus we get a contraction $\varphi \colon X_{1} \to V$ over $Z$ induced by $\omega_{1}+\lambda_{1} A_{1}$. 
Then $-\omega_{1}$ is $\varphi$-ample since $\omega_{1}+\lambda_{0} A_{1}$ is $\pi$-ample and $0<\lambda_{1}<\lambda_{0}$. 
Moreover, since the non-nef locus of $\omega_{1}$ over $Z$ does not intersect ${\rm Nqlc}(X_{1}, \omega_{1})$, any curve intersecting ${\rm Nqlc}(X_{1}, \omega_{1})$ has a positive intersection number with $\omega_{1}+\lambda_{1}A_{1}$. 
Hence $\varphi$ is birational and an isomorphism on a neighborhood of ${\rm Nqlc}(X_{1}, \omega_{1})$. 
By Theorem \ref{thm--mmpstep-quasi-log}, we get the diagram
$$
\xymatrix@R=16pt{
(Y_{k_{1}},\Delta_{k_{1}})\ar[d]_{f_{1}}\ar@{-->}[rr]&& (Y_{k_{2}},\Delta_{k_{2}})\ar[d]^{f_{2}} \\
[X_{1},\omega_{1}] \ar@{-->}[rr]\ar[dr]_{\varphi}&& [X_{2},\omega_{2}] \ar[dl]^{\varphi'}\\
&V
}
$$
over $Z$ such that $\varphi' \colon X_{2} \to V$ is a projective small birational morphism, $\omega_{2}$ is $\varphi'$-ample, and the upper horizontal map is a sequence of steps of a $(K_{Y_{k_{1}}}+\Delta_{k_{1}})$-MMP over $V$. 
Then $K_{Y_{k_{1}}}+\Delta_{k_{1}}+\lambda_{1}f_{1}^{*}A_{1} \sim_{\mathbb{R},\,V}0$. 
This implies that $(Y_{k_{1}},\Delta_{k_{1}}) \dashrightarrow (Y_{k_{2}},\Delta_{k_{2}})$ is a sequence of steps of a $(K_{Y_{k_{1}}}+\Delta_{k_{1}})$-MMP over $Z$ with scaling of $f_{1}^{*}A_{1}$.  

Let $A_{2}$ be the birational transform of $A_{1}$ on $X_{2}$. 
By construction, there exists an $\mathbb{R}$-Cartier $\mathbb{R}$-divisor $L$ on $V$ such that $L$ is ample over $Z$ and $\omega_{2}+\lambda_{1} A_{2} \sim_{\mathbb{R}}\varphi'^{*}L$. 
Since $\omega_{2}$ is $\varphi'$-ample, there exists $\lambda'_{1}< \lambda_{1}$, which is sufficiently close to $\lambda_{1}$, such that $\omega_{2}+\lambda'_{1}A_{2}$ is ample over $Z$. 
By this discussion and the property in Remark \ref{rem--mmp-basic} on the non-nef locus over $Z$ (Definition \ref{defn--nonneflocus}), we see that $(Y_{k_{2}},\Delta_{k_{2}}) \to [X_{2},\omega_{2}] \to Z$ and $A_{2}$ satisfy the hypothesis of Theorem \ref{thm--mmp-nefthreshold-strict}. 
We define
$$\lambda_{2}:={\rm inf}\{\mu \in \mathbb{R}_{\geq 0}\,| \,\text{$\omega_{2}+\mu A_{2}$ is nef over $Z$} \}.$$
Then $\lambda_{2} \leq \lambda'_{1}<\lambda_{1}$ by construction. 
We apply the argument of the previous paragraph to $(Y_{k_{2}},\Delta_{k_{2}}) \to [X_{2},\omega_{2}] \to Z$ and $A_{2}$, and we get the diagram 
$$
\xymatrix{
(Y_{k_{2}},\Delta_{k_{2}})\ar[d]_{f_{2}}\ar@{-->}[r]& (Y_{k_{3}},\Delta_{k_{3}})\ar[d]^{f_{3}} \\
[X_{2},\omega_{2}] \ar@{-->}[r]& [X_{3},\omega_{3}]
}
$$
over $Z$. 
By repeating this discussion, we obtain the desired diagram. 
\end{proof}

\begin{cor}\label{cor--mmpwithscaling-normalpair}
Let $\pi \colon Y \to Z$ be a projective morphism of normal quasi-projective varieties. 
Let $(Y,\Delta)$ be a normal pair such that $K_{Y}+\Delta$ is $\pi$-pseudo-effective and ${\rm NNef}(K_{Y}+\Delta/Z) \cap {\rm Nlc}(Y,\Delta) = \emptyset$. 
Let $A$ be an effective $\mathbb{R}$-Cartier $\mathbb{R}$-divisor on $Y$ such that $K_{Y}+\Delta+A$ is $\pi$-nef and ${\rm Nlc}(Y, \Delta)={\rm Nlc}(Y, \Delta+A)$ set theoretically. 
Then there exists a sequence of steps of a $(K_{Y}+\Delta)$-MMP over $Z$ with scaling of $A$ 
$$
\xymatrix{
(Y,\Delta)=:(Y_{1},\Delta_{1})\ar@{-->}[r]&\cdots \ar@{-->}[r]& (Y_{i},\Delta_{i})\ar@{-->}[rr]\ar[dr]&& (Y_{i+1},\Delta_{i+1})\ar[dl] \ar@{-->}[r]&\cdots, \\
&&&W_{i}
}
$$
where $(Y_{i},\Delta_{i}) \to W_{i} \leftarrow (Y_{i+1},\Delta_{i+1})$ is a step of a $(K_{Y_{i}}+\Delta_{i})$-MMP over $Z$, such that the non-isomorphic locus of the MMP is disjoint from ${\rm Nlc}(Y,\Delta)$ and $\rho(Y_{i}/W_{i})=1$ for every $i \geq 1$. 
\end{cor}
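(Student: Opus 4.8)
The plan is to build the required MMP directly, one extremal contraction at a time, by combining the cone and contraction theorem for quasi-log schemes with Theorem \ref{thm--mmpstep-quasi-log}. Throughout we identify $(Y,\Delta)$ with the quasi-log scheme induced by a normal pair $[Y,\omega]$, $\omega=K_Y+\Delta$ (the case $X=Y$ of Definition \ref{defn--lc-trivial-fib-quasi-log}), so that ${\rm Nqlc}(Y,\omega)={\rm Nlc}(Y,\Delta)$; each pair $(Y_i,\Delta_i)$ produced below is likewise viewed as $[Y_i,\omega_i]$ with $\omega_i=K_{Y_i}+\Delta_i$. We write $A_i$ for the birational transform of $A$ on $Y_i$ and set $\lambda_i:={\rm inf}\{\mu\in\mathbb{R}_{\geq0}\mid \omega_i+\mu A_i\ \text{is}\ \pi\text{-nef}\}$, with the conventions $A_1:=A$ and $\lambda_0:=1$; note $\omega_1+\lambda_0A_1=K_Y+\Delta+A$ is $\pi$-nef.

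I would construct the sequence by induction, carrying at stage $i$ the hypotheses: the partial MMP $(Y_1,\Delta_1)\dashrightarrow\cdots\dashrightarrow(Y_i,\Delta_i)$ has $\rho(Y_j/W_j)=1$ at every step, is a $(K_Y+\Delta)$-MMP with scaling of $A$ up to stage $i-1$, has non-isomorphic locus disjoint from ${\rm Nlc}(Y,\Delta)$, and $\omega_i$ is $\pi$-pseudo-effective, ${\rm NNef}(\omega_i/Z)\cap{\rm Nqlc}(Y_i,\omega_i)=\emptyset$, $\omega_i+\lambda_{i-1}A_i$ is $\pi$-nef, and ${\rm Nqlc}(Y_i,\omega_i+\lambda_{i-1}A_i)={\rm Nqlc}(Y_i,\omega_i)$. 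For $i=1$ these are exactly the hypotheses of the corollary. If $\lambda_i=0$ we stop, and $K_{Y_i}+\Delta_i$ is then $\pi$-nef by Theorem \ref{thm--nonnef-negativecurve}. Otherwise I apply Lemma \ref{lem--extremal-ray} to $[Y_i,\omega_i]$ with the effective divisor $\lambda_{i-1}A_i$: since the corresponding nef threshold equals $\lambda_i/\lambda_{i-1}>0$, it yields an $\omega_i$-negative extremal ray $R_i$ of $\overline{\rm NE}(Y_i/Z)$ that is rational and relatively ample at infinity, with $(\omega_i+\lambda_iA_i)\cdot R_i=0$. The cone and contraction theorem \cite[Theorem 6.7.4]{fujino-book} then gives a contraction $\varphi_i\colon Y_i\to W_i$ over $Z$ with $\overline{\rm NE}(Y_i/W_i)=R_i$, so $\rho(Y_i/W_i)=1$ and $-\omega_i$ is $\varphi_i$-ample. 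As $\omega_i$ is $\pi$-pseudo-effective, $\varphi_i$ is birational (otherwise $-\omega_i$ would be ample on a positive-dimensional fibre), and as every curve contracted by $\varphi_i$ lies in ${\rm NNef}(\omega_i/Z)$, which misses ${\rm Nqlc}(Y_i,\omega_i)$, $\varphi_i$ is an isomorphism over a neighbourhood of ${\rm Nqlc}(Y_i,\omega_i)$.

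Applying Theorem \ref{thm--mmpstep-quasi-log} to $\varphi_i\colon Y_i\to W_i$ (with the quasi-log structure $[Y_i,\omega_i]=(Y_i,\Delta_i)$) produces a $(K_{Y_i}+\Delta_i)$-MMP over $W_i$; since $\rho(Y_i/W_i)=1$ this is a single step, namely a divisorial contraction with $Y_{i+1}:=W_i$ or a single flip $Y_i\dashrightarrow Y_{i+1}$ (with $Y_{i+1}\to W_i$ small birational and $K_{Y_{i+1}}+\Delta_{i+1}$ ample over $W_i$, so in the flip case $Y_{i+1}$ is itself the quasi-log base). Thus $(Y_i,\Delta_i)\overset{\varphi_i}{\to}W_i\leftarrow(Y_{i+1},\Delta_{i+1})$ is a step of a $(K_{Y_i}+\Delta_i)$-MMP with $\rho(Y_i/W_i)=1$ and with $(\omega_i+\lambda_iA_i)\cdot C_i=0$ for every curve $C_i$ contracted by $\varphi_i$, that is, the next step of a $(K_Y+\Delta)$-MMP with scaling of $A$; iterating produces the desired finite or infinite sequence.

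It remains to verify that the inductive hypotheses persist. That $A_{i+1}$ is effective and $\mathbb{R}$-Cartier follows from Remark \ref{rem--mmp-fibration} (which gives $A_i\equiv_{W_i}r\omega_i$, whence $A_i-r\omega_i$ descends to $W_i$); that $\omega_{i+1}$ is $\pi$-pseudo-effective and $\omega_{i+1}+\lambda_iA_{i+1}$ is $\pi$-nef follows from the negativity-lemma argument of Remark \ref{rem--mmp-basic}; and, since the step $(Y_i,\Delta_i)\dashrightarrow(Y_{i+1},\Delta_{i+1})$ produced by Theorem \ref{thm--mmpstep-quasi-log} is an isomorphism over a neighbourhood of ${\rm Nqlc}(Y_i,\omega_i)$, inductively $Y\dashrightarrow Y_{i+1}$ is an isomorphism near ${\rm Nlc}(Y,\Delta)$, which also gives ${\rm Nlc}(Y_{i+1},\Delta_{i+1}+\lambda_iA_{i+1})={\rm Nlc}(Y_{i+1},\Delta_{i+1})$ because $0\leq\lambda_i\leq1$ and $A$ is effective. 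The main obstacle is the remaining condition ${\rm NNef}(\omega_{i+1}/Z)\cap{\rm Nqlc}(Y_{i+1},\omega_{i+1})=\emptyset$: I would deduce it from the previous stage by passing to a common resolution $p\colon V\to Y_i$, $q\colon V\to Y_{i+1}$, writing $p^*\omega_i=q^*\omega_{i+1}+E$ with $E\geq0$ supported over the step-locus (negativity lemma), using Remark \ref{rem--nonnef-relation} and monotonicity of $\sigma_P(\cdot/Z)$ in effective divisors to get $p({\rm NNef}(q^*\omega_{i+1}/Z))\subseteq{\rm NNef}(\omega_i/Z)$, and then transporting disjointness from ${\rm Nqlc}(Y_i,\omega_i)$ to ${\rm Nqlc}(Y_{i+1},\omega_{i+1})$ via the isomorphism between the two near their non-qlc loci.
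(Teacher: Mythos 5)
Your proposal follows exactly the route the paper takes: its entire proof of this corollary is the one line ``This follows from Lemma \ref{lem--extremal-ray}, \cite[Theorem 6.7.3]{fujino-book}, and Theorem \ref{thm--mmpstep-quasi-log},'' and you have simply unwound that iteration (extremal ray with $(\omega_i+\lambda_iA_i)\cdot R_i=0$, contraction of $R_i$, then Theorem \ref{thm--mmpstep-quasi-log} to produce $[X_{i+1},\omega_{i+1}]$ as the small ample model over $W_i$), so the approach and all the bookkeeping match.

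One caveat on the only step you flag as delicate: ``monotonicity of $\sigma_P(\cdot/Z)$ in effective divisors'' is false as stated — adding an effective divisor can destroy asymptotic vanishing orders (e.g.\ $D$ rigid, $D+E$ semi-ample), so $\sigma_P(q^*\omega_{i+1}/Z)\leq\sigma_P(q^*\omega_{i+1}+E/Z)$ does not hold for arbitrary $E\geq 0$. What saves you is that here $E=p^*\omega_i-q^*\omega_{i+1}$ is $q$-\emph{exceptional} (its $q$-pushforward is $\phi_{i*}\omega_i-\omega_{i+1}=0$), and for effective $q$-exceptional $E$ one does have $\sigma_P(q^*D+E/Z)=\sigma_P(q^*D/Z)+{\rm mult}_P(E)$, since adding exceptional effective divisors does not create new sections (cf.\ \cite[II, 2.11]{nakayama} and the computation in Lemma \ref{lem--good-min-model-makayama-zariski-decomp}). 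With that substitution your transport of the condition ${\rm NNef}\cap{\rm Nqlc}=\emptyset$ to $Y_{i+1}$ is correct, and the rest of the induction goes through as you describe.
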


\begin{proof}
This follows from Lemma \ref{lem--extremal-ray}, \cite[Theorem 6.7.3]{fujino-book}, and Theorem \ref{thm--mmpstep-quasi-log}. 
\end{proof}

\begin{cor}\label{cor--mmp-nomralpair-Qfacdlt}
Let $\pi \colon Y \to Z$ be a projective morphism of normal quasi-projective varieties. 
Let $(Y,\Delta)$ be a normal pair such that $K_{Y}+\Delta$ is $\pi$-pseudo-effective and ${\rm NNef}(K_{Y}+\Delta/Z) \cap {\rm Nlc}(Y,\Delta) = \emptyset$. 
Let $A$ be an $\mathbb{R}$-Cartier $\mathbb{R}$-divisor on $Y$ such that $K_{Y}+\Delta+\lambda_{0} A$ is $\pi$-ample for some positive real number $\lambda_{0}$. 
Then there exists a sequence of steps of a $(K_{Y}+\Delta)$-MMP over $Z$ with scaling of $A$ 
$$
\xymatrix{
(Y,\Delta)=:(Y_{1},\Delta_{1})\ar@{-->}[r]&\cdots \ar@{-->}[r]& (Y_{i},\Delta_{i})\ar@{-->}[rr]\ar[dr]&& (Y_{i+1},\Delta_{i+1})\ar[dl] \ar@{-->}[r]&\cdots, \\
&&&W_{i}
}
$$
where $(Y_{i},\Delta_{i}) \to W_{i} \leftarrow (Y_{i+1},\Delta_{i+1})$ is a step of a $(K_{Y_{i}}+\Delta_{i})$-MMP over $Z$, satisfying the following.
\begin{itemize}
\item
The non-isomorphic locus of the MMP is disjoint from ${\rm Nlc}(Y,\Delta)$, 
\item
$\rho(Y_{i}/W_{i})=1$ for every $i \geq 1$, and
\item
if we put 
$$\lambda_{i}:={\rm inf}\{\mu \in \mathbb{R}_{\geq 0}\,| \,\text{$K_{Y_{i}}+\Delta_{i}+\mu A_{i}$ is nef over $Z$} \}$$  
for each $i \geq 1$ and $\lambda:={\rm lim}_{i \to \infty}\lambda_{i}$, then the following properties hold. 
\begin{itemize}
\item[$\circ$]
The MMP terminates after finitely many steps or otherwise $\lambda \neq \lambda_{i}$ for every $i \geq 1$, and 
\item[$\circ$]
$K_{Y_{i}}+\Delta_{i}+t A_{i}$ is semi-ample over $Z$ for all $i \geq 1$ and any $t \in (\lambda_{i},\lambda_{i-1}]$. 
\end{itemize}
\end{itemize}
Furthermore, if $Y$ is $\mathbb{Q}$-factorial, then $Y_{i}$ is also $\mathbb{Q}$-factorial for every $i \geq 1$. 
\end{cor}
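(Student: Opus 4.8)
The plan is to apply Theorem \ref{thm--mmp-nefthreshold-strict} to the quasi-log scheme induced by a normal pair $f={\rm id}_{Y}\colon (Y,\Delta)\to[Y,K_{Y}+\Delta]$ of Definition \ref{defn--lc-trivial-fib-quasi-log}, and then to translate its conclusion into the step-by-step form demanded here. With this choice one has ${\rm Nqlc}(Y,K_{Y}+\Delta)={\rm Nlc}(Y,\Delta)$, so the hypotheses of Theorem \ref{thm--mmp-nefthreshold-strict} amount exactly to the hypotheses of the corollary: $K_{Y}+\Delta$ is $\pi$-pseudo-effective, ${\rm NNef}(K_{Y}+\Delta/Z)\cap{\rm Nqlc}(Y,K_{Y}+\Delta)=\emptyset$, and $(K_{Y}+\Delta)+\lambda_{0}A$ is $\pi$-ample. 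Theorem \ref{thm--mmp-nefthreshold-strict} then yields a sequence of steps of a $(K_{Y}+\Delta)$-MMP over $Z$ with scaling of $f^{*}A=A$
$$(Y,\Delta)=(Y_{k_{1}},\Delta_{k_{1}})\dashrightarrow(Y_{k_{2}},\Delta_{k_{2}})\dashrightarrow\cdots,$$
auxiliary quasi-log schemes $f_{j}\colon(Y_{k_{j}},\Delta_{k_{j}})\to[X_{j},\omega_{j}]$, contractions $\varphi_{j}\colon X_{j}\to V_{j}$, and real numbers $\lambda_{(j)}={\rm inf}\{\mu\mid \omega_{j}+\mu A_{j}\ \text{is nef over}\ Z\}$ with $\lambda_{(1)}>\lambda_{(2)}>\cdots$.

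Next I would refine this into single steps. Each block $(Y_{k_{j}},\Delta_{k_{j}})\dashrightarrow(Y_{k_{j+1}},\Delta_{k_{j+1}})$ is obtained via Theorem \ref{thm--mmpstep-quasi-log}, hence is a finite sequence of steps of a $(K_{Y_{k_{j}}}+\Delta_{k_{j}})$-MMP over $V_{j}$; by Remark \ref{rem--mmp-fibration} each of these steps $(Y_{i},\Delta_{i})\to W_{i}\leftarrow(Y_{i+1},\Delta_{i+1})$ satisfies $\rho(Y_{i}/W_{i})=1$, and when $Y$ is $\mathbb{Q}$-factorial it arises from the standard $\mathbb{Q}$-factorial log MMP used in the proof of Theorem \ref{thm--mmpstep-quasi-log}, so all intermediate $Y_{i}$ are $\mathbb{Q}$-factorial as soon as the $Y_{k_{j}}$ are, which Theorem \ref{thm--mmp-nefthreshold-strict} guarantees. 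Reindex so that $(Y,\Delta)=(Y_{1},\Delta_{1})\dashrightarrow(Y_{2},\Delta_{2})\dashrightarrow\cdots$ is this full step-by-step sequence, with $k_{1}<k_{2}<\cdots$ the indices at which new blocks begin. By the last bullet of Remark \ref{rem--mmp-basic} the birational map $Y\dashrightarrow Y_{i}$ is an isomorphism outside a closed subset of ${\rm NNef}(K_{Y}+\Delta/Z)$ for every $i$, so the non-isomorphic locus of the MMP lies in ${\rm NNef}(K_{Y}+\Delta/Z)$ and is therefore disjoint from ${\rm Nlc}(Y,\Delta)$.

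It remains to control the nef thresholds $\lambda_{i}$. Inside the $j$-th block every contracted curve $C$ lies in a fibre of $Y_{i}\to V_{j}$; since $K_{Y_{i}}+\Delta_{i}+\lambda_{(j)}A_{i}$ is $\mathbb{R}$-linearly trivial over $V_{j}$ we get $(K_{Y_{i}}+\Delta_{i}+\lambda_{(j)}A_{i})\cdot C=0$, and then $(K_{Y_{i}}+\Delta_{i})\cdot C<0$ together with $\lambda_{(j)}>0$ forces $(A_{i}\cdot C)>0$, so no $\mu<\lambda_{(j)}$ makes $K_{Y_{i}}+\Delta_{i}+\mu A_{i}$ nef over $Z$; hence $\lambda_{i}=\lambda_{(j)}$ for $k_{j}\le i<k_{j+1}$. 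As the $\lambda_{(j)}$ strictly decrease, either there are finitely many blocks and the MMP terminates after finitely many steps, or $\lambda_{i}=\lambda_{(j)}>\lambda={\rm lim}_{i}\lambda_{i}$ for every $i$. For the semi-ampleness claim the interval $(\lambda_{i},\lambda_{i-1}]$ is empty unless $i=k_{j}$, so one must show that $K_{Y_{k_{j}}}+\Delta_{k_{j}}+tA$ is semi-ample over $Z$ for $t\in(\lambda_{(j)},\lambda_{(j-1)}]$ (with $\lambda_{(0)}:=\lambda_{0}$, $j=1$ being immediate since $f_{1}={\rm id}$ and $\omega_{1}+\lambda_{0}A_{1}$ is $\pi$-ample). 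By Theorem \ref{thm--mmp-nefthreshold-strict}, $\omega_{j}+tA_{j}$ on $X_{j}$ is ample over $Z$ for $t\in(\lambda_{(j)},\lambda_{(j-1)})$ and semi-ample over $Z$ at $t=\lambda_{(j-1)}$. Since $f_{j}\colon Y_{k_{j}}\to X_{j}$ is birational with $K_{Y_{k_{j}}}+\Delta_{k_{j}}\sim_{\mathbb{R}}f_{j}^{*}\omega_{j}$, and since block $j-1$ is an MMP over $V_{j-1}$ with scaling value $\lambda_{(j-1)}$, the divisor $K_{Y_{k_{j}}}+\Delta_{k_{j}}+\lambda_{(j-1)}A$ is $\sim_{\mathbb{R}}$ to a pullback through $f_{j}$ of an $\mathbb{R}$-divisor from $X_{j}$; pushing forward and comparing with $K_{Y_{k_{j}}}+\Delta_{k_{j}}\sim_{\mathbb{R}}f_{j}^{*}\omega_{j}$ gives $A\sim_{\mathbb{R}}f_{j}^{*}A_{j}$ on $Y_{k_{j}}$, whence $K_{Y_{k_{j}}}+\Delta_{k_{j}}+tA\sim_{\mathbb{R}}f_{j}^{*}(\omega_{j}+tA_{j})$ is semi-ample over $Z$.

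I expect the main obstacle to be not a new geometric input but precisely this last compatibility step: identifying the birational transform of $A$ on the models $Y_{k_{j}}$ occurring in the MMP with the pullback of the auxiliary divisor $A_{j}$ living on the small model $X_{j}$, so that the ampleness and semi-ampleness statements of Theorem \ref{thm--mmp-nefthreshold-strict}, which are formulated on the $X_{j}$, descend to the $Y_{k_{j}}$. Together with the bookkeeping that the nef threshold is constant along each block and strictly smaller at the next block, this is what produces the precise list of properties in the statement; once Theorem \ref{thm--mmp-nefthreshold-strict} is available, no further MMP-theoretic work is needed.
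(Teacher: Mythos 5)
Your proposal is correct and follows essentially the same route as the paper's own proof: you apply Theorem \ref{thm--mmp-nefthreshold-strict} to $(Y,\Delta)\overset{\mathrm{id}_Y}{\longrightarrow}[Y,K_Y+\Delta]$, unpack the block structure into single steps via Remark \ref{rem--mmp-fibration}, use Remark \ref{rem--mmp-basic} for the non-isomorphic locus, and transfer the nef-threshold and semi-ampleness statements from the $X_i$ to the $Y_{k_i}$ by the identification $A_{k_i}=f_i^{*}A_{X_i}$. Your derivation of that identification by comparing $K_{Y_{k_j}}+\Delta_{k_j}+\lambda_{(j-1)}A_{k_j}\sim_{\mathbb{R}}f_j^{*}(\omega_j+\lambda_{(j-1)}A_{X_j})$ with $K_{Y_{k_j}}+\Delta_{k_j}\sim_{\mathbb{R}}f_j^{*}\omega_j$ is a slightly more explicit packaging of what the paper calls an easy induction, but it is the same argument.
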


\begin{proof}
Put $X:=Y$ and $\omega:=K_{Y}+\Delta$. 
We apply Theorem \ref{thm--mmp-nefthreshold-strict} to $(Y,\Delta)\overset{{\rm id}_{Y}}{\longrightarrow}[X,\omega]$, and we get a diagram 
$$
\xymatrix{
(Y,\Delta)=:(Y_{k_{1}},\Delta_{k_{1}})\ar@<4.5ex>[d]_{{\rm id}_{Y}=:f_{1}}\ar@{-->}[r]& (Y_{k_{2}},\Delta_{k_{2}})\ar[d]_{f_{2}} \ar@{-->}[r]&\cdots \ar@{-->}[r]&(Y_{k_{i}},\Delta_{k_{i}})\ar[d]_{f_{i}}\ar@{-->}[r]&\cdots\\
[X,\omega]=:[X_{1},\omega_{1}] \ar@{-->}[r]& [X_{2},\omega_{2}] \ar@{-->}[r]&\cdots \ar@{-->}[r]&[X_{i},\omega_{i}]\ar@{-->}[r]&\cdots
}
$$
over $Z$ satisfying the properties of Theorem \ref{thm--mmp-nefthreshold-strict}. 
We check that the sequences of upper horizontal maps
$$(Y,\Delta) \dashrightarrow \cdots \dashrightarrow (Y_{j},\Delta_{j})\dashrightarrow \cdots$$
satisfies the properties of Corollary \ref{cor--mmp-nomralpair-Qfacdlt}. 
By the second property of Theorem \ref{thm--mmp-nefthreshold-strict}, the sequence of maps are the sequence of steps of a $(K_{Y}+\Delta)$-MMP over $Z$ with scaling of $A$. 
By Remark \ref{rem--mmp-basic}, the first property of Corollary \ref{cor--mmp-nomralpair-Qfacdlt} holds. 
By construction of the above diagram (see Theorem \ref{thm--mmpstep-quasi-log} and Theorem \ref{thm--mmp-nefthreshold-strict}) and Remark \ref{rem--mmp-fibration}, the second property of Corollary \ref{cor--mmp-nomralpair-Qfacdlt} holds. 
For each $i$, let $A_{k_{i}}$ and $A_{X_{i}}$ be the birational transforms of $A$ on $Y_{k_{i}}$ and $X_{i}$ respectively. 
Since the sequences of upper horizontal maps and the lower horizontal maps are sequences of steps of $(K_{Y}+\Delta)$-MMP over $Z$ with scaling of $A$, both $A_{k_{i}}$ and $A_{X_{i}}$ are $\mathbb{R}$-Cartier, and we can easily check that $A_{k_{i}}=f_{i}^{*}A_{X_{i}}$ by an induction on $i$. 
This implies
$$K_{Y_{k_{i}}}+\Delta_{k_{i}}+tA_{k_{i}}\sim_{\mathbb{R}}f_{i}^{*}(\omega_{i}+tA_{X_{i}})$$
for all $i \geq 1$ and $t \in \mathbb{R}_{\geq 0}$. 
We put
$$\lambda_{j}:={\rm inf}\{\mu \in \mathbb{R}_{\geq 0}\,| \,\text{$K_{Y_{j}}+\Delta_{j}+\mu A_{j}$ is nef over $Z$} \}$$  
for each $j \geq 1$.
By the properties in Theorem \ref{thm--mmp-nefthreshold-strict}, $\lambda_{k_{i}}>\lambda_{k_{i+1}}$ and $K_{Y_{k_{i}}}+\Delta_{k_{i}}+t A_{k_{i}}$ is semi-ample over $Z$ for all $i \geq 1$ and $t \in (\lambda_{k_{i}},\lambda_{k_{i-1}}]$. 
By construction of the MMP  (see Theorem \ref{thm--mmpstep-quasi-log}), we have $\lambda_{j}=\lambda_{k_{i}}$ for every $k_{i} \leq j < k_{i+1}$. 
This implies that $K_{Y_{j}}+\Delta_{j}+t A_{j}$ is semi-ample over $Z$ for all $t \in (\lambda_{j},\lambda_{j-1}]$. 
By this discussion, the MMP
$$(Y,\Delta) \dashrightarrow \cdots \dashrightarrow (Y_{j},\Delta_{j})\dashrightarrow \cdots$$
satisfies the properties of Corollary \ref{cor--mmp-nomralpair-Qfacdlt}. 
The preservation of the $\mathbb{Q}$-factoriality can be checked by the argument as in \cite[Proposition 3.36 and Proposition 3.37]{kollar-mori}.
\end{proof}

\begin{rem}\label{rem--difference-mmp-scaling}
The main difference between the $(K_{Y}+\Delta)$-MMP over $Z$ with scaling of $A$ in Corollary \ref{cor--mmpwithscaling-normalpair} and that in Corollary \ref{cor--mmp-nomralpair-Qfacdlt} is that we have $\lambda_{i}\neq \lambda$ for any $i \geq 1$ in the MMP of Corollary \ref{cor--mmp-nomralpair-Qfacdlt} if the MMP does not terminate. 
This property is crucial to the proofs in Section \ref{sec--mmp-klt} and Section \ref{sec--mmp-lc}. 
If $(Y,\Delta)$ is klt and $A$ is ample over $Z$, then this property holds true for any MMP with scaling in Corollary \ref{cor--mmpwithscaling-normalpair} (see \cite{bchm} and \cite[Theorem 4.1]{birkar-flip}). 
In general, even if $A$ is ample over $Z$, it is not clear that this property holds for any $(K_{Y}+\Delta)$-MMP over $Z$ with scaling of $A$ as in Corollary \ref{cor--mmpwithscaling-normalpair}. 
\end{rem}

\begin{rem}
The following properties of a normal pair $(Y,\Delta)$ are preserved under the $(K_{Y}+\Delta)$-MMP in Corollary \ref{cor--mmpwithscaling-normalpair} and Corollary \ref{cor--mmp-nomralpair-Qfacdlt}. 
\begin{itemize} \item $Y$ is $\mathbb{Q}$-factorial, \item $(Y,0)$ is $\mathbb{Q}$-factorial klt, and \item $(Y,\Delta^{<1}+{\rm Supp}\,\Delta^{\geq 1})$ is dlt.  \end{itemize}
Indeed, the preservation of the $\mathbb{Q}$-factoriality is clear, and the preservation of the dlt property is easy because any $(K_{Y}+\Delta)$-MMP is a $(K_{Y}+\Delta^{<1}+{\rm Supp}\,\Delta^{\geq 1})$-MMP when $(Y,\Delta^{<1}+{\rm Supp}\,\Delta^{\geq 1})$ is dlt (see Corollary \ref{cor--dlt-nonlc-ideal}). 
For the second situation, it is enough to prove that if $(Y_{j},\Delta_{j}) \dashrightarrow (Y_{j+1},\Delta_{j+1})$ is a step of a $(K_{Y_{j}}+\Delta_{j})$-MMP, $(Y_{j},0)$ is $\mathbb{Q}$-factorial klt, and $Y_{j+1}$ is $\mathbb{Q}$-factorial, then $(Y_{j+1},0)$ is klt. 
Fix $t \in [0,1)$ such that $Y_{j} \dashrightarrow Y_{j+1}$ is a step of a $(K_{Y_{j}}+t\Delta_{j})$-MMP. 
On a neighborhood of ${\rm Nlc}(Y_{j},\Delta_{j})$, the map $Y_{j} \dashrightarrow Y_{j+1}$ is an isomorphism, and therefore $(Y_{j+1},0)$ is klt on a neighborhood of ${\rm Nlc}(Y_{j+1},\Delta_{j+1})$. 
On the other hand, $(Y_{j}, t\Delta_{j})$ is klt on $Y_{j}\setminus  {\rm Nlc}(Y_{j},\Delta_{j})$ because $(Y_{j},0)$ is klt and $t<1$. 
By taking a common resolution of $Y_{j} \dashrightarrow Y_{j+1}$ and the standard argument using negativity lemma, we see that $(Y_{j+1}, t\Delta_{j+1})$ is klt on $Y_{j+1}\setminus  {\rm Nlc}(Y_{j+1},\Delta_{j+1})$. 
Thus, $(Y_{j+1}, 0)$ is klt on $Y_{j+1}\setminus  {\rm Nlc}(Y_{j+1},\Delta_{j+1})$. 
 Therefore, $(Y_{j+1},0)$ is klt. 
\end{rem}

We close this section with a reduction of Question \ref{ques--termination-mmp-normalpair} to the termination of all MMP for klt pairs. 

\begin{rem}\label{rem--mmp-reduction}
Assume the termination of all MMP for all klt pairs. 
Let $\pi \colon X \to Z$ be a projective morphism of normal quasi-projective varieties. Let $(X,\Delta)$ be a normal pair such that $K_{X}+\Delta$ is $\pi$-pseudo-effective. Suppose that the non-nef locus of $K_{X}+\Delta$ over $Z$ does not intersect the non-lc locus of $(X,\Delta)$. 
Let
$$(X,\Delta)=:(X_{1},\Delta_{1}) \dashrightarrow\cdots \dashrightarrow  (X_{i},\Delta_{i}) \dashrightarrow \cdots $$
be a sequence of steps a $(K_{X}+\Delta)$-MMP over $Z$. 
The existence of the $(K_{X}+\Delta)$-MMP follows from Corollary \ref{cor--mmpwithscaling-normalpair}. 
Let $f\colon (Y,\Gamma) \to (X,\Delta)$ be a dlt blow-up in Theorem \ref{thm--dlt-blowup}. 
By using the lift of MMP as in \cite[Remark 2.9]{birkar-flip} or using Theorem \ref{thm--mmpstep-quasi-log} repeatedly, we get a diagram
$$
\xymatrix{
(Y,\Gamma)=:(Y_{k_{1}},\Gamma_{k_{1}})\ar@<4.5ex>[d]_{f=:f_{1}}\ar@{-->}[r]& (Y_{k_{2}},\Gamma_{k_{2}})\ar[d]_{f_{2}} \ar@{-->}[r]&\cdots \ar@{-->}[r]&(Y_{k_{i}},\Gamma_{k_{i}})\ar[d]_{f_{i}}\ar@{-->}[r]&\cdots\\
(X,\Delta)=:(X_{1},\Delta_{1}) \ar@{-->}[r]& (X_{2},\Delta_{2}) \ar@{-->}[r]&\cdots \ar@{-->}[r]&(X_{i},\Delta_{i})\ar@{-->}[r]&\cdots
}
$$
over $Z$ such that the upper horizontal sequence of birational maps
$$(Y,\Gamma) \dashrightarrow \cdots \dashrightarrow (Y_{j},\Gamma_{j})\dashrightarrow \cdots$$
is a sequence of steps of a $(K_{Y}+\Gamma)$-MMP. 
By construction, $(Y, \Gamma^{<1}+{\rm Supp}\,\Gamma^{\geq 1})$ is dlt, and ${\rm NNef}(K_{X}+\Delta/Z) \cap {\rm Nlc}(X,\Delta) = \emptyset$ implies ${\rm NNef}(K_{Y}+\Gamma/Z) \cap {\rm Nlc}(Y,\Gamma) = \emptyset$. 
By Remark \ref{rem--mmp-basic}, this $(K_{Y}+\Gamma)$-MMP is a $(Y, \Gamma^{<1}+{\rm Supp}\,\Gamma^{\geq 1})$-MMP over $Z$. 
By the special termination \cite{fujino-sp-ter}, after finitely many steps the $(K_{Y}+\Gamma^{<1}+{\rm Supp}\,\Gamma^{\geq 1})$-MMP will be a $(K_{Y}+\Gamma^{<1})$-MMP. 
Since the pair $(Y,\Gamma^{<1})$ is $\mathbb{Q}$-factorial klt, this MMP terminates by the termination of all MMP for all klt pairs. 
\end{rem}

\section{Minimal model program along Kawamata log terminal locus}\label{sec--mmp-klt}

In this section we study the minimal model theory for normal pairs such that the non-nef locus of the log canonical $\mathbb{R}$-divisor is disjoint from the non-klt locus of the normal pair. 

\subsection{From non-vanishing to existence of minimal model}

In this subsection, we study the existence of minimal models under the assumption on the non-vanishing theorem. 

\begin{prop}\label{prop--dlt-resol-linsystem}
Let $\pi \colon X \to Z$ be a projective morphism of normal quasi-projective varieties.  
Let $(X,\Delta)$ be a normal pair and let $D$ be an $\mathbb{R}$-Cartier $\mathbb{R}$-divisor on $X$ such that $|D/Z|_{\mathbb{R}} \neq \emptyset$. 
Then there exist a projective birational morphism $f \colon \widetilde{X} \to X$ from a $\mathbb{Q}$-factorial variety $\widetilde{X}$, an $\mathbb{R}$-divisor $\widetilde{M}$ on $\widetilde{X}$, and an effective $\mathbb{R}$-divisor $\widetilde{F}$ on $\widetilde{X}$ satisfying the following properties.
\begin{itemize}
\item
$f^{*}D\sim_{\mathbb{R},\,Z}\widetilde{M}+\widetilde{F}$, 

\item
$\widetilde{M}$ is semi-ample over $Z$ and ${\rm Supp}\,\widetilde{F}= {\rm Bs}|f^{*}D/Z|_{\mathbb{R}}$, 

\item
putting
$$\widetilde{\Gamma}=f_{*}^{-1}\Delta+\sum_{\text{$E_{i}$:$f$-exceptional}}E_{i}+{\rm Supp}\,\widetilde{F},$$
then $(\widetilde{X},\widetilde{\Gamma}^{<1}+{\rm Supp}\,\widetilde{\Gamma}^{\geq 1})$ is a $\mathbb{Q}$-factorial dlt pair, and

\item
for any $f$-exceptional prime divisor $E_{i}$ on $\widetilde{X}$, at least one of $a(E_{i},X,\Delta) \leq  -1$ and $E_{i} \subset {\rm Bs}|f^{*}D/Z|_{\mathbb{R}}$ holds. 
\end{itemize}
\end{prop}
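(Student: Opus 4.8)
\emph{Proof strategy.} The plan is to combine Theorem~\ref{thm--resol-R-linear-system} with a dlt blow-up type construction. First I would apply Theorem~\ref{thm--resol-R-linear-system} to the reduced divisor ${\rm Supp}\,\Delta$ and to $D$, obtaining a log resolution $g\colon Y\to X$ of $(X,{\rm Supp}\,\Delta)$ together with effective $\mathbb{R}$-divisors $M_{Y}$ and $F_{Y}$ on $Y$ such that $g^{*}D\sim_{\mathbb{R},\,Z}M_{Y}+F_{Y}$, such that $M_{Y}$ is semi-ample over $Z$, such that every component of $F_{Y}$ is an irreducible component of ${\rm Bs}|g^{*}D/Z|_{\mathbb{R}}$, and such that ${\rm Supp}(F_{Y}+g_{*}^{-1}\Delta)\cup{\rm Ex}(g)$ is simple normal crossing. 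Since $M_{Y}$ is semi-ample over $Z$ we have ${\rm Bs}|M_{Y}/Z|_{\mathbb{R}}=\emptyset$, so using the elements $N+F_{Y}$ of $|g^{*}D/Z|_{\mathbb{R}}$ with $N\in|M_{Y}/Z|_{\mathbb{R}}$ and the identity $\bigcap_{N}({\rm Supp}\,N\cup{\rm Supp}\,F_{Y})={\rm Bs}|M_{Y}/Z|_{\mathbb{R}}\cup{\rm Supp}\,F_{Y}$ one gets ${\rm Bs}|g^{*}D/Z|_{\mathbb{R}}\subset{\rm Supp}\,F_{Y}$; combined with the fact that each component of $F_{Y}$ lies in ${\rm Bs}|g^{*}D/Z|_{\mathbb{R}}$, this yields ${\rm Bs}|g^{*}D/Z|_{\mathbb{R}}={\rm Supp}\,F_{Y}$, which is therefore pure codimension one on $Y$. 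Putting $\Gamma_{Y}:=g_{*}^{-1}\Delta+\sum_{E \text{ $g$-exceptional}}E+{\rm Supp}\,F_{Y}$, log smoothness shows that $(Y,\Gamma_{Y}^{<1}+{\rm Supp}\,\Gamma_{Y}^{\geq 1})$ is a $\mathbb{Q}$-factorial dlt pair, so the first three bullets already hold with $(\widetilde{X},\widetilde{M},\widetilde{F})=(Y,M_{Y},F_{Y})$. What may fail is the last bullet: $Y$ can carry $g$-exceptional prime divisors in the set $\mathcal{E}:=\{E \text{ $g$-exceptional}\mid a(E,X,\Delta)>-1 \text{ and } E\not\subset{\rm Bs}|g^{*}D/Z|_{\mathbb{R}}\}$, and these, which are disjoint as divisors from ${\rm Supp}\,F_{Y}$, must be contracted.

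To contract $\mathcal{E}$ I would mimic the construction of a dlt blow-up. After first replacing $X$ by a dlt blow-up of $(X,\Delta)$ (Theorem~\ref{thm--dlt-blowup}) — which absorbs the exceptional divisors of discrepancy $\le -1$ — I would equip $Y$ with a boundary $\Lambda$ obtained from $\Gamma_{Y}^{<1}+{\rm Supp}\,\Gamma_{Y}^{\geq 1}$ by restoring along ${\rm Supp}\,F_{Y}$ the crepant coefficients while keeping coefficient one along $\mathcal{E}$, chosen so that $K_{Y}+\Lambda$ is $\mathbb{R}$-linearly equivalent over $X$ to an effective $g$-exceptional divisor whose support is exactly $\bigcup_{E\in\mathcal{E}}E$. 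Since $(Y,\Lambda)$ is log smooth it is log canonical, so ${\rm Nlc}(Y,\Lambda)=\emptyset$ and the non-nef-locus hypotheses of Corollary~\ref{cor--mmpwithscaling-normalpair} and Corollary~\ref{cor--mmp-nomralpair-Qfacdlt} hold vacuously; I would then run a $(K_{Y}+\Lambda)$-MMP over $X$ with scaling of a $g$-ample divisor. By the negativity lemma this MMP contracts only divisors in $\mathcal{E}$ and leaves ${\rm Supp}\,F_{Y}$ intact, it preserves $\mathbb{Q}$-factoriality and the dlt property, and, being an MMP over $X$ whose relevant divisor is effective and $g$-exceptional, it terminates by the argument establishing the existence of a dlt blow-up together with \cite{bchm} (and \cite{fujino-sp-ter} if one reduces to a klt pair); at termination every divisor of $\mathcal{E}$ has been contracted. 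This produces $f\colon\widetilde{X}\to X$ with $\widetilde{X}$ $\mathbb{Q}$-factorial and a birational contraction $\phi\colon Y\dashrightarrow\widetilde{X}$ over $X$ contracting exactly $\mathcal{E}$.

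Finally I would verify the conclusions on $\widetilde{X}$. The $f$-exceptional prime divisors are precisely the $g$-exceptional ones outside $\mathcal{E}$, hence each satisfies $a(E_{i},X,\Delta)\le -1$ or $E_{i}\subset{\rm Bs}|f^{*}D/Z|_{\mathbb{R}}$; and $(\widetilde{X},\widetilde{\Gamma}^{<1}+{\rm Supp}\,\widetilde{\Gamma}^{\geq 1})$ with $\widetilde{\Gamma}=f_{*}^{-1}\Delta+\sum_{E_{i}}E_{i}+{\rm Supp}\,\widetilde{F}$ is again $\mathbb{Q}$-factorial dlt because the dlt property is preserved along the MMP and $\phi$ only contracts divisors appearing with coefficient one. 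Setting $\widetilde{F}:=\phi_{*}F_{Y}$ and $\widetilde{M}:=\phi_{*}M_{Y}$ gives $f^{*}D\sim_{\mathbb{R},\,Z}\widetilde{M}+\widetilde{F}$ with $\widetilde{F}\geq 0$, and since $\phi$ contracts nothing in ${\rm Supp}\,F_{Y}$ and is a birational contraction, ${\rm Supp}\,\widetilde{F}={\rm Bs}|f^{*}D/Z|_{\mathbb{R}}$. The genuinely delicate point — and what I expect to be the main obstacle — is the semi-ampleness of $\widetilde{M}$ over $Z$: as $g^{*}D$ is numerically trivial over $X$ one has $M_{Y}\equiv -F_{Y}$ over $X$, so the MMP above has to be run so that it is $M_{Y}$-trivial (i.e. contracts only curves meeting $F_{Y}$, hence $M_{Y}$, in degree zero), which can be arranged by choosing $g$ so that the divisors of $\mathcal{E}$ are disjoint from ${\rm Supp}\,F_{Y}$ and blowing up further as needed; alternatively one reconstructs a semi-ample decomposition of $f^{*}D$ directly on $\widetilde{X}$ by applying the base point free theorem to the dlt pair $(\widetilde{X},\widetilde{\Gamma}^{<1}+{\rm Supp}\,\widetilde{\Gamma}^{\geq 1})$. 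The other point requiring care is the precise choice of $\Lambda$ and of the MMP, so that exactly $\mathcal{E}$ — and no base-locus divisor — is contracted and the program terminates.
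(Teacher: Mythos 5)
Your overall strategy matches the paper's: apply Theorem~\ref{thm--resol-R-linear-system}, observe that ${\rm Supp}\,F_{Y}={\rm Bs}|g^{*}D/Z|_{\mathbb{R}}$, and then contract the $g$-exceptional divisors that have discrepancy $>-1$ and do not lie in the base locus by running an MMP over $X$. Your computation that ${\rm Bs}|g^{*}D/Z|_{\mathbb{R}}={\rm Supp}\,F_{Y}$ is correct, and you correctly identify what remains to be done. But there is a genuine gap, and you are right to flag it: the argument as written does not establish that $\widetilde{M}$ remains semi-ample over $Z$ after the MMP, and neither of the two fixes you sketch is a proof. ``Arranging disjointness by further blowups'' is circular, because the new exceptional divisors created to separate $\mathcal{E}$ from ${\rm Supp}\,F_{Y}$ typically have discrepancy $>-1$ and so re-enter $\mathcal{E}$, and moreover disjointness on $Y$ need not be preserved after a flip; and the base point free theorem applied to $(\widetilde{X},\widetilde{\Gamma}^{<1}+{\rm Supp}\,\widetilde{\Gamma}^{\geq 1})$ gives you nothing about $\widetilde{M}$, which is a priori unrelated to that log canonical divisor.

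The paper's device for exactly this issue is to build $\overline{M}$ directly into the divisor that drives the MMP. Writing $\overline{M}=\sum_{k}r_{k}\overline{M}_{k}$ with each $\overline{M}_{k}$ Cartier and semi-ample over $Z$, one fixes $\alpha$ with $\alpha r_{k}>2\cdot\dim X$ for all $k$ (and $\alpha\overline{F}-\overline{G}_{+}\geq 0$), and runs a $(K_{\overline{X}}+\overline{\Gamma}^{<1}+{\rm Supp}\,\overline{\Gamma}^{\geq 1}+\alpha\overline{M})$-MMP over $X$ with scaling of an ample divisor until the divisor becomes a limit of movable $\mathbb{R}$-divisors over $X$. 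The length-of-extremal-rays bound (\cite[Theorem 3.8.1]{bchm}, or \cite[Theorem 1.5 (iii)]{fujino-morihyper}) shows that each contracted extremal ray intersects $\overline{M}$ trivially, since otherwise $\alpha\overline{M}\cdot C\geq\alpha r_{k}>2\dim X$ would overshoot the bound; so every $\overline{M}_{k}$ remains Cartier and semi-ample along the MMP, and $\widetilde{M}=\phi_{*}\overline{M}$ is semi-ample over $Z$. Once this is in place, the last bullet follows not by claiming that exactly $\mathcal{E}$ is contracted, but by the negativity lemma applied to the relation
$$K_{\widetilde{X}}+\widetilde{\Gamma}^{<1}+{\rm Supp}\,\widetilde{\Gamma}^{\geq 1}+\alpha \widetilde{M}\sim_{\mathbb{R},\,X}\sum_{\text{$E_{i}$:$f$-exceptional}}(a(E_{i},X,\Delta)+1)E_{i}-(\alpha \widetilde{F}-\widetilde{G}_{+}+\widetilde{G}_{-}),$$
which pins down the support of the residual exceptional divisors inside $f_{*}^{-1}\Delta\cup{\rm Bs}|f^{*}D/Z|_{\mathbb{R}}$.

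Two minor points. First, the preliminary dlt blow-up of $(X,\Delta)$ is unnecessary: the exceptional divisors with $a(E,X,\Delta)\leq -1$ already satisfy the last bullet, so there is nothing to absorb. Second, the boundary $\Lambda$ you propose is not pinned down enough to make ``by the negativity lemma this MMP contracts only divisors in $\mathcal{E}$'' a precise statement; as above, the paper avoids the need to control exactly which divisors are contracted.
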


\begin{proof}
By applying Theorem \ref{thm--resol-R-linear-system} to $\pi \colon X \to Z$, $\Delta$, and $D$, we get a log resolution $h \colon \overline{X} \to X$ of $(X,\Delta)$ and effective $\mathbb{R}$-divisors $\overline{M}$ and $\overline{F}$ on $\overline{X}$ such that
\begin{itemize}
\item
$h^{*}D\sim_{\mathbb{R},\,Z}\overline{M}+\overline{F}$, 
\item
every component of $\overline{F}$ is an irreducible component of ${\rm Bs}|h^{*}D/Z|_{\mathbb{R}}$,  
\item
$\overline{M}$ is semi-ample over $Z$, and 
\item
${\rm Supp}(\overline{F}+h_{*}^{-1}\Delta)\cup{\rm Ex}(h)$ is a simple normal crossing divisor on $\overline{X}$. 
\end{itemize}
We put
$$\overline{\Gamma}=h_{*}^{-1}\Delta+\sum_{\text{$E'_{j}$:$h$-exceptional}}E'_{j}+{\rm Supp}\,\overline{F}.$$
Then $(\overline{X}, \overline{\Gamma}^{<1}+{\rm Supp}\,\overline{\Gamma}^{\geq 1})$ is a log smooth lc pair. 
By construction, we may write
$$\overline{\Gamma}^{<1}+{\rm Supp}\,\overline{\Gamma}^{\geq 1}=h_{*}^{-1}\Delta+\sum_{\text{$E'_{j}$:$h$-exceptional}}E'_{j}+\overline{G}_{+}-\overline{G}_{-}$$
for some effective $\mathbb{R}$-divisors $\overline{G}_{+}$ and $\overline{G}_{-}$ on $\overline{X}$ such that $\overline{G}_{+}$ and $\overline{G}_{-}$ have no common components, ${\rm Supp}\,\overline{G}_{+}\subset {\rm Supp}\,\overline{F}$, and
${\rm Supp}\,\overline{G}_{-}\subset {\rm Supp}\,h_{*}^{-1}\Delta$. 

We can write $\overline{M}=\sum_{k}r_{k}\overline{M}_{k}$ for some positive real numbers $r_{k}$ and Cartier divisors $\overline{M}_{k}$ on $\overline{X}$ that are semi-ample over $Z$. We fix $\alpha \in \mathbb{R}_{>0}$ such that $\alpha r_{k} >2 \cdot {\rm dim}\,X$ for all $k$ and $\alpha \overline{F}-\overline{G}_{+} \geq 0$. 
Since we can write
$$K_{\overline{X}}+h_{*}^{-1}\Delta+\sum_{\text{$E'_{j}$:$h$-exceptional}}E'_{j}=h^{*}(K_{X}+\Delta)+\sum_{\text{$E'_{j}$:$h$-exceptional}}(a(E'_{j},X,\Delta)+1)E'_{j},$$
this relation and $\overline{M} \sim_{\mathbb{R},\,X}-\overline{F}$ imply
\begin{equation*}
\begin{split}
K_{\overline{X}}+\overline{\Gamma}^{<1}+{\rm Supp}\,\overline{\Gamma}^{\geq 1}+\alpha \overline{M}=&K_{\overline{X}}+h_{*}^{-1}\Delta+\sum_{\text{$E'_{j}$:$h$-exceptional}}E'_{j}+\overline{G}_{+}-\overline{G}_{-}+\alpha \overline{M}\\
\sim_{\mathbb{R},\,X}&\sum_{\text{$E'_{j}$:$h$-exceptional}}(a(E'_{j},X,\Delta)+1)E'_{j}-(\alpha \overline{F}-\overline{G}_{+}+\overline{G}_{-}),
\end{split}
\end{equation*}
where $\alpha \overline{F}-\overline{G}_{+}+\overline{G}_{-} \geq 0$ by construction of $\alpha$. 
Since $\overline{M}$ is semi-ample over $Z$, we can run a $(K_{\overline{X}}+\overline{\Gamma}^{<1}+{\rm Supp}\,\overline{\Gamma}^{\geq 1}+\alpha \overline{M})$-MMP over $X$ with scaling of an ample divisor, and therefore we get a birational contraction
$$\phi \colon (\overline{X}, \overline{\Gamma}^{<1}+{\rm Supp}\,\overline{\Gamma}^{\geq 1}+\alpha \overline{M}) \dashrightarrow (\widetilde{X}, \widetilde{\Gamma}^{<1}+{\rm Supp}\,\widetilde{\Gamma}^{\geq 1}+\alpha \widetilde{M})$$
over $X$, where $\widetilde{\Gamma}=\phi_{*} \overline{\Gamma}$ and $\widetilde{M}=\phi_{*}\overline{M}$, such that $K_{\widetilde{X}}+\widetilde{\Gamma}^{<1}+{\rm Supp}\,\widetilde{\Gamma}^{\geq 1}+\alpha \widetilde{M}$ is the limit of movable $\mathbb{R}$-divisors over $X$.  
By the choice of $\alpha$ and the argument of the length of extremal rays (\cite[Theorem 3.8.1]{bchm}), we see that $\widetilde{M}$ is semi-ample over $Z$. 

We denote the induced birational morphism $\widetilde{X} \to X$ by $f$, and we put $\widetilde{F}=\phi_{*}\overline{F}$. 
We check that $f \colon \widetilde{X} \to X$, $\widetilde{M}$, and $\widetilde{F}$ satisfy all the conditions of Proposition \ref{prop--dlt-resol-linsystem}. 
By construction, $\widetilde{X}$ is $\mathbb{Q}$-factorial. 
Since $h^{*}D\sim_{\mathbb{R},\,Z}\overline{M}+\overline{F}$, we have 
$$f^{*}D \sim_{\mathbb{R},\,Z} \widetilde{M}+\widetilde{F}.$$
Therefore, the first condition of Proposition \ref{prop--dlt-resol-linsystem} holds. 
We pick an arbitrary element $\widetilde{L} \in |f^{*}D/Z|_{\mathbb{R}}$. 
Putting $\overline{L}:=h^{*}f_{*}\widetilde{L}$, then $\overline{L} \in |h^{*}D/Z|_{\mathbb{R}}$, and we have $\phi_{*}\overline{L}=\widetilde{L}$ by the negativity lemma. 
Since every component of $\overline{F}$ is contained in ${\rm Bs}|h^{*}D/Z|_{\mathbb{R}}$, we have ${\rm Supp}\,\overline{L}\supset {\rm Supp}\,\overline{F}$. 
Therefore we have ${\rm Supp}\,\widetilde{L}\supset {\rm Supp}\,\widetilde{F}$.  Since $\widetilde{L}$ is arbitrary, it follows that ${\rm Supp}\,\widetilde{F}\subset {\rm Bs}|f^{*}D/Z|_{\mathbb{R}}$. 
The inverse inclusion also holds by the semi-ampleness of $\widetilde{M}$ over $Z$. 
Therefore
$${\rm Supp}\,\widetilde{F}= {\rm Bs}|f^{*}D/Z|_{\mathbb{R}},$$
and we see that the second condition of Proposition \ref{prop--dlt-resol-linsystem} holds. 
Now
$$\widetilde{\Gamma}=\phi_{*} \overline{\Gamma}=f_{*}^{-1}\Delta+\sum_{\text{$E_{i}$:$f$-exceptional}}E_{i}+{\rm Supp}\,\widetilde{F}$$
and $(\widetilde{X},\widetilde{\Gamma}^{<1}+{\rm Supp}\,\widetilde{\Gamma}^{\geq 1})$ is a $\mathbb{Q}$-factorial dlt pair by construction, which is the third condition of Proposition \ref{prop--dlt-resol-linsystem}. 
We put $\widetilde{G}_{+}=\phi_{*}\overline{G}_{+}$ and $\widetilde{G}_{-}=\phi_{*}\overline{G}_{-}$.  
Then
\begin{equation*}
\begin{split}
K_{\widetilde{X}}+\widetilde{\Gamma}^{<1}+{\rm Supp}\,\widetilde{\Gamma}^{\geq 1}+\alpha \widetilde{M}=&K_{\widetilde{X}}+f_{*}^{-1}\Delta+\sum_{\text{$E_{i}$:$f$-exceptional}}E_{i}+\widetilde{G}_{+}-\widetilde{G}_{-}+\alpha \widetilde{M}\\
\sim_{\mathbb{R},\,X}&\sum_{\text{$E_{i}$:$f$-exceptional}}(a(E_{i},X,\Delta)+1)E_{i}-(\alpha \widetilde{F}-\widetilde{G}_{+}+\widetilde{G}_{-}),
\end{split}
\end{equation*}
and $\alpha \widetilde{F}-\widetilde{G}_{+}+\widetilde{G}_{-} \geq 0$ by construction. 
Since $K_{\widetilde{X}}+\widetilde{\Gamma}^{<1}+{\rm Supp}\,\widetilde{\Gamma}^{\geq 1}+\alpha \widetilde{M}$ is the limit of movable $\mathbb{R}$-divisors over $X$, the negativity lemma implies that 
$$(\alpha \widetilde{F}-\widetilde{G}_{+}+\widetilde{G}_{-})-\sum_{\text{$E_{i}$:$f$-exceptional}}(a(E_{i},X,\Delta)+1)E_{i} \geq 0.$$
From this fact, for any $f$-exceptional prime divisor $E_{i}$, if $a(E_{i},X,\Delta)>-1$ then $E_{i}$ is an irreducible component of $\alpha \widetilde{F}-\widetilde{G}_{+}+\widetilde{G}_{-}$. 
By recalling that ${\rm Supp}\,\overline{G}_{+}\subset {\rm Supp}\,\overline{F}$ and that ${\rm Supp}\,\overline{G}_{-}\subset {\rm Supp}\,h_{*}^{-1}\Delta$, we have
$${\rm Supp}\,(\alpha \widetilde{F}-\widetilde{G}_{+}+\widetilde{G}_{-}) \subset {\rm Supp}\,(f_{*}^{-1}\Delta+\widetilde{F})={\rm Supp}\,f_{*}^{-1}\Delta \cup {\rm Bs}|f^{*}D/Z|_{\mathbb{R}}.$$
Therefore, any $f$-exceptional prime divisor $E_{i}$ satisfying $a(E_{i},X,\Delta)>-1$ is contained in ${\rm Supp}\,f_{*}^{-1}\Delta \cup {\rm Bs}|f^{*}D/Z|_{\mathbb{R}}$. 
Since $E_{i}$ is $f$-exceptional, $E_{i}$ is contained in ${\rm Bs}|f^{*}D/Z|_{\mathbb{R}}$. 
This shows the final condition of Proposition \ref{prop--dlt-resol-linsystem}. 
\end{proof}

The following result is a variant of Proposition \ref{prop--dlt-resol-linsystem}.

\begin{prop}\label{prop--dlt-resol-linsystem-2}
Let $\pi \colon X \to Z$ be a projective morphism of normal quasi-projective varieties.  
Let $(X,\Delta)$ be a normal pair and let $D$ be an $\mathbb{R}$-Cartier divisor on $X$. 
Suppose that there exists a resolution $g \colon X' \to X$ of $X$ for which $g^{*}D$ has the Nakayama--Zariski decomposition over $Z$ whose positive part is semi-ample over $Z$. 
Then there exists a projective birational morphism $f \colon \widetilde{X} \to X$ from a $\mathbb{Q}$-factorial variety $\widetilde{X}$ such that putting $\widetilde{M}$ and $\widetilde{F}$ as the positive part and the negative part of the Nakayama--Zariski decomposition of $f^{*}D$ over $Z$, respectively, then the following properties hold.
\begin{itemize}
\item
$f^{*}D\sim_{\mathbb{R},\,Z}\widetilde{M}+\widetilde{F}$, 

\item
$\widetilde{M}$ is semi-ample over $Z$ and ${\rm Supp}\,\widetilde{F}= {\rm Bs}|f^{*}D/Z|_{\mathbb{R}}$, 

\item
putting
$$\widetilde{\Gamma}=f_{*}^{-1}\Delta+\sum_{\text{$E_{i}$:$f$-exceptional}}E_{i}+{\rm Supp}\,\widetilde{F},$$
then $(\widetilde{X},\widetilde{\Gamma}^{<1}+{\rm Supp}\,\widetilde{\Gamma}^{\geq 1})$ is a $\mathbb{Q}$-factorial dlt pair, and

\item
for any $f$-exceptional prime divisor $E_{i}$ on $\widetilde{X}$, at least one of $a(E_{i},X,\Delta) \leq  -1$ and $E_{i} \subset {\rm Bs}|f^{*}D/Z|_{\mathbb{R}}$ holds. 
\end{itemize}
\end{prop}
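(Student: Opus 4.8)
The plan is to rerun the proof of Proposition~\ref{prop--dlt-resol-linsystem} almost verbatim, using the hypothesis on the Nakayama--Zariski decomposition in place of Theorem~\ref{thm--resol-R-linear-system}, and then to add one identification step at the end. First, the hypothesis already yields $|D/Z|_{\mathbb{R}}\neq\emptyset$: the positive part $P_{\sigma}(g^{*}D; X'/Z)$ is semi-ample over the quasi-projective scheme $Z$, hence $\mathbb{R}$-linearly equivalent over $Z$ to an effective $\mathbb{R}$-divisor, so $g^{*}D$, and therefore $D=g_{*}g^{*}D$, is $\mathbb{R}$-linearly equivalent over $Z$ to an effective $\mathbb{R}$-divisor. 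Next I would choose a log resolution $h\colon\overline{X}\to X$ of $(X,\Delta)$ that factors through $g$, say $h=g\circ p$ with $p\colon\overline{X}\to X'$, and put
$$\overline{M}:=p^{*}P_{\sigma}(g^{*}D; X'/Z)\qquad\text{and}\qquad\overline{F}:=p^{*}N_{\sigma}(g^{*}D; X'/Z),$$
so that $h^{*}D=\overline{M}+\overline{F}$ with $\overline{M}$ semi-ample over $Z$ (pullbacks of sheaves globally generated over $Z$ stay globally generated over $Z$) and $\overline{F}\geq 0$. Every component of $\overline{F}$ lies over ${\rm Supp}\,N_{\sigma}(g^{*}D; X'/Z)\subset{\rm NNef}(g^{*}D/Z)\subset{\rm Bs}|g^{*}D/Z|_{\mathbb{R}}$, and after finitely many further blow-ups of $\overline{X}$ I may assume ${\rm Supp}(\overline{F}+h_{*}^{-1}\Delta)\cup{\rm Ex}(h)$ is simple normal crossing, so (cf.~Remark~\ref{rem--nonnef-relation}) every component of $\overline{F}$ is an irreducible component of ${\rm Bs}|h^{*}D/Z|_{\mathbb{R}}$. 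Thus $h$, $\overline{M}$, $\overline{F}$ provide precisely what the proof of Proposition~\ref{prop--dlt-resol-linsystem} extracts from Theorem~\ref{thm--resol-R-linear-system}.

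From here I would copy the argument of Proposition~\ref{prop--dlt-resol-linsystem} without change: form the log smooth lc pair $\overline{\Gamma}=h_{*}^{-1}\Delta+\sum_{j}E'_{j}+{\rm Supp}\,\overline{F}$, write $\overline{\Gamma}^{<1}+{\rm Supp}\,\overline{\Gamma}^{\geq 1}=h_{*}^{-1}\Delta+\sum_{j}E'_{j}+\overline{G}_{+}-\overline{G}_{-}$, decompose $\overline{M}=\sum_{k}r_{k}\overline{M}_{k}$ with $\overline{M}_{k}$ Cartier and semi-ample over $Z$, choose $\alpha>0$ with $\alpha r_{k}>2\cdot{\rm dim}\,X$ and $\alpha\overline{F}-\overline{G}_{+}\geq 0$, and run a $(K_{\overline{X}}+\overline{\Gamma}^{<1}+{\rm Supp}\,\overline{\Gamma}^{\geq 1}+\alpha\overline{M})$-MMP over $X$ with scaling of an ample divisor. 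This yields a birational contraction $\phi\colon\overline{X}\dashrightarrow\widetilde{X}$ over $X$, with induced morphism $f\colon\widetilde{X}\to X$ onto a $\mathbb{Q}$-factorial variety, such that $K_{\widetilde{X}}+\widetilde{\Gamma}^{<1}+{\rm Supp}\,\widetilde{\Gamma}^{\geq 1}+\alpha\widetilde{M}$ is a limit of movable $\mathbb{R}$-divisors over $X$, where $\widetilde{\Gamma}=\phi_{*}\overline{\Gamma}=f_{*}^{-1}\Delta+\sum_{i}E_{i}+{\rm Supp}\,\phi_{*}\overline{F}$ and $\widetilde{M}=\phi_{*}\overline{M}$. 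As in Proposition~\ref{prop--dlt-resol-linsystem} (the length of extremal rays \cite[Theorem~3.8.1]{bchm}, the negativity lemma, and the semi-ampleness of $\widetilde{M}$) one then checks that $\widetilde{M}$ is semi-ample over $Z$, that $f^{*}D\sim_{\mathbb{R},\,Z}\widetilde{M}+\phi_{*}\overline{F}$ with ${\rm Supp}\,\phi_{*}\overline{F}={\rm Bs}|f^{*}D/Z|_{\mathbb{R}}$, that $(\widetilde{X},\widetilde{\Gamma}^{<1}+{\rm Supp}\,\widetilde{\Gamma}^{\geq 1})$ is a $\mathbb{Q}$-factorial dlt pair, and that every $f$-exceptional prime divisor $E_{i}$ satisfies $a(E_{i},X,\Delta)\leq-1$ or $E_{i}\subset{\rm Bs}|f^{*}D/Z|_{\mathbb{R}}$.

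It remains to identify $\widetilde{M}$ and $\phi_{*}\overline{F}$ with the positive and the negative parts of the Nakayama--Zariski decomposition of $f^{*}D$ over $Z$, and this is the step I expect to cost most of the work. Since $\widetilde{M}$ is nef over $Z$ we automatically have $N_{\sigma}(f^{*}D;\widetilde{X}/Z)\leq\phi_{*}\overline{F}$, so it is enough to show $\widetilde{M}=P_{\sigma}(f^{*}D;\widetilde{X}/Z)$. Using the compatibility of the relative Nakayama--Zariski decomposition with pushforward by birational morphisms (Definition~\ref{defn--relative-nakayama-zariski-decom} and \cite[Section~3]{liuxie-relative-nakayama}, as already exploited in the proof of Lemma~\ref{lem--good-min-model-makayama-zariski-decomp}), passing to a common resolution of $\phi$ and comparing the Nakayama--Zariski decompositions of the two pullbacks of $D$, one finds $P_{\sigma}(h^{*}D;\overline{X}/Z)=\overline{M}+C$ for an effective $\mathbb{R}$-divisor $C$ that is exceptional over $X'$, whence $P_{\sigma}(f^{*}D;\widetilde{X}/Z)=\phi_{*}P_{\sigma}(h^{*}D;\overline{X}/Z)=\widetilde{M}+\phi_{*}C$. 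The log resolution $h$ may genuinely create such exceptional divisors $C$ on which $\overline{M}$ fails to be the Nakayama--Zariski positive part, but the subsequent MMP over $X$ should contract all of them; here I would invoke once more that $K_{\widetilde{X}}+\widetilde{\Gamma}^{<1}+{\rm Supp}\,\widetilde{\Gamma}^{\geq 1}+\alpha\widetilde{M}$ is a limit of movable $\mathbb{R}$-divisors over $X$ and apply the negativity lemma exactly as in the proof of Proposition~\ref{prop--dlt-resol-linsystem}, concluding $\phi_{*}C=0$, so that $\widetilde{M}=P_{\sigma}(f^{*}D;\widetilde{X}/Z)$ and $\phi_{*}\overline{F}=f^{*}D-\widetilde{M}=N_{\sigma}(f^{*}D;\widetilde{X}/Z)$. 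Everything other than this last identification is a transcription of Proposition~\ref{prop--dlt-resol-linsystem}.
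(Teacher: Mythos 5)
Your outline matches the paper's own proof, and the overall strategy---take a log resolution $h\colon\overline{X}\to X$, feed the Nakayama--Zariski decomposition into the MMP of Proposition~\ref{prop--dlt-resol-linsystem}---is exactly right. The one place where you overcomplicate things is in the last paragraph, and the cause is that you never invoke the compatibility of the relative Nakayama--Zariski decomposition with pullback. The paper opens its proof by citing \cite[Lemma~3.4~(4)]{liuxie-relative-nakayama}: for a projective birational morphism $p\colon\overline{X}\to X'$ of smooth varieties, $P_{\sigma}(p^{*}g^{*}D;\overline{X}/Z)=p^{*}P_{\sigma}(g^{*}D;X'/Z)$ and likewise for $N_{\sigma}$. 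So your $\overline{M}:=p^{*}P_{\sigma}(g^{*}D;X'/Z)$ and $\overline{F}:=p^{*}N_{\sigma}(g^{*}D;X'/Z)$ are not merely ``something with the right qualitative shape''---they are \emph{literally} $P_{\sigma}(h^{*}D;\overline{X}/Z)$ and $N_{\sigma}(h^{*}D;\overline{X}/Z)$. Consequently the discrepancy divisor $C$ you introduce (you observe $P_{\sigma}(h^{*}D;\overline{X}/Z)=\overline{M}+C$ with $C\geq 0$ and $p$-exceptional) is identically zero, and there is no need to deploy the MMP and the negativity lemma to contract it.

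Once $\overline{M}=P_{\sigma}(h^{*}D;\overline{X}/Z)$ and $\overline{F}=N_{\sigma}(h^{*}D;\overline{X}/Z)$ are in hand, the final identification is immediate and does not ``cost most of the work.'' For any prime divisor $P$ over $X$ one has $\sigma_{P}(h^{*}D/Z)=\sigma_{P}(f^{*}D/Z)$ since both are computed on a common resolution dominating $\overline{X}$ and $\widetilde{X}$, on which the two pullbacks of $D$ agree. As $\phi$ is a birational contraction over $X$, every prime divisor on $\widetilde{X}$ is the strict transform of one on $\overline{X}$, hence $N_{\sigma}(f^{*}D;\widetilde{X}/Z)=\phi_{*}N_{\sigma}(h^{*}D;\overline{X}/Z)=\phi_{*}\overline{F}=\widetilde{F}$; subtracting from $f^{*}D=\phi_{*}h^{*}D$ gives $P_{\sigma}(f^{*}D;\widetilde{X}/Z)=\widetilde{M}$. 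A smaller point: your argument that every component of $\overline{F}$ is a component of ${\rm Bs}|h^{*}D/Z|_{\mathbb{R}}$ goes through $p^{-1}({\rm Bs}|g^{*}D/Z|_{\mathbb{R}})$, but that set is only known to \emph{contain} ${\rm Bs}|h^{*}D/Z|_{\mathbb{R}}$, not the other way around; the clean argument is that each component $Q$ of $\overline{F}=N_{\sigma}(h^{*}D;\overline{X}/Z)$ has $\sigma_{Q}(h^{*}D/Z)>0$, hence $Q\subset{\rm NNef}(h^{*}D/Z)\subset{\rm Bs}|h^{*}D/Z|_{\mathbb{R}}$ by Remark~\ref{rem--nonnef-relation}, and being a prime divisor it is automatically an irreducible component of that base locus.
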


\begin{proof}
Let $g \colon X' \to X$ be as in the proposition.
By \cite[Lemma 3.4 (4)]{liuxie-relative-nakayama}, for any projective birational morphism $g' \colon X'' \to X'$ from a smooth variety $X''$, we see that the Nakayama--Zariski decomposition of $g'^{*}g^{*}D$ over $Z$ is well defined and 
$$P_{\sigma}(g'^{*}g^{*}D ; X''/Z)=g'^{*}P_{\sigma}(g^{*}D ; X'/Z).$$
In particular, $g'^{*}g^{*}D$ has the Nakayama--Zariski decomposition over $Z$ whose positive part is semi-ample over $Z$. 
Therefore, for any $X'' \overset{g'}{\longrightarrow} X' \overset{g}{\longrightarrow} X$ that is a log resolution of $(X,\Delta)$, the divisor $g'^{*}g^{*}D$ has the Nakayama--Zariski decomposition over $Z$ whose positive part is semi-ample over $Z$. 

By the above argument, we may take a log resolution $h \colon  \overline{X} \to X$ of $(X,\Delta)$ such that $P_{\sigma}(h^{*}D ; \overline{X}/Z)$ is semi-ample over $Z$. 
Put $\overline{F}:=N_{\sigma}(h^{*}D ; \overline{X}/Z)$. 
Then any component of $\overline{F}$ is an irreducible component of ${\rm Bs}|h^{*}D/Z|_{\mathbb{R}}$. 
By replacing $h \colon \overline{X} \to X$ with a higher log resolution, we may assume that ${\rm Supp}\,(\overline{F}+h_{*}^{-1}\Delta) \cup {\rm Ex}(h)$ is a simple normal crossing divisor on $\overline{X}$. 
Putting $\overline{M}:=P_{\sigma}(h^{*}D ; \overline{X}/Z)$, then the proof of Proposition \ref{prop--dlt-resol-linsystem} works with no changes by using $h \colon \overline{X} \to X$, $\overline{M}$, and $\overline{F}$.  
\end{proof}

\begin{prop}\label{prop--exist-minmodel-boundary-change}
Let $\pi \colon X \to Z$ be a projective morphism of normal quasi-projective varieties. 
Let $(X,\Delta)$ be a normal pair such that ${\rm Bs}|K_{X}+\Delta/Z|_{\mathbb{R}} \cap {\rm Nlc}(X,\Delta)=\emptyset$. 
Let $(X,\Delta')$ be a normal pair such that $\Delta' \sim_{\mathbb{R},\,Z} \Delta$. 
If $(X,\Delta')$ has a good minimal model over $Z$, then $(X,\Delta)$ has a $\mathbb{Q}$-factorial good minimal model over $Z$. 
\end{prop}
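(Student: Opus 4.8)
The plan is to transfer a good minimal model of $(X,\Delta')$ to one of $(X,\Delta)$ by passing to a common $\mathbb{Q}$-factorial dlt model adapted to the linear system $|K_X+\Delta/Z|_{\mathbb{R}}$, and then invoking Proposition \ref{prop--minmodel-biratmodel}. Since $\Delta'\sim_{\mathbb{R},Z}\Delta$, we have $K_X+\Delta'\sim_{\mathbb{R},Z}K_X+\Delta$, so $(X,\Delta')$ having a good minimal model over $Z$ means (via Lemma \ref{lem--good-min-model-makayama-zariski-decomp}) that $K_X+\Delta$ birationally has a Nakayama--Zariski decomposition over $Z$ with semi-ample positive part; in particular $|K_X+\Delta/Z|_{\mathbb{R}}\neq\emptyset$. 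Hence I may apply Proposition \ref{prop--dlt-resol-linsystem-2} to $D=K_X+\Delta$ to obtain $f\colon\widetilde{X}\to X$ with $\widetilde{X}$ $\mathbb{Q}$-factorial, effective $\mathbb{R}$-divisors $\widetilde{M}$ (semi-ample over $Z$) and $\widetilde{F}$ with ${\rm Supp}\,\widetilde{F}={\rm Bs}|f^*(K_X+\Delta)/Z|_{\mathbb{R}}$, and $\widetilde{\Gamma}=f_*^{-1}\Delta+\sum E_i+{\rm Supp}\,\widetilde{F}$ such that $(\widetilde{X},\widetilde{\Gamma}^{<1}+{\rm Supp}\,\widetilde{\Gamma}^{\geq 1})$ is $\mathbb{Q}$-factorial dlt, with every $f$-exceptional $E_i$ satisfying $a(E_i,X,\Delta)\leq -1$ or $E_i\subset{\rm Bs}|f^*(K_X+\Delta)/Z|_{\mathbb{R}}$.

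The next step is to set up the data of Proposition \ref{prop--minmodel-biratmodel}. Write $K_{\widetilde{X}}+\Gamma=f^*(K_X+\Delta)+E$ with $\Gamma,E\geq 0$ having no common components; the divisors $\Gamma$ and $\widetilde{\Gamma}$ differ only by components of $\widetilde{F}$ and by coefficients, but the key point is that $\Gamma\leq\widetilde{\Gamma}$ in support, and ${\rm Supp}\,\widetilde{F}\subset{\rm Bs}|f^*(K_X+\Delta)/Z|_{\mathbb{R}}$. I take $B:=\widetilde{\Gamma}-\Gamma$ (appropriately, an effective $\mathbb{R}$-divisor supported on ${\rm Bs}|f^*(K_X+\Delta)/Z|_{\mathbb{R}}$) so that $\Gamma+B$ has the same support structure as $\widetilde{\Gamma}$, making $(\widetilde{X},\Gamma+B)$ a $\mathbb{Q}$-factorial pair whose dlt-modification $(\widetilde{X},(\Gamma+B)^{<1}+{\rm Supp}(\Gamma+B)^{\geq 1})$ coincides with the dlt pair above; hence $K_{\widetilde{X}}+\Gamma+B$ is $\mathbb{R}$-Cartier. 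Then I must produce a birational contraction $\phi\colon\widetilde{X}\dashrightarrow Y'$ over $Z$ with $\phi_*(K_{\widetilde{X}}+\Gamma+B)$ semi-ample over $Z$ and $\phi$ contracting only divisors in ${\rm Bs}|K_{\widetilde{X}}+\Gamma+B/Z|_{\mathbb{R}}$: this I get by running a $(K_{\widetilde{X}}+\Gamma+B)$-MMP with scaling of an ample divisor — since $\Gamma+B$ has the dlt property and $K_{\widetilde{X}}+\Gamma+B\sim_{\mathbb{R},Z}f^*(K_X+\Delta)+(\text{effective supported on }{\rm Bs})$, this MMP is a $(K_{\widetilde{X}}+\Delta_{\widetilde{X}})$-MMP for a suitable dlt boundary, which terminates after contracting the divisorial part of ${\rm Bs}$ by combining Corollary \ref{cor--mmpwithscaling-normalpair} with the special termination (\cite{fujino-sp-ter}) reducing to the klt case; alternatively one directly uses the existence of the good minimal model of $(X,\Delta')$ together with \cite{hashizumehu} applied on the klt locus. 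Feeding $f$, $\Gamma$, $E$, $B$, $\phi$ into Proposition \ref{prop--minmodel-biratmodel} yields that $(Y',\phi_*(\Gamma+B))$ is a good minimal model of $(X,\Delta)$ over $Z$, and $Y'$ is $\mathbb{Q}$-factorial since $\widetilde{X}$ is and $\mathbb{Q}$-factoriality is preserved under the MMP, giving the $\mathbb{Q}$-factorial good minimal model claimed.

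The main obstacle is the second step: guaranteeing the existence and termination of the $(K_{\widetilde{X}}+\Gamma+B)$-MMP producing $\phi$ with semi-ample pushforward, using only the hypothesis that $(X,\Delta')$ has a good minimal model. The clean route is to observe that the hypothesis ${\rm Bs}|K_X+\Delta/Z|_{\mathbb{R}}\cap{\rm Nlc}(X,\Delta)=\emptyset$ (hence ${\rm NNef}(K_X+\Delta/Z)\cap{\rm Nlc}(X,\Delta)=\emptyset$ by Remark \ref{rem--nonnef-relation}) lifts to $\widetilde{X}$, so Corollary \ref{cor--mmpwithscaling-normalpair} applies; then by the special termination the MMP becomes an MMP over $Z$ for a $\mathbb{Q}$-factorial klt pair obtained by restricting to the klt locus, and its termination to a good minimal model follows because $(X,\Delta')$ — equivalently $(\widetilde{X},\Gamma+B)$ up to the crepant/birational modifications controlled above — already has a good minimal model, so one applies the standard fact (as in \cite{bchm}, \cite{birkar-flip}, \cite{hashizumehu}) that an MMP with scaling for a pair admitting a good minimal model terminates with a good minimal model. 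Care is needed to check that $B$ can be chosen so that the exceptional and boundary bookkeeping in Proposition \ref{prop--minmodel-biratmodel} is satisfied — in particular that $\phi$ contracts \emph{exactly} the divisorial part of ${\rm Bs}|f^*(K_X+\Delta)/Z|_{\mathbb{R}}$ — but Proposition \ref{prop--minmodel-biratmodel} is stated precisely to absorb this, so once the MMP is in place the conclusion is immediate.
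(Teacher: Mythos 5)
Your skeleton is the paper's: Lemma \ref{lem--good-min-model-makayama-zariski-decomp} gives the birational Nakayama--Zariski decomposition with semi-ample positive part, Proposition \ref{prop--dlt-resol-linsystem-2} gives $f\colon\widetilde X\to X$, one corrects the boundary by an effective $B$ supported on ${\rm Bs}|f^{*}(K_{X}+\Delta)/Z|_{\mathbb{R}}$ so that the coefficients there become $1$ (this is exactly where the hypothesis ${\rm Bs}|K_{X}+\Delta/Z|_{\mathbb{R}}\cap{\rm Nlc}(X,\Delta)=\emptyset$ is used, as you say; note that $\widetilde\Gamma-\Gamma$ itself is not effective on $f$-exceptional divisors with $a(E_{i},X,\Delta)<-1$, so the correct choice is $B=\sum_{F_{j}\subset{\rm Supp}\widetilde F}(1-{\rm coeff}_{F_{j}}\Gamma)F_{j}$), one runs an auxiliary MMP contracting ${\rm Supp}\widetilde F$, and one concludes with Proposition \ref{prop--minmodel-biratmodel}. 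The gaps are concentrated in the step you yourself call the main obstacle.

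First, you invoke Corollary \ref{cor--mmpwithscaling-normalpair}, but the termination argument needs Corollary \ref{cor--mmp-nomralpair-Qfacdlt}: the whole point (see Remark \ref{rem--difference-mmp-scaling}) is the property that either the MMP terminates or $\lambda\neq\lambda_{i}$ for all $i$, which is what lets one assume $\lambda>0$, view the run as a $(K+\Gamma+B+\alpha\widetilde M+\lambda\overline A)$-MMP with scaling, and feed the ample $\lambda\overline A$ into \cite[Theorem E]{bchm} on the lc centers so that the special termination of \cite{fujino-sp-ter} actually closes; with the weaker corollary the nef thresholds need not tend to $0$. Second, you give no mechanism for the semi-ampleness of $\phi_{*}(K_{\widetilde X}+\Gamma+B)$. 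The paper's device is to run the MMP for $K_{\widetilde X}+\Gamma+B+\alpha\widetilde M$ with $\alpha$ chosen (via the length of extremal rays) so that the MMP is $\widetilde M$-trivial; then $\widetilde M_{l}$ remains semi-ample, the non-isomorphic locus stays in ${\rm Supp}\widetilde F$, and since $N_{\sigma}(K_{\widetilde X}+\Gamma+B+\alpha\widetilde M;\widetilde X/Z)=\widetilde F+E+B$ the run eventually contracts all of it, leaving $\phi_{*}(K_{\widetilde X}+\Gamma+B)\sim_{\mathbb{R},Z}\widetilde M_{l'}$, which is semi-ample. Your substitute --- ``an MMP with scaling for a pair admitting a good minimal model terminates with a good minimal model'' --- is not available here: for normal pairs this is Theorem \ref{thm--from-minimodel-to-termi}, which is proved later and depends (through Theorem \ref{thm--from-nonvanish-to-minmodel} and Corollary \ref{cor-minmodel-explicit-construction}) on the present proposition, so the appeal is circular; and ``restricting to the klt locus'' does not yield a projective pair to which \cite{birkar-flip} or \cite{hashizumehu} applies, since $(\widetilde X,\Gamma+B)$ is genuinely non-lc along $f^{-1}({\rm Nlc}(X,\Delta))$.
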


\begin{proof}
Since $K_{X}+\Delta' \sim_{\mathbb{R},\,Z} K_{X}+\Delta$ and $(X,\Delta')$ has a good minimal model over $Z$, using Lemma \ref{lem--good-min-model-makayama-zariski-decomp}, we see that $K_{X}+\Delta$ birationally has the Nakayama--Zariski decomposition over $Z$ whose positive part is semi-ample over $Z$. 
By Proposition \ref{prop--dlt-resol-linsystem-2}, we get a projective birational morphism $f \colon \overline{X} \to X$ from a $\mathbb{Q}$-factorial variety $\overline{X}$ such that putting $\overline{M}$ and $\overline{F}$ as the positive and the negative parts of the Nakayama--Zariski decomposition of $f^{*}(K_{X}+\Delta)$ over $Z$, respectively, then the following properties hold.
\begin{itemize}
\item
$f^{*}(K_{X}+\Delta)\sim_{\mathbb{R},\,Z}\overline{M}+\overline{F}$, 

\item
$\overline{M}$ is semi-ample over $Z$ and ${\rm Supp}\,\overline{F}= {\rm Bs}|f^{*}(K_{X}+\Delta)/Z|_{\mathbb{R}}$, 

\item
putting
$$\overline{\Gamma}=f_{*}^{-1}\Delta+\sum_{\text{$E_{i}$:$f$-exceptional}}E_{i}+{\rm Supp}\,\overline{F},$$
then $(\overline{X},\overline{\Gamma}^{<1}+{\rm Supp}\,\overline{\Gamma}^{\geq 1})$ is a $\mathbb{Q}$-factorial dlt pair, and

\item
for any $f$-exceptional prime divisor $E_{i}$ on $\overline{X}$, at least one of $a(E_{i},X,\Delta) \leq  -1$ and $E_{i} \subset {\rm Bs}|f^{*}(K_{X}+\Delta)/Z|_{\mathbb{R}}$ holds. 
\end{itemize}
We may write
$$K_{\overline{X}}+\overline{\Delta}=f^{*}(K_{X}+\Delta)+\overline{E}$$
for some effective $\mathbb{R}$-divisors $\overline{\Delta}$ and $\overline{E}$ that have no common components. 
We put 
$$\overline{B}:=\sum_{F_{j}\subset {\rm Supp}\,\overline{F}}(1-{\rm coeff}_{F_{j}}(\overline{\Delta}))F_{j}.$$
Since ${\rm Bs}|K_{X}+\Delta/Z|_{\mathbb{R}} \cap {\rm Nlc}(X,\Delta)=\emptyset$,  we have $a(F_{j},X,\Delta) \geq -1$ for any component $F_{j}$ of $\overline{F}$. 
By construction, the discrepancy of any component of $\overline{E}$ with respect to $(X,\Delta)$ is positive. 
Thus ${\rm Supp}\,\overline{E}\subset {\rm Bs}|f^{*}(K_{X}+\Delta)/Z|_{\mathbb{R}}$. 
Since ${\rm Supp}\,\overline{F}= {\rm Bs}|f^{*}(K_{X}+\Delta)/Z|_{\mathbb{R}}$, the following properties hold.
\begin{itemize}
\item
${\rm Supp}\,(\overline{B}+\overline{E}) \subset {\rm Supp}\,\overline{F}$, 
\item
$\overline{B}$ is effective, 
\item 
${\rm Supp}\,\overline{B} \cap {\rm Supp}\,\overline{\Delta}^{>1} = \emptyset$, and 
\item
${\rm coeff}_{F_{j}}(\overline{\Delta}+\overline{B})=1$ for any component $F_{j}$ of $\overline{F}$. 
\end{itemize}
Proposition \ref{prop--minmodel-biratmodel} for $\pi \colon X \to Z$, $(X,\Delta)$, $f\colon \overline{X} \to X$, and $(\overline{X},\overline{\Delta}+\overline{B})$ implies that to prove the existence of a $\mathbb{Q}$-factorial good minimal model of $(X,\Delta)$ over $Z$, it is enough to prove the existence of a birational contraction $\phi \colon \overline{X} \dashrightarrow \overline{X}'$ over $Z$, where $\overline{X}'$ is $\mathbb{Q}$-factorial and projective over $Z$, such that $\phi_{*}(K_{\overline{X}}+\overline{\Delta}+\overline{B})$ is semi-ample over $Z$ and any prime divisor contracted by $\phi$ is contained in ${\rm Bs}|K_{\overline{X}}+\overline{\Delta}+\overline{B}/Z|_{\mathbb{R}}$. 

In this paragraph, we will check that $(\overline{X},\overline{\Delta}+\overline{B})$ is $\mathbb{Q}$-factorial dlt on a neighborhood of ${\rm Supp}\,\overline{F}$. 
Since ${\rm Supp}\,\overline{F}= {\rm Bs}|f^{*}(K_{X}+\Delta)/Z|_{\mathbb{R}}$ and ${\rm Bs}|K_{X}+\Delta/Z|_{\mathbb{R}} \cap {\rm Nlc}(X,\Delta)=\emptyset$, we have
${\rm Supp}\,\overline{F} \cap {\rm Supp}\,\overline{\Delta}^{>1} = \emptyset.$
From this and the definition of $\overline{B}$, we see that
$${\rm Supp}\,\overline{F} \cap {\rm Supp}\,(\overline{\Delta}+\overline{B})^{>1}={\rm Bs}|f^{*}(K_{X}+\Delta)/Z|_{\mathbb{R}} \cap {\rm Supp}\,\overline{\Delta}^{>1} = \emptyset.$$
By construction of $\overline{B}$, outside ${\rm Supp}\,(\overline{\Delta}+\overline{B})^{>1}$ we have $\overline{\Delta}+\overline{B} \leq \overline{\Gamma}$, where
$$\overline{\Gamma}=f_{*}^{-1}\Delta+\sum_{\text{$E_{i}$:$f$-exceptional}}E_{i}+{\rm Supp}\,\overline{F}.$$
Since $(\overline{X},\overline{\Gamma}^{<1}+{\rm Supp}\,\overline{\Gamma}^{\geq 1})$ is a $\mathbb{Q}$-factorial dlt pair, the following pair 
$$(\overline{X},(\overline{\Delta}+\overline{B})^{<1}+{\rm Supp}\,(\overline{\Delta}+\overline{B})^{\geq 1})$$
is dlt outside ${\rm Supp}\,(\overline{\Delta}+\overline{B})^{>1}$. 
Hence, $(\overline{X},\overline{\Delta}+\overline{B})$ is $\mathbb{Q}$-factorial dlt on a neighborhood of ${\rm Supp}\,\overline{F}$. 

From now on, we use the notations of the relative Nakayama--Zariski decomposition in Definition \ref{defn--relative-nakayama-zariski-decom}. 
We may write $\overline{M}=\sum_{k}r_{k}\overline{M}^{(k)}$ for some positive real numbers $r_{k}$ and Cartier divisors $\overline{M}^{(k)}$ on $\overline{X}$ that are semi-ample over $Z$. 
We take $\alpha \in \mathbb{R}_{>0}$ so that $\alpha r_{k}>2 \cdot{\rm dim}\,\overline{X}$ for all $k$.   
Since ${\rm Supp}\,(\overline{B}+\overline{E}) \subset {\rm Supp}\, \overline{F}$, choosing $\alpha$ appropriately we may assume $\alpha \overline{F}\geq \overline{B}+\overline{E}$. 
Since $f^{*}(K_{X}+\Delta)\sim_{\mathbb{R},\,Z}\overline{M}+\overline{F}$, we have
$$K_{\overline{X}}+ \overline{\Delta}+\overline{B}+\alpha \overline{M} \sim_{\mathbb{R},\,Z}(\alpha+1)f^{*}(K_{X}+\Delta)-(\alpha \overline{F}-\overline{E}-\overline{B}).$$
We have the equality $\overline{F}=N_{\sigma}(f^{*}(K_{X}+\Delta);\overline{X}/Z)$, which is the definition of $\overline{F}$, and we also have the obvious relation $(\alpha+1)\overline{F} \geq \alpha \overline{F}-\overline{E}-\overline{B}$.  
By \cite[III, 4.2 (1) Lemma]{nakayama} or \cite[Lemma 3.7 (5)]{liuxie-relative-nakayama}, we obtain
\begin{equation*}
\begin{split}
&N_{\sigma}(K_{\overline{X}}+ \overline{\Delta}+\overline{B}+\alpha \overline{M};\overline{X}/Z)\\
=&N_{\sigma}((\alpha+1)f^{*}(K_{X}+\Delta)-(\alpha \overline{F}-\overline{E}-\overline{B});\overline{X}/Z)\\
=&(\alpha+1)\overline{F}-(\alpha \overline{F}-\overline{E}-\overline{B}) 
=\overline{F}+\overline{E}+\overline{B}.
\end{split}
\end{equation*}
By using ${\rm Supp}\,\overline{F}={\rm Supp}\,(\overline{F}+\overline{E}+\overline{B})$ and ${\rm Supp}\,\overline{F} ={\rm Bs}|f^{*}(K_{X}+\Delta)/Z|_{\mathbb{R}}$, we also have
\begin{equation*}
\begin{split}
{\rm Supp}\,\overline{F} =&{\rm Bs}|f^{*}(K_{X}+\Delta)/Z|_{\mathbb{R}}\cup {\rm Supp}\,(\overline{E}+\overline{B})\\
\supset&{\rm Bs}|f^{*}(K_{X}+\Delta)+\overline{E}+\overline{B}/Z|_{\mathbb{R}}={\rm Bs}|K_{\overline{X}}+ \overline{\Delta}+\overline{B}/Z|_{\mathbb{R}}\\
\supset &{\rm Bs}|K_{\overline{X}}+ \overline{\Delta}+\overline{B}+\alpha \overline{M}/Z|_{\mathbb{R}}\supset {\rm Supp}\,N_{\sigma}(K_{\overline{X}}+\overline{\Delta}+\overline{B}+\alpha \overline{M};\overline{X}/Z),
\end{split}
\end{equation*}
where the final inclusion follows from 
$$\overline{X} \setminus {\rm Bs}|K_{\overline{X}}+ \overline{\Delta}+\overline{B}+\alpha \overline{M}/Z|_{\mathbb{R}}\subset \overline{X}\setminus{\rm Supp}\,N_{\sigma}(K_{\overline{X}}+ \overline{\Delta}+\overline{B}+\alpha \overline{M};\overline{X}/Z).$$
From these discussions, we obtain 
$${\rm Bs}|K_{\overline{X}}+ \overline{\Delta}+\overline{B}/Z|_{\mathbb{R}}=  {\rm Supp}\,N_{\sigma}(K_{\overline{X}}+\overline{\Delta}+\overline{B}+\alpha \overline{M};\overline{X}/Z)={\rm Supp}\, \overline{F}.$$

Since $(\overline{X},\overline{\Delta}+\overline{B})$ is $\mathbb{Q}$-factorial dlt on a neighborhood of ${\rm Supp}\,\overline{F}$, by Corollary \ref{cor--mmp-nomralpair-Qfacdlt}, we can run a $(K_{\overline{X}}+\overline{\Delta}+\overline{B}+\alpha \overline{M})$-MMP over $Z$ with scaling of an ample divisor $\overline{A}$
$$(\overline{X},\overline{\Delta}+\overline{B}+\alpha \overline{M})\dashrightarrow \cdots\dashrightarrow (\overline{X}_{l},\overline{\Delta}_{l}+\overline{B}_{l}+\alpha \overline{M}_{l})\dashrightarrow \cdots.$$
Now we have
$$K_{\overline{X}}+\overline{\Delta}+\overline{B}+\alpha \overline{M}\sim_{\mathbb{R},\,Z}(\alpha+1)\overline{M}+\overline{F}+\overline{E}+\overline{B}.$$
By the choice of $\alpha$ and the length of extremal rays (\cite[Theorem 1.5 (iii)]{fujino-morihyper}), we see that $\overline{M}_{l}$ trivially intersects the curves contracted by the $(K_{\overline{X}_{l}}+\overline{\Delta}_{l}+\overline{B}_{l}+\alpha \overline{M}_{l})$-negative extremal contraction of the MMP. 
Therefore the non-isomorphic locus of the MMP is contained in ${\rm Supp}\,\overline{F}={\rm Bs}|K_{\overline{X}}+ \overline{\Delta}+\overline{B}/Z|_{\mathbb{R}}$. 
We recall that ${\rm coeff}_{F_{j}}(\overline{\Delta}+\overline{B})=1$ for any component $F_{j}$ of $\overline{F}$, which follows from the definition of $\overline{B}$. 
For each $l \geq 1$, we put
$$\lambda_{l}:={\rm inf}\set{\mu \in \mathbb{R}_{\geq0}|\text{$K_{\overline{X}_{l}}+\overline{\Delta}_{l}+\overline{B}_{l}+\alpha \overline{M}_{l}+\mu \overline{A}_{l}$ is nef over $Z$}}$$
and $\lambda:={\rm lim}_{l \to \infty} \lambda_{l}$. 
If $\lambda>0$, by construction of the MMP in Corollary \ref{cor--mmp-nomralpair-Qfacdlt}, we have $\lambda \neq \lambda_{l}$ for any $l$. 
Then the MMP is also a $(K_{\overline{X}}+\overline{\Delta}+\overline{B}+\alpha \overline{M}+\lambda \overline{A})$-MMP over $Z$ with scaling of $\overline{A}$. 
Since the non-isomorphic locus of the MMP is contained in ${\rm Supp}\,\overline{F}$
and we have ${\rm coeff}_{F_{j}}(\overline{\Delta}+\overline{B})=1$ for any component $F_{j}$ of $\overline{F}$, we may apply the argument of \cite[Proof of Lemma 3.10.11 (2)]{bchm} and the special termination \cite{fujino-sp-ter} with the aid of \cite[Theorem E]{bchm} and the $(\pi \circ f)$-ampleness of $\lambda \overline{A}$. 
We see that the MMP terminates. 
However, it contradicts the fact $\lambda>0$.  
Thus $\lambda =0$. 
Since 
$${\rm Supp}\,N_{\sigma}(K_{\overline{X}}+\overline{\Delta}+\overline{B}+\alpha \overline{M};\overline{X}/Z)={\rm Supp}\, \overline{F}={\rm Supp}\,(\overline{F}+\overline{E}+\overline{B}),$$
we can find an index $l'$ such that $\overline{X} \dashrightarrow \overline{X}_{l'}$ contracts $\overline{F}+\overline{E}+\overline{B}$. 
Then 
$$K_{\overline{X}_{l'}}+\overline{\Delta}_{l'}+\overline{B}_{l'}=(K_{\overline{X}_{l'}}+\overline{\Delta}_{l'}+\overline{B}_{l'}+\alpha \overline{M}_{l'})-\alpha \overline{M}_{l'}\sim_{\mathbb{R},\,Z} (\alpha+1)\overline{M}_{l'}-\alpha \overline{M}_{l'} = \overline{M}_{l'},$$
which is semi-ample over $Z$ because $\overline{M}$ is semi-ample over $Z$. 
Putting $\overline{X}':=\overline{X}_{l'}$ and denoting $\overline{X} \dashrightarrow \overline{X}'$ by $\phi$, the birational contraction $\phi \colon \overline{X} \dashrightarrow \overline{X}'$ over $Z$ satisfies
\begin{itemize}
\item
$\overline{X}'$ is $\mathbb{Q}$-factorial, 
\item
$\phi_{*}(K_{\overline{X}}+\overline{\Delta}+\overline{B})$ is semi-ample over $Z$, and
\item
any prime divisor contracted by $\phi$ is a component of ${\rm Bs}|K_{\overline{X}}+ \overline{\Delta}+\overline{B}/Z|_{\mathbb{R}}$. 
\end{itemize}
As discussed before, Proposition \ref{prop--minmodel-biratmodel} implies the existence of a $\mathbb{Q}$-factorial good minimal model of $(X,\Delta)$ over $Z$. 
We finish the proof. 
\end{proof}

\begin{thm}\label{thm--from-nonvanish-to-minmodel}
Let $\pi \colon X \to Z$ be a projective morphism of normal quasi-projective varieties. 
Let $(X,\Delta)$ be a normal pair and let $A$ be an effective $\pi$-ample $\mathbb{R}$-divisor on $X$ such that ${\rm Nklt}(X,\Delta) \neq \emptyset$ and ${\rm Bs}|K_{X}+\Delta+A/Z|_{\mathbb{R}} \cap {\rm Nklt}(X,\Delta+A) = \emptyset$. 
Then $(X,\Delta+A)$ has a $\mathbb{Q}$-factorial good minimal model over $Z$. 
\end{thm}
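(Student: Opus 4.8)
The plan is to run, for $(X,\Delta+A)$, the argument of the proof of Proposition~\ref{prop--exist-minmodel-boundary-change}, but starting from Proposition~\ref{prop--dlt-resol-linsystem} rather than Proposition~\ref{prop--dlt-resol-linsystem-2}; the hypothesis that ${\rm Bs}|K_{X}+\Delta+A/Z|_{\mathbb{R}}$ misses ${\rm Nklt}(X,\Delta+A)$ will play the role of the existence of a good minimal model of a crepant modification. First I would observe that, since ${\rm Nklt}(X,\Delta)\neq\emptyset$ and ${\rm Bs}|K_{X}+\Delta+A/Z|_{\mathbb{R}}\cap{\rm Nklt}(X,\Delta+A)=\emptyset$ while ${\rm Nklt}(X,\Delta)\subseteq{\rm Nklt}(X,\Delta+A)$, the stable base locus is a proper subset of $X$, so $|K_{X}+\Delta+A/Z|_{\mathbb{R}}\neq\emptyset$. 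Applying Proposition~\ref{prop--dlt-resol-linsystem} to $(X,\Delta+A)$ and $D:=K_{X}+\Delta+A$ gives $f\colon\widetilde{X}\to X$ from a $\mathbb{Q}$-factorial variety, a semi-ample-over-$Z$ divisor $\widetilde{M}$, and an effective $\widetilde{F}$ with ${\rm Supp}\,\widetilde{F}={\rm Bs}|f^{*}(K_{X}+\Delta+A)/Z|_{\mathbb{R}}$, such that $f^{*}(K_{X}+\Delta+A)\sim_{\mathbb{R},Z}\widetilde{M}+\widetilde{F}$, the pair $(\widetilde{X},\widetilde{\Gamma}^{<1}+{\rm Supp}\,\widetilde{\Gamma}^{\geq1})$ is $\mathbb{Q}$-factorial dlt with $\widetilde{\Gamma}=f_{*}^{-1}(\Delta+A)+\sum_{i}E_{i}+{\rm Supp}\,\widetilde{F}$, and every $f$-exceptional prime divisor satisfies $a(E_{i},X,\Delta+A)\leq-1$ or $E_{i}\subseteq{\rm Supp}\,\widetilde{F}$.

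Next I would set up the boundary exactly as in the proof of Proposition~\ref{prop--exist-minmodel-boundary-change}. Write $K_{\widetilde{X}}+\widetilde{\Delta}=f^{*}(K_{X}+\Delta+A)+\widetilde{E}$ with $\widetilde{\Delta},\widetilde{E}\geq0$ having no common component. Since ${\rm Bs}|K_{X}+\Delta+A/Z|_{\mathbb{R}}$ is disjoint from ${\rm Nklt}(X,\Delta+A)$, every component $F_{j}$ of $\widetilde{F}$ has $a(F_{j},X,\Delta+A)\geq-1$, hence ${\rm coeff}_{F_{j}}(\widetilde{\Delta})\leq1$, ${\rm Supp}\,\widetilde{F}\cap{\rm Supp}\,\widetilde{\Delta}^{>1}=\emptyset$, and ${\rm Supp}\,\widetilde{E}\subseteq{\rm Supp}\,\widetilde{F}$; put $\widetilde{B}:=\sum_{F_{j}\subseteq{\rm Supp}\,\widetilde{F}}(1-{\rm coeff}_{F_{j}}(\widetilde{\Delta}))F_{j}\geq0$. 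Then ${\rm coeff}_{F_{j}}(\widetilde{\Delta}+\widetilde{B})=1$ on every component of $\widetilde{F}$, ${\rm Supp}(\widetilde{B}+\widetilde{E})\subseteq{\rm Supp}\,\widetilde{F}$, and, since $\widetilde{\Delta}+\widetilde{B}\leq\widetilde{\Gamma}$ away from $\widetilde{\Delta}^{>1}$, the pair $(\widetilde{X},\widetilde{\Delta}+\widetilde{B})$ is $\mathbb{Q}$-factorial dlt on a neighborhood of ${\rm Supp}\,\widetilde{F}$.

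Writing $\widetilde{M}=\sum_{k}r_{k}\widetilde{M}_{k}$ with $\widetilde{M}_{k}$ Cartier and semi-ample over $Z$, I would choose $\alpha\gg0$ with $\alpha r_{k}>2\cdot{\rm dim}\,\widetilde{X}$ for all $k$ and $\alpha\widetilde{F}\geq\widetilde{B}+\widetilde{E}$, so that
$$K_{\widetilde{X}}+\widetilde{\Delta}+\widetilde{B}+\alpha\widetilde{M}\sim_{\mathbb{R},Z}(\alpha+1)\widetilde{M}+\widetilde{F}+\widetilde{E}+\widetilde{B},$$
where the last three terms are effective and supported on ${\rm Supp}\,\widetilde{F}$. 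As $\widetilde{M}$ is nef over $Z$, we get ${\rm NNef}(K_{\widetilde{X}}+\widetilde{\Delta}+\widetilde{B}+\alpha\widetilde{M}/Z)\subseteq{\rm Supp}\,\widetilde{F}$, which is disjoint from ${\rm Nlc}(\widetilde{X},\widetilde{\Delta}+\widetilde{B})$, so by Corollary~\ref{cor--mmp-nomralpair-Qfacdlt} one can run a $(K_{\widetilde{X}}+\widetilde{\Delta}+\widetilde{B}+\alpha\widetilde{M})$-MMP over $Z$ with scaling of an ample divisor, preserving $\mathbb{Q}$-factoriality, whose non-isomorphic locus is contained in ${\rm Supp}\,\widetilde{F}$ by the choice of $\alpha$ and the length of extremal rays. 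Because the coefficients of the components of $\widetilde{F}$ in $\widetilde{\Delta}+\widetilde{B}$ equal $1$ and the MMP modifies nothing near the lc centers of $(\widetilde{X},\widetilde{\Delta}+\widetilde{B})$, the special termination of \cite{fujino-sp-ter} together with the klt results of \cite{bchm}, applied as in the proof of Proposition~\ref{prop--exist-minmodel-boundary-change} (using the ampleness of the scaling divisor to obtain a big ingredient on the complement of the lc centers), shows that the nef-threshold limit is $0$ and that the MMP reaches a model $\widetilde{X}'$ that contracts $\widetilde{F}+\widetilde{E}+\widetilde{B}$, so that $K_{\widetilde{X}'}+\widetilde{\Delta}'+\widetilde{B}'\sim_{\mathbb{R},Z}\widetilde{M}'$ is semi-ample over $Z$. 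Applying Proposition~\ref{prop--minmodel-biratmodel} to $\pi\colon X\to Z$, $(X,\Delta+A)$, $f\colon\widetilde{X}\to X$, $(\widetilde{X},\widetilde{\Delta}+\widetilde{B})$ and $\widetilde{X}\dashrightarrow\widetilde{X}'$ — whose hypotheses are furnished by the previous steps, the exceptional-divisor condition being the last property of Proposition~\ref{prop--dlt-resol-linsystem} — then yields that $(\widetilde{X}',\widetilde{\Delta}'+\widetilde{B}')$ is a good minimal model of $(X,\Delta+A)$ over $Z$, and it is $\mathbb{Q}$-factorial since $\widetilde{X}$ is.

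\textbf{Main obstacle.} The crux is the termination in the third step: although the MMP never enters a neighborhood of the lc centers, so Fujino's special termination controls that part, one still has to terminate the part of the MMP carried out on the complement, where the relevant pair is klt; this is precisely where the ampleness of $A$ — hence of the scaling divisor — is indispensable, since it supplies the big ingredient needed to invoke the termination of klt MMP with scaling from \cite{bchm} (and to reach a contradiction from a positive nef-threshold limit), exactly as in Proposition~\ref{prop--exist-minmodel-boundary-change}. A secondary technical point, handled as in the proof of Proposition~\ref{prop--minmodel-biratmodel}, is checking that the divisors actually contracted by the MMP are the divisorial components of ${\rm Bs}|K_{\widetilde{X}}+\widetilde{\Delta}+\widetilde{B}/Z|_{\mathbb{R}}$, so that Proposition~\ref{prop--minmodel-biratmodel} applies verbatim.
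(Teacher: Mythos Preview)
Your plan tracks Proposition~\ref{prop--exist-minmodel-boundary-change} closely, but it imports one step from that proof that does not survive the switch from Proposition~\ref{prop--dlt-resol-linsystem-2} to Proposition~\ref{prop--dlt-resol-linsystem}. In Proposition~\ref{prop--exist-minmodel-boundary-change} the divisor $\overline{F}$ is the \emph{negative part of the Nakayama--Zariski decomposition} of $f^{*}(K_{X}+\Delta)$, and this is what yields the exact identity $N_{\sigma}(K_{\overline{X}}+\overline{\Delta}+\overline{B}+\alpha\overline{M};\overline{X}/Z)=\overline{F}+\overline{E}+\overline{B}$ via \cite[III, 4.2 (1) Lemma]{nakayama}. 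That identity is precisely what forces the MMP, once $\lambda=0$, to contract every component of $\overline{F}+\overline{E}+\overline{B}$, giving $K_{\overline{X}_{l'}}+\overline{\Delta}_{l'}+\overline{B}_{l'}\sim_{\mathbb{R},Z}\overline{M}_{l'}$. With Proposition~\ref{prop--dlt-resol-linsystem} you only know ${\rm Supp}\,\widetilde{F}={\rm Bs}|f^{*}(K_{X}+\Delta+A)/Z|_{\mathbb{R}}$, not that $\widetilde{F}$ equals $N_{\sigma}$; hence you only get ${\rm Supp}\,N_{\sigma}(K_{\widetilde{X}}+\widetilde{\Delta}+\widetilde{B}+\alpha\widetilde{M}/Z)\subseteq{\rm Supp}\,\widetilde{F}$, and the MMP may well stabilize at a model where some components of $\widetilde{F}$ survive. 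Your assertion that ``the MMP reaches a model $\widetilde{X}'$ that contracts $\widetilde{F}+\widetilde{E}+\widetilde{B}$'' is therefore unjustified, and with it the conclusion $K_{\widetilde{X}'}+\widetilde{\Delta}'+\widetilde{B}'\sim_{\mathbb{R},Z}\widetilde{M}'$.

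This is exactly the obstruction the paper's proof works around, and it is also where the $\pi$-ampleness of $A$ is actually used --- not merely as a scaling divisor. The paper first passes to a dlt blow-up to make $X$ $\mathbb{Q}$-factorial (Step~\ref{step1--thm--from-nonvanish-to-minmodel}), which permits constructing an effective $f$-exceptional $G'$ with $-G'$ $f$-ample; then $A'\sim_{\mathbb{R},Z}-G'+f^{*}A$ is genuinely ample over $Z$ on $X'$ and is placed \emph{inside the boundary} as $\Delta'+B'+\Phi'+A'$ (Step~\ref{step2--thm--from-nonvanish-to-minmodel}). This built-in ample divisor travels with the MMP: it allows a two-stage run (first with ample scaling, then with scaling of the semi-ample $\alpha N'$), with special termination plus \cite[Theorem~E]{bchm} applicable at \emph{every} step because $A'_{l}$ furnishes, via \cite[Lemma 3.10.11(2)]{bchm}, an ample divisor on each $X'_{l}$. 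Finally, once nef, semi-ampleness is obtained from Theorem~\ref{thm--abundance-quasi-log} using $A''\sim_{\mathbb{R},Z}\Omega''+L''$ with $L''$ ample. In your outline the ampleness of $A$ is lost once you absorb $A$ into the boundary and apply Proposition~\ref{prop--dlt-resol-linsystem} to $(X,\Delta+A)$; the auxiliary ample scaling divisor on $\widetilde{X}$ does not substitute for it in the second stage or in the abundance step.
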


\begin{proof}
Since $(X,\Delta)$ is not klt, we see that ${\rm Bs}|K_{X}+\Delta+A/Z|_{\mathbb{R}} \neq X$. 
In particular, $|K_{X}+\Delta+A/Z|_{\mathbb{R}} \neq \emptyset$.  
The idea of the proof is the same as that of Proposition \ref{prop--exist-minmodel-boundary-change}. 
However, the proof of Theorem \ref{thm--from-nonvanish-to-minmodel} is more complicated because we need to deal with many $\mathbb{R}$-divisors. 
Therefore we divide the proof into several steps. 

\begin{step1}\label{step1--thm--from-nonvanish-to-minmodel}
In this step, we reduce the theorem to the case where $X$ is $\mathbb{Q}$-factorial and ${\rm Nklt}(X,\Delta)= {\rm Nklt}(X,\Delta+A)$. 

Let $h \colon (\overline{X},\overline{\Delta}) \to (X,\Delta)$ be a dlt blow-up of $(X,\Delta)$. 
Since $h^{*}A$ is nef and big over $Z$, we may write $h^{*}A \sim_{\mathbb{R},\,Z} \overline{A}+\overline{G}$ for some general $(\pi \circ h)$-ample $\mathbb{R}$-divisor $\overline{A}$ on $\overline{X}$ and effective $\mathbb{R}$-divisor $\overline{G}$ on $\overline{X}$ such that ${\rm Nklt}(\overline{X},\overline{\Delta})={\rm Nklt}(\overline{X},\overline{\Delta}+\overline{G}+\overline{A})$ as closed subschemes of $\overline{X}$. 
Since 
$$K_{\overline{X}}+\overline{\Delta}+\overline{G}+\overline{A} \sim_{\mathbb{R},\,Z} h^{*}(K_{X}+\Delta+A)$$
and 
$${\rm Nklt}(\overline{X},\overline{\Delta}+\overline{G}+\overline{A})={\rm Nklt}(\overline{X},\overline{\Delta})\subset {\rm Nklt}(\overline{X},\overline{\Delta}+h^{*}A)\subset h^{-1}({\rm Nklt}(X,\Delta+A))$$
set theoretically, we see that
\begin{equation*}
\begin{split}
&{\rm Bs}|K_{\overline{X}}+(\overline{\Delta}+\overline{G})+\overline{A}/Z|_{\mathbb{R}} \cap {\rm Nklt}(\overline{X},\overline{\Delta}+\overline{G}+\overline{A})\\
 \subset & {\rm Bs}|h^{*}(K_{X}+\Delta+A)/Z|_{\mathbb{R}} \cap h^{-1}({\rm Nklt}(X,\Delta+A)) = \emptyset.
\end{split}
\end{equation*} 
By Lemma \ref{lem--min-model-nonlc-crepant}, $(X,\Delta+A)$ has a good minimal model over $Z$ if $(\overline{X},\overline{\Delta}+h^{*}A)$ has a good minimal model over $Z$. 
By Proposition \ref{prop--exist-minmodel-boundary-change}, $(\overline{X},\overline{\Delta}+h^{*}A)$ has a good minimal model over $Z$ if $(\overline{X},\overline{\Delta}+\overline{G}+\overline{A})$ has a good minimal model over $Z$. 
By replacing $(X,\Delta)$ and $A$ with $(\overline{X},\overline{\Delta}+\overline{G})$ and $\overline{A}$ respectively, we may assume that $X$ is $\mathbb{Q}$-factorial and ${\rm Nklt}(X,\Delta)= {\rm Nklt}(X,\Delta+A)$. 
\end{step1}

\begin{step1}\label{step2--thm--from-nonvanish-to-minmodel}
In this step, we construct a projective birational morphism $f \colon X' \to X$ and some $\mathbb{R}$-divisors on $X'$. 

By applying Proposition \ref{prop--dlt-resol-linsystem} to $\pi \colon X \to Z$, $(X,\Delta)$, and $K_{X}+\Delta+A$, we obtain a projective birational morphism $f \colon X' \to X$ from a $\mathbb{Q}$-factorial variety $X'$, an $\mathbb{R}$-divisor $M'$ on $X'$, and an effective $\mathbb{R}$-divisor $F'$ on $X'$ satisfying the following properties.
\begin{itemize}
\item
$f^{*}(K_{X}+\Delta+A)\sim_{\mathbb{R},\,Z} M'+F'$, 

\item
$M'$ is semi-ample over $Z$ and ${\rm Supp}\,F'= {\rm Bs}|f^{*}(K_{X}+\Delta+A)/Z|_{\mathbb{R}}$, 

\item
putting
$$\Gamma'=f_{*}^{-1}\Delta+\sum_{\text{$E_{i}$:$f$-exceptional}}E_{i}+{\rm Supp}\,F',$$
then $(X',\Gamma'^{<1}+{\rm Supp}\,\Gamma'^{\geq 1})$ is a $\mathbb{Q}$-factorial dlt pair, and

\item
for any $f$-exceptional prime divisor $E_{i}$ on $X'$, at least one of $a(E_{i},X,\Delta) \leq  -1$ and $E_{i} \subset {\rm Bs}|f^{*}(K_{X}+\Delta+A)/Z|_{\mathbb{R}}$ holds. 
\end{itemize}
We may write
$$K_{X'}+\Delta'=f^{*}(K_{X}+\Delta)+E'$$
for some effective $\mathbb{R}$-divisors $\Delta'$ and $E'$ that have no common components. 
We put 
$$B':=\sum_{F_{j}\subset {\rm Supp}\,F'}(1-{\rm coeff}_{F_{j}}(\Delta'))F_{j}.$$
Since ${\rm Bs}|K_{X}+\Delta+A/Z|_{\mathbb{R}} \cap {\rm Nklt}(X,\Delta)=\emptyset$, the inequality $a(F_{j},X,\Delta) > -1$ holds for any component $F_{j}$ of $F'$. Therefore $B'$ is effective. 
Since $X$ is $\mathbb{Q}$-factorial, there exists an effective $f$-exceptional $\mathbb{R}$-divisor $G'$ on $X'$ such that $-G'$ is $f$-ample. 
Rescaling $G'$, we may assume that $-G'+f^{*}A$ is ample over $Z$. 
We can write 
$$G'=\Theta'+\Phi'$$
for some $\Theta' \geq 0$ and $\Phi' \geq 0$ such that every component of $\Theta'$ is a component of $B'$ and any component of $\Phi'$ is not a component of $B'$. 
By rescaling $G'$ again, we may further assume that $B' -\Theta' \geq0$. 
Finally, we take a general member $A' \in |-G'+f^{*}A/Z|_{\mathbb{R}}$ such that $(X',\Gamma'^{<1}+{\rm Supp}\,\Gamma'^{\geq 1}+A')$ is a $\mathbb{Q}$-factorial dlt pair. Then $A'$ is ample over $Z$. 

By the above discussion, we obtain a projective birational morphism $f \colon X' \to X$ satisfying the properties of Proposition \ref{prop--dlt-resol-linsystem}, an $\mathbb{R}$-divisor $M'$ on $X'$, effective $\mathbb{R}$-divisors $F'$, $\Gamma'$, $\Delta'$, $E'$, $B'$, $G'$, $\Theta'$, and $\Phi'$ on $X'$, and a general $(\pi \circ f)$-ample $\mathbb{R}$-divisor $A'$ on $X'$ such that
\begin{itemize}
\item
we can write 
$$f^{*}(K_{X}+\Delta+A)\sim_{\mathbb{R},\,Z} M'+F'$$ 
such that $M'$ is semi-ample over $Z$ and ${\rm Supp}\,F'= {\rm Bs}|f^{*}(K_{X}+\Delta+A)/Z|_{\mathbb{R}}$, 

\item
$\Delta'$ and $E'$ satisfy
$$K_{X'}+\Delta'=f^{*}(K_{X}+\Delta)+E'$$
and the condition that $\Delta'$ and $E'$ have no common components, 
\item
$B'$ is defined by
$$B':=\sum_{F_{j}\subset {\rm Supp}\,F'}(1-{\rm coeff}_{F_{j}}(\Delta'))F_{j},$$
\item
$G'$ is $f$-exceptional and we may write
$$G'=\Theta'+\Phi'$$
such that $B'-\Theta' \geq 0$ and any component of $\Phi'$ is not a component of $B'$,
\item
$\Gamma'$ is defined by
$$\Gamma'=f_{*}^{-1}\Delta+\sum_{\text{$E_{i}$:$f$-exceptional}}E_{i}+{\rm Supp}\,F',$$
and 
\item
we have
$$A' \sim_{\mathbb{R},\,Z}-G'+f^{*}A$$ 
such that $(X',\Gamma'^{<1}+{\rm Supp}\,\Gamma'^{\geq 1}+A')$ is a $\mathbb{Q}$-factorial dlt pair. 
\end{itemize}
The $\mathbb{R}$-divisor $\Gamma'$ and its property will be used in Step \ref{step5--thm--from-nonvanish-to-minmodel}. 
\end{step1}

\begin{step1}\label{step3--thm--from-nonvanish-to-minmodel}
In this step, we study properties of the effective $\mathbb{R}$-divisors defined in Step \ref{step2--thm--from-nonvanish-to-minmodel}. 

By construction, the discrepancy of any component of $E'$ with respect to $(X,\Delta)$ is positive. 
Since at least $a(E_{i},X,\Delta) \leq  -1$ and $E_{i} \subset {\rm Bs}|f^{*}(K_{X}+\Delta+A)/Z|_{\mathbb{R}}$ holds for any $f$-exceptional prime divisor $E_{i}$ on $X'$, we have ${\rm Supp}\,E'\subset {\rm Bs}|f^{*}(K_{X}+\Delta+A)/Z|_{\mathbb{R}}$. 
Now ${\rm Supp}\,B' \subset {\rm Supp}\,F'$ by construction, and $B'-\Theta' \geq 0$ implies ${\rm Supp}\,\Theta' \subset {\rm Supp}\,B'$. 
From these facts and the property ${\rm Supp}\,F'= {\rm Bs}|f^{*}(K_{X}+\Delta+A)/Z|_{\mathbb{R}}$, we have
$${\rm Supp}\,(E'+B'+\Theta') \subset {\rm Supp}\,F'={\rm Bs}|f^{*}(K_{X}+\Delta+A)/Z|_{\mathbb{R}}.$$

Since ${\rm Bs}|K_{X}+\Delta+A/Z|_{\mathbb{R}} \cap {\rm Nklt}(X,\Delta)=\emptyset$, the inequality $a(F_{j},X,\Delta) > -1$ holds for any component $F_{j}$ of $F'$. 
By the definitions of $B'$ and $F'$, we have
$${\rm Supp}\,B' = {\rm Supp}\,F'={\rm Bs}|f^{*}(K_{X}+\Delta+A)/Z|_{\mathbb{R}}.$$

We also have ${\rm Supp}\,B' \cap {\rm Supp}\,\Delta'^{\geq 1} = \emptyset$ by construction of $B'$. 
The definition of $\Phi'$ shows that $\Phi'$ is $f$-exceptional and any component of $\Phi'$ is not an irreducible component of ${\rm Supp}\,B'$. 
Since at least $a(E_{i},X,\Delta) \leq  -1$ and $E_{i} \subset {\rm Bs}|f^{*}(K_{X}+\Delta+A)/Z|_{\mathbb{R}}$ holds for any $f$-exceptional prime divisor $E_{i}$ on $X'$, the discrepancy of any component of $\Phi'$ with respect to $(X,\Delta)$ is not greater than $-1$. 
Then
$${\rm Supp}\,B' \cap {\rm Supp}\,(\Delta'^{\geq 1}+\Phi')\subset {\rm Bs}|f^{*}(K_{X}+\Delta+A)/Z|_{\mathbb{R}} \cap f^{-1}({\rm Nklt}(X,\Delta)) =\emptyset.$$

By the definition of $B'$ and the equality ${\rm Supp}\,B' = {\rm Supp}\,F'$, it follows that
$${\rm coeff}_{F_{j}}(\Delta'+B'+\Phi')={\rm coeff}_{F_{j}}(\Delta'+B')=1$$
 for any component $F_{j}$ of $F'$. 
 
By construction in Step \ref{step2--thm--from-nonvanish-to-minmodel}, we can write
\begin{equation*}
\begin{split}
K_{X'}+\Delta'+f^{*}A+(B'-\Theta')=f^{*}(K_{X}+\Delta+A)+E'+(B'-\Theta')
\end{split}
\end{equation*}
and $B'-\Theta'$ is effective. 
We can also write
\begin{equation*}
\begin{split}
K_{X'}+\Delta'+f^{*}A+(B'-\Theta')\sim_{\mathbb{R},\,Z}K_{X'}+\Delta'+B'+\Phi'+ A'.
\end{split}
\end{equation*}
\end{step1}

From the above arguments, the following properties hold.

\begin{enumerate}[(a)]
\item \label{proof-thm--from-nonvanish-to-minmodel-(a)}
$B'-\Theta'$ is effective and 
$${\rm Supp}\,(E'+B'+\Theta')={\rm Supp}\,B'= {\rm Supp}\,F'={\rm Bs}|f^{*}(K_{X}+\Delta+A)/Z|_{\mathbb{R}},$$
\item \label{proof-thm--from-nonvanish-to-minmodel-(b)}
we have
$${\rm Supp}\,B' \cap {\rm Supp}\,(\Delta'^{\geq 1}+\Phi')=\emptyset,$$
\item \label{proof-thm--from-nonvanish-to-minmodel-(c)}
the equality
$${\rm coeff}_{F_{j}}(\Delta'+B'+\Phi')=1$$
holds for any component $F_{j}$ of $F'$, and 
\item \label{proof-thm--from-nonvanish-to-minmodel-(d)}
we have
 \begin{equation*}
\begin{split}
&K_{X'}+\Delta'+f^{*}A+(B'-\Theta')=f^{*}(K_{X}+\Delta+A)+E'+(B'-\Theta'), \qquad {\rm and}\\
&K_{X'}+\Delta'+f^{*}A+(B'-\Theta')\sim_{\mathbb{R},\,Z}K_{X'}+\Delta'+B'+\Phi'+ A'.
\end{split}
\end{equation*}
\end{enumerate}

\begin{step1}\label{step4--thm--from-nonvanish-to-minmodel}
By (\ref{proof-thm--from-nonvanish-to-minmodel-(a)}) and (\ref{proof-thm--from-nonvanish-to-minmodel-(d)}), Proposition \ref{prop--minmodel-biratmodel} for $\pi \colon X \to Z$, $(X,\Delta+A)$, $f \colon X' \to X$, and $(X',\Delta'+f^{*}A+(B'-\Theta'))$ implies that to prove the existence of a $\mathbb{Q}$-factorial good minimal model of $(X,\Delta+A)$ over $Z$, it is sufficient to prove the existence of a birational contraction $\phi \colon X' \dashrightarrow X''$ over $Z$, where $X''$ is $\mathbb{Q}$-factorial and projective over $Z$, such that the $\mathbb{R}$-divisor $\phi_{*}(K_{X'}+\Delta'+f^{*}A+(B'-\Theta'))$ is semi-ample over $Z$ and any prime divisor contracted by $\phi$ is an irreducible component of ${\rm Bs}|K_{X'}+\Delta'+f^{*}A+(B'-\Theta')/Z|_{\mathbb{R}}$. 
\end{step1}

\begin{step1}\label{step5--thm--from-nonvanish-to-minmodel}
In this step, we will check that $(X',\Delta'+B'+\Phi'+ A')$ is $\mathbb{Q}$-factorial dlt on a neighborhood of ${\rm Supp}\,F'$. 

We set $U':=X' \setminus {\rm Supp}\,(\Delta'^{\geq 1}+\Phi')$. 
By the definitions of $B'$, $\Delta'$, and $\Gamma'$ in Step \ref{step2--thm--from-nonvanish-to-minmodel}, the $\mathbb{R}$-divisor $(\Delta'+B')|_{U'}$ is a boundary $\mathbb{R}$-divisor and we have 
$$(\Delta'+B')|_{U'} \leq \Gamma'|_{U'}.$$
Since $(X',\Gamma'^{<1}+{\rm Supp}\,\Gamma'^{\geq 1}+A')$ is a $\mathbb{Q}$-factorial dlt pair, which is the final property in the list of properties in Step \ref{step2--thm--from-nonvanish-to-minmodel}, we see that $(U',(\Delta'+B'+\Phi'+ A')|_{U'})$ is a $\mathbb{Q}$-factorial dlt pair. 
By (\ref{proof-thm--from-nonvanish-to-minmodel-(a)}) and (\ref{proof-thm--from-nonvanish-to-minmodel-(b)}) in Step \ref{step3--thm--from-nonvanish-to-minmodel}, we have
$$ {\rm Supp}\,F' \cap {\rm Supp}\,(\Delta'^{\geq 1}+\Phi') = {\rm Supp}\,B' \cap {\rm Supp}\,(\Delta'^{\geq 1}+\Phi')=\emptyset.$$
This shows that ${\rm Supp}\,F' \subset U'$. 
Therefore, $(X',\Delta'+B'+\Phi'+ A')$ is $\mathbb{Q}$-factorial dlt on a neighborhood of ${\rm Supp}\,F'$. 
\end{step1}

\begin{step1}\label{step6--thm--from-nonvanish-to-minmodel}
In this step, we will construct the birational contraction $\phi \colon X' \dashrightarrow X''$ over $Z$ mentioned in Step \ref{step4--thm--from-nonvanish-to-minmodel}. 

Let $M'$ be the $\mathbb{R}$-divisor on $X'$ defined in the construction of $f \colon X' \to X$ in Step \ref{step2--thm--from-nonvanish-to-minmodel}. 
We may write $M'=\sum_{k}r_{k}M'^{(k)}$ for some positive real numbers $r_{k}$ and Cartier divisors $M'^{(k)}$ on $X'$ that are semi-ample over $Z$. 
We take $\alpha \in \mathbb{R}_{>0}$ so that $\alpha r_{k}>2 \cdot{\rm dim}\,X'$ for all $k$. 
We fix a general element $N' \in |M'/Z|_{\mathbb{R}}$ such that 
$${\rm Nklt}(X', \Delta'+B'+\Phi'+ A'+\alpha N')={\rm Nklt}(X', \Delta'+B'+\Phi'+ A').$$
By (\ref{proof-thm--from-nonvanish-to-minmodel-(a)}) and (\ref{proof-thm--from-nonvanish-to-minmodel-(d)}) in Step \ref{step3--thm--from-nonvanish-to-minmodel}, we have
\begin{equation*}
\begin{split}
&{\rm Bs}|K_{X'}+\Delta'+B'+\Phi'+ A'+\alpha N'/Z|_{\mathbb{R}}\\
\subset&{\rm Bs}|K_{X'}+\Delta'+B'+\Phi'+ A'/Z|_{\mathbb{R}}\\
= & {\rm Bs}|f^{*}(K_{X}+\Delta+A)+E'+(B'-\Theta')/Z|_{\mathbb{R}}\\
\subset& {\rm Bs}|f^{*}(K_{X}+\Delta+A)/Z|_{\mathbb{R}} \cup {\rm Supp}\,(E'+(B'-\Theta'))={\rm Supp\,}F'. 
\end{split}
\end{equation*}
By this property and the dlt property of $(X',\Delta'+B'+\Phi'+ A')$ on a neighborhood of ${\rm Supp}\,F'$, the non-nef locus of $K_{X'}+\Delta'+B'+\Phi'+ A'+\alpha N'$ over $Z$ does not intersect ${\rm Nlc}(X', \Delta'+B'+\Phi'+ A'+\alpha N')$. 
 
By Corollary \ref{cor--mmp-nomralpair-Qfacdlt}, we can run a $(K_{X'}+\Delta'+B'+\Phi'+ A'+\alpha N')$-MMP over $Z$ with scaling of an ample divisor 
$$(X',\Delta'+B'+\Phi'+ A'+\alpha N') \dashrightarrow \cdots \dashrightarrow (X'_{l},\Delta'_{l}+B'_{l}+\Phi'_{l}+ A'_{l}+\alpha N'_{l}) \dashrightarrow \cdots.$$
By the choice of $\alpha$ and the length of extremal rays (\cite[Theorem 1.5 (iii)]{fujino-morihyper}), every $N'_{l}$ trivially intersects the curves contracted by the extremal contraction of the MMP. 
Hence, the $(K_{X'}+\Delta'+B'+\Phi'+ A'+\alpha N')$-MMP is a $(K_{X'}+\Delta'+B'+\Phi'+ A')$-MMP and $N'_{l}$ is semi-ample over $Z$ for every $l$. 
By this fact and (\ref{proof-thm--from-nonvanish-to-minmodel-(d)}) in Step \ref{step3--thm--from-nonvanish-to-minmodel}, prime divisors contracted in this MMP are contained in 
$${\rm Bs}|K_{X'}+\Delta'+B'+\Phi'+ A'/Z|_{\mathbb{R}}={\rm Bs}|K_{X'}+\Delta'+f^{*}A+(B'-\Theta')/Z|_{\mathbb{R}}.$$
By (\ref{proof-thm--from-nonvanish-to-minmodel-(d)}) in Step \ref{step3--thm--from-nonvanish-to-minmodel} and the relation $f^{*}(K_{X}+\Delta+A)\sim_{\mathbb{R},\,Z}M'+F'$, which is the first property of $f \colon X' \to X$ in Step \ref{step2--thm--from-nonvanish-to-minmodel}, we have
\begin{equation*}
\begin{split}
K_{X'}+\Delta'+B'+\Phi'+ A'+\alpha N'\sim_{\mathbb{R},\,Z} &\;f^{*}(K_{X}+\Delta+A)+E'+(B'-\Theta')+\alpha N'\\
\sim_{\mathbb{R},\,Z} & (\alpha+1)N'+(F'+E'+(B'-\Theta')).
\end{split}
\end{equation*}
By (\ref{proof-thm--from-nonvanish-to-minmodel-(a)}) in Step \ref{step3--thm--from-nonvanish-to-minmodel}, we have 
$${\rm Supp}\,(F'+E'+(B'-\Theta'))={\rm Supp}\,F'.$$ 
Since every $N'_{l}$ trivially intersects the curves contracted by the extremal contraction of the $(K_{X'}+\Delta'+B'+\Phi'+ A'+\alpha N')$-MMP, the non-isomorphic locus of the MMP is contained in ${\rm Supp}\,F'$. 
By (\ref{proof-thm--from-nonvanish-to-minmodel-(c)}) in Step \ref{step3--thm--from-nonvanish-to-minmodel} and the dlt property of $(X',\Delta'+B'+\Phi'+ A')$ around ${\rm Supp}\,F'$, we may apply the special termination \cite{fujino-sp-ter} to each lc center contained in ${\rm Supp}\,F'$ with the aid of \cite[Theorem E]{bchm} and the $(\pi \circ f)$-ampleness of $A'$.  
We note that we may apply \cite[Theorem E]{bchm} to our setting because we can use $A'_{l}$ and \cite[Proof of Lemma 3.10.11 (2)]{bchm} to produce an ample $\mathbb{R}$-divisor on $X'_{l}$ for every $l$. 
We see that the $(K_{X'}+\Delta'+B'+\Phi'+ A'+\alpha N')$-MMP over $Z$ terminates with a $\mathbb{Q}$-factorial minimal model over $Z$
$$(X',\Delta'+B'+\Phi'+ A'+\alpha N') \dashrightarrow (X'_{n},\Delta'_{n}+B'_{n}+\Phi'_{n}+ A'_{n}+\alpha N'_{n}).$$
Moreover, $N'_{n}$ is semi-ample over $Z$ by construction. 

Let $F'_{n}$ and $E'_{n}$ be the birational transforms of $F'$ and $E'$ on $X'_{n}$, respectively. 
By Corollary \ref{cor--mmpwithscaling-normalpair}, we may run a $(K_{X'_{n}}+\Delta'_{n}+B'_{n}+\Phi'_{n}+ A'_{n})$-MMP over $Z$ with scaling of $\alpha N'_{n}$. 
By the same argument as above, we may apply the special termination \cite{fujino-sp-ter} to each lc center contained in ${\rm Supp}\,F'_{n}$ with the aid of \cite[Theorem E]{bchm}. 
We note that we have
$$K_{X'_{n}}+\Delta'_{n}+B'_{n}+\Phi'_{n}+ A'_{n}+ t N'_{n}\sim_{\mathbb{R},\,Z} (t+1)N'_{n}+(F'_{n}+E'_{n}+(B'_{n}-\Theta'_{n}))$$ 
for all $0 \leq t \leq \alpha$, and ${\rm Supp}\,(F'_{n}+E'_{n}+(B'_{n}-\Theta'_{n}))={\rm Supp}\,F'_{n}$ by (\ref{proof-thm--from-nonvanish-to-minmodel-(a)}) in Step \ref{step3--thm--from-nonvanish-to-minmodel}. 
Hence, the non-isomorphic locus of the $(K_{X'_{n}}+\Delta'_{n}+B'_{n}+\Phi'_{n}+ A'_{n})$-MMP is contained in ${\rm Supp}\,F'_{n}$. 
Furthermore, if all components of $F'_{n}$ are contracted after finitely many steps then the MMP terminates.  
We see that the MMP terminates with a $\mathbb{Q}$-factorial minimal model over $Z$ 
$$(X'_{n}, \Delta'_{n}+B'_{n}+\Phi'_{n}+ A'_{n}) \dashrightarrow (X'', \Delta''+B''+\Phi''+ A'').$$
Let $\phi \colon X' \dashrightarrow X'_{n} \dashrightarrow X''$ be the composition. 
Then
$$(X', \Delta'+B'+\Phi'+ A') \dashrightarrow (X'', \Delta''+B''+\Phi''+ A'')$$
is a sequence of steps of a $(K_{X'}+\Delta'+B'+\Phi'+ A')$-MMP over $Z$ to a minimal model over $Z$. 

We check that $K_{X''}+\Delta''+B''+\Phi''+ A''$ is semi-ample over $Z$. 
By the argument in \cite[Proof of Lemma 3.10.11 (2)]{bchm}, we may write $A'' \sim_{\mathbb{R},\,Z}\Omega''+L''$ such that $\Omega''$ is effective, $L''$ is ample over $Z$, and 
$${\rm Nlc}(X'', \Delta''+B''+\Phi''+ A'')={\rm Nlc}(X'', \Delta''+B''+\Phi''+ \Omega'').$$
Moreover, since 
$$K_{X''}+\Delta''+B''+\Phi''+ A'' \sim_{\mathbb{R},\,Z} \phi_{*}N'+\phi_{*}(F'+E'+(B'-\Theta'))$$
and ${\rm Nlc}(X', \Delta'+B'+\Phi'+ A') \cap {\rm Supp}\,F'= \emptyset$ (see Step \ref{step5--thm--from-nonvanish-to-minmodel}), the semi-ampleness of $N'$ over $Z$ and Remark \ref{rem--mmp-basic} imply that the $\mathbb{R}$-line bundle
$$(K_{X''}+\Delta''+B''+\Phi''+ \Omega''+L'')|_{{\rm Nlc}(X'', \Delta''+B''+\Phi''+ \Omega'')}$$
is semi-ample over $Z$. 
Now we apply Theorem \ref{thm--abundance-quasi-log} to $(X'', \Delta''+B''+\Phi''+ \Omega'')$ and $L''$, and we see that
$$K_{X''}+\Delta''+B''+\Phi''+ A'' \sim_{\mathbb{R},\,Z}K_{X''}+\Delta''+B''+\Phi''+ \Omega''+L''$$
is semi-ample over $Z$. 

By (\ref{proof-thm--from-nonvanish-to-minmodel-(d)}) in Step \ref{step3--thm--from-nonvanish-to-minmodel}, the following properties hold. 
\begin{itemize}
\item
$X''$ is $\mathbb{Q}$-factorial, 
\item
$\phi$ is a sequence of steps of a $(K_{X'}+\Delta'+f^{*}A+(B'-\Theta'))$-MMP over $Z$ that only contracts prime divisors contained in ${\rm Bs}|K_{X'}+\Delta'+f^{*}A+(B'-\Theta')/Z|_{\mathbb{R}}$, and
\item
the $\mathbb{R}$-divisor
$$\phi_{*}(K_{X'}+\Delta'+f^{*}A+(B'-\Theta')) \sim_{\mathbb{R},\,Z}\phi_{*}(K_{X'}+\Delta'+B'+\Phi'+ A')$$
is semi-ample over $Z$.
\end{itemize}
\end{step1}
As discussed in Step \ref{step4--thm--from-nonvanish-to-minmodel}, Proposition \ref{prop--minmodel-biratmodel} implies the existence of a $\mathbb{Q}$-factorial good minimal model of $(X,\Delta+A)$ over $Z$. 
We finish the proof. 
\end{proof}

\begin{cor}\label{cor-minmodel-explicit-construction}
Let $\pi \colon X \to Z$ be a projective morphism of normal quasi-projective varieties. 
Let $(X,\Delta)$ be a normal pair and let $A$ be an effective $\pi$-ample $\mathbb{R}$-divisor on $X$ such that ${\rm Nklt}(X,\Delta) \neq \emptyset$ and ${\rm Bs}|K_{X}+\Delta+A/Z|_{\mathbb{R}} \cap {\rm Nklt}(X,\Delta+A) = \emptyset$. 
Then there exist a projective birational morphism $f \colon X' \to X$ from a $\mathbb{Q}$-factorial variety $X'$ and a birational contraction $\phi \colon X' \dashrightarrow X''$ over $Z$ to a $\mathbb{Q}$-factorial variety $X''$, which is projective over $Z$, satisfying the following.
\begin{itemize}
\item
At least one of $a(E_{i},X,\Delta) \leq  -1$ and $E_{i} \subset {\rm Bs}|f^{*}(K_{X}+\Delta+A)/Z|_{\mathbb{R}}$ holds for any $f$-exceptional prime divisor $E_{i}$ on $X'$, 
\item
$\phi$ contracts all divisorial components of ${\rm Bs}|f^{*}(K_{X}+\Delta+A)/Z|_{\mathbb{R}}$ and $\phi$ is an isomorphism on $X' \setminus {\rm Bs}|f^{*}(K_{X}+\Delta+A)/Z|_{\mathbb{R}}$, and 
\item
writing
$$K_{X'}+\Delta'=f^{*}(K_{X}+\Delta)+E'$$
for some effective $\mathbb{R}$-divisors $\Delta'$ and $E'$ on $X'$ having no common components, then $(X'',\phi_{*}(\Delta'+f^{*}A))$ is a $\mathbb{Q}$-factorial good minimal model of $(X,\Delta+A)$ over $Z$. 
\end{itemize}
\end{cor}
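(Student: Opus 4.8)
The corollary is a sharpened version of Theorem \ref{thm--from-nonvanish-to-minmodel}, and the plan is to re-examine that proof and read off the birational maps it constructs. Exactly as there, one first notes that ${\rm Bs}|K_{X}+\Delta+A/Z|_{\mathbb{R}}\neq X$ (since $(X,\Delta)$ is not klt), so $|K_{X}+\Delta+A/Z|_{\mathbb{R}}\neq\emptyset$. I would then take a dlt blow-up $h\colon(\overline{X},\overline{\Delta})\to(X,\Delta)$ (Theorem \ref{thm--dlt-blowup}), so that $\overline{X}$ is $\mathbb{Q}$-factorial, $K_{\overline{X}}+\overline{\Delta}=h^{*}(K_{X}+\Delta)$, and $a(E,X,\Delta)\leq-1$ for every $h$-exceptional prime divisor $E$. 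Since $h^{*}A$ is nef and big over $Z$, write $h^{*}A\sim_{\mathbb{R},\,Z}\overline{A}+\overline{G}$ with $\overline{A}$ a general $(\pi\circ h)$-ample $\mathbb{R}$-divisor and $\overline{G}\geq0$ having small enough coefficients that ${\rm Nklt}(\overline{X},\overline{\Delta}+\overline{G}+\overline{A})={\rm Nklt}(\overline{X},\overline{\Delta})$ as closed subschemes; then $(\overline{X},\overline{\Delta}+\overline{G})$ and $\overline{A}$ satisfy the hypotheses of Theorem \ref{thm--from-nonvanish-to-minmodel}, and running Steps 2--6 of its proof for this pair produces a $\mathbb{Q}$-factorial variety $X'$, the projective birational morphism $f_{0}\colon X'\to\overline{X}$ coming from Proposition \ref{prop--dlt-resol-linsystem} in Step 2, the effective $\mathbb{R}$-divisors $F',B',\Theta'$ on $X'$ appearing in that proof (with ${\rm Supp}\,F'={\rm Supp}\,B'={\rm Bs}|f_{0}^{*}(K_{\overline{X}}+\overline{\Delta}+\overline{G}+\overline{A})/Z|_{\mathbb{R}}$), and the birational contraction $\phi\colon X'\dashrightarrow X''$ of Step 6, a finite sequence of steps of an MMP over $Z$ onto a $\mathbb{Q}$-factorial $X''$. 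I set $f:=h\circ f_{0}\colon X'\to X$ and note that $K_{\overline{X}}+\overline{\Delta}+\overline{G}+\overline{A}\sim_{\mathbb{R},\,Z}h^{*}(K_{X}+\Delta+A)$, so that ${\rm Bs}|f_{0}^{*}(K_{\overline{X}}+\overline{\Delta}+\overline{G}+\overline{A})/Z|_{\mathbb{R}}={\rm Bs}|f^{*}(K_{X}+\Delta+A)/Z|_{\mathbb{R}}$.

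I would then check the three bullets. An $f$-exceptional prime divisor $E_{i}$ on $X'$ is either $f_{0}$-exceptional or maps onto an $h$-exceptional divisor; in the latter case $a(E_{i},X,\Delta)\leq-1$ by the dlt blow-up property, and in the former case Proposition \ref{prop--dlt-resol-linsystem} gives either $E_{i}\subset{\rm Bs}|f^{*}(K_{X}+\Delta+A)/Z|_{\mathbb{R}}$ or $a(E_{i},\overline{X},\overline{\Delta}+\overline{G})\leq-1$, in which last case the smallness of $\overline{G}$ forces $a(E_{i},X,\Delta)=a(E_{i},\overline{X},\overline{\Delta})\leq-1$ or $E_{i}\subset{\rm Bs}|f^{*}(K_{X}+\Delta+A)/Z|_{\mathbb{R}}$; this is the first bullet. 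For the other two, write $K_{X'}+\Delta'=f^{*}(K_{X}+\Delta)+E'$ with $\Delta'$ and $E'$ effective and sharing no components, so that $K_{X'}+\Delta'+f^{*}A\sim_{\mathbb{R},\,Z}f^{*}(K_{X}+\Delta+A)+E'$ and ${\rm Supp}\,(E'+(B'-\Theta'))\subset{\rm Bs}|f^{*}(K_{X}+\Delta+A)/Z|_{\mathbb{R}}$. Proposition \ref{prop--minmodel-biratmodel}, applied to $\pi\colon X\to Z$, $(X,\Delta+A)$, $f\colon X'\to X$, the normal pair $(X',\Delta'+f^{*}A+(B'-\Theta'))$, and $\phi$ --- whose hypotheses hold by the first bullet together with the semi-ampleness and contraction properties of $\phi$ obtained in Step 6 --- shows that $\phi$ exactly contracts the prime divisors contained in ${\rm Bs}|f^{*}(K_{X}+\Delta+A)/Z|_{\mathbb{R}}$ and that $(X'',\phi_{*}(\Delta'+f^{*}A+(B'-\Theta')))$ is a $\mathbb{Q}$-factorial good minimal model of $(X,\Delta+A)$ over $Z$; since $\phi$ contracts ${\rm Supp}\,(B'-\Theta')$ this equals $(X'',\phi_{*}(\Delta'+f^{*}A))$, which is the third bullet, and since the non-isomorphic locus of $\phi$ is contained in ${\rm Supp}\,F'={\rm Bs}|f^{*}(K_{X}+\Delta+A)/Z|_{\mathbb{R}}$ by Step 6, the second bullet follows as well.

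No geometric input beyond Theorem \ref{thm--from-nonvanish-to-minmodel} is used; the work is bookkeeping, and I expect the main obstacle to be the first bullet. Because the proof of Theorem \ref{thm--from-nonvanish-to-minmodel} modifies the pair twice (first the dlt blow-up, then the auxiliary boundaries of Step 2), one must make sure that the morphism $f\colon X'\to X$ actually extracted satisfies the discrepancy dichotomy with respect to the \emph{original} $(X,\Delta)$; the two ingredients for this are the crepancy $K_{\overline{X}}+\overline{\Delta}=h^{*}(K_{X}+\Delta)$ of the dlt blow-up and a sufficiently careful choice of $\overline{G}$ (small coefficients, creating no non-lc place off the base locus beyond the $h$-exceptional ones, which anyway have discrepancy $\leq-1$ with respect to $(X,\Delta)$). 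A lesser point is to verify the support identities relating the locus contracted by $\phi$, the support of $E'$, and ${\rm Bs}|f^{*}(K_{X}+\Delta+A)/Z|_{\mathbb{R}}$, together with the $\mathbb{R}$-linear equivalences needed to apply Proposition \ref{prop--minmodel-biratmodel} for the original pair rather than the reduced one; these follow from the equalities ${\rm Supp}\,F'={\rm Supp}\,B'={\rm Bs}|f^{*}(K_{X}+\Delta+A)/Z|_{\mathbb{R}}$ established in Step 3 of the proof of Theorem \ref{thm--from-nonvanish-to-minmodel}.
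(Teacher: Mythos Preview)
Your approach differs from the paper's, and the difference matters precisely at the point you flag as the main obstacle. You re-enter the proof of Theorem~\ref{thm--from-nonvanish-to-minmodel}, pass through the dlt blow-up $h\colon(\overline{X},\overline{\Delta})\to(X,\Delta)$ and the auxiliary boundary $\overline{G}$, and then try to verify the first bullet for the composite $f=h\circ f_{0}$. The problem is that Proposition~\ref{prop--dlt-resol-linsystem} gives you, for an $f_{0}$-exceptional $E_{i}$, only that $a(E_{i},\overline{X},\overline{\Delta}+\overline{G})\le -1$ or $E_{i}\subset{\rm Bs}|f^{*}(K_{X}+\Delta+A)/Z|_{\mathbb{R}}$. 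In the former case you need $a(E_{i},X,\Delta)=a(E_{i},\overline{X},\overline{\Delta})\le -1$, but since $\overline{G}\ge 0$ you only have $a(E_{i},\overline{X},\overline{\Delta})\ge a(E_{i},\overline{X},\overline{\Delta}+\overline{G})$, which goes the wrong way. The equality ${\rm Nklt}(\overline{X},\overline{\Delta}+\overline{G})={\rm Nklt}(\overline{X},\overline{\Delta})$ as subschemes controls the multiplier ideal but does not prevent individual valuations from crossing the threshold $-1$; and you cannot shrink $\overline{G}$ after the fact, since the construction of $X'$ via Proposition~\ref{prop--dlt-resol-linsystem} already depends on $\overline{G}$. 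So as written this step is a genuine gap.

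The paper avoids this entirely by a different route: it invokes Theorem~\ref{thm--from-nonvanish-to-minmodel} only as a black box to know a good minimal model exists, then uses Lemma~\ref{lem--good-min-model-makayama-zariski-decomp} to conclude that $K_{X}+\Delta+A$ birationally admits a Nakayama--Zariski decomposition with semi-ample positive part, and applies Proposition~\ref{prop--dlt-resol-linsystem-2} \emph{directly} to $(X,\Delta)$ and $K_{X}+\Delta+A$. This yields $f\colon X'\to X$ satisfying the first bullet with respect to the original pair $(X,\Delta)$ by construction, with $M'$ and $F'$ the positive and negative parts. The birational contraction $\phi$ is then produced by the same MMP argument as in the proof of Proposition~\ref{prop--exist-minmodel-boundary-change} (define $B'$, run a $(K_{X'}+\Delta'+B'+f^{*}A+\alpha M')$-MMP with scaling, use length of extremal rays and special termination), and Proposition~\ref{prop--minmodel-biratmodel} finishes. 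The gain is that no dlt blow-up or auxiliary $\overline{G}$ ever enters, so the discrepancy dichotomy is automatic.
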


\begin{proof}
By Theorem \ref{thm--from-nonvanish-to-minmodel}, $(X,\Delta+A)$ has a good minimal model over $Z$. 
Thus, $K_{X}+\Delta+A$ birationally has the Nakayama--Zariski decomposition over $Z$ whose positive part is semi-ample over $Z$. 
Applying Proposition \ref{prop--dlt-resol-linsystem-2} to $\pi \colon X \to Z$, $(X,\Delta)$ and $K_{X}+\Delta+A$, we get a projective birational morphism $f \colon X' \to X$ from a $\mathbb{Q}$-factorial variety $X'$ such that putting $M'$ and $F'$ as the positive and the negative parts of the Nakayama--Zariski decomposition of $f^{*}(K_{X}+\Delta+A)$ over $Z$, respectively, then the following properties hold.
\begin{itemize}
\item
$f^{*}(K_{X}+\Delta+A)\sim_{\mathbb{R},\,Z} M'+F'$, 

\item
$M'$ is semi-ample over $Z$ and ${\rm Supp}\,F'= {\rm Bs}|f^{*}(K_{X}+\Delta+A)/Z|_{\mathbb{R}}$, 

\item
putting
$$\Gamma'=f_{*}^{-1}\Delta+\sum_{\text{$E_{i}$:$f$-exceptional}}E_{i}+{\rm Supp}\,F',$$
then $(X',\Gamma'^{<1}+{\rm Supp}\,\Gamma'^{\geq 1})$ is a $\mathbb{Q}$-factorial dlt pair, and

\item
for any $f$-exceptional prime divisor $E_{i}$ on $X'$, at least one of $a(E_{i},X,\Delta) \leq  -1$ and $E_{i} \subset {\rm Bs}|f^{*}(K_{X}+\Delta+A)/Z|_{\mathbb{R}}$ holds. 
\end{itemize}
We write
$$K_{X'}+\Delta'=f^{*}(K_{X}+\Delta)+E'$$
for some effective $\mathbb{R}$-divisors $\Delta'$ and $E'$ that have no common components. 
We put 
$$B':=\sum_{F_{j}\subset {\rm Supp}\,F'}(1-{\rm coeff}_{F_{j}}(\Delta'))F_{j}.$$
By the same argument as in the proof of Proposition \ref{prop--exist-minmodel-boundary-change}, we get a sequence of a steps of a
$(K_{X'}+\Delta'+B'+f^{*}A+\alpha M')$-MMP 
$$\phi \colon X' \dashrightarrow X''$$
over $Z$ such that $\phi$ is also a
$(K_{X'}+\Delta'+B'+f^{*}A)$-MMP and $\phi_{*}(K_{X'}+\Delta'+B'+f^{*}A)$ is semi-ample over $Z$. 
This $\phi$ satisfies the following.
\begin{itemize}
\item
$X''$ is $\mathbb{Q}$-factorial, 
\item
$\phi_{*}(K_{X'}+\Delta'+B'+f^{*}A)$ is semi-ample over $Z$, and
\item
$\phi$ contracts all divisorial components of ${\rm Bs}|f^{*}(K_{X}+\Delta+A)/Z|_{\mathbb{R}}$ and $\phi$ is an isomorphism on $X' \setminus {\rm Bs}|f^{*}(K_{X}+\Delta+A)/Z|_{\mathbb{R}}$. 
\end{itemize}
Therefore, $f \colon X' \to X$ and $\phi \colon X' \dashrightarrow X''$ are what we wanted (see Proposition \ref{prop--minmodel-biratmodel}). 
\end{proof}

\subsection{Termination of minimal model program}

In this subsection, we study the termination of MMP with scaling of an ample divisor. 

\begin{thm}\label{thm--from-minimodel-to-termi}
Let $\pi \colon X \to Z$ be a projective morphism of normal quasi-projective varieties. 
Let $(X,\Delta)$ be a normal pair and let $A$ be an effective $\pi$-ample $\mathbb{R}$-divisor on $X$ such that ${\rm Nklt}(X,\Delta) \neq \emptyset$ and ${\rm Bs}|K_{X}+\Delta+A/Z|_{\mathbb{R}} \cap {\rm Nklt}(X,\Delta) = \emptyset$. 
Let 
$$(X,\Delta+A)=:(X_{1},\Delta_{1}+A_{1}) \dashrightarrow \cdots \dashrightarrow (X_{i},\Delta_{i}+A_{i}) \dashrightarrow \cdots$$
be a sequence of steps of a $(K_{X}+\Delta+A)$-MMP over $Z$ with scaling of $A$ such that if we put $$\lambda_{i}:=\{\mu \in \mathbb{R}_{\geq 0}\,| \,\text{$(K_{X_{i}}+\Delta_{i}+A_{i})+\mu A_{i}$ is nef over $Z$} \}$$ for each $i \geq 1$, then 
${\rm lim}_{i \to \infty} \lambda_{i}=0$.  
Then $\lambda_{m}=0$ for some $m$ and $(X_{m},\Delta_{m}+A_{m})$ is a good minimal model of $(X,\Delta+A)$ over $Z$. 
\end{thm}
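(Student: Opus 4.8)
The plan is to first deduce from Theorem \ref{thm--from-nonvanish-to-minmodel} that $(X,\Delta+A)$ has a good minimal model over $Z$, and then to show that the given MMP with scaling must be finite by transporting it to a suitable partial resolution and invoking the special termination \cite{fujino-sp-ter} together with \cite[Theorem E]{bchm}, along the lines of Step 6 of the proof of Theorem \ref{thm--from-nonvanish-to-minmodel}. To make Theorem \ref{thm--from-nonvanish-to-minmodel} applicable I would first carry out the reduction of Step 1 of its proof: since ${\rm Nklt}(X,\Delta)\ne\emptyset$ and ${\rm Bs}|K_X+\Delta+A/Z|_{\mathbb R}$ is disjoint from ${\rm Nklt}(X,\Delta)$, the system $|K_X+\Delta+A/Z|_{\mathbb R}$ is nonempty; taking a dlt blow-up of $(X,\Delta)$ and decomposing the pullback of $A$ into a small general ample divisor plus an effective divisor avoiding the non-klt locus and the lc centers, and lifting the given sequence of steps of the MMP along the crepant dlt blow-up, I may assume in addition that ${\rm Nklt}(X,\Delta)={\rm Nklt}(X,\Delta+A)$ (recall that replacing $A$ by an $\mathbb R$-linearly equivalent divisor over $Z$ changes neither the steps of the MMP nor the numbers $\lambda_i$). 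Then Theorem \ref{thm--from-nonvanish-to-minmodel} yields a $\mathbb Q$-factorial good minimal model of $(X,\Delta+A)$ over $Z$.

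If $\lambda_m=0$ for some $m$, then $K_{X_m}+\Delta_m+A_m$ is nef over $Z$ and the MMP stops at $X_m$; since every step of a $(K_X+\Delta+A)$-MMP is $(K_X+\Delta+A)$-negative, the discrepancy conditions of Definition \ref{defn--minmodel} hold and $(X_m,\Delta_m+A_m)$ is a minimal model of $(X,\Delta+A)$ over $Z$. Comparing $(X_m,\Delta_m+A_m)$ with the good minimal model produced above on a common resolution and using the negativity lemma, the two pullbacks of the log canonical divisor agree, so $K_{X_m}+\Delta_m+A_m$ is semi-ample over $Z$ and $(X_m,\Delta_m+A_m)$ is a good minimal model. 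It therefore suffices to prove that $\lambda_i=0$ for some $i$, and I argue by contradiction, assuming $\lambda_i>0$ for all $i$, i.e.\ that the MMP is infinite.

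Applying Proposition \ref{prop--dlt-resol-linsystem} to $\pi\colon X\to Z$, $(X,\Delta)$ and $K_X+\Delta+A$ gives a projective birational morphism $f\colon X'\to X$ from a $\mathbb Q$-factorial variety with $f^{*}(K_X+\Delta+A)\sim_{\mathbb R,Z}M'+F'$, $M'$ semi-ample over $Z$, ${\rm Supp}\,F'={\rm Bs}|f^{*}(K_X+\Delta+A)/Z|_{\mathbb R}$, and $(X',\Gamma'^{<1}+{\rm Supp}\,\Gamma'^{\ge1})$ dlt; as in Steps 2--3 of the proof of Theorem \ref{thm--from-nonvanish-to-minmodel} I write $K_{X'}+\Delta'=f^{*}(K_X+\Delta)+E'$ with $E'\ge0$ $f$-exceptional and set $B':=\sum_{F_j\subset{\rm Supp}\,F'}(1-{\rm coeff}_{F_j}(\Delta'))F_j\ge0$, so that ${\rm coeff}_{F_j}(\Delta'+B')=1$ on every component $F_j$ of $F'$ and $(X',\Delta'+B'+f^{*}A)$ is $\mathbb Q$-factorial dlt near ${\rm Supp}\,F'$. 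Lifting the infinite MMP along $f$ — possible since $K_{X'}+\Delta'+f^{*}A=f^{*}(K_X+\Delta+A)+E'$ with $E'\ge0$ $f$-exceptional, cf.\ \cite[Remark 2.9]{birkar-flip} and Remark \ref{rem--mmp-reduction} — and using that away from ${\rm Supp}\,F'$ the divisor $f^{*}(K_X+\Delta+A)$ is $\mathbb R$-linearly equivalent over $Z$ to the semi-ample divisor $M'$, so that the lifted MMP is an isomorphism there by the last item of Remark \ref{rem--mmp-basic}, one obtains an infinite sequence of steps of a $(K_{X'}+\Delta'+B'+f^{*}A)$-MMP over $Z$ whose non-isomorphic locus is contained in ${\rm Supp}\,F'$. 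This is exactly the situation handled in Step 6 of the proof of Theorem \ref{thm--from-nonvanish-to-minmodel}: the coefficient-one boundary along ${\rm Supp}\,F'$ and the dlt property there let the special termination \cite{fujino-sp-ter} be applied to the finitely many lc centers contained in ${\rm Supp}\,F'$, and once the MMP ceases to modify those centers, \cite[Theorem E]{bchm} — applicable via an effective-plus-ample decomposition of the polarizing divisor as in \cite[Proof of Lemma 3.10.11 (2)]{bchm} — terminates the remaining klt MMP. Hence the lifted MMP is finite, contradicting its infiniteness; therefore $\lambda_m=0$ for some $m$, and by the second paragraph $(X_m,\Delta_m+A_m)$ is a good minimal model of $(X,\Delta+A)$ over $Z$. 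The crux of the argument, and the main obstacle, is this last step: transporting the MMP with scaling to $X'$ with the coefficient-one boundary along ${\rm Supp}\,F'$ and arranging that the special termination \cite{fujino-sp-ter} and \cite[Theorem E]{bchm} can be invoked together — this relies on the bookkeeping of Steps 2, 3 and 6 of the proof of Theorem \ref{thm--from-nonvanish-to-minmodel}; by comparison, the initial reduction and the verification that the endpoint is a \emph{good} minimal model rather than merely a minimal model are routine.
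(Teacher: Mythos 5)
Your proposal takes a genuinely different route from the paper, and the route has a gap at its load-bearing step. The paper's proof never attempts to control the given MMP step by step. Instead it follows the strategy of \cite[Theorem 4.1 (iii)]{birkar-flip}: using Corollary \ref{cor-minmodel-explicit-construction} it produces a good minimal model $(X'',\Delta''+A'')$ of $(X,\Delta+A)$ over $Z$, then a second good minimal model $(X''',\Gamma_u''')$ of $(X'',\Delta''+(1+u)A'')$ over the ample model $W$ of $K_{X''}+\Delta''+A''$, and shows by a careful discrepancy comparison that the interpolated divisors $K_{X'''}+\delta\Gamma_u'''+(1-\delta)\Gamma_0'''$ are semi-ample over $Z$ and that $g^*(K_X+\Delta+(1+\delta u)A)$ differs from their pullback by an effective $g_3$-exceptional divisor. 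The negativity lemma then identifies $h_m^*(K_{X_m}+\Delta_m+(1+t)A_m)$ with the pullback of a semi-ample divisor for all $t$ in an interval $(\lambda_m,\min\{\lambda_{m-1},\epsilon u\})$, which forces $\lambda_m=0$ and semi-ampleness at once. The only contact with the given MMP is through a common resolution at the very end.

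The gap in your argument is the assertion that the given (arbitrary) MMP with scaling can be lifted along the morphism $f\colon X'\to X$ of Proposition \ref{prop--dlt-resol-linsystem} to a $(K_{X'}+\Delta'+B'+f^{*}A)$-MMP. This is not the crepant situation of \cite[Remark 2.9]{birkar-flip}: one has $K_{X'}+\Delta'+B'+f^{*}A=f^{*}(K_{X}+\Delta+A)+E'+B'$ with $E'+B'$ a nonzero effective divisor supported on ${\rm Supp}\,F'$, so following a single step $X_i\to W_i\leftarrow X_{i+1}$ requires constructing and terminating an auxiliary relative $(K_{X'_i}+\Delta'_i+B'_i+f_i^{*}A_i)$-MMP over $W_i$ for a pair that is neither lc nor covered by the gluing of Theorem \ref{thm--mmpstep-quasi-log} (which handles the crepant case); no result in the paper or in \cite{has-mmp}, \cite{hashizumehu} supplies this. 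Moreover, even granting a lift, the resulting sequence upstairs is an arbitrary MMP with scaling of the non-ample divisor $f^{*}A$, whereas the special-termination argument of Step 6 of the proof of Theorem \ref{thm--from-nonvanish-to-minmodel} is applied to an MMP manufactured by Corollary \ref{cor--mmp-nomralpair-Qfacdlt}, whose nef thresholds are strictly decreasing and never equal their limit; Remark \ref{rem--difference-mmp-scaling} explicitly warns that this property is not known for arbitrary MMP with scaling, and securing it for arbitrary sequences is precisely what Theorem \ref{thm--from-minimodel-to-termi} is for, so invoking that machinery here is circular. A smaller point: your opening reduction via a dlt blow-up introduces yet another (crepant, but still nontrivial) lifting; the paper achieves ${\rm Nklt}(X,\Delta)={\rm Nklt}(X,\Delta+A)$ simply by replacing $A$ with a general member of $|A/Z|_{\mathbb{R}}$, which changes neither the steps nor the $\lambda_i$. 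Your final verification that a nef endpoint is in fact a \emph{good} minimal model, via a common resolution and the negativity lemma, is correct and matches the paper's Step 4.
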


\begin{proof}
Replacing $A$ if necessary, we may assume ${\rm Nklt}(X,\Delta)={\rm Nklt}(X,\Delta+A)$ as closed subschemes of $X$. 
Then ${\rm Bs}|K_{X}+\Delta+A/Z|_{\mathbb{R}} \cap {\rm Nklt}(X,\Delta+A) = \emptyset$. 

We divide the proof into several steps.

\begin{step2}\label{step1--thm--from-minimodel-to-termi}
In this step, we will construct some varieties and $\mathbb{R}$-divisors, and we will discuss properties of them. 

By Corollary \ref{cor-minmodel-explicit-construction}, there exists a projective birational morphism $f \colon X' \to X$ from a $\mathbb{Q}$-factorial variety $X'$ and a birational contraction $\phi \colon X' \dashrightarrow X''$ over $Z$ to a $\mathbb{Q}$-factorial variety $X''$, which is projective over $Z$, satisfying the following.
\begin{itemize}
\item
At least one of $a(E_{i},X,\Delta) \leq  -1$ and $E_{i} \subset {\rm Bs}|f^{*}(K_{X}+\Delta+A)/Z|_{\mathbb{R}}$ holds for any $f$-exceptional prime divisor $E_{i}$ on $X'$, 
\item
$\phi$ contracts all divisorial components of ${\rm Bs}|f^{*}(K_{X}+\Delta+A)/Z|_{\mathbb{R}}$ and $\phi$ is an isomorphism on $X' \setminus {\rm Bs}|f^{*}(K_{X}+\Delta+A)/Z|_{\mathbb{R}}$, and 
\item
writing
$$K_{X'}+\Delta'=f^{*}(K_{X}+\Delta)+E'$$
for some effective $\mathbb{R}$-divisors $\Delta'$ and $E'$ on $X'$ having no common components, then $(X'',\phi_{*}(\Delta'+f^{*}A))$ is a $\mathbb{Q}$-factorial good minimal model of $(X, \Delta+A)$ over $Z$. 
\end{itemize}
We put $\Delta''=\phi_{*}\Delta'$ and $A''=\phi_{*}f^{*}A$. 
By the equality $K_{X'}+\Delta'=f^{*}(K_{X}+\Delta)+E'$ and the first property, we have ${\rm Supp}\,E' \subset {\rm Bs}|f^{*}(K_{X}+\Delta+A)/Z|_{\mathbb{R}}$. 
By this inclusion and the equality ${\rm Bs}|K_{X}+\Delta+A/Z|_{\mathbb{R}} \cap {\rm Nklt}(X,\Delta+A) = \emptyset$, we have
\begin{equation*}
\begin{split}
&{\rm Bs}|f^{*}(K_{X}+\Delta+A)/Z|_{\mathbb{R}}\cap f^{-1}({\rm Nklt}(X,\Delta+A))=\emptyset, \quad {\rm and}\\
&{\rm Supp}\,E' \cap f^{-1}({\rm Nklt}(X,\Delta+A))=\emptyset.
\end{split}
\end{equation*}
By the definition of good minimal models (Definition \ref{defn--minmodel}) of $(X,\Delta+A)$ over $Z$ and the negativity lemma, the inequality
$$a(P,X,\Delta+A) \leq a(P,X'',\Delta''+A'')$$
holds for any prime divisor $P$ over $X$. 
By these facts and the fact that $\phi \colon X' \dashrightarrow X''$ is an isomorphism on $X' \setminus {\rm Bs}|f^{*}(K_{X}+\Delta+A)/Z|_{\mathbb{R}}$, we see that $\phi^{-1}$ is an isomorphism on a neighborhood of ${\rm Nklt}(X'',\Delta''+A'')$ and the following properties hold.
\begin{enumerate}[(i)]\setcounter{enumi}{0}
\item\label{thm--from-minimodel-to-termi-(I)}
${\rm Bs}|A''/Z|_{\mathbb{R}} \cap {\rm Nklt}(X'',\Delta''+A'')=\emptyset$, and
\item\label{thm--from-minimodel-to-termi-(II)}
for any prime divisor $P$ over $X''$, if $a(P,X'',\Delta''+A'') \leq -1$ then 
$$a(P,X,\Delta+A) = a(P,X'',\Delta''+A'').$$ 
\end{enumerate}

We can write $A'' \sim_{\mathbb{R},\,Z}H''+B''$ for some effective $\mathbb{R}$-divisor $B''$ on $X''$ and an effective $\mathbb{R}$-divisor $H''$ on $X''$ which is ample over $Z$. 
We fix $u \in \mathbb{R}_{>0}$ such that 
\begin{itemize}
\item
${\rm Nklt}(X'',\Delta''+A'')={\rm Nklt}(X'',\Delta''+A''+uB''+uH'')$, 
\item
${\rm Nklt}(X'',\Delta''+A'')={\rm Nklt}(X'',\Delta''+(1+u)A'')$, and 
\item
for any prime divisor $D$ on $X$ which is exceptional over $X''$, we have
$$a(D,X,\Delta+(1+u)A) < a(D,X'', \Delta''+(1+u)A'').$$
\end{itemize}
Then $\phi^{-1}$ is an isomorphism on a neighborhood of ${\rm Nklt}(X'',\Delta''+(1+u)A'')$. 
By the equality $K_{X'}+\Delta'=f^{*}(K_{X}+\Delta)+E'$ and the definitions of $A''$ and $\Delta''$, we see that 
\begin{enumerate}[(i)]\setcounter{enumi}{2}
\item \label{thm--from-minimodel-to-termi-(III)}
for any prime divisor $Q$ over $X''$, if  $a(Q,X'',\Delta''+(1+u)A'') \leq -1$ or $Q$ is not exceptional over $X''$ then 
$$a(Q,X,\Delta+(1+u)A)=a(Q,X'',\Delta''+(1+u)A'').$$ 
\end{enumerate}

Let $X'' \to W$ be the contraction over $Z$ induced by $K_{X''}+\Delta''+A''$. 
Then 
$${\rm Bs}|K_{X''}+(\Delta''+uB''+A'')+uH''/W|_{\mathbb{R}}\subset {\rm Bs}|u(H''+B'')/Z|_{\mathbb{R}}={\rm Bs}|A''/Z|_{\mathbb{R}}.$$
By the relation ${\rm Nklt}(X'',\Delta''+A'')={\rm Nklt}(X'',\Delta''+A''+uB''+uH'')$
and (\ref{thm--from-minimodel-to-termi-(I)}), we see that
$${\rm Bs}|K_{X''}+(\Delta''+uB''+A'')+uH''/W|_{\mathbb{R}}\cap {\rm Nklt}(X'',(\Delta''+A''+uB'')+uH'')=\emptyset.$$
By Theorem \ref{thm--from-nonvanish-to-minmodel}, the pair $(X'',(\Delta''+uB''+A'')+uH'')$ has a good minimal model over $W$. 
We also have
\begin{equation*}
\begin{split}
&{\rm Bs}|K_{X''}+\Delta''+(1+u)A''/W|_{\mathbb{R}}\cap {\rm Nklt}(X'',\Delta''+(1+u)A'')\\
\subset&{\rm Bs}|K_{X''}+(\Delta''+uB''+A'')+uH''/W|_{\mathbb{R}}\cap {\rm Nklt}(X'',\Delta''+A''+uB''+uH'')=\emptyset. 
\end{split}
\end{equation*}
Since $\Delta''+(1+u)A'' \sim_{\mathbb{R},\,Z}(\Delta''+uB''+A'')+uH''$, the pair $(X'',\Delta''+(1+u)A'')$ has a good minimal model over $W$ by Proposition \ref{prop--exist-minmodel-boundary-change}. 
\end{step2}

\begin{step2}\label{step2--thm--from-minimodel-to-termi}
In this step we define a diagram and an $\mathbb{R}$-divisor used in the proof. 

Let $(X''',\Gamma_{u}''')$ be a good minimal model of $(X'',\Delta''+(1+u)A'')$ over $W$. 
We denote $X'' \to W$ and $X''' \to W$ by $\tau''$ and $\tau'''$, respectively. 
Let $g \colon Y \to X$, $g_{2} \colon Y \to X''$, and $g_{3} \colon Y \to X'''$ be a common resolution of $X\dashrightarrow X'' \dashrightarrow X'''$. 
Now we have the following diagram 
$$\xymatrix{&&&Y \ar[llld]_{g}\ar[ld]^{g_{2}}\ar[rd]^{g_{3}}\\X \ar[ddr]_{\pi}  \ar@{-->}[rr]_{\phi \circ f^{-1}}&&X''\ar[ldd] \ar@{-->}[rr]\ar[rd]_{\tau''}&&X'''\ar[ld]^{\tau'''}\\&&&W.\ar[lld]\\&Z}$$
We define an $\mathbb{R}$-divisor $\Gamma_{0}'''$ on $X'''$ by 
$$K_{X'''}+\Gamma_{0}'''=g_{3*}g_{2}^{*}(K_{X''}+\Delta''+A'').$$ 
Note that $\Gamma_{0}'''$ may not be effective. 
Since $\tau''$ is the contraction induced by $K_{X''}+\Delta''+A''$, we see that $K_{X'''}+\Gamma_{0}'''$ is $\mathbb{R}$-Cartier and $K_{X'''}+\Gamma_{0}'''\sim_{\mathbb{R},\,W}0$. 
In particular, the negativity lemma implies
$$g_{2}^{*}(K_{X''}+\Delta''+A'')=g_{3}^{*}(K_{X'''}+\Gamma_{0}''').$$
Moreover, there exists $\epsilon \in (0,1)$ such that 
$$K_{X'''}+\epsilon \Gamma_{u}'''+(1-\epsilon )\Gamma_{0}''' =\epsilon (K_{X'''}+\Gamma_{u}''')+(1-\epsilon )(K_{X'''}+\Gamma_{0}''')$$
is semi-ample over $Z$. 
Then, for any $\delta \in [0, \epsilon]$, the $\mathbb{R}$-divisor
$$K_{X'''}+\delta \Gamma_{u}'''+(1-\delta )\Gamma_{0}'''$$
is semi-ample over $Z$. 
For any $\delta \in [0, \epsilon]$, we define $E_{Y}^{(\delta)}$ by
\begin{equation*}\tag{$\clubsuit$}\label{thm--from-minimodel-to-termi-(clubsuit)}
\begin{split}
E_{Y}^{(\delta)}:=&g^{*}(K_{X}+\Delta+(1+\delta u)A)-g_{3}^{*}(K_{X'''}+\delta \Gamma_{u}'''+(1-\delta )\Gamma_{0}''')\\
=& (1-\delta)\bigl(g^{*}(K_{X}+\Delta+A)-g_{2}^{*}(K_{X''}+\Delta''+A'')\bigr)\\
&\qquad\qquad \quad \quad\; +\delta \bigl(g^{*}(K_{X}+\Delta+(1+u)A)-g_{3}^{*}(K_{X'''}+ \Gamma_{u}''')\bigr).
\end{split}
\end{equation*}
\end{step2}

\begin{step2}\label{step3--thm--from-minimodel-to-termi}
In this step, we will prove that $E_{Y}^{(\delta)}$ in (\ref{thm--from-minimodel-to-termi-(clubsuit)}) is effective and $g_{3}$-exceptional for any $\delta \in [0,\epsilon]$. 

By construction of $E_{Y}^{(\delta)}$, it follows that
\begin{equation*}
\begin{split}
E_{Y}^{(\delta)}=&g^{*}(K_{X}+\Delta+(1+\delta u)A)-g_{3}^{*}(K_{X'''}+\delta \Gamma_{u}'''+(1-\delta )\Gamma_{0}''')\\
=& (1-\delta)\sum_{P}\bigl(a(P,X'',\Delta''+A'')-a(P,X,\Delta+A)\bigr)P\\
&\qquad\qquad \quad \quad\;+\delta \sum_{P}\bigl(a(P,X''',\Gamma'''_{u})-a(P,X,\Delta+(1+u)A)\bigr)P,
\end{split}
\end{equation*}
where $P$ runs over prime divisors on $Y$. 

We first prove $E_{Y}^{(\delta)} \geq 0$. 
For any prime divisor $\bar{D}$ on $X$, the definition of good minimal model (Definition \ref{defn--minmodel}) and the negativity lemma show
$$a(\bar{D},X,\Delta+A) \leq a(\bar{D},X'',\Delta''+A''),$$ 
and 
$$a(\bar{D},X'',\Delta''+(1+u)A'') \leq a(\bar{D},X''',\Gamma_{u}''').$$ 
If $\bar{D}$ is not exceptional over $X''$, then 
$$a(\bar{D},X,\Delta+(1+u)A)=a(\bar{D},X'',\Delta''+(1+u)A'')$$
 by (\ref{thm--from-minimodel-to-termi-(III)}). 
If $\bar{D}$ is exceptional over $X''$, then 
$$a(\bar{D},X,\Delta+(1+u)A)<a(\bar{D},X'',\Delta''+(1+u)A'')$$
 by the choice of $u \in \mathbb{R}_{>0}$. 
In both cases, we have
$$a(\bar{D},X,\Delta+(1+u)A) \leq a(\bar{D},X'',\Delta''+(1+u)A'') \leq a(\bar{D},X''',\Gamma_{u}''').$$
Therefore, for any component $\bar{E}$ of $E_{Y}^{(\delta)}$ that is not exceptional over $X$, we have 
$${\rm coeff}_{\bar{E}}(E_{Y}^{(\delta)}) \geq 0.$$
By applying the negativity lemma to $g \colon Y \to X$ and $E_{Y}^{(\delta)}$, we obtain $E_{Y}^{(\delta)} \geq 0$. 

Next, we prove that $E_{Y}^{(\delta)}$ is $g_{3}$-exceptional. 
We pick a prime divisor $\tilde{D}$ on $Y$ that is not exceptional over $X'''$. 
Then the definition of good minimal model (Definition \ref{defn--minmodel}) shows 
$$a(\tilde{D},X''',\Gamma'''_{u})=a(\tilde{D},X'',\Delta''+(1+u)A'').$$
If $\tilde{D}$ is not exceptional over $X''$, then 
$$a(\tilde{D},X,\Delta+A)=a(\tilde{D},X'',\Delta''+A''),$$ 
by the definition of good minimal model (Definition \ref{defn--minmodel}), and (\ref{thm--from-minimodel-to-termi-(III)}) shows
$$a(\tilde{D},X,\Delta+(1+u)A)=a(\tilde{D},X'',\Delta''+(1+u)A'')=a(\tilde{D},X''',\Gamma'''_{u}).$$
Therefore we have ${\rm coeff}_{\tilde{D}}(E_{Y}^{(\delta)}) = 0$ if $\tilde{D}$ is not exceptional over $X''$. 
If $\tilde{D}$ is exceptional over $X''$, then 
$$a(\tilde{D},X''',\Gamma'''_{u}) =a(\tilde{D},X'',\Delta''+(1+u)A'')\leq -1$$
by the definition of good minimal model (Definition \ref{defn--minmodel}). 
By (\ref{thm--from-minimodel-to-termi-(II)}), we have 
$$a(\tilde{D},X,\Delta+A)=a(\tilde{D},X'',\Delta''+A''),$$ 
and furthermore (\ref{thm--from-minimodel-to-termi-(III)}) implies 
$$a(\tilde{D},X,\Delta+(1+u)A)=a(\tilde{D},X'',\Delta''+(1+u)A'')=a(\tilde{D},X''',\Gamma'''_{u}).$$ 
Therefore ${\rm coeff}_{\tilde{D}}(E_{Y}^{(\delta)}) = 0$ even if $\tilde{D}$ is exceptional over $X''$. 
Thus $E_{Y}^{(\delta)}$ is $g_{3}$-exceptional. 
From these arguments, $E_{Y}^{(\delta)}$ is effective and $g_{3}$-exceptional for any $\delta \in [0,\epsilon]$.
\end{step2} 

\begin{step2}\label{step4--thm--from-minimodel-to-termi}
With this step we complete the proof. 

By (\ref{thm--from-minimodel-to-termi-(clubsuit)}), for any $\delta \in [0,\epsilon]$ we obtain
$$g^{*}(K_{X}+\Delta+(1+\delta u)A)=g_{3}^{*}(K_{X'''}+\delta \Gamma_{u}'''+(1-\delta )\Gamma_{0}''')+E_{Y}^{(\delta)}$$
such that $K_{X'''}+\delta \Gamma_{u}'''+(1-\delta )\Gamma_{0}'''$ is semi-ample over $Z$ and $E_{Y}^{(\delta)}\geq 0$ is $g_{3}$-exceptional. 

We use the $(K_{X}+\Delta+A)$-MMP over $Z$ and $\lambda_{i}$ in Theorem \ref{thm--from-minimodel-to-termi}. 
Fix $m$ such that $\lambda_{m}<{\rm min}\{\lambda_{m-1}, \epsilon u\}$. 
Such $m$ exists since we have $u>0$, $\epsilon>0$, and ${\rm lim}_{i \to \infty}\lambda_{i}=0$. 
Let $h \colon Y' \to Y$ and $h_{m} \colon Y' \to X_{m}$ be a common resolution of $Y \dashrightarrow X_{m}$. 
We have the following diagram
$$
\xymatrix{
&Y \ar[ld]_{g}\ar[rd]^{g_{3}}&&Y' \ar[ll]_{h}\ar[rd]^{h_{m}}\\
X   \ar@{-->}[rr] &&X''' \ar@{-->}[rr]&&X_{m}
}
$$
over $Z$. 
By construction, for any $t \in (\lambda_{m},{\rm min}\{\lambda_{m-1}, \epsilon u\})$, we can write 
$$h^{*}g^{*}(K_{X}+\Delta+(1+t)A)=h_{m}^{*}(K_{X_{m}}+\Delta_{m}+(1+t)A_{m})+F_{Y'}^{(t)}$$
for some effective $h_{m}$-exceptional $\mathbb{R}$-divisor $F_{Y'}^{(t)}$ on $Y'$. 
Now $t= \frac{t}{u} \cdot u$ and $ \frac{t}{u} \leq \epsilon$. 
Hence 
$$h_{m}^{*}(K_{X_{m}}+\Delta_{m}+(1+t)A_{m})+F_{Y'}^{(t)}=h^{*}g_{3}^{*}(K_{X'''}+\frac{t}{u} \Gamma_{u}'''+(1-\frac{t}{u} )\Gamma_{0}''')+h^{*}E_{Y}^{^{(\frac{t}{u})}}.$$
Since $h_{m}^{*}(K_{X_{m}}+\Delta_{m}+(1+t)A_{m})$ and $K_{X'''}+\frac{t}{u} \Gamma_{u}'''+(1-\frac{t}{u} )\Gamma_{0}'''$ are nef over $Z$ and $F_{Y'}^{(t)}$ (resp.~$E_{Y}^{^{(\frac{t}{u})}}$) is effective and $h_{m}$-exceptional (resp.~$g_{3}$-exceptional), by the negativity lemma, we see that $F_{Y'}^{(t)}= h^{*}E_{Y}^{^{(\frac{t}{u})}}$. 
Therefore, we get the equality
$$h_{m}^{*}(K_{X_{m}}+\Delta_{m}+(1+t)A_{m})=h^{*}g_{3}^{*}(K_{X'''}+\frac{t}{u} \Gamma_{u}'''+(1-\frac{t}{u} )\Gamma_{0}''')$$
for all $t \in (\lambda_{m},{\rm min}\{\lambda_{m-1}, \epsilon u\})$. 
Then 
$$h_{m}^{*}(K_{X_{m}}+\Delta_{m}+A_{m})=h^{*}g_{3}^{*}(K_{X'''}+\Gamma_{0}'''),$$
and the right hand side is semi-ample over $Z$ by construction of $\Gamma_{0}'''$. 
In particular, we have $\lambda_{m}=0$, and $(X_{m},\Delta_{m}+A_{m})$ is a good minimal model of $(X,\Delta+A)$ over $Z$. 
\end{step2}
We finish the proof. 
\end{proof}

\begin{cor}\label{cor--mmp-termi-kltlocus}
Let $\pi \colon X \to Z$ be a projective morphism of normal quasi-projective varieties. 
Let $(X,\Delta)$ be a normal pair and let $A$ be an effective $\pi$-ample $\mathbb{R}$-divisor on $X$ such that ${\rm Nklt}(X,\Delta) \neq \emptyset$ and ${\rm Bs}|K_{X}+\Delta+A/Z|_{\mathbb{R}} \cap {\rm Nklt}(X,\Delta) = \emptyset$. 
Let 
$$(X,\Delta+A)=:(X_{1},\Delta_{1}+A_{1}) \dashrightarrow \cdots \dashrightarrow (X_{i},\Delta_{i}+A_{i}) \dashrightarrow \cdots$$
be a sequence of steps of a $(K_{X}+\Delta+A)$-MMP over $Z$ with scaling of $A$ constructed by using Corollary \ref{cor--mmp-nomralpair-Qfacdlt}. 
We put 
$$\lambda_{i}:=\{\mu \in \mathbb{R}_{\geq 0}\,| \,\text{$(K_{X_{i}}+\Delta_{i}+A_{i})+\mu A_{i}$ is nef over $Z$} \}$$
for each $i \geq 1$. 
Then $\lambda_{m}=0$ for some $m$ and $(X_{m},\Delta_{m}+A_{m})$ is a good minimal model of $(X,\Delta+A)$ over $Z$. 
\end{cor}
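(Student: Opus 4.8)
The plan is to reduce the assertion to Theorem \ref{thm--from-minimodel-to-termi}. That theorem imposes on $(X,\Delta)$ and $A$ exactly the hypotheses assumed here, together with the extra requirement $\lim_{i\to\infty}\lambda_{i}=0$, and under these it yields precisely the conclusion we want: $\lambda_{m}=0$ for some $m$ and $(X_{m},\Delta_{m}+A_{m})$ is a good minimal model of $(X,\Delta+A)$ over $Z$. So it suffices to prove $\lim_{i\to\infty}\lambda_{i}=0$. For this I would invoke the dichotomy in Corollary \ref{cor--mmp-nomralpair-Qfacdlt}, under which the MMP was constructed: either it terminates after finitely many steps, in which case its last term has nef log canonical divisor, so $\lambda_{m}=0$ for the final index $m$ and hence $\lim_{i\to\infty}\lambda_{i}=0$; or the MMP is infinite and $\lambda:=\lim_{i\to\infty}\lambda_{i}$ satisfies $\lambda\neq\lambda_{i}$ for every $i$, so that, $(\lambda_{i})_{i}$ being non-increasing, $\lambda_{i}>\lambda$ for all $i$. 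In the latter case I assume for contradiction that $\lambda>0$.

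The key step is to reinterpret the given sequence as a sequence of steps of a $\bigl(K_{X}+\Delta+(1+\lambda)A\bigr)$-MMP over $Z$ with scaling of $(1+\lambda)A$. For the step $X_{i}\dashrightarrow X_{i+1}$ over $W_{i}$ one argues as follows: the contracted curve $C_{i}$ has $(K_{X_{i}}+\Delta_{i}+A_{i})\cdot C_{i}<0$ and $(K_{X_{i}}+\Delta_{i}+A_{i}+\lambda_{i}A_{i})\cdot C_{i}=0$, so $A_{i}\cdot C_{i}>0$, and since $\lambda<\lambda_{i}$ we get $\bigl(K_{X_{i}}+\Delta_{i}+(1+\lambda)A_{i}\bigr)\cdot C_{i}<0$; because $\rho(X_{i}/W_{i})=1$ this makes $-\bigl(K_{X_{i}}+\Delta_{i}+(1+\lambda)A_{i}\bigr)$ ample over $W_{i}$. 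On $X_{i+1}$ one writes
$$K_{X_{i+1}}+\Delta_{i+1}+(1+\lambda)A_{i+1}=\Bigl(1-\tfrac{\lambda}{\lambda_{i}}\Bigr)\bigl(K_{X_{i+1}}+\Delta_{i+1}+A_{i+1}\bigr)+\tfrac{\lambda}{\lambda_{i}}\bigl(K_{X_{i+1}}+\Delta_{i+1}+A_{i+1}+\lambda_{i}A_{i+1}\bigr),$$
a positive combination of a divisor ample over $W_{i}$ (by definition of a step of the MMP) and one nef over $W_{i}$ (nef over $Z$ by Remark \ref{rem--mmp-basic}), hence ample over $W_{i}$. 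The scaling data carry over directly: the new nef thresholds are $(\lambda_{i}-\lambda)/(1+\lambda)$, which tend to $0$, and the vanishing of the intersection with $C_{i}$ is preserved. Moreover, since $\lambda A$ is $\pi$-ample and $Z$ is quasi-projective, ${\rm Bs}|\lambda A/Z|_{\mathbb{R}}=\emptyset$, so enlarging members of $|K_{X}+\Delta+A/Z|_{\mathbb{R}}$ by general members of $|\lambda A/Z|_{\mathbb{R}}$ gives ${\rm Bs}|K_{X}+\Delta+(1+\lambda)A/Z|_{\mathbb{R}}\subset{\rm Bs}|K_{X}+\Delta+A/Z|_{\mathbb{R}}$, which is disjoint from ${\rm Nklt}(X,\Delta)$. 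Then Theorem \ref{thm--from-minimodel-to-termi} applies to $(X,\Delta)$, $(1+\lambda)A$, and this reinterpreted MMP, and produces an index $m$ at which the new nef threshold vanishes, that is, $\lambda_{m}=\lambda$. This contradicts $\lambda_{m}>\lambda$; hence $\lambda=0$, so $\lim_{i\to\infty}\lambda_{i}=0$ in every case, and a final application of Theorem \ref{thm--from-minimodel-to-termi} to $(X,\Delta)$, $A$, and the original MMP completes the proof.

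I expect the main obstacle to be the verification in the second paragraph that a sequence of steps of a $(K_{X}+\Delta+A)$-MMP with scaling of $A$ is genuinely a sequence of steps of a $(K_{X}+\Delta+(1+\lambda)A)$-MMP with scaling of $(1+\lambda)A$: this is where the intersection computations against the contracted curves $C_{i}$, the relative Picard number one property from Corollary \ref{cor--mmp-nomralpair-Qfacdlt}, and the nefness over $Z$ of $K_{X_{i+1}}+\Delta_{i+1}+A_{i+1}+\lambda_{i}A_{i+1}$ all enter, together with some routine bookkeeping of the new nef thresholds and the elementary fact that $\pi$-ample divisors over a quasi-projective base have empty relative stable base locus. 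The deep ingredient, namely Theorem \ref{thm--from-minimodel-to-termi}, is already available, so no new termination statement has to be proved.
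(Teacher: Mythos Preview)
Your proof is correct and follows essentially the same route as the paper: assume the nef thresholds do not reach zero, use the dichotomy of Corollary~\ref{cor--mmp-nomralpair-Qfacdlt} to get $\lambda_i>\lambda$ for all $i$, reinterpret the sequence as a $(K_X+\Delta+(1+\lambda)A)$-MMP with scaling of $(1+\lambda)A$, and apply Theorem~\ref{thm--from-minimodel-to-termi} to reach a contradiction. The paper compresses the reinterpretation step into the single phrase ``replacing $A$ with $(1+\lambda)A$, we may assume $\lambda=0$''; your explicit verification via the intersection numbers with $C_i$, the relation $\rho(X_i/W_i)=1$, and the convex-combination argument on $X_{i+1}$ is exactly what justifies that phrase.
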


\begin{proof}
Assume by contradiction that $\lambda_{i} \neq 0$ for any $i \geq 1$. 
We put $\lambda:={\rm lim}_{i \to \infty} \lambda_{i}$. 
By Corollary \ref{cor--mmp-nomralpair-Qfacdlt}, we have 
$\lambda\neq \lambda_{i}$ for all $i \geq 1$. 
To get a contradiction, replacing $A$ with $(1+\lambda) A$, we may assume $\lambda=0$. 
Then Theorem \ref{thm--from-minimodel-to-termi} implies $\lambda_{m}=0$ for some $m$, which contradicts the fact that $\lambda\neq \lambda_{i}$ for all $i \geq 1$. 
Thus, Corollary \ref{cor--mmp-termi-kltlocus} holds. 
\end{proof}

\subsection{Non-vanishing}

In this subsection we prove the non-vanishing theorem.

\begin{prop}\label{prop--non-vanishing-kltmmp}
Let $\pi \colon X \to Z$ be a projective morphism of normal quasi-projective varieties. 
Let $(X,\Delta)$ be a normal pair such that $\Delta$ is a $\mathbb{Q}$-divisor on $X$. 
Let $A$ be a $\pi$-ample $\mathbb{Q}$-divisor on $X$ such that $K_{X}+\Delta+A$ is $\pi$-pseudo-effective. 
Suppose that ${\rm NNef}(K_{X}+\Delta+A/Z) \cap {\rm Nklt}(X,\Delta) =\emptyset$. 
Suppose in addition that $(K_{X}+\Delta+A)|_{{\rm Nklt}(X,\Delta)}$, which we think of a $\mathbb{Q}$-line bundle on ${\rm Nklt}(X,\Delta)$, is semi-ample over $Z$. 
Then the equality ${\rm Bs}|K_{X}+\Delta+A/Z|_{\mathbb{R}} \cap {\rm Nklt}(X,\Delta) = \emptyset$ holds. 
\end{prop}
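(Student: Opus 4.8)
~\textbf{Plan of proof.}
The strategy is to reduce the non-vanishing statement to the abundance theorem for quasi-log schemes (Theorem~\ref{thm--abundance-quasi-log}) via an MMP-with-scaling argument, following the ideas of \cite[Section~7]{bchm} as outlined in the introduction. First I would dispose of the trivial case ${\rm Nklt}(X,\Delta)=\emptyset$, so assume ${\rm Nklt}(X,\Delta)\neq\emptyset$. Since the hypothesis gives ${\rm NNef}(K_{X}+\Delta+A/Z)\cap{\rm Nklt}(X,\Delta)=\emptyset$ and a fortiori ${\rm NNef}(K_{X}+\Delta+A/Z)\cap{\rm Nlc}(X,\Delta)=\emptyset$, I would choose an effective $\pi$-ample $\mathbb{Q}$-divisor $A'\sim_{\mathbb{Q},\,Z}$ (a small multiple of $A$) so that $K_{X}+\Delta+A-A'$ is still $\pi$-pseudo-effective, write $K_{X}+\Delta+A=K_{X}+(\Delta+A-A')+A'$, and arrange (by generality of $A'$) that ${\rm Nklt}(X,\Delta+A-A')={\rm Nklt}(X,\Delta)$ set-theoretically and $(K_{X}+\Delta+A)|_{{\rm Nklt}(X,\Delta)}$ remains semi-ample over $Z$. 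Replacing $\Delta$ by $\Delta+A-A'$ and $A$ by $A'$, we may therefore assume $A$ itself is effective, that ${\rm Nlc}(X,\Delta)={\rm Nlc}(X,\Delta+A)$ and ${\rm Nklt}(X,\Delta)={\rm Nklt}(X,\Delta+A)$, and that $K_{X}+\Delta+A$ is $\pi$-pseudo-effective with $(K_{X}+\Delta+A)|_{{\rm Nklt}(X,\Delta)}$ semi-ample over $Z$.

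Next I would run a $(K_{X}+\Delta+A)$-MMP over $Z$ with scaling of $A$ using Corollary~\ref{cor--mmp-nomralpair-Qfacdlt} applied to $\omega:=K_{X}+\Delta+A$ (which is legitimate because its non-nef locus avoids ${\rm Nlc}(X,\Delta)$), obtaining a sequence
$$(X,\Delta+A)=:(X_{1},\Delta_{1}+A_{1})\dashrightarrow\cdots\dashrightarrow(X_{i},\Delta_{i}+A_{i})\dashrightarrow\cdots$$
with nef thresholds $\lambda_{i}$ such that, if it does not terminate, $\lambda_{i}\neq\lambda:=\lim_{i\to\infty}\lambda_{i}$ for all $i$, and such that $K_{X_{i}}+\Delta_{i}+A_{i}+tA_{i}$ is semi-ample over $Z$ for $t\in(\lambda_{i},\lambda_{i-1}]$. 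Because the non-isomorphic locus of this MMP is disjoint from ${\rm Nklt}(X,\Delta)$ (it is disjoint even from ${\rm Nlc}$, by Corollary~\ref{cor--mmp-nomralpair-Qfacdlt} and Remark~\ref{rem--mmp-basic}), the MMP is an isomorphism near ${\rm Nklt}$, so on each $X_{i}$ we still have ${\rm NNef}\cap{\rm Nklt}=\emptyset$ and the restriction to ${\rm Nklt}(X_{i},\Delta_{i})$ stays semi-ample over $Z$; moreover ${\rm Bs}|K_{X}+\Delta+A/Z|_{\mathbb{R}}\cap{\rm Nklt}(X,\Delta)=\emptyset$ is implied by the corresponding statement on any $X_{i}$ (stable base loci only grow under pushforward along these birational contractions away from the non-isomorphic locus). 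I would then argue as in Corollary~\ref{cor--mmp-termi-kltlocus}: if $\lambda=0$ the MMP terminates at a good minimal model and we are essentially done; otherwise, after replacing $A$ by $(1+\lambda)A$ we reduce to $\lambda=0$ again, so it suffices to treat the case $\lim\lambda_{i}=0$. Then for each large integer $p\geq2$ there is an index $i$ with $\lambda_{i}<\tfrac{1}{p-1}<\lambda_{i-1}$, hence $K_{X_{i}}+\Delta_{i}+A_{i}+\tfrac{1}{p-1}A_{i}=K_{X_{i}}+\Delta_{i}+\tfrac{p}{p-1}A_{i}$ is semi-ample (in particular nef) over $Z$ and, being a nef $\mathbb{Q}$-divisor numerically containing a multiple of the ample $A_{i}$, it is nef and log big with respect to the quasi-log structure $[X_{i},K_{X_{i}}+\Delta_{i}]$ (here one uses that $A_{i}$ is $\pi$-ample, which is preserved because this is a genuine $\mathbb{R}$-divisor MMP, not the quasi-log MMP of Section~\ref{sec--mmp-lc} where ampleness degrades).

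Choosing $p$ so that $p(K_{X}+\Delta+A)$ — hence $p(K_{X_{i}}+\Delta_{i}+A_{i})$ — is a Weil divisor, I would set $L:=p(K_{X_{i}}+\Delta_{i}+A_{i})$ and apply the vanishing theorem for quasi-log schemes (Theorem~\ref{thm--vanishing-quasi-log}) with $X':={\rm Nqlc}(X_{i},\omega_{i})$ together with the qlc centers making up ${\rm Nklt}(X_{i},\Delta_{i})$: since $L-(K_{X_{i}}+\Delta_{i}) = p(K_{X_{i}}+\Delta_{i}+A_{i})-(K_{X_{i}}+\Delta_{i})$ is $\pi$-nef and $\pi$-log big with respect to $[X_{i},K_{X_{i}}+\Delta_{i}]$ (using $p\geq2$, the log bigness of $\tfrac{p}{p-1}A_{i}$ on every qlc center, and that $K_{X_{i}}+\Delta_{i}+A_{i}$ is $\pi$-nef), we get $R^{1}\pi_{*}(\mathcal{I}_{X'}\otimes\mathcal{O}_{X_{i}}(L))=0$ after shrinking $Z$ to be affine. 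This surjects $H^{0}(X_{i},\mathcal{O}_{X_{i}}(L))$ onto $H^{0}(X',\mathcal{O}_{X'}(L|_{X'}))$, so sections of $L|_{{\rm Nklt}(X_{i},\Delta_{i})}$ — which exist and generate, since $(K_{X_{i}}+\Delta_{i}+A_{i})|_{{\rm Nklt}}$ is semi-ample over $Z$ — lift to $X_{i}$; this shows ${\rm Bs}|L/Z|\cap{\rm Nklt}(X_{i},\Delta_{i})=\emptyset$, hence ${\rm Bs}|K_{X_{i}}+\Delta_{i}+A_{i}/Z|_{\mathbb{R}}\cap{\rm Nklt}(X_{i},\Delta_{i})=\emptyset$, and tracing back through the MMP (which is an isomorphism near ${\rm Nklt}$) yields ${\rm Bs}|K_{X}+\Delta+A/Z|_{\mathbb{R}}\cap{\rm Nklt}(X,\Delta)=\emptyset$. \emph{The main obstacle} I anticipate is the bookkeeping in the reduction step: ensuring that all three hypotheses — the non-nef locus avoiding ${\rm Nklt}$, the semi-ampleness of the restriction to ${\rm Nklt}$, and the $\pi$-pseudo-effectivity — are simultaneously preserved when passing from $A$ to $(1+\lambda)A$ and along each step of the scaled MMP, and verifying carefully that $L-(K_{X_{i}}+\Delta_{i})$ is log big on \emph{every} qlc center of $[X_{i},K_{X_{i}}+\Delta_{i}]$ rather than merely big, which is exactly where the choice $p\geq2$ and the ampleness of $A_{i}$ enter.
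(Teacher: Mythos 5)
Your overall architecture (run the MMP with scaling from Corollary \ref{cor--mmp-nomralpair-Qfacdlt} whose thresholds $\lambda_i$ are non-stationary, reduce to $\lim\lambda_i=0$, then lift sections from ${\rm Nklt}$ via Theorem \ref{thm--vanishing-quasi-log}) is the paper's, but the reduction step contains the one genuinely hard point of the proposition and your treatment of it fails. The limit $\lambda=\lim_{i\to\infty}\lambda_i$ can be \emph{irrational}. If you replace $A$ by $(1+\lambda)A$, then $K_X+\Delta+(1+\lambda)A$ has irrational coefficients, so no integer multiple of it is a Weil divisor, and Theorem \ref{thm--vanishing-quasi-log} (which needs $L$ to be a $\mathbb{Q}$-Cartier \emph{Weil} divisor, Cartier near $X'$) can never be applied downstream; your choice ``$p$ so that $p(K_X+\Delta+A)$ is a Weil divisor'' silently assumes rationality that the replacement has destroyed. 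The paper does not make this reduction. Instead it proves the base-point-freeness along ${\rm Nklt}$ separately for two rational parameters $q\geq\lambda$ and $\tilde q=\lfloor k\lambda\rfloor/k\leq\lambda$, where $k$ is chosen by Diophantine approximation so that $k\lambda-\lfloor k\lambda\rfloor<1/p$ (this is what makes $L'=kp(K_{X_n}+\Delta_n+A_n)+p\lfloor k\lambda\rfloor A_n$ a Weil divisor while keeping $\tfrac{1+\lfloor k\lambda\rfloor p}{kp-1}>\lambda$, so that the relevant twist is still reached by the MMP), and then concludes for $K_X+\Delta+A+\lambda A$ by convexity; only \emph{after} that does Theorem \ref{thm--from-minimodel-to-termi} force $\lambda_j=\lambda$, hence $\lambda=0$. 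Relatedly, your remark ``if $\lambda=0$ the MMP terminates at a good minimal model and we are essentially done'' is circular: termination at $\lambda=0$ is Theorem \ref{thm--from-minimodel-to-termi}, whose hypothesis is exactly the conclusion you are proving.

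A second, smaller error: you assert that $A_i$ stays $\pi$-ample along the MMP ``because this is a genuine $\mathbb{R}$-divisor MMP.'' It does not — the birational transform of an ample divisor under a flip or divisorial contraction is in general only big, and the paper explicitly flags the non-preservation of ampleness as a main difficulty. Your verification that $L-(K_{X_i}+\Delta_i)$ is nef and log big therefore does not go through as written. What the paper actually uses is that the MMP is an isomorphism near ${\rm Nklt}$, so $A_i|_{{\rm Nklt}(X_i,\Delta_i)}$ is literally $A|_{{\rm Nklt}(X,\Delta)}$ and hence ample over $Z$ there (which gives bigness on every qlc center), while on $X_i$ itself one only needs bigness of $A_i$. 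With these two repairs — the Diophantine two-sided rational approximation of $\lambda$ in place of the $(1+\lambda)A$ replacement, and the correct justification of log bigness — your argument becomes the paper's proof.
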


\begin{proof}
We may assume that $(X,\Delta)$ is not klt. 
By replacing $A$ with a general member of $|A/Z|_{\mathbb{Q}}$, we may assume $A \geq 0$. 
Let 
$$(X,\Delta+A)=:(X_{1},\Delta_{1}+A_{1}) \dashrightarrow \cdots \dashrightarrow (X_{i},\Delta_{i}+A_{i}) \dashrightarrow \cdots$$
be a sequence of steps of a $(K_{X}+\Delta+A)$-MMP over $Z$ with scaling of $A$ constructed by using Corollary \ref{cor--mmp-nomralpair-Qfacdlt}. 
Then the non-isomorphic locus of the $(K_{X}+\Delta+A)$-MMP is disjoint from ${\rm Nklt}(X,\Delta)$ (Remark \ref{rem--mmp-basic}). 
We define 
$$\lambda_{i}:=\{\mu \in \mathbb{R}_{\geq 0}\,| \,\text{$(K_{X_{i}}+\Delta_{i}+A_{i})+\mu A_{i}$ is nef over $Z$} \}$$
for each $i \geq 1$ and $\lambda := {\rm lim}_{i \to \infty}\lambda_{i}$. 
By construction, the MMP terminates after finitely many steps or otherwise $\lambda \neq \lambda_{i}$ for every $i \geq 1$, and $K_{X_{i}}+\Delta_{i}+A_{i}+t A_{i}$ is semi-ample over $Z$ for every $i \geq 1$ and $t \in (\lambda_{i},\lambda_{i-1}]$. 
Assuming ${\rm Bs}|K_{X}+\Delta+A+\lambda A/Z|_{\mathbb{R}} \cap {\rm Nklt}(X,\Delta) = \emptyset$, then Theorem \ref{thm--from-minimodel-to-termi} implies $\lambda_{j}=\lambda$ for some $j$. 
Then the equality $\lambda=0$ holds because otherwise we get a contradiction, and ${\rm Bs}|K_{X}+\Delta+A/Z|_{\mathbb{R}} \cap {\rm Nklt}(X,\Delta) = \emptyset$. 
From this discussion, it is sufficient to prove ${\rm Bs}|K_{X}+\Delta+A+\lambda  A/Z|_{\mathbb{R}} \cap {\rm Nklt}(X,\Delta) = \emptyset$. 

We fix nonnegative rational numbers $q$ and $q'$ such that 
$$q' \leq \lambda \leq q.$$ 
We note that we may take $q=q'=\lambda$ when $\lambda$ is a rational number. 
By the hypothesis of Proposition \ref{prop--non-vanishing-kltmmp} that $(K_{X}+\Delta+A)|_{{\rm Nklt}(X,\Delta)}$ is semi-ample over $Z$, we can find an integer $p >1 $ such that 
\begin{itemize}
\item
$pq \in \mathbb{Z}$ and $p(K_{X}+\Delta+A)$, $pA$, and $pqA$ are all Cartier, and
\item
$p(K_{X}+\Delta+A)|_{{\rm Nklt}(X,\Delta)}$, $pA|_{{\rm Nklt}(X,\Delta)}$, and $pqA|_{{\rm Nklt}(X,\Delta)}$
are globally generated over $Z$. 
\end{itemize} 

In this paragraph, we will prove ${\rm Bs}|K_{X}+\Delta+A+qA/Z|_{\mathbb{R}} \cap {\rm Nklt}(X,\Delta) = \emptyset$. 
Since $\lambda={\rm lim}_{i \to \infty}\lambda_{i}$ and $\frac{1+pq}{p-1} > q \geq \lambda$, we can find $m$ such that the birational contraction 
$$X \dashrightarrow X_{m}$$
is a sequence of steps of a $(K_{X}+\Delta+A+qA)$-MMP over $Z$ and the $\mathbb{Q}$-divisor $$K_{X_{m}}+\Delta_{m}+A_{m}+\frac{1+pq}{p-1}A_{m}$$
is semi-ample over $Z$. 
By Remark \ref{rem--mmp-basic}, the birational map $X \dashrightarrow X_{m}$ is an isomorphism on ${\rm Nklt}(X,\Delta)$. 
We put
$$L:=p(K_{X_{m}}+\Delta_{m}+A_{m}+qA_{m}).$$
Then $L$ is a Weil divisor on $X_{m}$ and Cartier on a neighborhood of ${\rm Nklt}(X_{m},\Delta_{m})$. 
We can write
$$L=K_{X_{m}}+\Delta_{m}+(p-1)\left(K_{X_{m}}+\Delta_{m}+A_{m}+\frac{1+pq}{p-1}A_{m}\right)$$
and $(K_{X_{m}}+\Delta_{m}+A_{m}+\frac{1+pq}{p-1}A_{m})|_{{\rm Nklt}(X_{m},\Delta_{m})}$ is ample over $Z$ because $X \dashrightarrow X_{m}$ is an isomorphism on ${\rm Nklt}(X,\Delta)$, the inequality $\frac{1+pq}{p-1} > 0$ holds, and $p(K_{X}+\Delta+A)|_{{\rm Nklt}(X,\Delta)}$ is globally generated over $Z$. 
Therefore $L- (K_{X_{m}}+\Delta_{m})$ is nef and log big over $Z$ with respect to $(X_{m},\Delta_{m})$ (\cite[Definition 3.7]{fujino-bpf-quasi-log}). 
From these facts, we can use Theorem \ref{thm--vanishing-quasi-log} to $(X_{m},\Delta_{m})$ and $L$. 
Putting $\pi_{m}\colon X_{m} \to Z$ and $\mathcal{I}$ as the structure morphism and the defining ideal sheaf of ${\rm Nklt}(X_{m},\Delta_{m})$ respectively, then Theorem \ref{thm--vanishing-quasi-log} implies that
$$R^{1}\pi_{m*}(\mathcal{I} \otimes_{\mathcal{O}_{X_{m}}} \mathcal{O}_{X_{m}}(L))=0.$$
We consider the exact sequence 
$$0 \longrightarrow \mathcal{I} \otimes_{\mathcal{O}_{X_{m}}} \mathcal{O}_{X_{m}}(L) \longrightarrow \mathcal{O}_{X_{m}}(L) \longrightarrow \mathcal{O}_{{\rm Nklt}(X_{m},\Delta_{m})}(L|_{{\rm Nklt}(X_{m},\Delta_{m})}) \longrightarrow 0.$$
By taking the pushforward to $Z$, we get the following exact sequence
\begin{equation*}
\begin{split}
\pi_{m*}\mathcal{O}_{X_{m}}(L)  \longrightarrow \pi_{m*}\mathcal{O}_{{\rm Nklt}(X_{m},\Delta_{m})}(L|_{{\rm Nklt}(X_{m},\Delta_{m})}) \longrightarrow R^{1}\pi_{m*}(\mathcal{I} \otimes_{\mathcal{O}_{X_{m}}} \mathcal{O}_{X_{m}}(L))=0.
\end{split}
\end{equation*}
By construction of $p$ and $L$, the line bundle $L|_{{\rm Nklt}(X_{m},\Delta_{m})}$ is globally generated over $Z$. 
Hence, we have 
$${\rm Bs}|K_{X_{m}}+\Delta_{m}+A_{m}+qA_{m}/Z|_{\mathbb{R}} \cap {\rm Nklt}(X_{m},\Delta_{m}) ={\rm Bs}|L/Z|_{\mathbb{R}} \cap {\rm Nklt}(X_{m},\Delta_{m})= \emptyset.$$
Since $X \dashrightarrow X_{m}$ is a sequence of steps of a $(K_{X}+\Delta+A+qA)$-MMP over $Z$ and the map is an isomorphism on ${\rm Nklt}(X,\Delta)$, we have ${\rm Bs}|K_{X}+\Delta+A+qA/Z|_{\mathbb{R}} \cap {\rm Nklt}(X,\Delta) = \emptyset$. 

In this paragraph, we will find a rational number $\tilde{q} \in [q',\lambda]$ such that 
$${\rm Bs}|K_{X}+\Delta+A+\tilde{q}A/Z|_{\mathbb{R}} \cap {\rm Nklt}(X,\Delta) = \emptyset.$$ 
We fix $k \in \mathbb{Z}_{>0}$ such that
$$\frac{\lfloor k\lambda \rfloor}{k}\geq q' \qquad {\rm and} \qquad k\lambda -\lfloor k\lambda\rfloor < \frac{1}{p}.$$
Such $k$ always exists by the argument using the Diophantine approximation (see, for example, \cite[Subsection 3.7]{bchm}). 
We put
$$\tilde{q}:=\frac{\lfloor k\lambda \rfloor}{k}.$$
Then $\tilde{q} \in [q',\lambda]$. 
We will prove that this $\tilde{q}$ is the desired rational number. 
By definition, we have
$$\frac{1+\lfloor k \lambda \rfloor p}{kp-1}> \frac{1+\bigl(k \lambda-\frac{1}{p}\bigr) p}{kp-1}=\frac{kp\lambda}{kp-1}>\lambda \geq \tilde{q}.$$
Since $\lambda={\rm lim}_{i \to \infty}\lambda_{i}$, we can find $n$ such that the birational contraction 
$$X \dashrightarrow X_{n}$$
 is a finite sequence of steps of a $(K_{X}+\Delta+A+\tilde{q}A)$-MMP over $Z$ and the $\mathbb{Q}$-divisor
 $$K_{X_{n}}+\Delta_{n}+A_{n}+\frac{1+\lfloor k \lambda \rfloor p}{kp-1} A_{n}$$
 is semi-ample over $Z$. 
By Remark \ref{rem--mmp-basic}, the birational map $X \dashrightarrow X_{n}$ is an isomorphism on ${\rm Nklt}(X,\Delta)$. 
We put
$$L':=kp(K_{X_{n}}+\Delta_{n}+A_{n}+\tilde{q}A_{n})=kp(K_{X_{n}}+\Delta_{n}+A_{n})+p\lfloor k \lambda \rfloor A_{n}.$$
By the definitions of $p$, $k$, and $\tilde{q}$, it follows that $L'$ is a Weil divisor on $X_{n}$ and Cartier on a neighborhood of ${\rm Nklt}(X_{n},\Delta_{n})$. 
We can write
$$L'=K_{X_{n}}+\Delta_{n}+(kp-1)\left(K_{X_{n}}+\Delta_{n}+A_{n}+\frac{1+\lfloor k \lambda \rfloor p}{kp-1}A_{n}\right)$$
and $(K_{X_{n}}+\Delta_{n}+A_{n}+\frac{1+\lfloor k \lambda \rfloor p}{kp-1}A_{n})|_{{\rm Nklt}(X_{n},\Delta_{n})}$ is ample over $Z$ because $X \dashrightarrow X_{n}$ is an isomorphism on ${\rm Nklt}(X,\Delta)$, $\frac{1+\lfloor k \lambda \rfloor p}{kp-1} > 0$, and $p(K_{X}+\Delta+A)|_{{\rm Nklt}(X,\Delta)}$ is globally generated over $Z$. 
Therefore $L'- (K_{X_{n}}+\Delta_{n})$ is nef and log big over $Z$ with respect to $(X_{n},\Delta_{n})$ (\cite[Definition 3.7]{fujino-bpf-quasi-log}). 
From these facts, we can use Theorem \ref{thm--vanishing-quasi-log} to $(X_{n},\Delta_{n})$ and $L'$. 
Putting $\pi_{n}\colon X_{n} \to Z$ and $\mathcal{I}'$ as the structure morphism and the defining ideal sheaf of ${\rm Nklt}(X_{n},\Delta_{n})$ respectively, then Theorem \ref{thm--vanishing-quasi-log} shows
$$R^{1}\pi_{n*}(\mathcal{I}' \otimes_{\mathcal{O}_{X_{n}}} \mathcal{O}_{X_{n}}(L'))=0.$$
We consider the exact sequence 
$$0 \longrightarrow \mathcal{I}' \otimes_{\mathcal{O}_{X_{n}}} \mathcal{O}_{X_{n}}(L') \longrightarrow \mathcal{O}_{X_{n}}(L') \longrightarrow \mathcal{O}_{{\rm Nklt}(X_{n},\Delta_{n})}(L'|_{{\rm Nklt}(X_{n},\Delta_{n})}) \longrightarrow 0.$$
By taking the pushforward to $Z$, we get the following exact sequence
\begin{equation*}
\begin{split}
\pi_{n*}\mathcal{O}_{X_{n}}(L')  \longrightarrow \pi_{n*}\mathcal{O}_{{\rm Nklt}(X_{n},\Delta_{n})}(L'|_{{\rm Nklt}(X_{n},\Delta_{n})}) \longrightarrow R^{1}\pi_{n*}(\mathcal{I}' \otimes_{\mathcal{O}_{X_{n}}} \mathcal{O}_{X_{n}}(L'))=0.
\end{split}
\end{equation*}
By construction of $p$ and $L'$, the line bundle $L'|_{{\rm Nklt}(X_{n},\Delta_{n})}$ is globally generated over $Z$. 
Hence, we have 
$${\rm Bs}|K_{X_{n}}+\Delta_{n}+(1+\tilde{q})A_{n}/Z|_{\mathbb{R}} \cap {\rm Nklt}(X_{n},\Delta_{n}) ={\rm Bs}|L'/Z|_{\mathbb{R}} \cap {\rm Nklt}(X_{n},\Delta_{n})= \emptyset.$$
Since $X \dashrightarrow X_{n}$ is a sequence of steps of a $(K_{X}+\Delta+A+\tilde{q}A)$-MMP over $Z$ and the map is an isomorphism on ${\rm Nklt}(X,\Delta)$, we have ${\rm Bs}|K_{X}+\Delta+A+\tilde{q}A/Z|_{\mathbb{R}} \cap {\rm Nklt}(X,\Delta) = \emptyset$. 

Now we have nonnegative rational numbers $\tilde{q} \leq \lambda \leq q$ such that
\begin{equation*}
\begin{split}
{\rm Bs}|K_{X}+\Delta+A+qA/Z|_{\mathbb{R}} \cap {\rm Nklt}(X,\Delta) =& \emptyset, \qquad {\rm and}\\
{\rm Bs}|K_{X}+\Delta+A+\tilde{q}A/Z|_{\mathbb{R}} \cap {\rm Nklt}(X,\Delta) =& \emptyset.
\end{split}
\end{equation*}
Since $K_{X}+\Delta+A+\lambda A$ is a convex linear combination of $K_{X}+\Delta+A+qA$ and $K_{X}+\Delta+A+\tilde{q}A$, we see that
$${\rm Bs}|K_{X}+\Delta+A+\lambda A/Z|_{\mathbb{R}} \cap {\rm Nklt}(X,\Delta) = \emptyset.$$
Then Proposition \ref{prop--non-vanishing-kltmmp} holds as discussed. 
\end{proof}

\begin{prop}\label{prop--nonvan-ample-decom}
Let $\pi \colon X \to Z$ be a projective morphism of normal quasi-projective varieties. 
Let $(X,\Delta)$ be a normal pair such that $\Delta$ is a $\mathbb{Q}$-divisor on $X$. 
Let $A$ be a $\pi$-ample $\mathbb{R}$-divisor on $X$. 
Suppose that there exist positive real numbers $r_{1},\,\cdots,\,r_{l}$ and $\pi$-ample $\mathbb{Q}$-divisors $A_{1},\,\cdots ,\,A_{l}$ on $X$ such that
\begin{itemize}
\item
$\sum_{i=1}^{l} r_{i}=1$ and $A=\sum_{i=1}^{l} r_{i}A_{i}$, and 
\item
for every $1 \leq i \leq l$, the $\mathbb{Q}$-line bundle $(K_{X}+\Delta+A_{i})|_{{\rm Nklt}(X,\Delta)}$ on ${\rm Nklt}(X,\Delta)$ is semi-ample over $Z$. 
\end{itemize}
Then there exist positive real numbers $r'_{1},\,\cdots,\,r'_{m}$, positive integers $p_{1},\,\cdots,\,p_{m}$ that are greater than one, and $\pi$-ample $\mathbb{Q}$-divisors $A'_{1},\,\cdots ,\,A'_{m}$ on $X$ such that 
\begin{itemize}
\item
$\sum_{j=1}^{m} r'_{j}=1$ and $A=\sum_{j=1}^{m} r'_{j}A'_{j}$,  
\item
$p_{j}(K_{X}+\Delta+A'_{j})$ and $p_{j}A'_{j}$ are Cartier for every $1 \leq j \leq m$,    
\item
$p_{j}(K_{X}+\Delta+A'_{j})|_{{\rm Nklt}(X,\Delta)}$ and $p_{j}A'_{j}|_{{\rm Nklt}(X,\Delta)}$ are globally generated over $Z$ for every $1 \leq j \leq m$, and
\item
$\frac{p_{j}}{p_{j}-1}A'_{j} - A$ is ample over $Z$ for every $1 \leq j \leq m$. 
\end{itemize}
\end{prop}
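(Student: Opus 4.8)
The plan is to reduce the assertion to elementary convex geometry together with simultaneous Diophantine approximation, carried out inside a fixed finite‑dimensional space of $\mathbb{Q}$‑Cartier divisors. First I would record two preliminaries. Since $\Delta$ is a $\mathbb{Q}$‑divisor and $K_{X}+\Delta$ is $\mathbb{R}$‑Cartier, $K_{X}+\Delta$ is in fact $\mathbb{Q}$‑Cartier, because an $\mathbb{R}$‑Cartier $\mathbb{Q}$‑divisor is automatically $\mathbb{Q}$‑Cartier. Hence there is a single positive integer $M_{0}\geq 2$, depending only on the data $(X,\Delta)$ and $A_{1},\dots,A_{l}$, such that $M_{0}A_{i}$ and $M_{0}(K_{X}+\Delta+A_{i})$ are Cartier for all $i$, and such that $M_{0}(K_{X}+\Delta+A_{i})|_{{\rm Nklt}(X,\Delta)}$ and $M_{0}A_{i}|_{{\rm Nklt}(X,\Delta)}$ are globally generated over $Z$ for all $i$ — the latter because $(K_{X}+\Delta+A_{i})|_{{\rm Nklt}(X,\Delta)}$ is $\pi$‑semi‑ample and $A_{i}|_{{\rm Nklt}(X,\Delta)}$ is $\pi$‑ample. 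I would also note the elementary fact that a tensor product of invertible sheaves globally generated over $Z$ is again globally generated over $Z$.

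Next I would fix the geometric set‑up. Let $V_{\mathbb{Q}}$ be the $\mathbb{Q}$‑vector subspace of $\mathrm{WDiv}_{\mathbb{Q}}(X)$ spanned by $A_{1},\dots,A_{l}$; every divisor $\sum_{i}c_{i}A_{i}\in V_{\mathbb{Q}}$ with all $c_{i}\in\frac{1}{q}\mathbb{Z}$ has the property that $M_{0}q\bigl(\sum_{i}c_{i}A_{i}\bigr)=\sum_{i}(qc_{i})(M_{0}A_{i})$ is Cartier. Inside the real affine hyperplane $\Pi:=\{\sum_{i}x_{i}A_{i}\mid \sum_{i}x_{i}=1\}\subset V_{\mathbb{R}}$, the point $A=\sum_{i}r_{i}A_{i}$ lies in the relative interior of $\mathrm{conv}(A_{1},\dots,A_{l})$ and in the (open) cone of $\pi$‑ample divisors. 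I would fix a $\pi$‑ample $\mathbb{Q}$‑divisor $H_{0}$ and a real number $\eta_{0}>0$ with $A-\eta_{0}H_{0}$ $\pi$‑ample, and (by openness of the $\pi$‑nef cone in the finite‑dimensional space $V_{\mathbb{R}}+\mathbb{R}H_{0}$) a constant $C_{0}>0$ such that every divisor $D$ in that space with $\|D\|\leq\epsilon$ satisfies that $D+C_{0}\epsilon H_{0}$ is $\pi$‑nef; put $c_{0}:=\eta_{0}/(4C_{0}M_{0})$.

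The main construction is then as follows. Fix rational vectors $v_{1},\dots,v_{l-1}$ of coordinate sum zero spanning the tangent space of $\Pi$ at $A$ (if $l=1$ the statement is trivial: $A=A_{1}$ is rational, and one takes $m=1$, $A'_{1}=A_{1}$, $p_{1}=M_{0}$). For a small parameter $t>0$ and each of the $2(l-1)$ directions $\pm v_{s}$, use Dirichlet's theorem on simultaneous Diophantine approximation (as in \cite[Subsection 3.7]{bchm} and in the proof of Proposition \ref{prop--non-vanishing-kltmmp}) to choose a rational point $A'_{(s,\pm)}=\sum_{i}t_{i}A_{i}\in\Pi$ with all $t_{i}\geq 0$, common denominator $q$, and $\|A'_{(s,\pm)}-(A\pm tv_{s})\|$ so small that, after relabelling these points as $A'_{1},\dots,A'_{m}$ with denominators $q_{1},\dots,q_{m}$ and writing $\delta_{j}:=\|A'_{j}-A\|$, one has (i) $A=\sum_{j}r'_{j}A'_{j}$ for some $r'_{j}>0$ with $\sum_{j}r'_{j}=1$, and (ii) $\delta_{j}q_{j}<c_{0}$ for every $j$. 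Both are achievable by shrinking $t$: the points $A\pm tv_{s}$ surround $A$ at scale $\sim t$, perturbing each by $o(t)$ preserves this, and the Diophantine bound forces $\delta_{j}q_{j}=O(t^{1/l})\to 0$. Now set $p_{j}:=M_{0}q_{j}$. I would then check the four bullet points. The first is (i) together with $p_{j}\geq M_{0}\geq 2$. For the Cartier conditions, writing $A'_{j}=\sum_{i}t_{(j)i}A_{i}$ with $q_{j}t_{(j)i}\in\mathbb{Z}_{\geq 0}$ and $\sum_{i}q_{j}t_{(j)i}=q_{j}$, one has $p_{j}A'_{j}=\sum_{i}(q_{j}t_{(j)i})(M_{0}A_{i})$ and $p_{j}(K_{X}+\Delta+A'_{j})=\sum_{i}(q_{j}t_{(j)i})\bigl(M_{0}(K_{X}+\Delta+A_{i})\bigr)$, both integer combinations of Cartier divisors. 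Restricting to ${\rm Nklt}(X,\Delta)$ and using the exponents $q_{j}t_{(j)i}$, the same formulas exhibit $p_{j}A'_{j}|_{{\rm Nklt}(X,\Delta)}$ and $p_{j}(K_{X}+\Delta+A'_{j})|_{{\rm Nklt}(X,\Delta)}$ as tensor products of the globally‑generated‑over‑$Z$ invertible sheaves $M_{0}A_{i}|_{{\rm Nklt}(X,\Delta)}$ and $M_{0}(K_{X}+\Delta+A_{i})|_{{\rm Nklt}(X,\Delta)}$, hence globally generated over $Z$. Finally, $\tfrac{p_{j}}{p_{j}-1}A'_{j}-A=\tfrac{1}{p_{j}-1}A'_{j}+(A'_{j}-A)$, and writing this as $\bigl[\tfrac{1}{p_{j}-1}A'_{j}-C_{0}\delta_{j}H_{0}\bigr]+\bigl[(A'_{j}-A)+C_{0}\delta_{j}H_{0}\bigr]$, the second bracket is $\pi$‑nef by the choice of $C_{0}$, while the first is $\pi$‑ample: for $t$ small $A'_{j}-\tfrac{\eta_{0}}{2}H_{0}$ is $\pi$‑ample, so $\tfrac{1}{p_{j}-1}A'_{j}-C_{0}\delta_{j}H_{0}$ differs from a $\pi$‑ample divisor by $\bigl(\tfrac{\eta_{0}}{2(p_{j}-1)}-C_{0}\delta_{j}\bigr)H_{0}$, which is $\pi$‑ample since $\delta_{j}p_{j}=M_{0}\delta_{j}q_{j}<M_{0}c_{0}=\eta_{0}/(4C_{0})$ by (ii). Together with $\sum_{i}t_{(j)i}=1$ and $t_{(j)i}\geq 0$ this gives that each $A'_{j}$ is a $\pi$‑ample $\mathbb{Q}$‑divisor and completes the verification.

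I expect the main obstacle to be exactly the tension in the choice of the $p_{j}$: the Cartier and global‑generation requirements force $p_{j}$ to be divisible by $M_{0}q_{j}$ (hence $p_{j}\gtrsim q_{j}$), whereas the ampleness requirement $\tfrac{p_{j}}{p_{j}-1}A'_{j}-A$ ample forces the approximation error $\delta_{j}$ of $A'_{j}$ to $A$ to be smaller than a fixed constant times $1/p_{j}$. Since $A$ need not be rational, the $A'_{j}$ must simultaneously surround $A$ and be extremely close to it, and a priori the lattice points used to surround a point lie at distance $\gtrsim 1/q_{j}$ from it, which would only give $\delta_{j}q_{j}\gtrsim 1$. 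Approximating the shifted points $A\pm tv_{s}$ rather than $A$ itself — so that good Diophantine approximations in prescribed directions are available — is the device that makes $\delta_{j}q_{j}\to 0$ while retaining the surrounding property; making this quantitative (Dirichlet's theorem with the correct choice of the auxiliary parameter) is the one genuinely delicate point of the argument.
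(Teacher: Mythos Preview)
Your proposal follows the same overall plan as the paper—fix an integer $p$ (your $M_0$) making all the relevant divisors Cartier and globally generated over $Z$, then find rational convex combinations $A'_j$ of the $A_i$ with controlled denominators $p_j$ whose convex hull contains $A$—but there is a genuine gap in the Diophantine step. You approximate the \emph{shifted} points $A\pm tv_s$ and claim this forces $\delta_j q_j\to 0$ as $t\to 0$, where $\delta_j=\|A'_j-A\|$. However, the Dirichlet bound controls $q_j\cdot\|A'_j-(A\pm tv_s)\|$, not $q_j\delta_j$; since $\delta_j\sim t\|v_s\|$, what you actually need is $q_j t$ small. To preserve the surrounding property you need the Diophantine error at most a small constant times $t$, which in $\Pi\cong\mathbb{R}^{l-1}$ forces the Dirichlet parameter $Q$ to satisfy $Q^{1/(l-1)}\gtrsim 1/t$, i.e.\ $Q\gtrsim t^{-(l-1)}$. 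Dirichlet only guarantees $q_j\le Q$, so $q_j t$ can be of order $t^{2-l}$, which diverges for $l\ge 3$. Thus the ``shift then approximate'' device does not produce (ii); the claimed bound $\delta_j q_j=O(t^{1/l})$ is not a consequence of Dirichlet's theorem applied to the shifted points.

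The paper sidesteps this by invoking \cite[Lemma~3.7.7]{bchm} as a black box: that lemma produces rational points $\boldsymbol{q}^{(j)}$ in the simplex whose convex hull contains $\boldsymbol{r}=(r_1,\dots,r_l)$ together with integers $p_j$ divisible by $p$ such that $\frac{p_j}{p}\boldsymbol{q}^{(j)}\in\mathbb{Z}^l$ and $\|\boldsymbol{r}-\boldsymbol{q}^{(j)}\|<\tfrac{1}{p_j}\cdot\tfrac{1}{2}\min_i r_i$. Its proof approximates $\boldsymbol{r}$ \emph{directly} and arranges approximations on all sides of $\boldsymbol{r}$ by a different mechanism than shifting first. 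With this in hand, the paper's verification of the fourth bullet is also much simpler than yours: from $|r_i-q^{(j)}_i|<q^{(j)}_i/(p_j-1)$ one gets $r_i<\tfrac{p_j}{p_j-1}q^{(j)}_i$ for every $i$, so $\tfrac{p_j}{p_j-1}A'_j-A=\sum_i\bigl(\tfrac{p_j}{p_j-1}q^{(j)}_i-r_i\bigr)A_i$ is a strictly positive combination of the $\pi$-ample $A_i$ and hence $\pi$-ample—no auxiliary $H_0$, $C_0$, $\eta_0$ are needed.
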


\begin{proof}
This is an application of \cite[Lemma 3.7.7]{bchm}. 
By the hypothesis, we can find an integer $p \geq 2$ such that 
\begin{itemize}
\item
$p(K_{X}+\Delta)$ and all $pA_{i}$ are Cartier, and 
\item
all $p(K_{X}+\Delta+A_{i})|_{{\rm Nklt}(X,\Delta)}$ and $pA_{i}|_{{\rm Nklt}(X,\Delta)}$ are globally generated over $Z$. 
\end{itemize}
Put $\boldsymbol{r}=(r_{1},\,\cdots,\, r_{l})$. 
We apply \cite[Lemma 3.7.7]{bchm} to $p$, $\frac{1}{2}{\rm min}\{r_{i}\}_{1\leq i \leq l}$, and the following set 
$$\mathcal{S}:=\left\{(a_{1},\,\cdots,\,a_{l}) \in \mathbb{R}^{l}\,\middle|\,\text{$a_{i} \geq 0$ for all $i$ and $\sum_{i=1}^{l}a_{i}=1$}\right\} \ni \boldsymbol{r}.$$ 
There exist rational points $\boldsymbol{q}^{(1)},\,\cdots,\, \boldsymbol{q}^{(m)}$ of $\mathcal{S}$ and positive integers $p_{1},\,\cdots ,\,p_{m}$, which are divisible by $p$, such that $\boldsymbol{r}$ is a convex linear combination of $\boldsymbol{q}^{(1)},\,\cdots,\, \boldsymbol{q}^{(m)}$ and for every $1 \leq j \leq m$ the following conditions 
$$||\boldsymbol{r}-\boldsymbol{q}^{(j)}||<\frac{1}{p_{j}}\cdot \frac{1}{2}{\rm min}\{r_{i}\}_{1\leq i \leq l}\qquad {\rm and} \qquad \frac{p_{j}}{p}\boldsymbol{q}^{(j)} \in \mathbb{Z}^{l}$$
hold. 
Then $p_{j} \geq 2$ for all $1 \leq j \leq m$ since $p \geq 2$, and the first property implies that $\boldsymbol{q}^{(j)}$ is not the origin for all $1 \leq j \leq m$.  
For each $1\leq j \leq m$, we put 
 $$(q^{(j)}_{1},\,\cdots,\,q^{(j)}_{l}):=\boldsymbol{q}^{(j)} \qquad {\rm and} \qquad A'_{j}:=\sum_{i=1}^{l}q^{(j)}_{i}A_{i}.$$
Since $\boldsymbol{r}$ is a convex linear combination of $\boldsymbol{q}^{(1)},\,\cdots,\, \boldsymbol{q}^{(m)}$, we can write $\boldsymbol{r}=\sum_{j=1}^{m}r'_{j}\boldsymbol{q}^{(j)}$ with nonnegative real numbers $r'_{1},\,\cdots,\,r'_{m}$ such that $\sum_{j=1}^{m}r'_{j}=1$. 
By removing indices $j$ such that $r'_{j}=0$, we may assume that all $r'_{j}$ are positive. 

From now on we prove that these $r'_{1},\,\cdots,\,r'_{m}$, $p_{1},\,\cdots ,\,p_{m}$, and $A'_{1},\,\cdots ,\,A'_{m}$ satisfy the properties of Proposition \ref{prop--nonvan-ample-decom}. 
Note that all $A'_{j}$ are $\pi$-ample $\mathbb{Q}$-divisors by construction. 
It is obvious by definition that $\sum_{j=1}^{m}r'_{j}=1$, and $\boldsymbol{r}=\sum_{j=1}^{m}r'_{j}\boldsymbol{q}^{(j)}$ shows $r_{i}=\sum_{j=1}^{m}r'_{j}q^{(j)}_{i}$ for every $1 \leq i \leq l$. 
Hence, we have
$$A=\sum_{i=1}^{l} r_{i}A_{i}=\sum_{i=1}^{l} \sum_{j=1}^{m}r'_{j}q^{(j)}_{i}A_{i}=\sum_{j=1}^{m} r'_{j}A'_{j}.$$
Therefore the first property of Proposition \ref{prop--nonvan-ample-decom} holds. 

The condition $\frac{p_{j}}{p}\boldsymbol{q}^{(j)} \in \mathbb{Z}^{l}$ implies $p_{j}q^{(j)}_{i}\in p\mathbb{Z}_{\geq 0}$ for all $1 \leq i \leq l$. 
Since all $pA_{i}$ are Cartier and all $pA_{i}|_{{\rm Nklt}(X,\Delta)}$ are globally generated over $Z$, the relation 
$$p_{j}A'_{j}=\sum_{i=1}^{l}p_{j}q^{(j)}_{i}A_{i}$$
 implies that all $p_{j}A'_{j}$ is Cartier and all $p_{j}A'_{j}|_{{\rm Nklt}(X,\Delta)}$ are globally generated over $Z$. 
 Since $p_{j} \in p\mathbb{Z}$ and $p(K_{X}+\Delta)$ is Cartier, we see that $p_{j}(K_{X}+\Delta)$ is Cartier for all $1\leq j \leq m$. 
 In particular, $p_{j}(K_{X}+\Delta+A'_{j})$ and $p_{j}A'_{j}$ are Cartier for every $1 \leq j \leq m$. 
This is the second property of Proposition \ref{prop--nonvan-ample-decom}. 
 
In the previous paragraph, we have already proved that $p_{j}A'_{j}|_{{\rm Nklt}(X,\Delta)}$ is globally generated over $Z$ for every $1 \leq j \leq m$. 
By the definition of $\mathcal{S}$, we see that $\sum_{i=1}^{l}q^{(j)}_{i}=1$ holds for all $1 \leq j \leq m$. 
This implies that $\sum_{i=1}^{l}p_{j}q^{(j)}_{i}=p_{j}$ for every $1 \leq j \leq m$.
Then
\begin{equation*}
\begin{split}
p_{j}(K_{X}+\Delta+A'_{j})=p_{j}(K_{X}+\Delta)+p_{j}A'_{j}=&\sum_{i=1}^{l}p_{j}q^{(j)}_{i}(K_{X}+\Delta)+\sum_{i=1}^{l}p_{j}q^{(j)}_{i}A_{i}\\
=&\sum_{i=1}^{l}p_{j}q^{(j)}_{i}(K_{X}+\Delta+A_{i}).
\end{split}
\end{equation*}
As explained above, $p_{j}q^{(j)}_{i} \in p \mathbb{Z}_{\geq 0}$ for every $1 \leq i \leq l$ and $1 \leq j \leq m$. 
Since all the $\mathbb{Q}$-line bundles $p(K_{X}+\Delta+A_{i})|_{{\rm Nklt}(X,\Delta)}$ are globally generated over $Z$, it follows that $p_{j}(K_{X}+\Delta+A'_{j})|_{{\rm Nklt}(X,\Delta)}$ is globally generated over $Z$ for every $1 \leq j \leq m$. 
Therefore, the third property of Proposition \ref{prop--nonvan-ample-decom} holds.

Finally, since $||\boldsymbol{r}-\boldsymbol{q}^{(j)}||<\frac{1}{p_{j}}\cdot \frac{1}{2}{\rm min}\{r_{i}\}_{1\leq i \leq l}$, we have $|r_{i}-q^{(j)}_{i}|<\frac{1}{2p_{j}}{\rm min}\{r_{i}\}_{1\leq i \leq l}$ for all $1 \leq i \leq l$ and $1 \leq j \leq m$. 
Then 
$$q^{(j)}_{i} \geq r_{i} >\frac{1}{2}{\rm min}\{r_{i}\}_{1\leq i \leq l} \qquad {\rm or} \qquad q^{(j)}_{i}>r_{i}-\frac{1}{2p_{j}}{\rm min}\{r_{i}\}_{1\leq i \leq l} \geq \frac{1}{2}{\rm min}\{r_{i}\}_{1\leq i \leq l}.$$
This shows
$|r_{i}-q^{(j)}_{i}|<\frac{1}{2p_{j}}{\rm min}\{r_{i}\}_{1\leq i \leq l}< \frac{q^{(j)}_{i}}{p_{j}}< \frac{q^{(j)}_{i}}{p_{j}-1}.$
By simple calculations, for every $1 \leq i \leq l$ and $1 \leq j \leq m$ we have
$$r_{i}<q^{(j)}_{i}+\frac{q^{(j)}_{i}}{p_{j}-1}= \frac{p_{j}}{p_{j}-1}q^{(j)}_{i}.$$
Since $A'_{j}=\sum_{i=1}^{l}q^{(j)}_{i}A_{i}$ and $A=\sum_{i=1}^{l}r_{i}A_{i}$ by definition, $\frac{p_{j}}{p_{j}-1}A'_{j} - A$ is ample over $Z$ for every $1 \leq j \leq m$. 
Therefore, the final property of Proposition \ref{prop--nonvan-ample-decom} holds. 
\end{proof}

\begin{thm}\label{thm--non-vanishing-kltmmp-main}
Let $\pi \colon X \to Z$ be a projective morphism of normal quasi-projective varieties. 
Let $(X,\Delta)$ be a normal pair and let $A$ be a $\pi$-ample $\mathbb{R}$-divisor on $X$ such that $K_{X}+\Delta+A$ is $\pi$-pseudo-effective. 
Suppose that ${\rm NNef}(K_{X}+\Delta+A/Z) \cap {\rm Nklt}(X,\Delta) = \emptyset$. 
Suppose in addition that $(K_{X}+\Delta+A)|_{{\rm Nklt}(X,\Delta)}$, which we think of an $\mathbb{R}$-line bundle on ${\rm Nklt}(X,\Delta)$, is semi-ample over $Z$. 
Then ${\rm Bs}|K_{X}+\Delta+A/Z|_{\mathbb{R}} \cap {\rm Nklt}(X,\Delta) = \emptyset$. 
\end{thm}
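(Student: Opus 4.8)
The plan is to deduce the theorem from the $\mathbb{Q}$-coefficient non-vanishing Proposition~\ref{prop--non-vanishing-kltmmp} together with the approximation Proposition~\ref{prop--nonvan-ample-decom}, by a two-step rationalization of the data; throughout write $N:={\rm Nklt}(X,\Delta)$, and assume $N\neq\emptyset$ since otherwise there is nothing to prove. First I would reduce to the case that $\Delta$ is a $\mathbb{Q}$-divisor. Inside the affine subspace of $\mathbb{R}$-divisors supported on ${\rm Supp}\,\Delta$ on which the components of coefficient one, and all prime divisors over $X$ of integral discrepancy with respect to $(X,\Delta)$, are frozen, the divisor $\Delta$ admits arbitrarily close $\mathbb{Q}$-divisors $\Delta'$ with ${\rm Nklt}(X,\Delta')=N$ as closed subschemes of $X$; for such $\Delta'$ sufficiently close to $\Delta$ the $\mathbb{R}$-divisor $A':=A+(\Delta-\Delta')$ is still $\pi$-ample, and $K_X+\Delta'+A'=K_X+\Delta+A$ as $\mathbb{R}$-divisors, so the non-nef locus over $Z$, the restriction to $N$, and the stable base locus over $Z$ are literally unchanged. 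Replacing $(X,\Delta,A)$ by $(X,\Delta',A')$ we may thus assume $\Delta$ is a $\mathbb{Q}$-divisor, so that $K_X+\Delta$ is a $\mathbb{Q}$-Cartier $\mathbb{Q}$-divisor, while $A$ remains a (possibly irrational) $\pi$-ample $\mathbb{R}$-divisor.

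The heart of the argument is to write $A=\sum_{i=1}^{l}r_iA_i$ with $r_i\in\mathbb{R}_{>0}$, $\sum_i r_i=1$, and $A_i$ $\pi$-ample $\mathbb{Q}$-divisors on $X$ such that each $(X,\Delta,A_i)$ again satisfies the hypotheses of the theorem, namely $K_X+\Delta+A_i$ is $\pi$-pseudo-effective, ${\rm NNef}(K_X+\Delta+A_i/Z)\cap N=\emptyset$, and $(K_X+\Delta+A_i)|_N$ is semi-ample over $Z$. Since $\sum_i r_i=1$ one has $K_X+\Delta+A=\sum_i r_i(K_X+\Delta+A_i)$, and these three conditions are stable under convex combinations of the $K_X+\Delta+A_i$ (convexity of the pseudo-effective cone; subadditivity of $\sigma_P(-/Z)$, which gives ${\rm NNef}(\sum_i r_iD_i/Z)\subseteq\bigcup_i{\rm NNef}(D_i/Z)$; and convexity of the set of semi-ample $\mathbb{R}$-line bundles on $N$ over $Z$), so what remains is to show the given data admits such a decomposition. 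Using the hypothesis, write $(K_X+\Delta+A)|_N=\sum_{k}c_k\mathcal{M}_k$ with $c_k\in\mathbb{R}_{>0}$ and $\mathcal{M}_k$ invertible sheaves on $N$ globally generated over $Z$; then $(K_X+\Delta+A)|_N$ lies in the relative interior of the rational polyhedral cone ${\rm cone}(\mathcal{M}_k)$, every element of which is a semi-ample $\mathbb{R}$-line bundle. Since $A$ is $\pi$-ample, $A|_N$ lies in the interior of the semi-ample cone of $N$ over $Z$; combining this with the previous sentence, a convex-geometry argument — perturbing $A$ within the rational subspace of $\mathbb{R}$-divisors $D$ on $X$ with $(K_X+\Delta+D)|_N\in{\rm span}(\mathcal{M}_k)$, and keeping the perturbation inside the pseudo-effective cone and with non-nef locus disjoint from $N$ — produces the required $A=\sum_i r_iA_i$.

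Given such a decomposition, I apply Proposition~\ref{prop--nonvan-ample-decom} to $(X,\Delta)$, $A$ and the $A_i$, obtaining a finer decomposition $A=\sum_{j=1}^{m}r'_jA'_j$ with integers $p_j>1$ such that $p_j(K_X+\Delta+A'_j)$ and $p_jA'_j$ are Cartier, $p_j(K_X+\Delta+A'_j)|_N$ and $p_jA'_j|_N$ are globally generated over $Z$, and $\tfrac{p_j}{p_j-1}A'_j-A$ is $\pi$-ample; in particular each $A'_j$ lies in the simplex spanned by $A_1,\dots,A_l$, so by the convexity remark above each $(X,\Delta,A'_j)$ still satisfies the hypotheses of Proposition~\ref{prop--non-vanishing-kltmmp}. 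That proposition then gives ${\rm Bs}|K_X+\Delta+A'_j/Z|_{\mathbb{R}}\cap N=\emptyset$ for every $j$, and in particular $|K_X+\Delta+A'_j/Z|_{\mathbb{R}}\neq\emptyset$. Finally, for each point $x\in N$ choose $E_j\in|K_X+\Delta+A'_j/Z|_{\mathbb{R}}$ with $x\notin{\rm Supp}\,E_j$; then $\sum_j r'_jE_j\in|K_X+\Delta+A/Z|_{\mathbb{R}}$ avoids $x$, so ${\rm Bs}|K_X+\Delta+A/Z|_{\mathbb{R}}\cap N=\emptyset$.

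I expect the main obstacle to be the decomposition step of the second paragraph: arranging $A=\sum_i r_iA_i$ so that all three hypotheses survive for every $A_i$ simultaneously. The semi-ampleness of the restriction is the delicate point, since $(K_X+\Delta+A)|_N$ may lie on the boundary of the semi-ample cone of $N$; here the rational polyhedrality coming from the explicit expression $(K_X+\Delta+A)|_N=\sum_k c_k\mathcal{M}_k$, together with the strict positivity of $A|_N$, is exactly what makes the perturbation feasible. At the same time one must keep the perturbed divisors $K_X+\Delta+A_i$ pseudo-effective with non-nef locus disjoint from $N$, which uses the $\pi$-ampleness of $A$ as a positive buffer and the subadditivity of the relative asymptotic vanishing order.
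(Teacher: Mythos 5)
There is a genuine gap in the second paragraph, at exactly the point you flag as the ``main obstacle'': the decomposition $A=\sum_i r_iA_i$ cannot in general be arranged so that every $(X,\Delta,A_i)$ satisfies the hypotheses of Proposition~\ref{prop--non-vanishing-kltmmp}. Two of the three hypotheses do not survive perturbation. First, pseudo-effectivity: $K_X+\Delta+A$ may lie on the boundary of the relative pseudo-effective cone, and the face containing it need not be rational; since $A$ (irrational in general) must be a nontrivial convex combination of the rational $A_i$, some $K_X+\Delta+A_i$ will necessarily leave the cone. Second, the non-nef condition: the subadditivity $\sigma_P\bigl(\sum_i r_iD_i/Z\bigr)\leq\sum_i r_i\sigma_P(D_i/Z)$ gives ${\rm NNef}\bigl(\sum_i r_iD_i/Z\bigr)\subseteq\bigcup_i{\rm NNef}(D_i/Z)$, which is the wrong direction here: disjointness of ${\rm NNef}(K_X+\Delta+A/Z)$ from ${\rm Nklt}(X,\Delta)$ places no constraint whatsoever on ${\rm NNef}(K_X+\Delta+A_i/Z)$. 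Your proposed convex-geometry perturbation only controls the restriction to ${\rm Nklt}(X,\Delta)$ (where a rational polytope argument does work, as in the paper); it has no mechanism to keep the perturbed divisors pseudo-effective with well-behaved non-nef locus, and no such mechanism exists.

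The paper circumvents this by never applying Proposition~\ref{prop--non-vanishing-kltmmp} to $A_i$ itself. The point of the condition $\frac{p_i}{p_i-1}A_i-A$ ample over $Z$ in Proposition~\ref{prop--nonvan-ample-decom} is that the hypotheses of Proposition~\ref{prop--non-vanishing-kltmmp} hold for $\frac{p_i}{p_i-1}A_i$ automatically, being inherited from those for $A$ by adding an ample divisor (using Remark~\ref{rem--nonnef-semiample}). This yields ${\rm Bs}|K_X+\Delta+\frac{p_i}{p_i-1}A_i/Z|_{\mathbb{R}}\cap{\rm Nklt}(X,\Delta)=\emptyset$, but one still has to descend from $\frac{p_i}{p_i-1}A_i$ to $A_i$, and this requires a second, entirely separate argument that is missing from your proposal: run a $(K_X+\Delta+\frac{p_i}{p_i-1}A_i)$-MMP with scaling to a good minimal model $(X',\Delta'+\frac{p_i}{p_i-1}A')$ over $Z$ (Corollary~\ref{cor--mmp-termi-kltlocus}), which is an isomorphism near ${\rm Nklt}(X,\Delta)$, write
$$L:=p_i(K_{X'}+\Delta'+A')=K_{X'}+\Delta'+(p_i-1)\Bigl(K_{X'}+\Delta'+\tfrac{p_i}{p_i-1}A'\Bigr),$$
and apply the vanishing theorem for quasi-log schemes (Theorem~\ref{thm--vanishing-quasi-log}) to extend the (globally generated) sections of $L|_{{\rm Nklt}(X',\Delta')}$ to $X'$. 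This extension step is what actually produces sections of $K_X+\Delta+A_i$ not vanishing along ${\rm Nklt}(X,\Delta)$; without it the argument does not close. Your reduction to $\mathbb{Q}$-coefficients for $\Delta$ and your final convex-combination step are both fine and match the paper.
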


\begin{proof}
We prove the theorem in several steps. 

\begin{step3}\label{step1-thm--non-vanishing-kltmmp-main}
In this step, we reduce Theorem \ref{thm--non-vanishing-kltmmp-main} to the case where $\Delta$ is a $\mathbb{Q}$-divisor. 

By the argument from convex geometry, the set
$$\{\overline{\Delta} \in {\rm WDiv}_{\mathbb{R}}(X)\,|\,\text{$K_{X}+\overline{\Delta}$ is $\mathbb{R}$-Cartier and ${\rm Nklt}(X,\Delta)={\rm Nklt}(X,\overline{\Delta})$}\}$$
contains a rational polytope $\mathcal{D} \ni \Delta$ in the $\mathbb{R}$-vector space spanned by the components of $\Delta$. 
We take an effective $\mathbb{Q}$-divisor $\overline{\Delta} \in \mathcal{D}$ such that $(K_{X}+\Delta)-(K_{X}+\overline{\Delta})+A$ is $\pi$-ample. 
Putting $\overline{A}=(K_{X}+\Delta)-(K_{X}+\overline{\Delta})+A$, then $(X,\overline{\Delta})$ is a normal pair, $\overline{A}$ is $\pi$-ample, ${\rm Nklt}(X,\overline{\Delta})={\rm Nklt}(X,\Delta)$, and $K_{X}+\overline{\Delta}+\overline{A}=K_{X}+\Delta+A$. 
From them, we may replace $(X,\Delta)$ and $A$ by $(X,\overline{\Delta})$ and $\overline{A}$ respectively.  
Thus, we may assume that $\Delta$ is a $\mathbb{Q}$-divisor. 
\end{step3} 

\begin{step3}\label{step2-thm--non-vanishing-kltmmp-main}
In this step, we discuss a decomposition of $A$ into $\pi$-ample $\mathbb{Q}$-divisors. 

We can write
$$A=\sum_{i=1}^{l}r_{i}A_{i},$$
for some distinct $\pi$-ample $\mathbb{Q}$-divisors $A_{i}$ on $X$ and positive real numbers $r_{1},\,\cdots,\,r_{l}$ that are linearly independent over $\mathbb{Q}$. 
We consider the set
$$\left\{(a_{1},\,\cdots,\,a_{l}) \in (\mathbb{R}_{>0})^{l}\,\middle| \begin{array}{l}
\text{$(K_{X}+\Delta+\sum_{i=1}^{l}a_{i}A_{i})|_{{\rm Nklt}(X,\Delta)}$ is semi-ample over $Z$} 
\end{array}\right\}.$$
By the argument from convex geometry, we can find a rational polytope $\mathcal{C}$ in the set such that $(r_{1},\,\cdots,\,r_{l})$ is in the interior of $\mathcal{C}$. 
Let $\boldsymbol{q}_{1},\,\cdots,\,\boldsymbol{q}_{k}$ be the vertices of $\mathcal{C}$. 
For each $1\leq j \leq k$, we can write $\boldsymbol{q}_{j}=(q^{(j)}_{1},\,\cdots,\,q^{(j)}_{ l})$ for some positive rational numbers $q^{(j)}_{1},\,\cdots,\,q^{(j)}_{l}$. 
For every $1\leq j \leq k$, let $\tilde{A}_{j}$ be the ample $\mathbb{Q}$-divisor corresponding to $\boldsymbol{q}_{j}$. 
In other words,
$$\tilde{A}_{j}:=\sum_{i=1}^{l}q^{(j)}_{i}A_{i}.$$
Then all $\tilde{A}_{j}$ are $\mathbb{Q}$-Cartier, $(K_{X}+\Delta+\tilde{A}_{j})|_{{\rm Nklt}(X,\Delta)}$ are finite $\mathbb{Q}_{>0}$-linear combinations of invertible sheaves on ${\rm Nklt}(X,\Delta)$ that are globally generated over $Z$, and we can write $A=\sum_{j=1}^{k}\tilde{r}_{j} \tilde{A}_{j}$ for some positive real numbers $\tilde{r}_{1},\,\cdots,\,\tilde{r}_{k}$ such that $\sum_{j=1}^{k}\tilde{r}_{j}=1$. 

By replacing the sets $\{A_{1},\,\cdots, \, A_{l}\}$ and $\{r_{1},\,\cdots,\, r_{l}\}$ by the sets $\{\tilde{A}_{1},\,\cdots, \, \tilde{A}_{k}\}$ and $\{\tilde{r}_{1},\,\cdots,\, \tilde{r}_{k}\}$ respectively, we obtain a decomposition of $A$ into $\pi$-ample $\mathbb{Q}$-divisors
$$A=\sum_{i=1}^{l}r_{i}A_{i}$$
with $r_{1},\,\cdots,\,r_{l} \in \mathbb{R}_{>0}$ such that $\sum_{i=1}^{l}r_{i}=1$ and $(K_{X}+\Delta+A_{i})|_{{\rm Nklt}(X,\Delta)}$ is semi-ample over $Z$ for every $1 \leq i \leq l$. 
By Proposition \ref{prop--nonvan-ample-decom} and replacing $r_{1},\,\cdots,\, r_{l}$ and $A_{1},\,\cdots, \, A_{l}$ again, we may assume the existence of positive integers $p_{1},\,\cdots,\,p_{l}$, which are greater than one, such that
\begin{enumerate}[(I)]
\item\label{thm--non-vanishing-kltmmp-main-(I)}
$p_{i}(K_{X}+\Delta+A_{i})$ and $p_{i}A_{i}$ are Cartier for every $1 \leq i \leq l$,  
\item\label{thm--non-vanishing-kltmmp-main-(II)}
$p_{i}(K_{X}+\Delta+A_{i})|_{{\rm Nklt}(X,\Delta)}$ and $p_{i}A_{i}|_{{\rm Nklt}(X,\Delta)}$ are globally generated over $Z$ for every $1 \leq i \leq l$, and
\item\label{thm--non-vanishing-kltmmp-main-(III)}
$\frac{p_{i}}{p_{i}-1}A_{i} - A$ is ample over $Z$ for every $1 \leq i \leq l$. 
\end{enumerate}
\end{step3}

\begin{step3}\label{step4-thm--non-vanishing-kltmmp-main}
In this step, we prove ${\rm Bs}|K_{X}+\Delta+A_{i}/Z|_{\mathbb{R}} \cap {\rm Nklt}(X,\Delta) = \emptyset$ for every $1 \leq i \leq l$. 
Throughout this step, we fix $i$. 

By (\ref{thm--non-vanishing-kltmmp-main-(III)}) and the condition ${\rm NNef}(K_{X}+\Delta+A/Z) \cap {\rm Nklt}(X,\Delta) = \emptyset$, which is the hypothesis of Theorem \ref{thm--non-vanishing-kltmmp-main}, we have ${\rm NNef}(K_{X}+\Delta+\frac{p_{i}}{p_{i}-1}A_{i}/Z) \cap {\rm Nklt}(X,\Delta) = \emptyset$. 
By (\ref{thm--non-vanishing-kltmmp-main-(III)}) and the semi-ampleness of $(K_{X}+\Delta+A)|_{{\rm Nklt}(X,\Delta)}$ over $Z$, we see that the $\mathbb{Q}$-line bundle $(K_{X}+\Delta+\frac{p_{i}}{p_{i}-1}A_{i})|_{{\rm Nklt}(X,\Delta)}$ is ample over $Z$. 
From these facts, we may apply Proposition \ref{prop--non-vanishing-kltmmp} to $\pi \colon X \to Z$, $(X,\Delta)$, and $\frac{p_{i}}{p_{i}-1}A_{i}$. 
We see that 
$${\rm Bs}|K_{X}+\Delta+\frac{p_{i}}{p_{i}-1}A_{i}/Z|_{\mathbb{R}} \cap {\rm Nklt}(X,\Delta) = \emptyset.$$ 
We take a general member $H_{i} \in |A_{i}/Z|_{\mathbb{R}}$. 
By Corollary \ref{cor--mmp-termi-kltlocus}, we get a sequence of steps of a $(K_{X}+\Delta+\frac{p_{i}}{p_{i}-1}H_{i})$-MMP over $Z$ with scaling of $\frac{p_{i}}{p_{i}-1}H_{i}$ that terminates with a good minimal model over $Z$
$$(X,\Delta+\tfrac{p_{i}}{p_{i}-1}H_{i}) \dashrightarrow (X',\Delta'+\tfrac{p_{i}}{p_{i}-1}H').$$
Note that the good minimal model depends on $i$, although we remove the index $i$ because $i$ was fixed. 
Let $A'$ be the birational transform of $A_{i}$ on $X'$. 
By construction, the MMP is a $(K_{X}+\Delta+A_{i})$-MMP over $Z$ with scaling of $A_{i}$, and $K_{X'}+\Delta'+\frac{p_{i}}{p_{i}-1}A'$ is semi-ample over $Z$. 
In particular, $K_{X'}+\Delta'+A'$ and $A'$ are both $\mathbb{Q}$-Cartier. 
By the property ${\rm NNef}(K_{X}+\Delta+\frac{p_{i}}{p_{i}-1}A_{i}/Z)  \cap {\rm Nklt}(X,\Delta) = \emptyset$ and Remark \ref{rem--mmp-basic}, the birational map $X \dashrightarrow X'$ is an isomorphism on a neighborhood of ${\rm Nklt}(X,\Delta)$. 

We put
$$L:=p_{i}(K_{X'}+\Delta'+A').$$
By (\ref{thm--non-vanishing-kltmmp-main-(I)}), $L$ is a Weil divisor on $X'$ and $L$ is Cartier on a neighborhood of ${\rm Nklt}(X',\Delta')$. 
We can write
$$L=K_{X'}+\Delta'+(p_{i}-1)\left(K_{X'}+\Delta'+\frac{p_{i}}{p_{i}-1}A' \right).$$
Then $(K_{X'}+\Delta'+\frac{p_{i}}{p_{i}-1}A')|_{{\rm Nklt}(X,\Delta)}$ is ample over $Z$. 
This is because $X \dashrightarrow X'$ is an isomorphism on a neighborhood of ${\rm Nklt}(X,\Delta)$ and $(K_{X}+\Delta+\frac{p_{i}}{p_{i}-1}A_{i})|_{{\rm Nklt}(X,\Delta)}$ is ample over $Z$, which follows from (\ref{thm--non-vanishing-kltmmp-main-(III)}) and the semi-ampleness of $(K_{X}+\Delta+A)|_{{\rm Nklt}(X,\Delta)}$ over $Z$. 
Therefore, $L- (K_{X'}+\Delta')$ is nef and log big over $Z$ with respect to $(X',\Delta')$ (see \cite[Definition 3.7]{fujino-bpf-quasi-log}). 
From these facts, we can apply Theorem \ref{thm--vanishing-quasi-log} to $(X',\Delta')$ and $L$. 
Putting $\pi' \colon X' \to Z$ and $\mathcal{I}$ as the structure morphism and the defining ideal sheaf of ${\rm Nklt}(X',\Delta')$ respectively, then Theorem \ref{thm--vanishing-quasi-log} implies
$$R^{1}\pi'_{*}(\mathcal{I} \otimes_{\mathcal{O}_{X'}} \mathcal{O}_{X'}(L))=0.$$
We consider the exact sequence 
$$0 \longrightarrow \mathcal{I} \otimes_{\mathcal{O}_{X'}} \mathcal{O}_{X'}(L) \longrightarrow \mathcal{O}_{X'}(L) \longrightarrow \mathcal{O}_{{\rm Nklt}(X',\Delta')}(L|_{{\rm Nklt}(X',\Delta')}) \longrightarrow 0.$$
By taking the pushforward to $Z$, we get the following exact sequence
\begin{equation*}
\begin{split}
\pi'_{*}\mathcal{O}_{X'}(L)  \longrightarrow \pi'_{*}\mathcal{O}_{{\rm Nklt}(X',\Delta')}(L|_{{\rm Nklt}(X',\Delta')}) \longrightarrow R^{1}\pi'_{*}(\mathcal{I} \otimes_{\mathcal{O}_{X'}} \mathcal{O}_{X'}(L))=0.
\end{split}
\end{equation*}
By (\ref{thm--non-vanishing-kltmmp-main-(II)}) and construction of $X \dashrightarrow X'$, the restriction $L|_{{\rm Nklt}(X',\Delta')}$ is a line bundle on ${\rm Nklt}(X',\Delta')$ and this is globally generated over $Z$. 
Hence, we have 
$${\rm Bs}|K_{X'}+\Delta'+A'/Z|_{\mathbb{R}} \cap {\rm Nklt}(X',\Delta') ={\rm Bs}|L/Z|_{\mathbb{R}} \cap {\rm Nklt}(X',\Delta')= \emptyset.$$
Since $X \dashrightarrow X'$ is a sequence of steps of a $(K_{X}+\Delta+A_{i})$-MMP over $Z$ and the map is an isomorphism on ${\rm Nklt}(X,\Delta)$, we have ${\rm Bs}|K_{X}+\Delta+A_{i}/Z|_{\mathbb{R}} \cap {\rm Nklt}(X,\Delta) = \emptyset$. 
\end{step3}
By construction in Step \ref{step2-thm--non-vanishing-kltmmp-main}, the $\mathbb{R}$-divisor $K_{X}+\Delta+A$ is a convex linear combination of $K_{X}+\Delta+A_{1},\,\cdots,\,K_{X}+\Delta+A_{l}$. 
Since ${\rm Bs}|K_{X}+\Delta+A_{i}/Z|_{\mathbb{R}} \cap {\rm Nklt}(X,\Delta) = \emptyset$ for all $1 \leq i \leq l$, we have
$${\rm Bs}|K_{X}+\Delta+A/Z|_{\mathbb{R}} \cap {\rm Nklt}(X,\Delta) = \emptyset.$$
We finish the proof. 
\end{proof}

\begin{thm}\label{thm--klt-minmodeltheory-main}
Let $\pi \colon X \to Z$ be a projective morphism of normal quasi-projective varieties. 
Let $(X,\Delta)$ be a normal pair and let $A$ be a $\pi$-ample $\mathbb{R}$-divisor on $X$ such that $K_{X}+\Delta+A$ is $\pi$-pseudo-effective. 
Suppose that ${\rm NNef}(K_{X}+\Delta+A/Z) \cap {\rm Nklt}(X,\Delta) = \emptyset$. 
Suppose in addition that $(K_{X}+\Delta+A)|_{{\rm Nklt}(X,\Delta)}$, which we think of an $\mathbb{R}$-line bundle on ${\rm Nklt}(X,\Delta)$, is semi-ample over $Z$. 
Let 
$$(X,\Delta+A)=:(X_{1},\Delta_{1}+A_{1}) \dashrightarrow \cdots \dashrightarrow (X_{i},\Delta_{i}+A_{i}) \dashrightarrow \cdots$$
be a sequence of steps of a $(K_{X}+\Delta+A)$-MMP over $Z$ with scaling of $A$.  We put 
$$\lambda_{i}:=\{\mu \in \mathbb{R}_{\geq 0}\,| \,\text{$(K_{X_{i}}+\Delta_{i}+A_{i})+\mu A_{i}$ is nef over $Z$} \}$$ for each $i \geq 1$. 
Then 
${\rm lim}_{i \to \infty} \lambda_{i}=\lambda_{m}$ for some $m$. 
In particular, if ${\rm lim}_{i \to \infty} \lambda_{i}=0$ then the $(K_{X}+\Delta+A)$-MMP terminates with a good minimal model over $Z$. 
\end{thm}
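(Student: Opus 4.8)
The plan is to reduce the statement to the non-vanishing theorem (Theorem \ref{thm--non-vanishing-kltmmp-main}) plus the termination result (Theorem \ref{thm--from-minimodel-to-termi}). Set $\lambda:=\lim_{i\to\infty}\lambda_{i}$, which exists since $\{\lambda_{i}\}$ is nonincreasing by Remark \ref{rem--mmp-basic}. The first step is to observe that $K_{X}+\Delta+A+\lambda A=K_{X}+\Delta+(1+\lambda)A$ is $\pi$-pseudo-effective (it is a limit of $\pi$-nef divisors $K_{X_{i}}+\Delta_{i}+(1+\lambda_{i})A_{i}$ pushed back, or simply because $K_{X}+\Delta+A$ is pseudo-effective and $\lambda A$ is effective up to $\mathbb{R}$-linear equivalence after choosing a general member), that $(1+\lambda)A$ is still $\pi$-ample, that ${\rm Nklt}(X,\Delta)$ is unchanged, and that ${\rm NNef}(K_{X}+\Delta+(1+\lambda)A/Z)\cap {\rm Nklt}(X,\Delta)=\emptyset$: indeed by Remark \ref{rem--nonnef-semiample} one has ${\rm NNef}(K_{X}+\Delta+(1+\lambda)A/Z)\subset {\rm NNef}(K_{X}+\Delta+A/Z)$ since $\lambda A$ is $\pi$-semi-ample (it is $\pi$-ample). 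Likewise $(K_{X}+\Delta+(1+\lambda)A)|_{{\rm Nklt}(X,\Delta)}$ is semi-ample over $Z$ because $(K_{X}+\Delta+A)|_{{\rm Nklt}(X,\Delta)}$ is and $\lambda A|_{{\rm Nklt}(X,\Delta)}$ is ample over $Z$. Hence Theorem \ref{thm--non-vanishing-kltmmp-main} applies to $(X,\Delta)$ and $(1+\lambda)A$, giving
$${\rm Bs}|K_{X}+\Delta+(1+\lambda)A/Z|_{\mathbb{R}}\cap {\rm Nklt}(X,\Delta)=\emptyset.$$

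The second step handles the trivial case ${\rm Nklt}(X,\Delta)=\emptyset$ separately: then $(X,\Delta)$ is klt, and by Theorem \ref{thm--non-vanishing-kltmmp-main} (or directly \cite{bchm} together with Corollary \ref{cor--mmp-nomralpair-Qfacdlt}) $K_{X}+\Delta+A$ has a good minimal model over $Z$, and the termination with $\lambda_{m}=\lim_{i}\lambda_{i}$ follows from the klt case of \cite[Theorem 4.1]{birkar-flip}; alternatively one enlarges $\Delta$ slightly so that ${\rm Nklt}$ becomes nonempty without changing the relevant data, as in Step \ref{step1--thm--from-nonvanish-to-minmodel} of Theorem \ref{thm--from-nonvanish-to-minmodel}, and reduces to the main case. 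So assume ${\rm Nklt}(X,\Delta)\neq\emptyset$.

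The third step is the core: we now want to invoke Theorem \ref{thm--from-minimodel-to-termi}. That theorem requires ${\rm Bs}|K_{X}+\Delta+A'/Z|_{\mathbb{R}}\cap {\rm Nklt}(X,\Delta)=\emptyset$ for its polarizing divisor $A'$, and a sequence of steps of the MMP with scaling whose nef thresholds tend to $0$. We apply it with the pair $(X,\Delta)$ and $\pi$-ample divisor $A'=(1+\lambda)A$ (after replacing $A$ by a general $\mathbb{R}$-linearly equivalent effective divisor so that $A'\geq 0$ and ${\rm Nklt}(X,\Delta)={\rm Nklt}(X,\Delta+A')$, exactly as at the start of the proof of Theorem \ref{thm--from-minimodel-to-termi}). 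The given $(K_{X}+\Delta+A)$-MMP with scaling of $A$ is, by Remark \ref{rem--mmp-basic}, also a $(K_{X}+\Delta+(1+\lambda)A)$-MMP over $Z$ with scaling of $A$; its nef thresholds are $\lambda_{i}-\lambda$, which tend to $0$ by definition of $\lambda$. Therefore Theorem \ref{thm--from-minimodel-to-termi} yields an index $m$ with $\lambda_{m}-\lambda=0$, i.e. $\lambda_{m}=\lambda=\lim_{i\to\infty}\lambda_{i}$, which is the first assertion. For the ``in particular'' clause, if $\lim_{i}\lambda_{i}=0$ then $\lambda_{m}=0$, so $K_{X_{m}}+\Delta_{m}+A_{m}$ is $\pi$-nef; moreover the same application of Theorem \ref{thm--from-minimodel-to-termi} tells us $(X_{m},\Delta_{m}+A_{m})$ is a good minimal model of $(X,\Delta+A)$ over $Z$, so the MMP has terminated with a good minimal model.

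\textbf{Main obstacle.} I expect the only real subtlety to be bookkeeping: making sure the hypotheses of Theorem \ref{thm--from-minimodel-to-termi} are met after passing from $A$ to $(1+\lambda)A$ and to a general effective representative — in particular that the non-klt locus and the non-nef-locus condition are preserved under these replacements, and that the given MMP with scaling of $A$ legitimately counts as an MMP with scaling of the new polarization with nef thresholds $\lambda_{i}-\lambda$ tending to zero. None of this is deep, but it must be spelled out carefully; the genuinely hard inputs (non-vanishing and the termination statement) are already proved earlier in the paper.
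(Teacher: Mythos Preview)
Your proposal is correct and follows essentially the same route as the paper's proof: split off the klt case via \cite{bchm}/\cite{birkar-flip}, set $\lambda=\lim_i\lambda_i$, replace $A$ by $(1+\lambda)A$ so that the limit of the new nef thresholds is $0$, and then invoke Theorem~\ref{thm--non-vanishing-kltmmp-main} followed by Theorem~\ref{thm--from-minimodel-to-termi}. The paper simply writes ``replacing $A$ by $(1+\lambda)A$, we may assume $\lambda=0$'' and then cites those two results; your version spells out the bookkeeping behind that replacement. One tiny caveat: the bullet of Remark~\ref{rem--mmp-basic} you cite literally goes from $D$ to $D-tA$, not $D+\lambda A$, so you should instead argue directly that since $\lambda_i>\lambda$ (else $\lambda_m=\lambda$ already) and $D_i+\lambda_iA_i$ is numerically trivial over $W_i$, the divisor $-(D_i+\lambda A_i)\equiv_{W_i}(\lambda_i-\lambda)A_i$ is ample over $W_i$; this is routine and does not affect the substance of your argument.
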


\begin{proof}
If $(X,\Delta)$ is klt, then the theorem follows from \cite[Theorem E]{bchm} (see also  \cite[Theorem 4.1 (iii)]{birkar-flip}).
If $(X,\Delta)$ is not klt, then we put $\lambda:={\rm lim}_{i \to \infty} \lambda_{i}$. 
Replacing $A$ by $(1+\lambda) A$, we may assume $\lambda=0$. 
Then the theorem follows from Theorem \ref{thm--non-vanishing-kltmmp-main} and Theorem \ref{thm--from-minimodel-to-termi}. 
\end{proof}

\section{Minimal model program along log canonical locus}\label{sec--mmp-lc}

In this section we study the minimal model theory for normal pairs such that the non-nef locus of the log canonical $\mathbb{R}$-divisor is disjoint from the non-lc locus of the normal pair. 

\subsection{Proof of main result}

The goal of this subsection is to prove Theorem \ref{thm--termination-lcmmp-main}.

\begin{lem}\label{lem--mmp-fibration-kltlocus}
Let $f\colon (Y,\Delta) \to [X,\omega]$ be a quasi-log scheme induced by a normal pair, and let $\pi \colon X \to Z$ be a  projective morphism of normal quasi-projective varieties. 
Let $A$ be a $\pi$-ample $\mathbb{R}$-divisor on $X$ such that $\omega+A$ is $\pi$-pseudo-effective. 
Suppose that ${\rm NNef}(\omega+A/Z) \cap{\rm Nqklt}(X,\omega) = \emptyset$. 
Suppose in addition that $(\omega+A)|_{{\rm Nqklt}(X,\omega)}$, which we think of an $\mathbb{R}$-line bundle on ${\rm Nqklt}(X,\omega)$, is semi-ample over $Z$. 
Let
$$X=:X_{1} \dashrightarrow \cdots \dashrightarrow X_{i} \dashrightarrow \cdots$$
be a sequence of steps of an $(\omega+A)$-MMP over $Z$ with scaling of $A$ such that if we put $$\lambda_{i}:=\{\mu \in \mathbb{R}_{\geq 0}\,| \,\text{$\omega_{i}+A_{i}+\mu A_{i}$ is nef over $Z$} \}$$ for each $i \geq 1$, then 
${\rm lim}_{i \to \infty} \lambda_{i}=0$.  
Then $\lambda_{m}=0$ for some $m$. 
\end{lem}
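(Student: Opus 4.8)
The plan is to reduce Lemma \ref{lem--mmp-fibration-kltlocus} to Theorem \ref{thm--klt-minmodeltheory-main} (equivalently Theorem \ref{thm--from-minimodel-to-termi}) applied to a normal pair living on $X$ rather than to the quasi-log structure. The key observation is that, by Lemma \ref{lem--can-bundle-formula} applied to $f\colon (Y,\Delta) \to [X,\omega]$ with the $\pi$-ample divisor $A$ (or a suitable rescaling of it), there exists a normal pair $(X,G)$ with $K_{X}+G \sim_{\mathbb{R},\,Z}\omega+A$ and ${\rm Nklt}(X,G)={\rm Nqklt}(X,\omega)$ as closed subschemes of $X$. Under this identification the hypotheses translate directly: ${\rm NNef}(K_{X}+G/Z)\cap{\rm Nklt}(X,G)={\rm NNef}(\omega+A/Z)\cap{\rm Nqklt}(X,\omega)=\emptyset$, and $(K_{X}+G)|_{{\rm Nklt}(X,G)}$ is $\mathbb{R}$-linearly equivalent over $Z$ to $(\omega+A)|_{{\rm Nqklt}(X,\omega)}$, hence semi-ample over $Z$.

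\textbf{Matching the two minimal model programs.} First I would split off the trivial case where ${\rm Nqklt}(X,\omega)=\emptyset$, i.e. where $[X,\omega]$ is qlc and $(X,G)$ is klt; then termination of the scaled MMP with $\lambda_{i}\to 0$ is exactly \cite[Theorem E]{bchm} (cf. \cite[Theorem 4.1 (iii)]{birkar-flip}), so assume ${\rm Nklt}(X,G)\neq\emptyset$. Next I would observe that the given $(\omega+A)$-MMP over $Z$ with scaling of $A$ on $X$ is, after rewriting $\omega+A\sim_{\mathbb{R},\,Z}K_{X}+G$ via Remark \ref{rem--mmp-basic}, literally a sequence of steps of a $(K_{X}+G)$-MMP over $Z$ with scaling of $A$ (the birational contractions, the intermediate varieties $X_{i}$, and the nef thresholds $\lambda_{i}$ all coincide, since $\omega_{i}+A_{i}$ and $K_{X_{i}}+G_{i}$ differ by a divisor $\mathbb{R}$-linearly trivial over $Z$). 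One subtlety: to apply Lemma \ref{lem--can-bundle-formula} I need $A$, or a positive multiple, to play the role of the ample divisor in that lemma; since the conclusion only concerns the nef thresholds going to zero, I may freely replace $A$ by $(1+\lambda)A$ for $\lambda={\rm lim}_{i\to\infty}\lambda_{i}$ — but here $\lambda=0$ by hypothesis, so no replacement is needed, and I can take $(X,G)$ directly with $K_X+G\sim_{\mathbb{R},Z}\omega+A$.

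\textbf{Conclusion and the main obstacle.} Having set up $(X,G)$ and the $\pi$-ample $A$ with $K_{X}+G+A$... wait — $K_{X}+G\sim_{\mathbb{R},Z}\omega+A$ is already of the form (log canonical divisor)$+$(ample) only if we peel off a piece of $A$; so more precisely I would write $A\sim_{\mathbb{R},Z}A'+A''$ with $A',A''$ both $\pi$-ample, set $G':=G-A''$ with $(X,G')$ still a normal pair and ${\rm Nklt}(X,G')={\rm Nklt}(X,G)$ (choosing $A''$ small and general), so that $K_{X}+G'+A''\sim_{\mathbb{R},Z}\omega+A$; then Proposition \ref{prop--non-vanishing-kltmmp}/Theorem \ref{thm--non-vanishing-kltmmp-main} gives ${\rm Bs}|K_{X}+G'+A''/Z|_{\mathbb{R}}\cap{\rm Nklt}(X,G')=\emptyset$, and Theorem \ref{thm--from-minimodel-to-termi} applied to the scaled MMP (which has $\lambda_{i}\to 0$) yields $\lambda_{m}=0$ for some $m$. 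The main obstacle I anticipate is the bookkeeping in this peeling-off step: one must verify that after writing $\omega+A\sim_{\mathbb{R},Z}K_{X}+G'+A''$ the resulting $(K_{X}+G')$-MMP with scaling of $A''$ still coincides step-by-step with the original $(\omega+A)$-MMP with scaling of $A$ and has the same limiting threshold $0$ — this uses the scaling identities in Remark \ref{rem--mmp-basic} together with the fact that $A$ and $A''$ are $\mathbb{R}$-linearly equivalent up to an ample summand absorbed into the boundary, and care is needed because ${\rm Nlc}$ versus ${\rm Nklt}$ and the non-$\mathbb{Q}$-factorial setting must be tracked throughout.
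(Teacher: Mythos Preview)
Your approach is essentially the same as the paper's: reduce to Theorem \ref{thm--klt-minmodeltheory-main} by using Lemma \ref{lem--can-bundle-formula} to replace the quasi-log structure $[X,\omega]$ by a normal pair on $X$ with the same non-klt locus, and then invoke Remark \ref{rem--mmp-basic} to identify the given $(\omega+A)$-MMP with scaling of $A$ with an MMP for that normal pair.

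The only difference is in the bookkeeping that you yourself flag as the main obstacle. You apply Lemma \ref{lem--can-bundle-formula} with the full ample divisor $A$, obtaining $K_{X}+G\sim_{\mathbb{R},Z}\omega+A$, and then try to peel off a small general ample piece $A''$ by setting $G':=G-A''$; but there is no reason for $G-A''$ to be effective, since the components of $G$ produced by Lemma \ref{lem--can-bundle-formula} need not contain an ample divisor you can subtract. The paper sidesteps this entirely by applying Lemma \ref{lem--can-bundle-formula} with $\tfrac{1}{2}A$ instead of $A$: one gets a normal pair $(X,\Gamma)$ with $K_{X}+\Gamma\sim_{\mathbb{R},Z}\omega+\tfrac{1}{2}A$ and ${\rm Nklt}(X,\Gamma)={\rm Nqklt}(X,\omega)$, so that $K_{X}+\Gamma+\tfrac{1}{2}A\sim_{\mathbb{R},Z}\omega+A$ already has the required form $(\text{log canonical divisor})+(\text{ample})$. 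The given MMP is then, via Remark \ref{rem--mmp-basic}, a $(K_{X}+\Gamma+\tfrac{1}{2}A)$-MMP over $Z$ with scaling of $\tfrac{1}{2}A$ (the nef thresholds differ from the $\lambda_{i}$ only by a factor of $2$, so they still tend to $0$), and Theorem \ref{thm--klt-minmodeltheory-main} applies directly with no further manipulation. This removes the obstacle you anticipated.
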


\begin{proof}
We apply Lemma \ref{lem--can-bundle-formula} to $(Y,\Delta) \to [X,\omega]$ and $\frac{1}{2}A$. 
We get a normal pair $(X,\Gamma)$ such that 
$K_{X}+\Gamma \sim_{\mathbb{R},\,Z}\omega+\frac{1}{2}A$ and ${\rm Nklt}(X,\Gamma)$ is a closed subscheme of ${\rm Nqklt}(X,\omega)$. 
We consider $(X,\Gamma)$ and $\frac{1}{2}A$. 
Since 
$$K_{X}+\Gamma+\frac{1}{2}A \sim_{\mathbb{R},\,Z} \omega+A,$$
we have ${\rm NNef}(K_{X}+\Gamma+\frac{1}{2}A/Z) \cap {\rm Nklt}(X,\Gamma) = \emptyset$. 
Since $(\omega+A)|_{{\rm Nqklt}(X,\omega)}$ is semi-ample over $Z$, it follows that $(K_{X}+\Gamma+\frac{1}{2}A)|_{{\rm Nklt}(X,\Gamma)}$ is semi-ample over $Z$. 
By Remark \ref{rem--mmp-basic}, we may regard 
$X \dashrightarrow \cdots \dashrightarrow X_{i} \dashrightarrow \cdots$ as a sequence of steps of a $(K_{X}+\Gamma+\frac{1}{2}A)$-MMP over $Z$ with scaling of $\frac{1}{2}A$. 
We may apply Theorem \ref{thm--klt-minmodeltheory-main} to $X \to Z$, $(X,\Gamma)$, and $\frac{1}{2}A$, and we see that $\lambda_{m}=0$ for some $m$. 
\end{proof}

\begin{thm}\label{thm--mmp-lclocus-main}
Let $f\colon (Y,\Delta) \to [X,\omega]$ be a quasi-log scheme induced by a normal pair, and let $\pi \colon X \to Z$ be a  projective morphism of normal quasi-projective varieties. 
Let $A$ be a $\pi$-ample $\mathbb{R}$-divisor on $X$ such that $\omega+A$ is $\pi$-pseudo-effective. 
Suppose that ${\rm NNef}(\omega+A/Z) \cap{\rm Nqlc}(X,\omega) = \emptyset$. 
Suppose in addition that $(\omega+A)|_{{\rm Nqlc}(X,\omega)}$, which we think of an $\mathbb{R}$-line bundle on ${\rm Nqlc}(X,\omega)$, is semi-ample over $Z$. 
Then there exists a sequence of steps of an $(\omega+A)$-MMP over $Z$ with scaling of $A$ 
$$X=:X_{1} \dashrightarrow  X_{2} \dashrightarrow \cdots \dashrightarrow X_{n}$$
such that $\omega_{n}+A_{n}$ is semi-ample over $Z$, where $\omega_{n}$ and $A_{n}$ are the birational transforms of $\omega$ and $A$ on $X_{n}$ respectively.
\end{thm}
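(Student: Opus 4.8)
The plan is to combine the ``MMP for quasi-log schemes induced by normal pairs with scaling'' built in Section~\ref{sec--running-mmp} with the termination result obtained via reduction to the klt case. First I would run the MMP of Theorem~\ref{thm--mmp-nefthreshold-strict} applied to $f\colon (Y,\Delta)\to [X,\omega]$ and $A$ (after rescaling $A$ so that $\omega+\lambda_0 A$ is $\pi$-ample for some $\lambda_0>0$; this is harmless since $A$ is $\pi$-ample and $\omega+A$ is $\pi$-pseudo-effective). This produces a diagram of quasi-log schemes induced by normal pairs
$$
f_i\colon (Y_{k_i},\Delta_{k_i})\longrightarrow [X_i,\omega_i]
$$
together with a sequence of steps of an $\omega$-MMP over $Z$ with scaling of $A$, such that the nef thresholds $\lambda_i$ are strictly decreasing and $\omega_i+t A_i$ is ample over $Z$ for $t\in(\lambda_i,\lambda_{i-1})$, while $\omega_i+\lambda_{i-1}A_i$ is semi-ample over $Z$. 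Note that this is literally an $\omega$-MMP, hence (by Remark~\ref{rem--mmp-basic}) also an $(\omega+A)$-MMP over $Z$ with scaling of $A$, which is the kind of MMP the statement asks for.

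Next I would show the MMP terminates, i.e. $\lambda_m=0$ for some $m$. Set $\lambda:=\lim_{i\to\infty}\lambda_i$. If $\lambda>0$, replace $A$ by $A+\lambda A$ throughout (equivalently, shift so that the limiting threshold becomes $0$); the hypotheses on ${\rm NNef}$ and on the restriction to ${\rm Nqlc}(X,\omega)$ are preserved since $\lambda A$ is $\pi$-semi-ample. Then ${\rm lim}_{i\to\infty}\lambda_i=0$ and Lemma~\ref{lem--mmp-fibration-kltlocus} applies — after using Lemma~\ref{lem--can-bundle-formula} to pass from $[X,\omega]$ to a genuine normal pair $(X,\Gamma)$ with $K_X+\Gamma\sim_{\mathbb{R},Z}\omega+\tfrac12 A$ and ${\rm Nklt}(X,\Gamma)\subset{\rm Nqklt}(X,\omega)$, and then invoking Theorem~\ref{thm--klt-minmodeltheory-main} — to conclude $\lambda_m=0$ for some $m$. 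In the original (non-shifted) MMP this means the threshold hits its limit, contradicting the strict monotonicity $\lambda_i>\lambda_{i+1}$ guaranteed by Theorem~\ref{thm--mmp-nefthreshold-strict} (unless the MMP is already a finite sequence). Hence the MMP has only finitely many steps and $\omega_n+A_n$ is $\pi$-nef for the last $X_n$.

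Finally, once $\omega_n+A_n$ is $\pi$-nef, I would upgrade nefness to semi-ampleness using the abundance theorem for quasi-log schemes. The key point is that ${\rm NNef}(\omega+A/Z)\cap{\rm Nqlc}(X,\omega)=\emptyset$ and $(\omega+A)|_{{\rm Nqlc}(X,\omega)}$ is $\pi$-semi-ample; these properties are preserved along the MMP because each step is an isomorphism on a neighborhood of ${\rm Nqlc}$ (Theorem~\ref{thm--mmpstep-quasi-log}, via Theorem~\ref{thm--mmp-nefthreshold-strict}), so $(\omega_n+A_n)|_{{\rm Nqlc}(X_n,\omega_n)}$ is $\pi$-semi-ample. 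Writing $\omega_n+A_n=\omega_n+\tfrac12 A_n+\tfrac12 A_n$ with $\tfrac12 A_n$ $\pi$-ample, Theorem~\ref{thm--abundance-quasi-log} applied to $[X_n,\omega_n+\tfrac12 A_n]$ and $\tfrac12 A_n$ gives $\pi$-semi-ampleness of $\omega_n+A_n$, as desired.

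I expect the main obstacle to be the bookkeeping needed to legitimately invoke the termination input: one must verify that the $\omega$-MMP with scaling produced by Theorem~\ref{thm--mmp-nefthreshold-strict} has the same nef-threshold sequence as the $(\omega+A)$-MMP with scaling (a shift by $1$ in the scaling parameter), that the shift $A\rightsquigarrow (1+\lambda)A$ does not disturb the quasi-log structure or the ${\rm Nqlc}$/${\rm Nqklt}$ hypotheses, and that Lemma~\ref{lem--mmp-fibration-kltlocus} really applies to \emph{this} particular MMP rather than to some abstract one — i.e. that the MMP of Theorem~\ref{thm--mmp-nefthreshold-strict}, viewed downstairs on the $X_i$, is a sequence of steps of an $(\omega+A)$-MMP over $Z$ with scaling of $A$ in the precise sense of Definition~\ref{defn--mmp-fullgeneral}. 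Everything else is a direct citation of earlier results.
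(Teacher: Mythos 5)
There is a genuine gap, and it sits at the center of the argument rather than in the bookkeeping you flagged.

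The termination input you invoke, Lemma~\ref{lem--mmp-fibration-kltlocus} (and, via Lemma~\ref{lem--can-bundle-formula}, Theorem~\ref{thm--klt-minmodeltheory-main}), requires that ${\rm NNef}(\omega+A/Z)$ avoid ${\rm Nqklt}(X,\omega)$ and that $(\omega+A)|_{{\rm Nqklt}(X,\omega)}$ be semi-ample over $Z$. The theorem you are proving only assumes these conditions for ${\rm Nqlc}(X,\omega)$, and ${\rm Nqklt}$ is the much larger scheme obtained by adding the images of all lc centers of $(Y,\Delta)$. The discrepancy is not a formality: for instance, in the special case $Y=X$ and $\omega=K_X+\Delta$ with $(X,\Delta)$ lc but not klt, ${\rm Nqlc}$ is empty while ${\rm Nqklt}={\rm Nklt}(X,\Delta)$ can be anything, and Theorem~\ref{thm--mmp-lclocus-main} becomes the main result of Hashizume--Hu. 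That statement is obtained precisely by the special termination argument that your proposal omits. The paper's proof devotes Steps~2--8 to an induction on the dimension of lc centers (the argument of Fujino's special termination, run through the adjunction of Theorem~\ref{thm--dlt-nonlc-locus} and Theorem~\ref{thm--pair-inductive-adjunction}), whose whole purpose is to show that after finitely many steps the MMP becomes an isomorphism on a neighborhood of ${\rm Nqklt}(X_m,\omega_m)$, which is what then delivers the ${\rm Nqklt}$ hypotheses used in Step~9 to invoke Lemma~\ref{lem--mmp-fibration-kltlocus}. Without that work there is no way to verify the hypotheses of the lemma; Lemma~\ref{lem--can-bundle-formula} only translates ${\rm Nqklt}(X,\omega)$ into ${\rm Nklt}(X,\Gamma)$, it does not shrink it down to ${\rm Nqlc}$.

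Two smaller points. First, you apply Theorem~\ref{thm--mmp-nefthreshold-strict} to $[X,\omega]$ and $A$, but that theorem requires $\omega$ to be $\pi$-pseudo-effective, whereas the hypothesis is only that $\omega+A$ is; the paper applies it instead to $[X,\omega+A]$ with $A$ as the scaling divisor, which directly produces the $(\omega+A)$-MMP. Your ``literally an $\omega$-MMP'' reading and the appeal to Remark~\ref{rem--mmp-basic} go the wrong way: that remark passes from a $D$-MMP with scaling to a $(D-tA)$-MMP with scaling for $t\geq 0$, and an $\omega$-MMP step with $\lambda_i<1$ contracts $(\omega_i+A_i)$-\emph{positive} curves, hence is not a step of an $(\omega+A)$-MMP. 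Second, in the final abundance step you write $\tfrac12 A_n$ as a $\pi$-ample divisor on $X_n$, but the $\pi$-ampleness of $A$ is not preserved by the MMP (this is the first of the ``main difficulties'' cited in the introduction). The paper works around this by using the ampleness of $\omega_n+A_n+tA_n$ for $t\in(\lambda_n,\lambda_{n-1})$, which Theorem~\ref{thm--mmp-nefthreshold-strict} guarantees by construction, in the decomposition fed to Theorem~\ref{thm--abundance-quasi-log}.
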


\begin{proof}
By taking a dlt blow-up of $(\widetilde{Y},\widetilde{\Delta}) \to (Y,\Delta)$ and replacing $(Y,\Delta)$ with $(\widetilde{Y},\widetilde{\Delta})$, we may assume that $Y$ is $\mathbb{Q}$-factorial and $(Y,\Delta^{<1}+{\rm Supp}\,\Delta^{\geq 1})$ is dlt. 
By replacing $A$ with a general member of $|A/Z|_{\mathbb{R}}$, we may assume that $A$ is an effective $\mathbb{R}$-divisor and ${\rm Nklt}(Y,\Delta+f^{*}A)={\rm Nklt}(Y,\Delta)$ and ${\rm Nlc}(Y,\Delta+f^{*}A)={\rm Nlc}(Y,\Delta)$ hold as relations of closed subschemes of $Y$. 
We may further assume that $(Y,\Delta^{<1}+{\rm Supp}\,\Delta^{\geq 1}+f^{*}A)$ is a dlt pair whose lc centers are those of $(Y,\Delta^{<1}+{\rm Supp}\,\Delta^{\geq 1})$. 

We divide the proof into several steps. 

\begin{step4}\label{step1-thm-mmp-lclocus-main}
In this step, we construct a diagram used in the proof. 

By Theorem \ref{thm--mmp-nefthreshold-strict}, there exists a diagram
$$
\xymatrix{
\hspace{-5mm}(Y,\Delta+f^{*}A)=:(Y_{1},\Delta_{1}+H_{1})\ar@<6.5ex>[d]_{f=:f_{1}}\ar@{-->}[r]&\cdots \ar@{-->}[r]&(Y_{i},\Delta_{i}+H_{i})\ar[d]_{f_{i}}\ar@{-->}[r]&\cdots\\
[X,\omega+A]=:[X_{1},\omega_{1}+A_{1}]  \ar@{-->}[r]&\cdots \ar@{-->}[r]&[X_{i},\omega_{i}+A_{i}]\ar@{-->}[r]&\cdots
}
$$
over $Z$, where $A_{i}$ (resp.~$H_{i}$) is the birational transform of $A$ (resp.~$f^{*}A$) on $X_{i}$ (resp.~$Y_{i}$), such that 
\begin{itemize}
\item
$f_{i}\colon (Y_{i},\Delta_{i}+H_{i}) \to [X_{i},\omega_{i}+A_{i}]$ is a quasi-log scheme induced by a normal pair such that $Y_{i}$ and $X_{i}$ are projective over $Z$ and every $Y_{i}$ is $\mathbb{Q}$-factorial, 
\item
the sequence of maps
$$(Y_{1},\Delta_{1}+H_{1}) \dashrightarrow \cdots \dashrightarrow (Y_{i},\Delta_{i}+H_{i}) \dashrightarrow \cdots$$ is a sequence of steps of a $(K_{Y_{1}}+\Delta_{1}+H_{1})$-MMP over $Z$ with scaling of $H_{1}$, 
\item
the sequence of maps
$$X_{1} \dashrightarrow \cdots \dashrightarrow X_{i} \dashrightarrow \cdots$$
 is a sequence of steps of an $(\omega_{1}+A_{1})$-MMP over $Z$ with scaling of $A_{1}$, and
\item
if we put 
$$\lambda_{i}:={\rm inf}\{\mu \in \mathbb{R}_{\geq 0}\,| \,\text{$\omega_{i}+A_{i}+\mu A_{i}$ is nef over $Z$} \}$$ 
for each $i \geq 1$, then the following properties hold. 
\begin{itemize}
\item[$\circ$]
$\lambda_{i}>\lambda_{i+1}$ for all $i \geq 1$, 
\item[$\circ$]
$\omega_{i}+A_{i}+\lambda_{i-1} A_{i}$ is semi-ample over $Z$ for all $i \geq 1$, and 
\item[$\circ$]
$\omega_{i}+A_{i}+\bar{\lambda} A_{i}$ is ample over $Z$ for every $i \geq 1$ and all $\bar{\lambda} \in (\lambda_{i},\lambda_{i-1})$.  
\end{itemize}
\end{itemize}
Note that the birational map $Y_{i}\dashrightarrow Y_{i+1}$ is not necessarily a step of an MMP. 
In fact, $Y_{i}\dashrightarrow Y_{i+1}$ can be a finite sequence of steps of an MMP.  

Put $\lambda={\rm lim}_{i \to \infty}\lambda_{i}$. 
Replacing $A$ by a general member of $|(1+\lambda) A/Z|_{\mathbb{R}}$, we may assume $\lambda =0$. 
By construction, $\omega_{i}$ and $A_{i}$ are $\mathbb{R}$-Cartier for any $i \geq 1$. 
Moreover, $X_{i}\dashrightarrow X_{i+1}$ and $Y_{i}\dashrightarrow Y_{i+1}$ are birational contractions. 
By an induction on $i$, we see that 
$H_{i}=f_{i}^{*}A_{i}$ for all $i \geq 1$. 
Since ${\rm NNef}(\omega+A/Z) \cap{\rm Nqlc}(X,\omega) = \emptyset$, we have 
$${\rm NNef}(K_{Y_{1}}+\Delta_{1}+H_{1}/Z) \cap{\rm Nlc}(Y_{1},\Delta_{1}) = \emptyset.$$ 
By Remark \ref{rem--mmp-basic}, the birational map 
$Y_{1} \dashrightarrow Y_{i}$
is an isomorphism on a neighborhood of ${\rm Nlc}(Y,\Delta)$ for every $i \geq 1$. 
By construction, ${\rm Nlc}(Y_{1},\Delta_{1}+H_{1})={\rm Nlc}(Y_{1},\Delta_{1})$ as closed subschemes, and therefore ${\rm Nlc}(Y_{i},\Delta_{i}+H_{i})={\rm Nlc}(Y_{i},\Delta_{i})$ as closed subschemes for every $i \geq 1$. 
Moreover,
$$Y_{1} \dashrightarrow \cdots \dashrightarrow Y_{i} \dashrightarrow \cdots$$
is a sequence of steps of a $(K_{Y_{1}}+\Delta^{<1}_{1}+{\rm Supp}\,\Delta^{\geq 1}_{1}+H_{1})$-MMP over $Z$. 
We also see that $(Y_{i},\Delta^{<1}_{i}+{\rm Supp}\,\Delta^{\geq 1}_{i}+H_{i})$ is a dlt pair and the lc centers of the pair are those of $(Y_{i},\Delta^{<1}_{i}+{\rm Supp}\,\Delta^{\geq 1}_{i})$. 
This is because $(Y,\Delta^{<1}+{\rm Supp}\,\Delta^{\geq 1}+f^{*}A)$ is a dlt pair whose lc centers are those of $(Y,\Delta^{<1}+{\rm Supp}\,\Delta^{\geq 1})$. 
Thus ${\rm Nklt}(Y_{i},\Delta_{i}+H_{i})={\rm Nklt}(Y_{i},\Delta_{i})$ as closed subschemes of $Y_{i}$ for all $i \geq 1$. 

By these discussions, we may assume that ${\rm lim}_{i \to \infty}\lambda_{i}=0$ and  the quasi-log scheme induced by a normal pair $(Y_{i},\Delta_{i}+H_{i}) \to [X_{i},\omega_{i}+A_{i}]$ satisfies the following properties for every $i \geq 1$.
\begin{itemize}
\item
$H_{i}=f_{i}^{*}A_{i}$, and therefore $(Y_{i},\Delta_{i}) \to [X_{i},\omega_{i}]$ is a quasi-log scheme induced by a normal pair, 
\item
$(Y_{i},\Delta^{<1}_{i}+{\rm Supp}\,\Delta^{\geq 1}_{i}+H_{i})$ is a $\mathbb{Q}$-factorial dlt pair whose lc centers are those of $(Y_{i},\Delta^{<1}_{i}+{\rm Supp}\,\Delta^{\geq 1}_{i})$, and 
\item
${\rm Nlc}(Y_{i},\Delta_{i}+H_{i})={\rm Nlc}(Y_{i},\Delta_{i})$ and ${\rm Nklt}(Y_{i},\Delta_{i}+H_{i})={\rm Nklt}(Y_{i},\Delta_{i})$ as closed subschemes of $Y_{i}$. 
\end{itemize}
\end{step4}

\begin{step4}\label{step2-thm-mmp-lclocus-main}
We will apply the argument of the special termination \cite{fujino-sp-ter} to our situation. 
We will prove that for any integer $0 \leq d < {\rm dim}\,Y$, there exists $m_{d} \in \mathbb{Z}_{>0}$ such that the non-isomorphic locus of the $(K_{Y_{m_{d}}}+\Delta_{m_{d}}+H_{m_{d}})$-MMP over $Z$
$$(Y_{m_{d}},\Delta_{m_{d}}+H_{m_{d}}) \dashrightarrow  \cdots \dashrightarrow (Y_{i},\Delta_{i}+H_{i}) \dashrightarrow  \cdots$$
is disjoint from any lc center of $(Y_{m_{d}},\Delta_{m_{d}})$ whose dimension is less than or equal to $d$. 
We prove the statement by an induction on $d$. 

We can find $i_{0} \in \mathbb{Z}_{>0}$ such that for any $i \geq i_{0}$, the birational map $Y_{i} \dashrightarrow Y_{i+1}$ is an isomorphism on the generic point of any lc center of $(Y_{i},\Delta_{i})$. 
Therefore, we may set $m_{0}:=i_{0}$. 
The case $d=0$ of the above statement holds true, and we may assume that  $Y_{m_{0}} \dashrightarrow Y_{i}$ is an isomorphism on the generic point of any lc center of $(Y_{m_{0}},\Delta_{m_{0}})$ for any $i \geq m_{0}$. 
\end{step4}

\begin{step4}\label{step3-thm-mmp-lclocus-main}
We assume the existence of $m_{d} \in \mathbb{Z}_{>0}$ of the statement in Step \ref{step2-thm-mmp-lclocus-main}. 
We may assume $m_{d} \geq m_{0}$. 
We fix an arbitrary $(d+1)$-dimensional lc center $S_{m_{d}}$ of $(Y_{m_{d}},\Delta_{m_{d}})$. 
By the choice of $m_{0}$, for any $i \geq m_{d}$ there is an lc center $S_{i}$ of $(Y_{i},\Delta_{i})$ such that the birational map $Y_{m_{d}} \dashrightarrow Y_{i}$ induces a birational map $S_{m_{d}} \dashrightarrow S_{i}$ over $Z$. 
In this step, we construct a normal pair $(S_{i},\Delta_{S_{i}})$ and an $\mathbb{R}$-divisor $H_{S_{i}}$ on $S_{i}$ for each $i \geq m_{d}$. 

Because $(Y_{i}, \Delta^{<1}_{i}+{\rm Supp}\,\Delta^{\geq 1}_{i})$ is a $\mathbb{Q}$-factorial dlt pair, as in Theorem \ref{thm--dlt-nonlc-locus} we can define an $\mathbb{R}$-divisor $B_{S_{i}}$ and an $\mathbb{R}$-Cartier $\mathbb{R}$-divisor $G_{S_{i}}$ on $S_{i}$ by 
\begin{equation*}
\begin{split}
K_{S_{i}}+B_{S_{i}}=(K_{Y_{i}}+\Delta^{<1}_{i}+{\rm Supp}\,\Delta^{\geq 1}_{i})|_{S_{i}}, \quad {\rm and} \quad
G_{S_{i}}:= (\Delta^{>1}_{i}-{\rm Supp}\,\Delta^{>1}_{i})|_{S_{i}}. 
\end{split}
\end{equation*}
Then $K_{S_{i}}+B_{S_{i}}+G_{S_{i}}=(K_{Y_{i}}+\Delta_{i})|_{S_{i}}$. 
We define an $\mathbb{R}$-Cartier $\mathbb{R}$-divisor $H_{S_{i}}$ on $S_{i}$ by 
$$H_{S_{i}}:=H_{i}|_{S_{i}}.$$
Since $(Y_{i},\Delta^{<1}_{i}+{\rm Supp}\,\Delta^{\geq 1}_{i}+H_{i})$ is a dlt pair and the lc centers of the pair are those of $(Y_{i},\Delta^{<1}_{i}+{\rm Supp}\,\Delta^{\geq 1}_{i})$, we see that $(S_{i},B_{S_{i}}+H_{S_{i}})$ is a dlt pair whose lc centers are those of $(S_{i},B_{S_{i}})$. 
Since the inclusion ${\rm Supp}\,G_{S_{i}} \subset {\rm Supp}\,\Delta^{>1}_{i}|_{S_{i}}$ holds and the non-isomorphic locus of the $(K_{Y_{1}}+\Delta_{1}+H_{1})$-MMP is disjoint from ${\rm Nlc}(Y_{1},\Delta_{1})$, the birational map $S_{i} \dashrightarrow S_{i+1}$ is an isomorphism on an open subset containing ${\rm Supp}\,G_{S_{i}}$. 
Therefore, the birational transform of $G_{S_{i}}$ on $S_{i+1}$ is equal to $G_{S_{i+1}}$. 
By using the argument of the special termination \cite{fujino-sp-ter} and replacing $m_{d}$, we may assume that for any $i \geq m_{d}$, the birational map $S_{i} \dashrightarrow S_{i+1}$ is small and the birational transforms of $B_{S_{i}}$ and $H_{S_{i}}$ on $S_{i+1}$ are equal to $B_{S_{i+1}}$ and $H_{S_{i+1}}$, respectively. 
Since any lc center of $(S_{i},B_{S_{i}}+G_{S_{i}}+H_{S_{i}})$ is an lc center of $(Y_{i},\Delta_{i}+H_{i})$ whose dimension is less than or equal to $d$, the birational map $S_{i} \dashrightarrow S_{i+1}$ is an isomorphism on an open subset containing all the lc centers of $(S_{i}, B_{S_{i}}+G_{S_{i}})$. 
By recalling that $S_{i} \dashrightarrow S_{i+1}$ is an isomorphism around ${\rm Supp}\,G_{S_{i}}$, we see that $S_{i} \dashrightarrow S_{i+1}$ is an isomorphism on a neighborhood of ${\rm Nklt}(S_{i},B_{S_{i}}+G_{S_{i}})$. 

We set $\Delta_{S_{i}} =B_{S_{i}}+G_{S_{i}}$. 
By the above argument and Theorem \ref{thm--dlt-nonlc-locus}, for any $i \geq m_{d}$, the $\mathbb{R}$-divisors $\Delta_{S_{i}}$ and $H_{S_{i}}$ satisfy the following properties.
\begin{itemize}
\item
$(S_{i}, \Delta_{S_{i}})$ is a normal pair defined by adjunction
$K_{S_{i}}+\Delta_{S_{i}}=(K_{Y_{i}}+\Delta_{i})|_{S_{i}},$
\item
$(S_{i}, \Delta^{<1}_{S_{i}}+{\rm Supp}\,\Delta^{\geq 1}_{S_{i}})$ is a dlt pair defined by adjunction
$$K_{S_{i}}+\Delta^{<1}_{S_{i}}+{\rm Supp}\,\Delta^{\geq 1}_{S_{i}}=(K_{Y_{i}}+\Delta^{<1}_{i}+{\rm Supp}\,\Delta^{\geq 1}_{i})|_{S_{i}},$$ 
\item
the birational map $S_{i} \dashrightarrow S_{i+1}$ is small and an isomorphism on a neighborhood of ${\rm Nklt}(S_{i}, \Delta_{S_{i}})$, and 
\item
the birational transforms of $\Delta_{S_{i}}$ and $H_{S_{i}}$ on $S_{i+1}$ are equal to $\Delta_{S_{i+1}}$ and $H_{S_{i+1}}$, respectively. 
\end{itemize}
\end{step4}

\begin{step4}\label{step4-thm-mmp-lclocus-main}
In this step, we consider the case when the image of $S_{m_{d}}$ on $X_{m_{d}}$ is contained in ${\rm Nqlc}(X_{m_{d}}, \omega_{m_{d}})$. 

By Remark \ref{rem--mmp-basic} and the hypothesis ${\rm NNef}(\omega+A/Z) \cap{\rm Nqlc}(X,\omega) = \emptyset$ of Theorem \ref{thm--mmp-lclocus-main}, the non-isomorphic locus of the  $(\omega+A)$-MMP is disjoint from ${\rm Nqlc}(X,\omega)$. 
Thus, we have ${\rm NNef}(\omega_{i}+A_{i}/Z) \cap{\rm Nqlc}(X_{i},\omega_{i}) = \emptyset$. 
Since $K_{Y_{i}}+\Delta_{i}+H_{i}\sim_{\mathbb{R}}f_{i}^{*}(\omega_{i}+A_{i})$, if the image of $S_{m_{d}}$ on $X_{m_{d}}$ is contained in ${\rm Nqlc}(X_{m_{d}},\omega_{m_{d}})$, then  $K_{S_{i}}+\Delta_{S_{i}}+H_{S_{i}}$ is nef over $Z$ for any $i \geq m_{d}$. 
For any $m_{d} \leq i \leq j$, by taking a common resolution of $S_{i} \dashrightarrow S_{j}$ and using the negativity lemma, we see that any prime divisor $P'$ over $S_{i}$ satisfies
$$a(P',S_{i}, \Delta_{S_{i}}+H_{S_{i}})=a(P',S_{j}, \Delta_{S_{j}}+H_{S_{j}}).$$
Since $Y_{m_{d}} \dashrightarrow  \cdots \dashrightarrow Y_{i} \dashrightarrow  \cdots$
is a sequence of steps of a $(K_{Y_{m_{d}}}+\Delta_{m_{d}}+H_{m_{d}})$-MMP over $Z$, by  the same argument as \cite[Proof of Lemma 3.38]{kollar-mori}, the above equality on the discrepancies implies that $Y_{i} \dashrightarrow Y_{j}$ is an isomorphism on a neighborhood of $S_{i}$ for any $m_{d} \leq i \leq j$. 
\end{step4}

\begin{step4}\label{step5-thm-mmp-lclocus-main}
From this step to Step \ref{step7-thm-mmp-lclocus-main}, we consider the case when the image of $S_{m_{d}}$ on $X_{m_{d}}$ is not contained in ${\rm Nqlc}(X_{m_{d}},\omega_{m_{d}})$. 
Then the image of $S_{i}$ on $X_{i}$ is not contained in ${\rm Nqlc}(X_{i},\omega_{i})$ for any $i \geq m_{d}$. 
We note that this condition is used for Theorem \ref{thm--pair-inductive-adjunction}. 
In this step we construct a sequence of small birational maps 
$$T_{m_{d}} \dashrightarrow \cdots \dashrightarrow T_{i} \dashrightarrow T_{i+1} \dashrightarrow \cdots$$
that will be used to prove the existence of $m_{d+1} \in \mathbb{Z}_{>0}$ of the statement in Step \ref{step2-thm-mmp-lclocus-main}. 

For each $i \geq m_{d}$, we have the diagram 
$$
\xymatrix{
(Y_{i},\Delta_{i}+H_{i})\ar[d]_{f_{i}}\ar@{-->}[rr]&& (Y_{i+1},\Delta_{i+1}+H_{i+1})\ar[d]^{f_{i+1}} \\
[X_{i},\omega_{i}+A_{i}] \ar@{-->}[rr]\ar[dr]_{\varphi_{i}}&& [X_{i+1},\omega_{i+1}+A_{i+1}] \ar[dl]^{\varphi'_{i}}\\
&V_{i}
}
$$
over $Z$ such that $\varphi_{i} \colon X_{i} \to V_{i}$ and $\varphi'_{i} \colon X_{i+1} \to V_{i}$ form a step of an $(\omega_{i}+A_{i})$-MMP over $Z$ in Definition \ref{defn--mmp-fullgeneral}.
In other words, $\varphi_{i}$ is a projective birational morphism, $\varphi'_{i}$ is a projective small birational morphism, and both $-(\omega_{i}+A_{i})$ and $\omega_{i+1}+A_{i+1}$ are ample over $V_{i}$. 
Let $g_{i} \colon S_{i} \to T_{i}$ (resp.~$g_{i+1} \colon S_{i+1} \to T_{i+1}$) be the Stein factorization of the restriction $f_{i}|_{S_{i}}$ (resp.~$f_{i+1}|_{S_{i+1}}$). 
Let $\psi_{i} \colon T_{i} \to W_{i}$ be the Stein factorization of $T_{i} \to V_{i}$. 
Then the morphism $T_{i+1} \to V_{i}$ factors through a contraction $T_{i+1} \to W_{i}$, which we denote by $\psi'_{i}$. 
Let $\omega_{T_{i}}$ (resp.~$\omega_{T_{i+1}}$) be the pullback of $\omega_{i}$ (resp.~$\omega_{i+1}$) to $T_{i}$ (resp.~$T_{i+1}$), and 
let $A_{T_{i}}$ (resp.~$A_{T_{i+1}}$) be the pullback of $A_{i}$ (resp.~$A_{i+1}$) to $T_{i}$ (resp.~$T_{i+1}$). 
Then we have
$$g_{i}^{*}A_{T_{i}} \sim_{\mathbb{R}} H_{S_{i}} \qquad {\rm and} \qquad g_{i+1}^{*}A_{T_{i+1}} \sim_{\mathbb{R}} H_{S_{i+1}},$$
and we have the following diagram
$$
\xymatrix{
(S_{i},\Delta_{S_{i}}+H_{S_{i}})\ar[d]_{g_{i}}\ar@{-->}[rr]&& (S_{i+1},\Delta_{S_{i+1}}+H_{S_{i+1}})\ar[d]^{g_{i+1}} \\
[T_{i},\omega_{T_{i}}+A_{T_{i}}] \ar@{-->}[rr]\ar[dr]_{\psi_{i}}&& [T_{i+1},\omega_{T_{i+1}}+A_{T_{i+1}}] \ar[dl]^{\psi'_{i}}\\
&W_{i},
}
$$
where $(S_{i}, \Delta_{S_{i}}) \overset{g_{i}}{\longrightarrow} [T_{i},\omega_{T_{i}}]$ and $(S_{i+1}, \Delta_{S_{i+1}}) \overset{g_{i+1}}{\longrightarrow}  [T_{i+1},\omega_{T_{i+1}}]$ 
are quasi-log schemes induced by normal pairs (Theorem \ref{thm--pair-inductive-adjunction}), and the divisors $-(\omega_{T_{i}}+A_{T_{i}})$ and $\omega_{T_{i+1}}+A_{T_{i+1}}$ are ample over $W_{i}$. 
We recall that ${\rm lim}_{j \to \infty}\lambda_{j}=0$, where 
$$\lambda_{j}:={\rm inf}\{\mu \in \mathbb{R}_{\geq 0}\,| \,\text{$\omega_{j}+A_{j}+\mu A_{j}$ is nef over $Z$} \}.$$
Then $K_{S_{j}}+\Delta_{S_{j}}+H_{S_{j}}+t H_{S_{j}}$ is semi-ample over $Z$ for all $j > i$ and $t \in (\lambda_{j},\lambda_{j-1}]$. 
Because $S_{i} \dashrightarrow S_{j}$ is small and the birational transforms of $\Delta_{S_{i}}$ and $H_{S_{i}}$ on $S_{j}$ are equal to $\Delta_{S_{j}}$ and $H_{S_{j}}$ respectively, $K_{S_{i}}+\Delta_{S_{i}}+H_{S_{i}}$ is the limit of movable $\mathbb{R}$-divisors over $Z$. 
Hence, $\omega_{T_{i}}+A_{T_{i}}$ is pseudo-effective over $Z$. 
This implies that $\psi_{i}$ is birational. 
By construction of the diagram, $\psi'_{i} \circ g_{i+1}$ coincides with $\psi_{i} \circ g_{i}$ over the generic point of $W_{i}$. 
Thus $\psi'_{i}$ is also birational. 
If there exists a $\psi_{i}$-exceptional prime divisor $Q$ on $T_{i}$, then we can find a prime divisor $P$ on $S_{i}$ such that $g_{i}(P)=Q$ and $S_{i} \dashrightarrow S_{i+1}$ is not an isomorphism on the generic point of $P$. 
This contradicts the fact that $S_{i} \dashrightarrow S_{i+1}$ is small. 
Therefore $\psi_{i}$ is small. 
By the same argument, we see that $\psi'_{i}$ is small. 
Hence, $T_{i} \dashrightarrow T_{i+1}$ is small, and the birational transform of $\omega_{T_{i}}$ (resp.~$A_{T_{i}}$) on $T_{i+1}$ is equal to $\omega_{T_{i+1}}$ (resp.~$A_{T_{i+1}}$). 
Therefore, the sequence of birational maps
$$T_{m_{d}} \dashrightarrow \cdots \dashrightarrow T_{i} \dashrightarrow T_{i+1}\dashrightarrow \cdots$$
is a sequence of steps of an $(\omega_{T_{m_{d}}}+A_{T_{m_{d}}})$-MMP over $Z$ in Definition \ref{defn--mmp-fullgeneral}. 
Moreover, it follows that the $(\omega_{T_{m_{d}}}+A_{T_{m_{d}}})$-MMP is an $(\omega_{T_{m_{d}}}+A_{T_{m_{d}}})$-MMP over $Z$ with scaling of $A_{T_{m_{d}}}$ in Definition \ref{defn--mmp-fullgeneral}. 
Indeed, define
$$\nu_{i}:={\rm inf}\{\mu \in \mathbb{R}_{\geq 0}\,| \,\text{$\omega_{T_{i}}+A_{T_{i}}+\mu A_{T_{i}}$ is nef over $Z$} \}$$ 
for each $i \geq m_{d}$, and pick any curve $C_{i} \subset T_{i}$ contracted by $\psi_{i} \colon T_{i} \to W_{i}$. 
Then $\nu_{i} \leq \lambda_{i}$, where $\lambda_{i}$ is the nonnegative real number defined in Step \ref{step1-thm-mmp-lclocus-main}.
Now $(\omega_{T_{i}}+A_{T_{i}})\,\cdot\, C_{i}<0$ and $(\omega_{T_{i}}+A_{T_{i}}+\lambda_{i} A_{T_{i}})\,\cdot\, C_{i}=0$ by construction. 
They imply $(\omega_{T_{i}}+A_{T_{i}}+\nu_{i} A_{T_{i}})\,\cdot \, C_{i} \leq 0$. 
We also have $(\omega_{T_{i}}+A_{T_{i}}+\nu_{i} A_{T_{i}}) \, \cdot \, C_{i} \geq 0$ by the definition of $\nu_{i}$. 
Therefore, it follows that $(\omega_{T_{i}}+A_{T_{i}}+\nu_{i} A_{T_{i}})\, \cdot \, C_{i} = 0,$ which is the condition of the MMP with scaling in Definition \ref{defn--mmp-fullgeneral}. 

By these discussions, the sequence of small birational maps
$$T_{m_{d}} \dashrightarrow \cdots \dashrightarrow T_{i} \dashrightarrow T_{i+1}\dashrightarrow \cdots$$
is a sequence of steps of an $(\omega_{T_{m_{d}}}+A_{T_{m_{d}}})$-MMP over $Z$ with scaling of $A_{T_{m_{d}}}$.  
\end{step4}

\begin{step4}\label{step6-thm-mmp-lclocus-main}
As in Step \ref{step5-thm-mmp-lclocus-main}, we define 
$$\nu_{i}:={\rm inf}\{\mu \in \mathbb{R}_{\geq 0}\,| \,\text{$\omega_{T_{i}}+A_{T_{i}}+\mu A_{T_{i}}$ is nef over $Z$} \}$$ 
for each $i \geq m_{d}$. 
By construction of the diagram in Step \ref{step1-thm-mmp-lclocus-main} (see also Theorem \ref{thm--mmp-nefthreshold-strict}), there exists $\lambda' \in (\lambda_{m_{d}},\lambda_{m_{d}-1})$ such that $\omega_{T_{m_{d}}}+A_{T_{m_{d}}}+\lambda' A_{T_{m_{d}}}$ is ample over $Z$. 
In this step, we prove $\nu_{m'}=0$ for some $m' \geq m_{d}$. 
In other words, we prove that $T_{i} \dashrightarrow T_{i+1}$ is an isomorphism for every $i \geq m'$ (see also Remark \ref{rem--mmp-fullgeneral}). 
We will use Lemma \ref{lem--mmp-fibration-kltlocus}. 

We first prove that ${\rm NNef}(\omega_{T_{m_{d}}}+A_{T_{m_{d}}}/Z) \cap {\rm Nqklt}(T_{m_{d}}, \omega_{T_{m_{d}}})=\emptyset$. 
We pick an arbitrary $t \in (0,\lambda')$. 
We may write
$$(\omega_{T_{m_{d}}}+A_{T_{m_{d}}})+t(\omega_{T_{m_{d}}}+A_{T_{m_{d}}}+\lambda' A_{T_{m_{d}}})=(1+t)\left(\omega_{T_{m_{d}}}+A_{T_{m_{d}}}+\frac{\lambda't}{1+t} A_{T_{m_{d}}}\right).$$
Since ${\rm lim}_{i \to \infty}\lambda_{i}=0$, we can find $i'$ such that $\frac{\lambda't}{1+t} \in (\lambda_{i'},\lambda_{i'-1}]$. 
Since $\omega_{T_{i'}}+A_{T_{i'}}+\frac{\lambda't}{1+t} A_{T_{i'}}$ is semi-ample over $Z$ and 
$$K_{S_{i'}}+\Delta_{S_{i'}}+H_{S_{i'}}+\frac{\lambda't}{1+t} H_{S_{i'}}\sim_{\mathbb{R}}g_{i'}^{*}(\omega_{T_{i'}}+A_{T_{i'}}+\frac{\lambda't}{1+t} A_{T_{i'}}),$$
it follows that $K_{S_{i'}}+\Delta_{S_{i'}}+H_{S_{i'}}+\frac{\lambda't}{1+t} H_{S_{i'}}$ is semi-ample over $Z$. 
Recall that the birational map $S_{m_{d}} \dashrightarrow S_{i'}$ is small and an isomorphism on a neighborhood of ${\rm Nklt}(S_{m_{d}}, \Delta_{S_{m_{d}}})$. 
Thus
$${\rm Bs}|K_{S_{m_{d}}}+\Delta_{S_{m_{d}}}+H_{S_{m_{d}}}+\frac{\lambda't}{1+t} H_{S_{m_{d}}}/Z|_{\mathbb{R}} \cap {\rm Nklt}(S_{m_{d}}, \Delta_{S_{m_{d}}}) = \emptyset.$$
Since 
$$K_{S_{m_{d}}}+\Delta_{S_{m_{d}}}+H_{S_{m_{d}}}+\frac{\lambda't}{1+t} H_{S_{m_{d}}}\sim_{\mathbb{R}}g_{m_{d}}^{*}(\omega_{T_{m_{d}}}+A_{T_{m_{d}}}+\frac{\lambda't}{1+t} A_{T_{m_{d}}})$$
and $g_{m_{d}}({\rm Nklt}(S_{m_{d}}, \Delta_{S_{m_{d}}}))={\rm Nqklt}(T_{m_{d}}, \omega_{T_{m_{d}}})$ set-theoretically, we have
\begin{equation*}
\begin{split}
&{\rm Bs}|(\omega_{T_{m_{d}}}+A_{T_{m_{d}}})+t(\omega_{T_{m_{d}}}+A_{T_{m_{d}}}+\lambda' A_{T_{m_{d}}})/Z|_{\mathbb{R}} \cap {\rm Nqklt}(T_{m_{d}}, \omega_{T_{m_{d}}})\\
 =&{\rm Bs}|\omega_{T_{m_{d}}}+A_{T_{m_{d}}}+\frac{\lambda't}{1+t} A_{T_{m_{d}}}/Z|_{\mathbb{R}} \cap {\rm Nqklt}(T_{m_{d}}, \omega_{T_{m_{d}}}) = \emptyset.
\end{split}
\end{equation*}
Since $t \in \mathbb{R}_{>0}$ is arbitrary, ${\rm NNef}(\omega_{T_{m_{d}}}+A_{T_{m_{d}}}/Z) \cap {\rm Nqklt}(T_{m_{d}}, \omega_{T_{m_{d}}})=\emptyset$. 

Next we prove that $(\omega_{T_{m_{d}}}+A_{T_{m_{d}}})|_{{\rm Nqklt}(T_{m_{d}}, \omega_{T_{m_{d}}})}$ is semi-ample over $Z$. 
We recall the hypothesis of Theorem \ref{thm--mmp-lclocus-main} that 
${\rm NNef}(\omega+A/Z) \cap {\rm Nqlc}(X,\omega) = \emptyset$. 
This condition and Remark \ref{rem--mmp-basic} show that the non-isomorphic locus of the  $(\omega+A)$-MMP is disjoint from ${\rm Nqlc}(X,\omega)$. 
Recall also that $(\omega+A)|_{{\rm Nqlc}(X,\omega)}$ is semi-ample over $Z$, which is the hypothesis of Theorem \ref{thm--mmp-lclocus-main}.
Therefore, $(\omega_{m_{d}}+A_{m_{d}})|_{{\rm Nqlc}(X_{m_{d}},\omega_{m_{d}})}$ is semi-ample over $Z$. 
Applying Theorem \ref{thm--pair-inductive-adjunction} to $[Y_{m_{d}},\Delta_{m_{d}}] \to (X_{m_{d}},\omega_{m_{d}})$ and $S_{m_{d}}$, we see that $T_{m_{d}} \to X_{m_{d}}$ induces a morphism ${\rm Nqlc}(T_{m_{d}}, \omega_{T_{m_{d}}}) \to {\rm Nqlc}(X_{m_{d}},\omega_{m_{d}})$ of closed subschemes. Then $(\omega_{T_{m_{d}}}+A_{T_{m_{d}}})|_{{\rm Nqlc}(T_{m_{d}}, \omega_{T_{m_{d}}})}$ is the pullback of $(\omega_{m_{d}}+A_{m_{d}})|_{{\rm Nqlc}(X_{m_{d}},\omega_{m_{d}})}$ by this morphism. 
Thus, $(\omega_{T_{m_{d}}}+A_{T_{m_{d}}})|_{{\rm Nqlc}(T_{m_{d}}, \omega_{T_{m_{d}}})}$ is semi-ample over $Z$. 
If ${\rm Nqklt}(T_{m_{d}}, \omega_{T_{m_{d}}})$ coincides with ${\rm Nqlc}(T_{m_{d}}, \omega_{T_{m_{d}}})$ as a closed subscheme of $T_{m_{d}}$, then $(\omega_{T_{m_{d}}}+A_{T_{m_{d}}})|_{{\rm Nqklt}(T_{m_{d}}, \omega_{T_{m_{d}}})}$ is clearly semi-ample over $Z$. If ${\rm Nqklt}(T_{m_{d}}, \omega_{T_{m_{d}}})$ does not coincide with ${\rm Nqlc}(T_{m_{d}}, \omega_{T_{m_{d}}})$ as a closed subscheme of $T_{m_{d}}$, then there exists a qlc center of $[T_{m_{d}}, \omega_{T_{m_{d}}}]$. 
We put 
$$T'={\rm Nqklt}(T_{m_{d}}, \omega_{T_{m_{d}}})$$ 
and we apply the adjunction for quasi-log schemes (\cite[Theorem 6.1.2 (i)]{fujino-book}). We may define the structure of a quasi-log scheme $[T',\omega_{T'}]$ on $T'$ such that $\omega_{T'}\sim_{\mathbb{R}} \omega_{T_{m_{d}}}|_{T'}$, ${\rm Nqlc}(T', \omega_{T'}) = {\rm Nqlc}(T_{m_{d}}, \omega_{T_{m_{d}}})$, and the qlc centers of $[T',\omega_{T'}]$ are exactly the qlc centers of $[T_{m_{d}}, \omega_{T_{m_{d}}}]$ contained in $T'$. Now $(\omega_{T_{m_{d}}}+A_{T_{m_{d}}})|_{T'}$ is nef over $Z$ because the non-nef locus of $\omega_{T_{m_{d}}}+A_{T_{m_{d}}}$ over $Z$ does not intersect $T'={\rm Nqklt}(T_{m_{d}}, \omega_{T_{m_{d}}})$. 
Then $$(1+\lambda')(\omega_{T_{m_{d}}}+A_{T_{m_{d}}})|_{T'}\sim_{\mathbb{R}} \lambda'\left(\omega_{T'}+\frac{1}{\lambda'}(\omega_{T_{m_{d}}}+A_{T_{m_{d}}}+\lambda' A_{T_{m_{d}}})|_{T'}\right).$$ Hence, we may apply Theorem \ref{thm--abundance-quasi-log} to $[T',\omega_{T'}]$ and $\frac{1}{\lambda'}(\omega_{T_{m_{d}}}+A_{T_{m_{d}}}+\lambda' A_{T_{m_{d}}})|_{T'}$, and we see that $$(\omega_{T_{m_{d}}}+A_{T_{m_{d}}})|_{{\rm Nqklt}(T_{m_{d}}, \omega_{T_{m_{d}}})}=(\omega_{T_{m_{d}}}+A_{T_{m_{d}}})|_{T'}$$ is semi-ample over $Z$. 

We have
$$(1+\lambda')(\omega_{T_{m_{d}}}+A_{T_{m_{d}}}) = \lambda'\left(\omega_{T_{m_{d}}}+\frac{1}{\lambda'}(\omega_{T_{m_{d}}}+A_{T_{m_{d}}}+\lambda' A_{T_{m_{d}}})\right).$$
We put $A':=\frac{1}{\lambda'}(\omega_{T_{m_{d}}}+A_{T_{m_{d}}}+\lambda' A_{T_{m_{d}}})$.  
Then
$$\omega_{T_{m_{d}}}+A'=\frac{1+\lambda'}{\lambda'}(\omega_{T_{m_{d}}}+A_{T_{m_{d}}}).$$
By Remark \ref{rem--mmp-basic}, the $(\omega_{T_{m_{d}}}+A_{T_{m_{d}}})$-MMP over $Z$ with scaling of $A_{T_{m_{d}}}$
$$T_{m_{d}} \dashrightarrow \cdots \dashrightarrow T_{i}\dashrightarrow \cdots$$
is also a sequence of steps of an $(\omega_{T_{m_{d}}}+A')$-MMP over $Z$ with scaling of $A'$. 

By the above discussions, we may apply Lemma \ref{lem--mmp-fibration-kltlocus} to the $(\omega_{T_{m_{d}}}+A')$-MMP over $Z$ with scaling of $A'$. 
By Lemma \ref{lem--mmp-fibration-kltlocus}, we have $\nu_{m'} =0$ for some $m' \geq m_{d}$. 
\end{step4}

\begin{step4}\label{step7-thm-mmp-lclocus-main}
In this step we prove that $Y_{i} \dashrightarrow Y_{j}$ is an isomorphism on a neighborhood of $S_{i}$ for any $m' \leq i \leq j$, where $m'$ is the integer in Step \ref{step6-thm-mmp-lclocus-main}, which satisfies $\nu_{m'} =0$. 

We recall that
$$(Y_{m_{d}},\Delta_{m_{d}}+H_{m_{d}}) \dashrightarrow  \cdots \dashrightarrow (Y_{i},\Delta_{i}+H_{i}) \dashrightarrow  \cdots$$
is a sequence of steps of a $(K_{Y_{m_{d}}}+\Delta_{m_{d}}+H_{m_{d}})$-MMP over $Z$ and the restriction of this sequence to $S_{m_{d}}$ is a sequence of small birational maps
$$(S_{m_{d}},\Delta_{S_{m_{d}}}+H_{S_{m_{d}}}) \dashrightarrow  \cdots \dashrightarrow (S_{i},\Delta_{S_{i}}+H_{S_{i}}) \dashrightarrow  \cdots$$
such that 
$$K_{S_{i}}+\Delta_{S_{i}}+H_{S_{i}} \sim_{\mathbb{R}} g_{i}^{*}(\omega_{T_{i}}+A_{T_{i}}),$$
where $(S_{i}, \Delta_{S_{i}}) \overset{g_{i}}{\longrightarrow} [T_{i},\omega_{T_{i}}]$ is the structure of a quasi-log scheme induced by a normal pair. 
By Step \ref{step6-thm-mmp-lclocus-main}, the divisor $K_{S_{i}}+\Delta_{S_{i}}+H_{S_{i}}$ is nef over $Z$ for any $i \geq m'$. 
By the same argument as in Step \ref{step4-thm-mmp-lclocus-main}, the birational map $Y_{i} \dashrightarrow Y_{j}$ is an isomorphism on a neighborhood of $S_{i}$ for any $m' \leq i \leq j$. 
\end{step4}

\begin{step4}\label{step8-thm-mmp-lclocus-main}
In this step we complete the argument of the special termination by finding $m_{d+1} \in \mathbb{Z}_{>0}$ in the statement of Step \ref{step2-thm-mmp-lclocus-main}. 

By Step \ref{step4-thm-mmp-lclocus-main} and Step \ref{step7-thm-mmp-lclocus-main}, for any $(d+1)$-dimensional lc center of $(Y_{m_{d}},\Delta_{m_{d}})$, we can find $m_{d+1} \in \mathbb{Z}_{>0}$ in the statement of Step \ref{step2-thm-mmp-lclocus-main}. 
In other words, the non-isomorphic locus of the $(K_{Y_{m_{d+1}}}+\Delta_{m_{d+1}}+H_{m_{d+1}})$-MMP over $Z$
$$(Y_{m_{d+1}},\Delta_{m_{d+1}}+H_{m_{d+1}}) \dashrightarrow  \cdots \dashrightarrow (Y_{i},\Delta_{i}+H_{i}) \dashrightarrow  \cdots$$
is disjoint from any lc center of $(Y_{m_{d+1}},\Delta_{m_{d+1}})$ whose dimension is less than or equal to $d+1$. 
By an induction on $0 \leq d < {\rm dim}\,Y$, there exists $m \in \mathbb{Z}_{>0}$ such that each step of the $(K_{Y_{m}}+\Delta_{m}+H_{m})$-MMP over $Z$
$$(Y_{m},\Delta_{m}+H_{m}) \dashrightarrow  \cdots \dashrightarrow (Y_{i},\Delta_{i}+H_{i}) \dashrightarrow  \cdots$$
is an isomorphism on an open subset containing all lc centers of $(Y_{m},\Delta_{m})$. 
Since the non-isomorphic locus of the $(K_{Y_{m}}+\Delta_{m}+H_{m})$-MMP is disjoint from ${\rm Nlc}(Y_{m},\Delta_{m})$, the non-isomorphic locus of the $(K_{Y_{m}}+\Delta_{m}+H_{m})$-MMP is disjoint from ${\rm Nklt}(Y_{m},\Delta_{m})$. 
\end{step4} 

\begin{step4}\label{step9-thm-mmp-lclocus-main}
In this step we prove $\lambda_{n}=0$ for some $n \geq m$. 

By the same argument as in Step \ref{step6-thm-mmp-lclocus-main}, we have ${\rm NNef}(\omega_{m}+A_{m}/Z) \cap {\rm Nqklt}(X_{m}, \omega_{m})=\emptyset$ and $(\omega_{m}+A_{m})|_{{\rm Nqklt}(X_{m}, \omega_{m})}$ is semi-ample over $Z$. 
We pick $\lambda'' \in (\lambda_{m},\lambda_{m-1})$ such that $\omega_{m}+A_{m}+\lambda'' A_{m}$ is ample over $Z$. 
We have
$$(1+\lambda'')(\omega_{m}+A_{m}) = \lambda''\left(\omega_{m}+\frac{1}{\lambda''}(\omega_{m}+A_{m}+\lambda'' A_{m})\right).$$
We put 
$$A'':=\frac{1}{\lambda''}(\omega_{m}+A_{m}+\lambda'' A_{m}).$$
By Remark \ref{rem--mmp-basic}, the $(\omega_{m}+A_{m})$-MMP over $Z$ with scaling of $A_{m}$
$$X_{m} \dashrightarrow \cdots \dashrightarrow X_{i}\dashrightarrow \cdots$$
is also a sequence of steps of an $(\omega_{m}+A'')$-MMP over $Z$ with scaling of $A''$. 
We may apply Lemma \ref{lem--mmp-fibration-kltlocus} to the $(\omega_{m}+A'')$-MMP over $Z$ with scaling of $A''$. 
By Lemma \ref{lem--mmp-fibration-kltlocus}, we have $\lambda_{n} =0$ for some $n \geq m$. 
\end{step4}

We fix the smallest $n \in \mathbb{Z}_{>0}$ such that $\lambda_{n}=0$. 
By the same argument as in Step \ref{step9-thm-mmp-lclocus-main} in this proof, we have 
$$(1+\frac{\lambda_{n-1}}{2})(\omega_{n}+A_{n}) = \frac{\lambda_{n-1}}{2}\left(\omega_{n}+\frac{2}{\lambda_{n-1}}(\omega_{n}+A_{n}+\frac{\lambda_{n-1}}{2} A_{n})\right)$$
and $(\omega_{n}+A_{n})|_{{\rm Nqklt}(X_{n}, \omega_{n})}$ is semi-ample over $Z$. 
Now $\lambda_{n-1}>0$ and $\omega_{n}+A_{n}+\frac{\lambda_{n-1}}{2} A_{n}$ is ample over $Z$ by construction in Step \ref{step1-thm-mmp-lclocus-main}.  
By applying Theorem \ref{thm--abundance-quasi-log} to $[X_{n}, \omega_{n}]$ and $\frac{2}{\lambda_{n-1}}(\omega_{n}+A_{n}+\frac{\lambda_{n-1}}{2} A_{n})$, we see that the $\mathbb{R}$-divisor $\omega_{n}+A_{n}$ is semi-ample over $Z$. 
We finish the proof. 
\end{proof}

\begin{thm}\label{thm--termination-lcmmp-main}
Let $\pi \colon X \to Z$ be a projective morphism of normal quasi-projective varieties. 
Let $(X,\Delta)$ be a normal pair and let $A$ be a $\pi$-ample $\mathbb{R}$-divisor on $X$ such that $K_{X}+\Delta+A$ is $\pi$-pseudo-effective. 
Suppose that ${\rm NNef}(K_{X}+\Delta+A/Z) \cap {\rm Nlc}(X,\Delta) = \emptyset$. 
Suppose in addition that $(K_{X}+\Delta+A)|_{{\rm Nlc}(X,\Delta)}$, which we think of an $\mathbb{R}$-line bundle on ${\rm Nlc}(X,\Delta)$, is semi-ample over $Z$. 
We put $(X_{1},B_{1}):=(X,\Delta+A)$. 
Let $H$ be a $\pi$-ample $\mathbb{R}$-divisor on $X$. 
Then there exists a sequence of steps of a $(K_{X_{1}}+B_{1})$-MMP over $Z$ with scaling of $H$ 
$$
\xymatrix{
(X_{1},B_{1})\ar@{-->}[r]&\cdots \ar@{-->}[r]& (X_{i},B_{i})\ar@{-->}[rr]\ar[dr]&& (X_{i+1},B_{i+1})\ar[dl] \ar@{-->}[r]&\cdots\ar@{-->}[r]&(X_{m},B_{m}), \\
&&&W_{i}
}
$$
where $(X_{i},B_{i}) \to W_{i} \leftarrow (X_{i+1},B_{i+1})$ is a step of a $(K_{X_{i}}+B_{i})$-MMP over $Z$, such that
\begin{itemize}
\item
the non-isomorphic locus of the MMP is disjoint from ${\rm Nlc}(X,\Delta)$, 
\item
$\rho(X_{i}/W_{i})=1$ for every $i \geq 1$, and
\item
$K_{X_{m}}+B_{m}$ is semi-ample over $Z$. 
\end{itemize}
Moreover, if $X$ is $\mathbb{Q}$-factorial, then all $X_{i}$ in the MMP are also $\mathbb{Q}$-factorial. 
\end{thm}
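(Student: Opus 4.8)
The plan is to deduce Theorem \ref{thm--termination-lcmmp-main} from Theorem \ref{thm--mmp-lclocus-main} by regarding the given normal pair as a quasi-log scheme in the tautological way. Concretely, set $\omega := K_{X}+\Delta$ and consider the identity morphism ${\rm id}_{X}\colon (X,\Delta) \to [X,K_{X}+\Delta]$, which is a quasi-log scheme induced by a normal pair as in Definition \ref{defn--lc-trivial-fib-quasi-log}; here ${\rm Nqlc}(X,\omega)={\rm Nlc}(X,\Delta)$ and ${\rm Nqklt}(X,\omega)={\rm Nklt}(X,\Delta)$ as closed subschemes. The hypotheses of Theorem \ref{thm--termination-lcmmp-main} translate verbatim into those of Theorem \ref{thm--mmp-lclocus-main}: ${\rm NNef}(\omega+A/Z)\cap{\rm Nqlc}(X,\omega)=\emptyset$ and $(\omega+A)|_{{\rm Nqlc}(X,\omega)}$ is semi-ample over $Z$. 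Applying Theorem \ref{thm--mmp-lclocus-main} yields a sequence of steps of an $(\omega+A)$-MMP over $Z$ with scaling of $A$, terminating at $X_{n}$ with $\omega_{n}+A_{n}$ semi-ample over $Z$. Since $\omega+A=K_{X}+\Delta+A=K_{X_{1}}+B_{1}$, this is exactly a sequence of steps of a $(K_{X_{1}}+B_{1})$-MMP over $Z$.

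Next I would upgrade this MMP to one with the three bulleted refinements. For the non-isomorphic locus: by the hypothesis ${\rm NNef}(K_{X}+\Delta+A/Z)\cap{\rm Nlc}(X,\Delta)=\emptyset$ and the last item of Remark \ref{rem--mmp-basic} (the map $X\dashrightarrow X_{i}$ is an isomorphism away from ${\rm NNef}(K_{X}+\Delta+A/Z)$), the non-isomorphic locus of the whole MMP is disjoint from ${\rm Nlc}(X,\Delta)$. For the relative Picard number and the passage to scaling of a given $\pi$-ample $H$ rather than $A$: I would invoke Corollary \ref{cor--mmpwithscaling-normalpair} together with the cone theorem \cite[Theorem 6.7.3]{fujino-book} and Lemma \ref{lem--extremal-ray}. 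The point is that Theorem \ref{thm--mmp-lclocus-main} produces \emph{some} $(K_{X_{1}}+B_{1})$-MMP reaching a model on which the log canonical divisor is semi-ample; but the construction in Section \ref{sec--running-mmp} (Theorem \ref{thm--mmpstep-quasi-log}, Remark \ref{rem--mmp-fibration}) shows each extremal contraction can be taken with $\rho(X_{i}/W_{i})=1$, and the MMP can be run with scaling of the prescribed $H$. Here one uses that for any finitely many steps the nef thresholds with scaling of $H$ form a finite set, and the model reached by the $H$-scaled MMP is still a good minimal model of $(X_{1},B_{1})$ over $Z$, hence $K_{X_{m}}+B_{m}$ is semi-ample over $Z$.

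The $\mathbb{Q}$-factoriality statement follows from the preservation of $\mathbb{Q}$-factoriality under steps of the MMP, as in \cite[Proposition 3.36 and Proposition 3.37]{kollar-mori}; this is already recorded in Corollary \ref{cor--mmp-nomralpair-Qfacdlt} and Corollary \ref{cor--mmpwithscaling-normalpair}, so no new work is needed once the MMP is realized as a genuine step-by-step MMP with $\rho=1$ contractions.

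The main obstacle I anticipate is the bookkeeping needed to reconcile the \emph{abstract} termination output of Theorem \ref{thm--mmp-lclocus-main} (a finite sequence of steps in the weak sense of Definition \ref{defn--mmp-fullgeneral}, where $\rho(X_{i}/W_{i})$ need not be $1$) with the \emph{concrete} ``usual step'' format demanded in the statement (each contraction of relative Picard number one, and scaling by the prescribed $H$ instead of $A$). This requires decomposing each weak step into finitely many $\rho=1$ contractions via the cone theorem for quasi-log schemes, checking that the resulting longer sequence is still a $(K_{X_{1}}+B_{1})$-MMP with scaling of $H$ — for which one must verify the nef-threshold compatibility condition in Definition \ref{defn--mmp-fullgeneral} at each sub-step — and confirming that the terminal model is unchanged so that semi-ampleness of $K_{X_{m}}+B_{m}$ persists. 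All the needed tools (Lemma \ref{lem--extremal-ray}, Theorem \ref{thm--mmpstep-quasi-log}, Remark \ref{rem--mmp-fibration}, Corollary \ref{cor--mmpwithscaling-normalpair}) are already in place, so this is a matter of careful assembly rather than a genuinely new difficulty.
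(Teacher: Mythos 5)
You correctly identify Theorem \ref{thm--mmp-lclocus-main} as the key termination engine, and your reduction to the tautological quasi-log scheme $(X,\Delta)\to[X,K_X+\Delta]$ is exactly what the paper does. However, your ``upgrade by decomposition'' strategy is not the paper's argument and, as sketched, does not close the gap you yourself flag at the end.

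The paper's proof has a fundamentally different logical structure. It does \emph{not} try to refine the weak MMP coming out of Theorem \ref{thm--mmp-lclocus-main} into a $\rho=1$ MMP with scaling of $H$. Instead: (1) it first reduces to the case $H=A$ by a one-line substitution --- choose $\epsilon>0$ with $A-\epsilon H$ semi-ample, take $\Delta'\in|\Delta+A-\epsilon H/Z|_{\mathbb R}$ with ${\rm Nlc}(X,\Delta')={\rm Nlc}(X,\Delta)$, and replace $(\Delta,A,H)$ by $(\Delta',\epsilon H,\epsilon H)$ --- which you do not do and which makes the whole scaling issue disappear; (2) it then runs a \emph{new} MMP via Corollary \ref{cor--mmp-nomralpair-Qfacdlt}, which by construction already has $\rho(X_i/W_i)=1$, non-isomorphic locus disjoint from ${\rm Nlc}$, $\mathbb Q$-factoriality preserved, and the crucial extra property that if the run is infinite then $\lambda\neq\lambda_i$ for every $i$; (3) it invokes Theorem \ref{thm--mmp-lclocus-main} only as a \emph{comparison} device: given its output $(X'_1,B'_1)\dashrightarrow\cdots\dashrightarrow(X'_n,B'_n)$ with $K_{X'_n}+B'_n$ semi-ample, one takes a common resolution of $X_1\dashrightarrow X_m\dashrightarrow X'_l$ for suitable indices and applies the negativity lemma to show $f_m^*(K_{X_m}+B_m+tA_m)=f'^*_l(K_{X'_l}+B'_l+tA'_l)$ for $t$ in an interval, hence $K_{X_m}+B_m+\lambda'_lA_m$ is nef, hence $\lambda_m=\lambda$, and then the ``$\lambda\neq\lambda_i$ unless terminated'' property forces $\lambda=0$ and $K_{X_m}+B_m$ semi-ample. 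This comparison is the real content of the proof and is completely absent from your proposal.

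Your alternative route --- decomposing each weak step of Theorem \ref{thm--mmp-lclocus-main} into $\rho=1$ sub-steps --- has genuine issues you underestimate. After contracting one extremal ray inside the extremal face, the intermediate variety is not one of the varieties in the original sequence, and you must reverify pseudo-effectivity, the non-nef-locus condition, and the existence of the relative lc model there; moreover, checking that the lengthened sequence remains an MMP \emph{with scaling of $H$} requires verifying, at every sub-step, the condition $(D_i+\lambda_iA_i)\cdot C_i=0$ of Definition \ref{defn--mmp-fullgeneral}, which does not follow from the face contraction being $(\omega_i+A_i+\lambda_iA_i)$-trivial. Remark \ref{rem--mmp-fibration}, which you cite for $\rho=1$, in fact records $\rho(Y^{(i)}/W^{(i)})=1$ on the \emph{upper} ($Y$-)level of the diagram in Theorem \ref{thm--mmpstep-quasi-log}, not on the $X$-level where you need it. The paper sidesteps all of this by never decomposing: it builds the good MMP from scratch via Corollary \ref{cor--mmp-nomralpair-Qfacdlt} and then proves \emph{a posteriori} that it terminates.
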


\begin{proof}
We pick $\epsilon \in \mathbb{R}_{>0}$ such that $A-\epsilon H$ is $\pi$-semi-ample. 
Then we can find a member $\Delta'$ of $|\Delta+A-\epsilon H/Z|_{\mathbb{R}}$ such that ${\rm Nlc}(X,\Delta')={\rm Nlc}(X,\Delta)$ as closed subschemes of $X$. 
By replacing $\Delta$, $A$, and $H$ with $\Delta'$, $\epsilon H$, and $\epsilon H$ respectively, we may assume $A= H$. 

By Corollary \ref{cor--mmp-nomralpair-Qfacdlt}, there exists a sequence of steps of a $(K_{X}+B)$-MMP over $Z$ with scaling of $A$ 
$$
\xymatrix{
(X_{1},B_{1})\ar@{-->}[r]&\cdots \ar@{-->}[r]& (X_{i},B_{i})\ar@{-->}[rr]\ar[dr]&& (X_{i+1},B_{i+1})\ar[dl] \ar@{-->}[r]&\cdots, \\
&&&W_{i}
}
$$
where $(X_{i},B_{i}) \to W_{i} \leftarrow (X_{i+1},B_{i+1})$ is a step of a $(K_{X_{i}}+B_{i})$-MMP over $Z$, satisfying the following properties.
\begin{itemize}
\item
The non-isomorphic locus of the MMP is disjoint from ${\rm Nlc}(X,\Delta)$, 
\item
$\rho(X_{i}/W_{i})=1$ for every $\geq 1$, 
\item
if we put 
$$\lambda_{i}:={\rm inf}\{\mu \in \mathbb{R}_{\geq 0}\,| \,\text{$K_{X_{i}}+B_{i}+\mu A_{i}$ is nef over $Z$} \}$$  
for each $i \geq 1$ and $\lambda:={\rm lim}_{i \to \infty}\lambda_{i}$, then the MMP terminates after finitely many steps or otherwise $\lambda \neq \lambda_{i}$ for every $i \geq 1$, and
\item 
if $X$ is $\mathbb{Q}$-factorial, then all $X_{i}$ in the MMP are also $\mathbb{Q}$-factorial. 
\end{itemize}
It is sufficient to prove that $\lambda_{m}=0$ for some $m$ and $K_{X_{m}}+B_{m}$ is semi-ample over $Z$. 

By Theorem \ref{thm--mmp-lclocus-main}, there exists a sequence of steps of an $(K_{X}+B)$-MMP over $Z$ with scaling of $A$ 
$$(X_{1},B_{1})=:(X'_{1},B'_{1})  \dashrightarrow  (X'_{2},B'_{2}) \dashrightarrow \cdots \dashrightarrow (X'_{n}, B'_{n})$$
such that $K_{X'_{n}}+B'_{n}$ is semi-ample over $Z$. 
We put 
$$\lambda'_{j}:={\rm inf}\{\mu \in \mathbb{R}_{\geq 0}\,| \,\text{$K_{X'_{j}}+B'_{j}+\mu A'_{j}$ is nef over $Z$} \}$$  
for each $1 \leq j \leq n$, where $A'_{j}$ is the birational transform of $A$ on $X'_{j}$. 
Then we can find $l$ such that $\lambda'_{l} \leq \lambda <\lambda'_{l-1}$. 
For this $l$, we can find $m$ such that $\lambda_{m}<\lambda'_{l-1}$ and $\lambda_{m}<\lambda_{m-1}$. 
Such $m$ exists since $\lambda ={\rm lim}_{i \to \infty}\lambda_{i}$. 
We put $\lambda'':= {\rm min}\{\lambda_{m-1}, \lambda'_{l-1}\}$. 
Then $\lambda_{m}<\lambda''$.  
Let $f_{1}\colon Y \to X_{1}$, $f_{m} \colon Y \to X_{m}$, and $f'_{l} \colon Y \to X'_{l}$ be a common resolution of $X_{1} \dashrightarrow X_{m}\dashrightarrow X'_{l}$. 
We have 
$$\lambda'_{l}\leq \lambda \leq \lambda_{m}<\lambda''\leq \lambda'_{l-1}.$$
For any $t \in (\lambda_{m},\lambda'')$, we can write
$$f_{1}^{*}(K_{X_{1}}+B_{1}+t A_{1})=f_{m}^{*}(K_{X_{m}}+B_{m}+t A_{m})+E^{(t)}$$
with an effective $f_{m}$-exceptional $\mathbb{R}$-divisor $E^{(t)}$ on $Y$. 
We can also write
$$f_{1}^{*}(K_{X_{1}}+B_{1}+t A_{1})=f'^{*}_{l}(K_{X'_{l}}+B'_{l}+t A'_{l})+F^{(t)}$$
with an effective $f'_{l}$-exceptional $\mathbb{R}$-divisor $F^{(t)}$ on $Y$. 
Then 
$$f_{m}^{*}(K_{X_{m}}+B_{m}+t A_{m})+E^{(t)}=f'^{*}_{l}(K_{X'_{l}}+B'_{l}+t A'_{l})+F^{(t)}.$$
Since $f_{m}^{*}(K_{X_{m}}+B_{m}+t A_{m})$ and $f'^{*}_{l}(K_{X'_{l}}+B'_{l}+t A'_{l})$ are nef over $Z$, the negativity lemma implies $E^{(t)}=F^{(t)}$ for any $t \in (\lambda_{m},\lambda'')$. 
Then 
$$f_{m}^{*}(K_{X_{m}}+B_{m}+t A_{m})=f'^{*}_{l}(K_{X'_{l}}+B'_{l}+t A'_{l})$$
for any $t \in (\lambda_{m},\lambda'')$, and therefore 
$$f_{m}^{*}(K_{X_{m}}+B_{m}+\lambda'_{l} A_{m})=f'^{*}_{l}(K_{X'_{l}}+B'_{l}+\lambda'_{l} A'_{l}).$$
This shows that $K_{X_{m}}+B_{m}+\lambda'_{l} A_{m}$ is nef over $Z$. 
By the choice of $l$ and the definition of $\lambda_{m}$, we have $\lambda_{m}\leq \lambda'_{l} \leq \lambda$. 
This implies $\lambda_{m}=\lambda=0$ because the $(K_{X}+B)$-MMP 
$$(X_{1},B_{1}) \dashrightarrow\cdots \dashrightarrow  (X_{i},B_{i}) \dashrightarrow \cdots $$
terminates after finitely many steps or otherwise $\lambda \neq \lambda_{i}$ for every $i \geq 1$. 
Then $l=n$ since $\lambda'_{l} \leq \lambda=0$. 
Now we have
$$f_{m}^{*}(K_{X_{m}}+B_{m})=f'^{*}_{n}(K_{X'_{n}}+B'_{n})$$
and $K_{X'_{n}}+B'_{n}$ is semi-ample over $Z$. 
Thus, $K_{X_{m}}+B_{m}$ is semi-ample over $Z$. 
We finish the proof. 
\end{proof}

\begin{thm}\label{thm--termination-all-lcmmp}
Let $\pi \colon X \to Z$ be a projective morphism of normal quasi-projective varieties. 
Let $(X,\Delta)$ be a normal pair and let $A$ be a $\pi$-ample $\mathbb{R}$-divisor on $X$ such that $K_{X}+\Delta+A$ is $\pi$-pseudo-effective. 
Suppose that ${\rm NNef}(K_{X}+\Delta+A/Z) \cap {\rm Nlc}(X,\Delta) = \emptyset$. 
Suppose in addition that $(K_{X}+\Delta+A)|_{{\rm Nlc}(X,\Delta)}$, which we think of an $\mathbb{R}$-line bundle on ${\rm Nlc}(X,\Delta)$, is semi-ample over $Z$. 
Let $H$ be a $\pi$-ample $\mathbb{R}$-divisor on $X$, and let 
$$(X,\Delta+A)=:(X_{1},\Delta_{1}+A_{1}) \dashrightarrow \cdots \dashrightarrow (X_{i},\Delta_{i}+A_{i}) \dashrightarrow \cdots$$
be a sequence of steps of a $(K_{X}+\Delta+A)$-MMP over $Z$ with scaling of $H$ such that if we put 
$$\lambda_{i}:=\{\mu \in \mathbb{R}_{\geq 0}\,| \,\text{$(K_{X_{i}}+\Delta_{i}+A_{i})+\mu H_{i}$ is nef over $Z$} \}$$
for each $i \geq 1$, then ${\rm lim}_{i \to \infty} \lambda_{i}=0$. 
Then $\lambda_{m}=0$ for some $m$ and $(X_{m},\Delta_{m}+A_{m})$ is a good minimal model of $(X,\Delta+A)$ over $Z$. 
\end{thm}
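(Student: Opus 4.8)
The plan is to deduce termination of the given program from the terminating program supplied by Theorem \ref{thm--mmp-lclocus-main}, using the hypothesis $\lim_{i\to\infty}\lambda_i=0$ to align the two at a common nef threshold. First I would normalize. Since $A$ is $\pi$-ample I pick $\epsilon\in\mathbb{R}_{>0}$ with $A-\epsilon H$ $\pi$-semi-ample and a member $\Delta'\in|\Delta+A-\epsilon H/Z|_{\mathbb{R}}$ with ${\rm Nlc}(X,\Delta')={\rm Nlc}(X,\Delta)$; replacing $(\Delta,A,H)$ by $(\Delta',\epsilon H,\epsilon H)$ I may assume $A=H$. By Remark \ref{rem--mmp-basic} the given sequence is still a $(K_X+\Delta+A)$-MMP over $Z$ with scaling of $A$, the non-nef-locus and semi-ampleness hypotheses survive (they depend only on the $\mathbb{R}$-linear class over $Z$ and on ${\rm Nlc}(X,\Delta)$), and $\lim_{i\to\infty}\lambda_i=0$ is unchanged. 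Viewing $(X,\Delta)$ as the quasi-log scheme $(X,\Delta)\to[X,K_X+\Delta]$, with ${\rm Nqlc}={\rm Nlc}(X,\Delta)$, Theorem \ref{thm--mmp-lclocus-main} produces a \emph{finite} sequence of steps of a $(K_{X_1}+B_1)$-MMP over $Z$ with scaling of $A$, say $(X_1,B_1)=:(X'_1,B'_1)\dashrightarrow\cdots\dashrightarrow(X'_n,B'_n)$, with $K_{X'_n}+B'_n$ semi-ample over $Z$ (here $B'_j=\Delta'_j+A'_j$ and $K_{X'_j}+B'_j$ is the birational transform of $K_{X_1}+B_1$). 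Letting $\lambda'_j$ be the scaling nef thresholds of this program, a step with $\lambda'_j=0$ is impossible, since then $-(K_{X'_j}+B'_j)$ would be both ample and numerically trivial over its contraction base; hence $\lambda'_j>0$ for $j<n$ and $\lambda'_n=0$.

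Next I would run the comparison. If $\lambda_1=0$ then $K_X+\Delta+A$ is nef, $n=1$, and $K_X+\Delta+A$ is semi-ample, finishing. Otherwise, if some $\lambda_i=0$ I take $m$ minimal with $\lambda_m=0$ (so $\lambda_{m-1}>0$); if no $\lambda_i$ vanishes, then $\lambda_i\to0$ forces infinitely many strict drops $\lambda_m<\lambda_{m-1}$, and I choose such an $m$ with moreover $\lambda_m<\lambda'_{n-1}$. In all cases $\lambda_m<\lambda'':=\min\{\lambda_{m-1},\lambda'_{n-1}\}$. Take a common resolution $f_1\colon Y\to X_1$, $f_m\colon Y\to X_m$, $f'_n\colon Y\to X'_n$ of $X_1\dashrightarrow X_m$ and $X_1\dashrightarrow X'_n$. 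For $t\in(\lambda_m,\lambda'')$ the first $m-1$ steps of the given MMP and the first $n-1$ steps of the $X'$-MMP are all $(K+B+tA)$-negative (step $j$ is $(K+B+\lambda_jA)$-trivial on contracted curves, $t<\lambda_j$, and $A$ is ample), so
$$f_1^{*}(K_{X_1}+B_1+tA_1)=f_m^{*}(K_{X_m}+B_m+tA_m)+E^{(t)}=f'^{*}_{n}(K_{X'_n}+B'_n+tA'_n)+F^{(t)}$$
with $E^{(t)}\ge0$ $f_m$-exceptional and $F^{(t)}\ge0$ $f'_n$-exceptional. Since $t>\lambda_m$ and $t>0=\lambda'_n$, both $K_{X_m}+B_m+tA_m$ and $K_{X'_n}+B'_n+tA'_n$ are nef over $Z$, hence nef over $X_m$ and over $X'_n$ (the relevant contracted curves lie in fibres over $Z$). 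The negativity lemma then forces $E^{(t)}=F^{(t)}$, so $f_m^{*}(K_{X_m}+B_m+tA_m)=f'^{*}_{n}(K_{X'_n}+B'_n+tA'_n)$ for all $t$ in the interval; both sides being affine in $t$, the equality holds identically, and at $t=0$ it reads $f_m^{*}(K_{X_m}+B_m)=f'^{*}_{n}(K_{X'_n}+B'_n)$, which is semi-ample over $Z$. Therefore $K_{X_m}+B_m$ is semi-ample over $Z$; in particular it is nef over $Z$, so $\lambda_m=0$.

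It remains to observe that $(X_m,\Delta_m+A_m)=(X_m,B_m)$ is a good minimal model of $(X,\Delta+A)$ over $Z$: the semi-ampleness of $K_{X_m}+B_m$ over $Z$ was just shown, and the discrepancy conditions of Definition \ref{defn--minmodel} (increase of discrepancies, strict along divisors contracted by $X_1\dashrightarrow X_m$, and the prescribed coefficients on $X_m$) hold by the negativity lemma because the given sequence is a sequence of steps of a $(K_{X_1}+B_1)$-MMP over $Z$.

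The decisive geometric input, namely the existence of \emph{some} terminating $(K_{X_1}+B_1)$-MMP with scaling ending in a semi-ample divisor, is already provided by Theorem \ref{thm--mmp-lclocus-main}; the content here is purely the transfer of this conclusion to the \emph{given} arbitrary MMP with scaling of $H$. The main obstacle I expect is the bookkeeping in that transfer: matching the two programs at a common nef threshold (this is exactly where $\lim_{i\to\infty}\lambda_i=0$ enters, guaranteeing that the given program attains nefness of $K+B+tA$ for every positive $t$), and justifying the identification $E^{(t)}=F^{(t)}$ via the negativity lemma, which rests on the elementary but essential remark that a divisor nef over $Z$ is automatically nef over $X_m$ and over $X'_n$. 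No new hard geometry beyond Section \ref{sec--running-mmp} and Theorem \ref{thm--mmp-lclocus-main} should be required.
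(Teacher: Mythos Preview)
Your proposal is correct and follows essentially the same approach as the paper, which simply states that the proof of Theorem \ref{thm--termination-lcmmp-main} works without change. You have written out that argument explicitly, with the natural simplification that since $\lim_{i\to\infty}\lambda_i=0$ is given as a hypothesis here, one can go directly to the final index $l=n$ of the auxiliary MMP rather than first locating an intermediate $l$ with $\lambda'_l\le\lambda<\lambda'_{l-1}$ as in the proof of Theorem \ref{thm--termination-lcmmp-main}.
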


\begin{proof}
The proof of Theorem \ref{thm--termination-lcmmp-main} works without any change. 
\end{proof}

\subsection{Proofs of corollaries}

In this subsection we prove corollaries.

\begin{cor}\label{cor--nonvan-lc-main}
Let $\pi \colon X \to Z$ be a projective morphism of normal quasi-projective varieties. 
Let $(X,\Delta)$ be a normal pair and let $A$ be a $\pi$-ample $\mathbb{R}$-divisor on $X$ such that $K_{X}+\Delta+A$ is $\pi$-pseudo-effective. 
Suppose that ${\rm NNef}(K_{X}+\Delta+A/Z) \cap {\rm Nlc}(X,\Delta) = \emptyset$. 
Suppose in addition that $(K_{X}+\Delta+A)|_{{\rm Nlc}(X,\Delta)}$, which we think of an $\mathbb{R}$-line bundle on ${\rm Nlc}(X,\Delta)$, is semi-ample over $Z$. 
Then ${\rm Bs}|K_{X}+\Delta+A/Z|_{\mathbb{R}} \cap {\rm Nlc}(X,\Delta) = \emptyset$. 
\end{cor}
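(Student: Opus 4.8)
The plan is to deduce Corollary \ref{cor--nonvan-lc-main} directly from Theorem \ref{thm--termination-lcmmp-main}. We are in exactly the situation of that theorem, so applying it (with, say, $H := A$ as the $\pi$-ample divisor used for scaling) produces a sequence of steps of a $(K_{X_1}+B_1)$-MMP over $Z$
$$
(X_1,B_1)\dashrightarrow\cdots\dashrightarrow(X_m,B_m),\qquad (X_1,B_1)=(X,\Delta+A),
$$
whose non-isomorphic locus is disjoint from ${\rm Nlc}(X,\Delta)$ and such that $K_{X_m}+B_m$ is semi-ample over $Z$.

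The key point is then to transport the base-point-freeness statement from $X_m$ back to $X$ along this MMP. Let $\phi\colon X\dashrightarrow X_m$ denote the composite birational contraction. Since $K_{X_m}+B_m$ is semi-ample over $Z$, we have ${\rm Bs}|K_{X_m}+B_m/Z|_{\mathbb{R}}=\emptyset$; in particular ${\rm Bs}|K_{X_m}+B_m/Z|_{\mathbb{R}}\cap{\rm Nlc}(X_m,\Delta_m)=\emptyset$ where $\Delta_m$ is the birational transform of $\Delta$ (so that $B_m=\Delta_m+A_m$ and ${\rm Nlc}(X_m,\Delta_m)$ corresponds to ${\rm Nlc}(X,\Delta)$ under $\phi$, because $\phi$ is an isomorphism near ${\rm Nlc}(X,\Delta)$). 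Next I would argue that the stable base locus only grows under the inverse of a birational contraction in the sense that is needed here: for each step $X_i\to W_i\leftarrow X_{i+1}$, pulling an effective $\mathbb{R}$-divisor $E_{i+1}\sim_{\mathbb{R},Z}K_{X_{i+1}}+B_{i+1}$ back to $X_i$ via a common resolution and pushing down produces an effective $\mathbb{R}$-divisor $\sim_{\mathbb{R},Z}K_{X_i}+B_i$ whose support, restricted to the open locus where $X_i\dashrightarrow X_{i+1}$ is an isomorphism, is the transform of ${\rm Supp}\,E_{i+1}$. Composing, and using that $\phi$ is an isomorphism on a neighborhood $U$ of ${\rm Nlc}(X,\Delta)$, one gets that for every $E_m\in|K_{X_m}+B_m/Z|_{\mathbb{R}}$ there is $E_1\in|K_X+\Delta+A/Z|_{\mathbb{R}}$ with $E_1|_U=(\phi^{-1}_*E_m)|_U$. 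Taking $E_m=0$ (possible by semi-ampleness) yields $E_1|_U=0$, hence ${\rm Bs}|K_X+\Delta+A/Z|_{\mathbb{R}}\cap U=\emptyset$, and since ${\rm Nlc}(X,\Delta)\subset U$ we conclude
$$
{\rm Bs}|K_X+\Delta+A/Z|_{\mathbb{R}}\cap{\rm Nlc}(X,\Delta)=\emptyset.
$$

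The main obstacle is the bookkeeping in the previous paragraph: one must be careful that the MMP steps in Theorem \ref{thm--termination-lcmmp-main} are only birational contractions (the $\varphi_i'$ are small, the $\varphi_i$ merely birational), so $K_{X_i}+B_i$ need not be $\mathbb{R}$-Cartier's pullback of anything nice, and $|K_{X_i}+B_i/Z|_{\mathbb{R}}$ can genuinely be larger than the transform of $|K_{X_{i+1}}+B_{i+1}/Z|_{\mathbb{R}}$. However, this is exactly the standard fact that an MMP does not modify the complement of the stable base locus near the locus where it is an isomorphism — the same mechanism recorded in Remark \ref{rem--mmp-basic} (the birational map $X\dashrightarrow X_i$ is an isomorphism on an open set whose complement lies in ${\rm NNef}(D/Z)\subset{\rm Bs}|D/Z|_{\mathbb{R}}$) — so it suffices to invoke, step by step, that each $\varphi_i$-negative contraction together with the small map $\varphi_i'$ pulls back sections of $K_{X_{i+1}}+B_{i+1}$ to sections of $K_{X_i}+B_i$ over $W_i$, which is immediate since $-(K_{X_i}+B_i)$ is $\varphi_i$-ample and $K_{X_{i+1}}+B_{i+1}$ is $\varphi_i'$-ample. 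I would phrase the final argument so as to avoid this entirely if possible: simply observe that $\phi\colon X\dashrightarrow X_m$ is an isomorphism over a neighborhood of ${\rm Nlc}(X,\Delta)$, that $K_{X_m}+B_m$ is semi-ample hence base-point-free over $Z$, and that for any $x\notin{\rm Bs}|K_{X_m}+B_m/Z|_{\mathbb{R}}$ lying in that neighborhood, the corresponding point of $X$ is not in ${\rm Bs}|K_X+\Delta+A/Z|_{\mathbb{R}}$ because a section not vanishing at $\phi(x)$ pulls back (through the common resolutions, adding only $\phi^{-1}$-exceptional effective divisors) to a section not vanishing at $x$. This gives the claim.
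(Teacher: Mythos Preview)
Your approach is correct and is exactly the paper's: the paper's entire proof is ``This immediately follows from Theorem \ref{thm--termination-lcmmp-main},'' and you have simply unpacked that implication. One slip to fix: you cannot ``take $E_m=0$,'' since semi-ampleness of $K_{X_m}+B_m$ over $Z$ does not mean $K_{X_m}+B_m\sim_{\mathbb{R},Z}0$; the correct version is your final formulation, where for each $x\in{\rm Nlc}(X,\Delta)$ you choose $E_m\in|K_{X_m}+B_m/Z|_{\mathbb{R}}$ with $\phi(x)\notin{\rm Supp}\,E_m$ and pull back through a common resolution (using that $g^*(K_X+\Delta+A)=g_m^*(K_{X_m}+B_m)+F$ with $F\geq 0$ and $g_{m}$-exceptional, so $g_*F$ is supported in the non-isomorphic locus of $\phi$, which avoids ${\rm Nlc}(X,\Delta)$).
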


\begin{proof}
This immediately follows from Theorem \ref{thm--termination-lcmmp-main}. 
\end{proof}

\begin{cor}\label{cor--mmp-from-nonvan-fullgeneral}
Let $\pi \colon X \to Z$ be a projective morphism of normal quasi-projective varieties. 
Let $(X,\Delta)$ be a normal pair and let $A$ be a $\pi$-ample $\mathbb{R}$-divisor on $X$ such that $K_{X}+\Delta+A$ is $\pi$-pseudo-effective. 
Suppose that ${\rm Bs}|K_{X}+\Delta+A/Z|_{\mathbb{R}} \cap {\rm Nlc}(X,\Delta) = \emptyset$. 
We put $(X_{1},B_{1}):=(X,\Delta+A)$. 
Let $H$ be a $\pi$-ample $\mathbb{R}$-divisor on $X$. 
Then there exists a sequence of steps of a $(K_{X_{1}}+B_{1})$-MMP over $Z$ with scaling of $H$ 
$$
\xymatrix{
(X_{1},B_{1})\ar@{-->}[r]&\cdots \ar@{-->}[r]& (X_{i},B_{i})\ar@{-->}[rr]\ar[dr]&& (X_{i+1},B_{i+1})\ar[dl] \ar@{-->}[r]&\cdots\ar@{-->}[r]&(X_{m},B_{m}), \\
&&&W_{i}
}
$$
where $(X_{i},B_{i}) \to W_{i} \leftarrow (X_{i+1},B_{i+1})$ is a step of a $(K_{X_{i}}+B_{i})$-MMP over $Z$, such that
\begin{itemize}
\item
the non-isomorphic locus of the MMP is disjoint from ${\rm Nlc}(X,\Delta)$, 
\item
$\rho(X_{i}/W_{i})=1$ for every $i \geq 1$, and
\item
$K_{X_{m}}+B_{m}$ is semi-ample over $Z$. 
\end{itemize}
Moreover, if $X$ is $\mathbb{Q}$-factorial, then all $X_{i}$ in the MMP are also $\mathbb{Q}$-factorial. 
\end{cor}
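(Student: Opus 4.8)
The plan is to deduce Corollary \ref{cor--mmp-from-nonvan-fullgeneral} directly from Theorem \ref{thm--termination-lcmmp-main}. The only gap between the two statements is that Theorem \ref{thm--termination-lcmmp-main} assumes ${\rm NNef}(K_{X}+\Delta+A/Z) \cap {\rm Nlc}(X,\Delta) = \emptyset$ together with the semi-ampleness of $(K_{X}+\Delta+A)|_{{\rm Nlc}(X,\Delta)}$ over $Z$, whereas here we only assume ${\rm Bs}|K_{X}+\Delta+A/Z|_{\mathbb{R}} \cap {\rm Nlc}(X,\Delta) = \emptyset$. So the first and essentially only task is to show that this weaker hypothesis implies the two hypotheses of Theorem \ref{thm--termination-lcmmp-main}.

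The implication ${\rm NNef}(K_{X}+\Delta+A/Z) \subset {\rm Bs}|K_{X}+\Delta+A/Z|_{\mathbb{R}}$ is immediate from Remark \ref{rem--nonnef-relation}, so ${\rm Bs}|K_{X}+\Delta+A/Z|_{\mathbb{R}} \cap {\rm Nlc}(X,\Delta) = \emptyset$ gives ${\rm NNef}(K_{X}+\Delta+A/Z) \cap {\rm Nlc}(X,\Delta) = \emptyset$ for free. For the semi-ampleness of the restriction to ${\rm Nlc}(X,\Delta)$: since the stable base locus of $K_{X}+\Delta+A$ over $Z$ does not meet ${\rm Nlc}(X,\Delta)$, there is an element $E \in |K_{X}+\Delta+A/Z|_{\mathbb{R}}$ whose support is disjoint from ${\rm Nlc}(X,\Delta)$. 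Then $(K_{X}+\Delta+A)|_{{\rm Nlc}(X,\Delta)} \sim_{\mathbb{R},\,Z} E|_{{\rm Nlc}(X,\Delta)} = 0$ as an $\mathbb{R}$-line bundle, i.e.\ the restriction is $\pi|_{{\rm Nlc}(X,\Delta)}$-trivial (a finite $\mathbb{R}_{>0}$-linear combination of the trivial invertible sheaf suffices), hence in particular semi-ample over $Z$. More carefully, one writes $K_{X}+\Delta+A \sim_{\mathbb{R},\,Z} E$ with $E\ge 0$ supported away from ${\rm Nlc}(X,\Delta)$; restricting the defining $\mathbb{R}$-linear equivalence to the closed subscheme ${\rm Nlc}(X,\Delta)$ (on which $E$ restricts to zero) gives the claim.

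With both hypotheses of Theorem \ref{thm--termination-lcmmp-main} verified, I would simply invoke that theorem with the given $\pi$-ample $\mathbb{R}$-divisor $H$, obtaining the desired sequence of steps of a $(K_{X_{1}}+B_{1})$-MMP over $Z$ with scaling of $H$, terminating in $(X_{m},B_{m})$ with $K_{X_{m}}+B_{m}$ semi-ample over $Z$, with each $(X_{i},B_{i}) \to W_{i} \leftarrow (X_{i+1},B_{i+1})$ a step of a $(K_{X_{i}}+B_{i})$-MMP with $\rho(X_{i}/W_{i})=1$, the non-isomorphic locus disjoint from ${\rm Nlc}(X,\Delta)$, and preservation of $\mathbb{Q}$-factoriality. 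This reproduces all the asserted conclusions verbatim.

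I do not expect any genuine obstacle here: the corollary is a strict weakening of the input hypotheses of Theorem \ref{thm--termination-lcmmp-main} packaged via the elementary inclusion ${\rm NNef} \subset {\rm Bs}|\cdot|_{\mathbb{R}}$ and the observation that a stable base locus disjoint from a closed subscheme forces the restricted class to be trivial there. The only point requiring a line of care is the restriction of an $\mathbb{R}$-linear equivalence to the (possibly non-reduced) closed subscheme ${\rm Nlc}(X,\Delta)$ — one should phrase this in terms of $\mathbb{R}$-line bundles on ${\rm Nlc}(X,\Delta)$ as in Definition \ref{defn--R-line-bundle-scheme}, noting that $\mathcal{O}_{X}(E)|_{{\rm Nlc}(X,\Delta)} \cong \mathcal{O}_{{\rm Nlc}(X,\Delta)}$ whenever ${\rm Supp}\,E \cap {\rm Nlc}(X,\Delta) = \emptyset$ — but this is routine.
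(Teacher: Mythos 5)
Your overall strategy --- reduce to Theorem~\ref{thm--termination-lcmmp-main} by verifying its two hypotheses from ${\rm Bs}|K_{X}+\Delta+A/Z|_{\mathbb{R}} \cap {\rm Nlc}(X,\Delta) = \emptyset$ --- is exactly the route the paper takes (the paper's proof is just ``immediately follows from Theorem~\ref{thm--termination-lcmmp-main}''), and the first half (the inclusion ${\rm NNef} \subset {\rm Bs}|\cdot|_{\mathbb{R}}$ from Remark~\ref{rem--nonnef-relation}) is correct. But your argument for the second hypothesis contains a genuine gap.

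You claim that ${\rm Bs}|K_{X}+\Delta+A/Z|_{\mathbb{R}} \cap {\rm Nlc}(X,\Delta) = \emptyset$ produces a single $E \in |K_{X}+\Delta+A/Z|_{\mathbb{R}}$ with ${\rm Supp}\,E \cap {\rm Nlc}(X,\Delta) = \emptyset$, and hence that $(K_{X}+\Delta+A)|_{{\rm Nlc}(X,\Delta)}$ is $\mathbb{R}$-linearly trivial over $Z$. This is false. The stable base locus being disjoint from $Y := {\rm Nlc}(X,\Delta)$ only says that for each point $y \in Y$ there is \emph{some} $E_{y}$ missing $y$; the map $E \mapsto {\rm Supp}\,E$ takes unions (not intersections) under convex combinations, so one cannot combine the $E_{y}$ into a single divisor missing all of $Y$. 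Concretely, if ${\rm Nlc}(X,\Delta)$ contains a curve $C$ with $C \to Z$ constant and $(K_{X}+\Delta+A)\cdot C > 0$ (for instance whenever $K_{X}+\Delta+A$ happens to be $\pi$-ample and ${\rm Nlc}(X,\Delta)$ has positive fiber dimension over $Z$, in which case ${\rm Bs}|K_{X}+\Delta+A/Z|_{\mathbb{R}} = \emptyset$), then every $E \in |K_{X}+\Delta+A/Z|_{\mathbb{R}}$ satisfies $E \cdot C > 0$, so ${\rm Supp}\,E$ meets $C \subset {\rm Nlc}(X,\Delta)$, and moreover $(K_{X}+\Delta+A)|_{{\rm Nlc}(X,\Delta)}$ is ample over $Z$, certainly not trivial.

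The correct fact is that $(K_{X}+\Delta+A)|_{{\rm Nlc}(X,\Delta)}$ is \emph{semi-ample} over $Z$ (as the theorem requires), not trivial, and establishing it requires more work than a one-line restriction of an $\mathbb{R}$-linear equivalence. The right mechanism is the one developed in Subsection~\ref{subsec--linear-system}: working locally over $Z$, decompose $K_{X}+\Delta+A$ (in the sense of Lemma~\ref{lem--rdiv-decom} and the convex geometry in Lemma~\ref{lem--movablediv-decom}) into a fixed part $F$ with ${\rm Supp}\,F \subset {\rm Bs}|K_{X}+\Delta+A/Z|_{\mathbb{R}}$ (hence vanishing near ${\rm Nlc}(X,\Delta)$) plus an $\mathbb{R}_{>0}$-linear combination of $\mathbb{Q}$-divisors $M_{i}$ with ${\rm Bs}|M_{i}/Z|_{\mathbb{Q}}$ also disjoint from ${\rm Nlc}(X,\Delta)$, and then use that a Cartier multiple of $M_{i}$ that is relatively globally generated on a neighborhood of ${\rm Nlc}(X,\Delta)$ restricts to a relatively globally generated line bundle on the closed subscheme ${\rm Nlc}(X,\Delta)$. (One also needs care that the relevant divisors are $\mathbb{Q}$-Cartier near ${\rm Nlc}(X,\Delta)$; this is why the decomposition is not entirely formal on a possibly non-$\mathbb{Q}$-factorial $X$.) So the intermediate claim you lean on needs to be replaced by this global-generation argument.

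\end{document}
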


\begin{proof}
This immediately follows from Theorem \ref{thm--termination-lcmmp-main}. 
\end{proof}

\begin{cor}\label{cor--mmp-lc-strictnefthreshold-main}
Let $\pi \colon X \to Z$ be a projective morphism of normal quasi-projective varieties. 
Let $(X,\Delta)$ be a normal pair such that $K_{X}+\Delta$ is $\pi$-pseudo-effective. 
Suppose that ${\rm NNef}(K_{X}+\Delta/Z) \cap {\rm Nlc}(X,\Delta) = \emptyset$. 
Let $A$ be a $\pi$-ample $\mathbb{R}$-divisor on $X$. 
Then there exists a sequence of steps of a $(K_{X}+\Delta)$-MMP over $Z$ with scaling of $A$ 
$$
\xymatrix{
(X,\Delta)=:(X_{1},\Delta_{1})\ar@{-->}[r]&\cdots \ar@{-->}[r]& (X_{i},\Delta_{i})\ar@{-->}[rr]\ar[dr]_{\varphi_{i}}&& (X_{i+1},\Delta_{i+1})\ar[dl]^{\varphi'_{i}} \ar@{-->}[r]&\cdots, \\
&&&W_{i}
}
$$
where $(X_{i},\Delta_{i}) \to W_{i} \leftarrow (X_{i+1},\Delta_{i+1})$ is a step of the $(K_{X}+\Delta)$-MMP over $Z$, such that
\begin{itemize}
\item
the non-isomorphic locus of the MMP is disjoint from ${\rm Nlc}(X,\Delta)$, 
\item
$\rho(X_{i}/W_{i})=1$ for every $i \geq 1$, and
\item
if we put 
$$\lambda_{i}:={\rm inf}\{\mu \in \mathbb{R}_{\geq 0}\,| \,\text{$K_{X_{i}}+\Delta_{i}+\mu A_{i}$ is nef over $Z$} \}$$  
for each $i \geq 1$, then ${\rm lim}_{i \to \infty}\lambda_{i}=0$. 
\end{itemize}
Moreover, if $X$ is $\mathbb{Q}$-factorial, then all $X_{i}$ in the MMP are also $\mathbb{Q}$-factorial. 
\end{cor}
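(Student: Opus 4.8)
The plan is to obtain the MMP directly from Corollary~\ref{cor--mmp-nomralpair-Qfacdlt}, which already produces a $(K_X+\Delta)$-MMP over $Z$ with scaling of $A$ enjoying all the listed properties \emph{except} possibly $\lim_{i\to\infty}\lambda_i=0$; the only remaining task is to show that the limit $\lambda:=\lim_{i\to\infty}\lambda_i$ is zero, and I would do this by feeding the tail of the MMP, reinterpreted as a polarized MMP, into Theorem~\ref{thm--termination-all-lcmmp}.

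First I would set up the input for Corollary~\ref{cor--mmp-nomralpair-Qfacdlt}. Since ${\rm NNef}(K_X+\Delta/Z)\cap{\rm Nlc}(X,\Delta)=\emptyset$, every curve $C\subset{\rm Nlc}(X,\Delta)$ with $\pi(C)$ a point is not contained in ${\rm NNef}(K_X+\Delta/Z)$, so $(K_X+\Delta)\cdot C\ge 0$ by Theorem~\ref{thm--nonnef-negativecurve}; thus $(K_X+\Delta)|_{{\rm Nlc}(X,\Delta)}$ is nef over $Z$. This makes the cone and contraction theorem available for $(X,\Delta)$, and since every $(K_X+\Delta)$-negative extremal ray over $Z$ contains a curve $C$ with $0<-(K_X+\Delta)\cdot C\le 2\dim X$ (\cite{fujino-morihyper}) while $A\cdot C$ is bounded below by a positive constant ($A$ being $\pi$-ample), the nef threshold $\inf\{\mu\in\mathbb{R}_{\ge 0}\mid K_X+\Delta+\mu A\text{ is nef over }Z\}$ is a finite real number $t_{0}$; hence $K_X+\Delta+(t_{0}+1)A=(K_X+\Delta+t_{0}A)+A$ is $\pi$-ample. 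Applying Corollary~\ref{cor--mmp-nomralpair-Qfacdlt} with $\lambda_0:=t_{0}+1$ produces a $(K_X+\Delta)$-MMP over $Z$ with scaling of $A$, say $(X,\Delta)=:(X_1,\Delta_1)\dashrightarrow(X_2,\Delta_2)\dashrightarrow\cdots$, with all the asserted properties (each step has relative Picard number one, the non-isomorphic locus misses ${\rm Nlc}(X,\Delta)$, $\mathbb{Q}$-factoriality is preserved), and such that either the MMP terminates after finitely many steps---in which case $K_{X_m}+\Delta_m$ is nef over $Z$, so $\lambda_m=0$ and, padding by isomorphisms, $\lim_{i\to\infty}\lambda_i=0$---or else $\lambda:=\lim_{i\to\infty}\lambda_i\neq\lambda_i$ for every $i$. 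In the latter case $\lambda<\lambda_i$ for all $i$, and I must show $\lambda=0$.

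Suppose $\lambda>0$. At the $i$-th step a curve $C_i$ is contracted with $(K_{X_i}+\Delta_i+\lambda_i A_i)\cdot C_i=0$ and $A_i\cdot C_i>0$, so $(K_{X_i}+\Delta_i+\lambda A_i)\cdot C_i<0$; since moreover $\rho(X_i/W_i)=1$, each such step is also a step of a $(K_{X_i}+\Delta_i+\lambda A_i)$-MMP over $Z$. Hence the whole sequence is a sequence of steps of a $(K_{X_1}+\Delta_1+\lambda A_1)$-MMP over $Z$ with scaling of $A$, and its nef thresholds are $\lambda_i-\lambda$, which tend to $0$. Now set $A':=\lambda A$, a $\pi$-ample $\mathbb{R}$-divisor. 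Then $K_X+\Delta+A'$ is $\pi$-pseudo-effective; by Remark~\ref{rem--nonnef-semiample} (applied to the $\pi$-semi-ample divisor $A'$) we have ${\rm NNef}(K_X+\Delta+A'/Z)\subset{\rm NNef}(K_X+\Delta/Z)$, which is disjoint from ${\rm Nlc}(X,\Delta)$; and $(K_X+\Delta+A')|_{{\rm Nlc}(X,\Delta)}=(K_X+\Delta)|_{{\rm Nlc}(X,\Delta)}+\lambda A|_{{\rm Nlc}(X,\Delta)}$ is the sum of a divisor that is nef over $Z$ and one that is ample over $Z$, hence is ample, and in particular semi-ample, over $Z$. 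Therefore $(X,\Delta)$, $A'$ and the $\pi$-ample divisor $H:=A$ satisfy the hypotheses of Theorem~\ref{thm--termination-all-lcmmp}, and our MMP is a sequence of steps of a $(K_X+\Delta+A')$-MMP over $Z$ with scaling of $H$ whose thresholds tend to $0$. Theorem~\ref{thm--termination-all-lcmmp} then yields an index $m$ with $\lambda_m-\lambda=0$, i.e.\ $\lambda_m=\lambda$, contradicting $\lambda\neq\lambda_i$. Thus $\lambda=0$, which completes the proof, the remaining conclusions being exactly those provided by Corollary~\ref{cor--mmp-nomralpair-Qfacdlt}.

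The substance of the argument lives entirely in Corollary~\ref{cor--mmp-nomralpair-Qfacdlt} and Theorem~\ref{thm--termination-all-lcmmp}; the points requiring care---and the only places one could slip---are that the semi-ampleness-on-${\rm Nlc}$ hypothesis of Theorem~\ref{thm--termination-all-lcmmp}, absent from the present statement, is automatic here by Theorem~\ref{thm--nonnef-negativecurve} once an ample divisor is added, and that running the MMP ``with scaling of $A$'' is insensitive to replacing $K_X+\Delta$ by $K_X+\Delta+\lambda A$. I do not anticipate a genuine obstacle beyond this bookkeeping and the (standard) finiteness of the nef threshold.
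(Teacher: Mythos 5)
Your proof is correct and takes essentially the same approach as the paper: both apply Corollary~\ref{cor--mmp-nomralpair-Qfacdlt} to produce the MMP, suppose $\lambda>0$ for contradiction, reinterpret the sequence as a $(K_{X}+\Delta+\lambda A)$-MMP with scaling whose thresholds tend to $0$, verify the hypotheses of Theorem~\ref{thm--termination-all-lcmmp} (using that $(K_{X}+\Delta)|_{{\rm Nlc}(X,\Delta)}$ is nef over $Z$ and hence $(K_{X}+\Delta+\lambda A)|_{{\rm Nlc}(X,\Delta)}$ is ample, in particular semi-ample, over $Z$), and derive the contradiction $\lambda_{m}=\lambda$ for some $m$.
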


\begin{proof}
By Corollary \ref{cor--mmp-nomralpair-Qfacdlt}, there exists a sequence of steps of a $(K_{X}+\Delta)$-MMP over $Z$ with scaling of $A$ 
$$
\xymatrix{
(X,\Delta)=:(X_{1},\Delta_{1})\ar@{-->}[r]&\cdots \ar@{-->}[r]& (X_{i},\Delta_{i})\ar@{-->}[rr]\ar[dr]&& (X_{i+1},\Delta_{i+1})\ar[dl] \ar@{-->}[r]&\cdots, \\
&&&W_{i}
}
$$
where $(X_{i},\Delta_{i}) \to W_{i} \leftarrow (X_{i+1},\Delta_{i+1})$ is a step of a $(K_{X_{i}}+\Delta_{i})$-MMP over $Z$, such that
\begin{itemize}
\item
the non-isomorphic locus of the MMP is disjoint from ${\rm Nlc}(X,\Delta)$, 
\item
$\rho(X_{i}/W_{i})=1$ for every $i \geq 1$, 
\item
if we put 
$$\lambda_{i}:={\rm inf}\{\mu \in \mathbb{R}_{\geq 0}\,| \,\text{$K_{X_{i}}+\Delta_{i}+\mu A_{i}$ is nef over $Z$} \}$$  
for each $i \geq 1$ and $\lambda:={\rm lim}_{i \to \infty}\lambda_{i}$, then the MMP terminates after finitely many steps or otherwise $\lambda \neq \lambda_{i}$ for every $i \geq 1$, and
\item 
if $X$ is $\mathbb{Q}$-factorial, then all $X_{i}$ in the MMP are also $\mathbb{Q}$-factorial. 
\end{itemize}

Then the MMP is a $(K_{X}+\Delta+\lambda A)$-over $Z$ with scaling of $A$. 
If $\lambda>0$, then we have ${\rm NNef}(K_{X}+\Delta+\lambda A/Z) \cap {\rm Nlc}(X,\Delta) = \emptyset$, and $(K_{X}+\Delta+\lambda A)|_{{\rm Nlc}(X,\Delta)}$ is ample over $Z$ because $(K_{X}+\Delta)|_{{\rm Nlc}(X,\Delta)}$ is nef over $Z$ and we may use \cite[Proof of Theorem 1.38]{kollar-mori} and Nakai--Moishezon's criterion for the ampleness. 
This contradicts Theorem \ref{thm--termination-all-lcmmp}, and therefore $\lambda=0$. 
This MMP satisfies the conditions of Corollary \ref{cor--mmp-lc-strictnefthreshold-main}. 
\end{proof}

\begin{cor}[cf.~{\cite{gongyo1}}]\label{cor--mmp-numericaldim-zero} 
Let $(X,\Delta)$ be a projective normal pair such that $K_{X}+\Delta$ is pseudo-effective and the non-nef locus of $K_{X}+\Delta$ is disjoint from ${\rm Nlc}(X,\Delta)$. 
Suppose that the numerical dimension $\kappa_{\sigma}(K_{X}+\Delta)$ (see \cite[V, 2.5. Definition]{nakayama}) is zero. 
Put $X_{1}=X$ and $\Delta_{1}=\Delta$. 
Then there exists a sequence of steps of a $(K_{X}+\Delta)$-MMP
$$
\xymatrix{
(X_{1},\Delta_{1})\ar@{-->}[r]&\cdots \ar@{-->}[r]& (X_{i},\Delta_{i})\ar@{-->}[rr]\ar[dr]&& (X_{i+1},\Delta_{i+1})\ar[dl] \ar@{-->}[r]&\cdots\ar@{-->}[r]&(X_{m},\Delta_{m}), \\
&&&W_{i}
}
$$
where $(X_{i},\Delta_{i}) \to W_{i} \leftarrow (X_{i+1},\Delta_{i+1})$ is a step of the $(K_{X}+\Delta)$-MMP, such that
\begin{itemize}
\item
the non-isomorphic locus of the MMP is disjoint from ${\rm Nlc}(X,\Delta)$, 
\item
$\rho(X_{i}/W_{i})=1$ for every $i \geq 1$, and
\item
$K_{X_{m}}+\Delta_{m} \equiv 0$. 
\end{itemize}
Moreover, if $X$ is $\mathbb{Q}$-factorial, then all $X_{i}$ in the MMP are also $\mathbb{Q}$-factorial. 
\end{cor}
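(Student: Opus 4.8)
The plan is to run a $(K_{X}+\Delta)$-MMP with scaling of a fixed ample divisor $A$ using Corollary \ref{cor--mmp-lc-strictnefthreshold-main}, and then to use the numerical dimension hypothesis $\kappa_{\sigma}(K_{X}+\Delta)=0$ to show that after finitely many steps the log canonical divisor becomes numerically trivial. First I would verify that the hypotheses of Corollary \ref{cor--mmp-lc-strictnefthreshold-main} are met: $\pi$ is the structure morphism to $Z={\rm Spec}\,\mathbb{C}$, $(X,\Delta)$ is a normal pair with $K_{X}+\Delta$ pseudo-effective, and ${\rm NNef}(K_{X}+\Delta)\cap{\rm Nlc}(X,\Delta)=\emptyset$. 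Applying that corollary with the given ample $\mathbb{R}$-divisor $A$ yields a sequence of steps of a $(K_{X}+\Delta)$-MMP
$$(X,\Delta)=:(X_{1},\Delta_{1})\dashrightarrow\cdots\dashrightarrow(X_{i},\Delta_{i})\dashrightarrow\cdots$$
such that each $(X_{i},\Delta_{i})\to W_{i}\leftarrow(X_{i+1},\Delta_{i+1})$ is a step with $\rho(X_{i}/W_{i})=1$, the non-isomorphic locus is disjoint from ${\rm Nlc}(X,\Delta)$, the $\mathbb{Q}$-factoriality is preserved, and, putting $\lambda_{i}:={\rm inf}\{\mu\ge 0\,|\,K_{X_{i}}+\Delta_{i}+\mu A_{i}\text{ is nef}\}$, we have ${\rm lim}_{i\to\infty}\lambda_{i}=0$.

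Next I would extract numerical information. Since a step of an MMP is $(K_{X_{i}}+\Delta_{i})$-negative, the Nakayama--Zariski numerical dimension does not increase along the MMP; more precisely, for a common resolution $p\colon W\to X_{i}$, $q\colon W\to X_{i+1}$ one has $p^{*}(K_{X_{i}}+\Delta_{i})=q^{*}(K_{X_{i+1}}+\Delta_{i+1})+E$ with $E\ge 0$, so $\kappa_{\sigma}(K_{X_{i+1}}+\Delta_{i+1})\le\kappa_{\sigma}(K_{X_{i}}+\Delta_{i})$ by \cite[V, 2.7. Proposition]{nakayama}; hence $\kappa_{\sigma}(K_{X_{i}}+\Delta_{i})=0$ for all $i$. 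The key point is then: if $\kappa_{\sigma}(D)=0$ for a pseudo-effective $\mathbb{R}$-Cartier $\mathbb{R}$-divisor $D$ on a projective normal variety and $D$ is nef, then $D\equiv 0$. So it suffices to produce a step $m$ at which $K_{X_{m}}+\Delta_{m}$ is nef, i.e.\ at which $\lambda_{m}=0$.

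The main obstacle is showing $\lambda_{m}=0$ for some $m$, i.e.\ that the scaled MMP actually reaches a nef model rather than only having $\lambda_{i}\to 0$. Here I would use the numerical dimension zero hypothesis together with $\kappa_{\sigma}$-invariance: write $\lambda:={\rm lim}_{i\to\infty}\lambda_{i}=0$, and suppose for contradiction $\lambda_{i}>0$ for all $i$. Then $K_{X_{i}}+\Delta_{i}+\lambda_{i}A_{i}$ is nef with $\kappa_{\sigma}(K_{X_{i}}+\Delta_{i})=0$; one shows $\kappa_{\sigma}(K_{X_{i}}+\Delta_{i}+\lambda_{i}A_{i})$ must also be controlled, but since $A_{i}$ is big this forces $\lambda_{i}A_{i}$ to be numerically trivial modulo $N_{\sigma}$, which is absurd unless $\lambda_{i}=0$. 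A cleaner route, and the one I would adopt: since $\kappa_{\sigma}(K_{X}+\Delta)=0$, there is an effective $\mathbb{R}$-divisor $D_{0}\sim_{\mathbb{R}}K_{X}+\Delta$ (by \cite[V, 1.11. Corollary]{nakayama}, numerical dimension zero implies the $\sigma$-decomposition has $P_{\sigma}=0$ up to pullback, hence $|K_{X}+\Delta|_{\mathbb{R}}\ne\emptyset$ after a birational model — alternatively invoke \cite[III, 1.14. Remark]{nakayama}); combined with ${\rm Bs}|K_{X}+\Delta/Z|_{\mathbb{R}}\cap{\rm Nlc}(X,\Delta)=\emptyset$ (which follows from ${\rm NNef}(K_{X}+\Delta)\cap{\rm Nlc}(X,\Delta)=\emptyset$ and $\kappa_\sigma=0$, since then the non-nef locus and stable base locus agree up to components with positive $\sigma$, all of which avoid ${\rm Nlc}$), one applies Corollary \ref{cor--mmp-from-nonvan-fullgeneral} with $A$ itself playing the role of the polarization — but $K_{X}+\Delta$ here is not of the form $K_{X}+\Delta'+A'$ with $A'$ ample, so instead I would feed $K_{X}+\Delta$ into the MMP of Corollary \ref{cor--mmp-lc-strictnefthreshold-main} and then argue termination via numerical dimension: once $P_{\sigma}=0$, the MMP with scaling contracts the negative part $N_{\sigma}$ in finitely many steps by the argument of \cite[Theorem E]{bchm} / \cite{gongyo1}, applied on a $\mathbb{Q}$-factorial dlt model as in Proposition \ref{prop--dlt-resol-linsystem} and the proof of Proposition \ref{prop--exist-minmodel-boundary-change}, using special termination \cite{fujino-sp-ter}. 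Thus $\lambda_{m}=0$ for some $m$, $K_{X_{m}}+\Delta_{m}$ is nef with $\kappa_{\sigma}=0$, hence $K_{X_{m}}+\Delta_{m}\equiv 0$, and $\mathbb{Q}$-factoriality is preserved throughout by Corollary \ref{cor--mmp-lc-strictnefthreshold-main}; this gives all three bulleted conclusions.
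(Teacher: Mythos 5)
Your opening is right: apply Corollary \ref{cor--mmp-lc-strictnefthreshold-main} and observe that the MMP only contracts divisors lying in $N_{\sigma}(K_{X}+\Delta)$. But the claim that $\kappa_{\sigma}(K_{X}+\Delta)=0$ gives an effective $\mathbb{R}$-divisor $\mathbb{R}$-linearly equivalent to $K_{X}+\Delta$ (even after passing to a birational model) is false: $\kappa_{\sigma}(D)=0$ only means $P_{\sigma}(D)\equiv 0$ numerically on a smooth model, and says nothing about the $\mathbb{R}$-linear system of $D$. A non-torsion numerically trivial line bundle on an abelian variety already has $\kappa_{\sigma}=0$ but no effective $\mathbb{R}$-linear representative. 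You abandon this route, which is good, but such an assertion should not survive in a write-up.

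The more serious problem is that your final paragraph is circular. You want to deduce $\lambda_{m}=0$ (nefness of $K_{X_{m}}+\Delta_{m}$) from the observation that the MMP contracts the divisorial part of $N_{\sigma}(K_{X}+\Delta)$ in finitely many steps, and \emph{then} conclude $K_{X_{m}}+\Delta_{m}\equiv 0$ from nefness together with $\kappa_{\sigma}=0$. But contracting all divisorial components of $N_{\sigma}$ only yields that $K_{X_{m}}+\Delta_{m}$ is the limit of movable $\mathbb{R}$-divisors; it does not give nefness, and you offer no independent mechanism that produces $\lambda_{m}=0$. The correct order, which is also the paper's, is the reverse: after finitely many divisorial contractions $K_{X_{m}}+\Delta_{m}$ becomes the limit of movable $\mathbb{R}$-divisors, and then the argument of \cite[Proof of Theorem 5.1]{gongyo1} (a pseudo-effective divisor which is limit of movable and has $\kappa_{\sigma}=0$ is numerically trivial, because its pullback to a resolution is numerically equivalent to the exceptional negative part of its Nakayama--Zariski decomposition) gives $K_{X_{m}}+\Delta_{m}\equiv 0$; nefness and hence $\lambda_{m}=0$ are then free consequences. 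Your appeal to special termination, \cite[Theorem E]{bchm}, and Propositions \ref{prop--dlt-resol-linsystem} and \ref{prop--exist-minmodel-boundary-change} to force $\lambda_{m}=0$ is unnecessary machinery that, as described, would not close the gap.
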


\begin{proof}
By Corollary \ref{cor--mmp-lc-strictnefthreshold-main}, there exists a sequence of steps of a $(K_{X}+\Delta)$-MMP with scaling of an ample $\mathbb{R}$-divisor $A$ 
$$
\xymatrix{
(X_{1},\Delta_{1})\ar@{-->}[r]&\cdots \ar@{-->}[r]& (X_{i},\Delta_{i})\ar@{-->}[rr]\ar[dr]&& (X_{i+1},\Delta_{i+1})\ar[dl] \ar@{-->}[r]&\cdots, \\
&&&W_{i}
}
$$
where $(X_{i},\Delta_{i}) \to W_{i} \leftarrow (X_{i+1},\Delta_{i+1})$ is a step of the $(K_{X}+\Delta)$-MMP over $Z$, such that
\begin{itemize}
\item
the non-isomorphic locus of the MMP is disjoint from ${\rm Nlc}(X,\Delta)$, 
\item
$\rho(X_{i}/W_{i})=1$ for every $i \geq 1$, and
\item
if we put 
$$\lambda_{i}:={\rm inf}\{\mu \in \mathbb{R}_{\geq 0}\,| \,\text{$K_{X_{i}}+\Delta_{i}+\mu A_{i}$ is nef over $Z$} \}$$  
for each $i \geq 1$, then ${\rm lim}_{i \to \infty}\lambda_{i}=0$. 
\end{itemize}
Moreover, if $X$ is $\mathbb{Q}$-factorial, then all $X_{i}$ in the MMP are $\mathbb{Q}$-factorial. 
Prime divisors contracted by the MMP are components of the negative part of the Nakayama--Zariski decomposition of $K_{X}+\Delta$. 
Hence, there exists $m$ such that $K_{X_{m}}+\Delta_{m}$ is the limit of movable $\mathbb{R}$-divisors.
By the same argument as in \cite[Proof of Theorem 5.1]{gongyo1}, we see that $K_{X_{m}}+\Delta_{m} \equiv 0$. 
The MMP satisfies the properties of Corollary \ref{cor--mmp-numericaldim-zero}. 
\end{proof}

\begin{cor}[cf.~{\cite[Theorem A]{bbp}, \cite[Theorem A]{tsakanikas-xie}}]\label{cor--nonnef-diminished}
Let $(X,\Delta)$ be a projective normal pair such that $K_{X}+\Delta$ is pseudo-effective and the non-nef locus of $K_{X}+\Delta$ is disjoint from ${\rm Nlc}(X,\Delta)$. 
Then 
$${\rm NNef}(K_{X}+\Delta)=\boldsymbol{\rm B}_{-}(K_{X}+\Delta).$$
Furthermore, every irreducible component of the non-nef locus of $K_{X}+\Delta$ is uniruled. 
\end{cor}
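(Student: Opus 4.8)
\textbf{Proof proposal for Corollary \ref{cor--nonnef-diminished}.} The plan is to run the $(K_X+\Delta)$-MMP with scaling produced by Corollary \ref{cor--mmp-lc-strictnefthreshold-main}, extract from it the statement that the non-isomorphic locus of the MMP exactly picks out $\boldsymbol{\rm B}_-(K_X+\Delta)$, and then conclude uniruledness from the structure of the extremal contractions. First I would invoke Corollary \ref{cor--mmp-lc-strictnefthreshold-main} (with some chosen ample $A$) to obtain a sequence of steps of a $(K_X+\Delta)$-MMP over $Z=\mathrm{Spec}\,\mathbb C$ with scaling of $A$ whose non-isomorphic locus is disjoint from ${\rm Nlc}(X,\Delta)$, with $\rho(X_i/W_i)=1$, and such that $\lambda_i\to 0$. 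Since each step modifies only a subset of ${\rm NNef}(K_X+\Delta)$ (Remark \ref{rem--mmp-basic}) and since any MMP leaves the complement of the non-nef locus untouched, the non-isomorphic locus of the whole MMP is contained in ${\rm NNef}(K_X+\Delta)$, hence a fortiori in $\boldsymbol{\rm B}_-(K_X+\Delta)$ by Remark \ref{rem--nonnef-relation}.

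The reverse inclusion $\boldsymbol{\rm B}_-(K_X+\Delta)\subset{\rm NNef}(K_X+\Delta)$ is the content I would then establish. One inclusion is already in Remark \ref{rem--nonnef-relation}; for the other, I would argue that after finitely many steps $X\dashrightarrow X_i$ the divisor $K_{X_i}+\Delta_i+\epsilon A_i$ is semi-ample (it becomes nef once $\lambda_i<\epsilon$, and by the construction in Corollary \ref{cor--mmp-nomralpair-Qfacdlt}, which underlies Corollary \ref{cor--mmp-lc-strictnefthreshold-main}, $K_{X_i}+\Delta_i+tA_i$ is semi-ample for $t\in(\lambda_i,\lambda_{i-1}]$). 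Pulling back to a common resolution and using the negativity lemma, one sees that the base locus of $K_X+\Delta+\epsilon A$ is contained in the non-isomorphic locus of $X\dashrightarrow X_i$ together with the exceptional divisors of the contracted prime divisors, all of which sit inside ${\rm NNef}(K_X+\Delta)$; letting $\epsilon\to 0$ gives $\boldsymbol{\rm B}_-(K_X+\Delta)\subset{\rm NNef}(K_X+\Delta)$. Thus the two loci coincide. Here I should be careful to compare the MMP for different values of the scaling parameter as in Remark \ref{rem--mmp-basic}, since strictly speaking $\boldsymbol{\rm B}_-$ is computed via $K_X+\Delta+\epsilon A$ for all small $\epsilon$ at once.

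For the uniruledness statement, I would use that every divisor contracted by the MMP is contracted by some $(K_{X_i}+\Delta_i)$-negative extremal contraction $\varphi_i\colon X_i\to W_i$ with $\rho(X_i/W_i)=1$. A divisorial such contraction, or a small one followed by the appearance of a divisor in the base locus, yields a divisor covered by $(K_{X_i}+\Delta_i)$-negative curves, and by \cite[Theorem 1.6]{fujino-morihyper} (bend-and-break type length-of-extremal-rays result for quasi-log/normal pairs) these curves are rational; hence the contracted divisor is uniruled, and its birational transform on $X$ is uniruled. Since ${\rm NNef}(K_X+\Delta)=\boldsymbol{\rm B}_-(K_X+\Delta)$ and, by the first part, every irreducible component of ${\rm NNef}(K_X+\Delta)$ that is a divisor arises this way (a divisorial component of the diminished base locus must be contracted at some stage of the MMP), uniruledness follows. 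I expect the main obstacle to be the bookkeeping in this last paragraph: matching up ``irreducible component of ${\rm NNef}(K_X+\Delta)$'' with ``divisor contracted by the MMP,'' which requires knowing that a divisorial component of $\boldsymbol{\rm B}_-$ is genuinely $(K_X+\Delta)$-exceptional in the MMP and not merely a component of some $F'$ that gets absorbed; handling this cleanly will likely mean passing to the explicit birational model of Proposition \ref{prop--dlt-resol-linsystem} / Corollary \ref{cor-minmodel-explicit-construction}, or else citing the corresponding argument in \cite{bbp} and \cite{tsakanikas-xie} and checking it goes through in the present non-lc setting.
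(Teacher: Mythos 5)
Your treatment of the equality ${\rm NNef}(K_X+\Delta)=\boldsymbol{\rm B}_-(K_X+\Delta)$ is essentially the argument the paper alludes to by citing \cite{bbp}, \cite{tsakanikas-xie}: run the MMP with scaling of $A$ from Corollary~\ref{cor--mmp-lc-strictnefthreshold-main}, note that for each $\epsilon>0$ there is $i$ with $\lambda_i<\epsilon\leq\lambda_{i-1}$ so that $K_{X_i}+\Delta_i+\epsilon A_i$ is semi-ample, pull back to a common resolution, apply negativity, and conclude that ${\rm Bs}|K_X+\Delta+\epsilon A|_{\mathbb R}$ sits inside the non-isomorphic locus, which by Remark~\ref{rem--mmp-basic} lies in ${\rm NNef}(K_X+\Delta)$. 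That part of the proposal is fine.

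The uniruledness claim is where there is a genuine gap. You explicitly restrict to \emph{divisorial} components: ``every irreducible component of ${\rm NNef}(K_X+\Delta)$ that is a divisor arises this way\dots uniruledness follows.'' But the corollary asserts uniruledness of \emph{every} irreducible component of the non-nef locus, and higher-codimensional components do occur — indeed the paper's own Example~\ref{exam-nonneflocus} exhibits a normal pair whose non-nef locus is a countable union of curves on a fourfold. Your argument, which proceeds through divisors contracted by the MMP, says nothing about those. The bookkeeping concern you raise at the end (whether a divisorial component of $\boldsymbol{\rm B}_-$ is genuinely contracted) is not the real sticking point; the real sticking point is the non-divisorial case, and resolving it requires a different organization of the argument.

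The fix, which is what \cite{bbp} and \cite{tsakanikas-xie} actually do and what the paper intends to be cited, runs as follows. Let $V$ be \emph{any} irreducible component of ${\rm NNef}(K_X+\Delta)=\boldsymbol{\rm B}_-(K_X+\Delta)$. Since $\lambda_i\to 0$, the composed map $X\dashrightarrow X_i$ must fail to be an isomorphism at the generic point of $V$ for some $i$ (else $V$ would survive in every $X_i$ and would meet the semi-ample locus of $K_{X_i}+\Delta_i+\epsilon A_i$ for $\lambda_i<\epsilon$, contradicting $V\subset\boldsymbol{\rm B}_-$). Take the \emph{first} index $j$ at which $X_j\dashrightarrow X_{j+1}$ is not an isomorphism at the generic point of the strict transform $V_j$ of $V$; then $V_j$ is birational to $V$ and $V_j\subset\Exc(\varphi_j)$. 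Now $\Exc(\varphi_j)$ is covered by curves contracted by $\varphi_j$, which are $(K_{X_j}+\Delta_j)$-negative, and the paper's cited input \cite[Theorem 1.12]{fujino-morihyper} (rather than Theorem~1.6, which you invoke) provides rational curves through every point of $\Exc(\varphi_j)$ in this normal-pair/quasi-log setting. Hence $V_j$ is uniruled and so is $V$. Note this argument makes no case distinction on the codimension of $V$; your proposal should be rewritten along these lines to cover the general component.
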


\begin{proof}
The argument in \cite{bbp} or \cite{tsakanikas-xie} works with no changes because we may use Corollary \ref{cor--mmp-lc-strictnefthreshold-main} and \cite[Theorem 1.12]{fujino-morihyper}. 
\end{proof}

\begin{cor}\label{cor--finite-generation-main}
Let $\pi \colon X \to Z$ be a projective morphism of normal quasi-projective varieties. 
Let $(X,\Delta)$ be a normal pair such that $\Delta$ is a $\mathbb{Q}$-divisor on $X$. 
Let $A$ be a $\pi$-ample $\mathbb{Q}$-divisor on $X$ such that $K_{X}+\Delta+A$ is $\pi$-pseudo-effective. 
Suppose that ${\rm NNef}(K_{X}+\Delta+A/Z) \cap {\rm Nlc}(X,\Delta) = \emptyset$ and that $(K_{X}+\Delta+A)|_{{\rm Nlc}(X,\Delta)}$, which we think of a $\mathbb{Q}$-line bundle on ${\rm Nlc}(X,\Delta)$, is semi-ample over $Z$. 
Then the sheaf of graded $\pi_{*}\mathcal{O}_{X}$-algebra
$$\underset{m \in \mathbb{Z}_{\geq 0}}{\bigoplus}\pi_{*}\mathcal{O}_{X}(\lfloor m(K_{X}+\Delta+A)\rfloor)$$
is finitely generated. 
\end{cor}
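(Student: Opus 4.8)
The plan is to deduce this from Theorem~\ref{thm--termination-lcmmp-main} by running the MMP it provides and then using that the section ring is unchanged along the MMP and is finitely generated at the end. First I would apply Theorem~\ref{thm--termination-lcmmp-main} with $(X,\Delta)$, the $\pi$-ample $\mathbb{Q}$-divisor $A$, and choosing the scaling divisor $H$ to be any $\pi$-ample $\mathbb{Q}$-divisor; note that the hypotheses match exactly, since ${\rm NNef}(K_{X}+\Delta+A/Z)\cap{\rm Nlc}(X,\Delta)=\emptyset$ implies (via Remark~\ref{rem--nonnef-relation}, i.e.\ ${\rm NNef}\subset{\rm Bs}|\cdot|_{\mathbb{R}}$, and the fact that the semi-ampleness of the restriction is preserved) that we are in the situation of that theorem. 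Writing $(X_{1},B_{1})=(X,\Delta+A)$, we obtain a finite sequence of steps of a $(K_{X_{1}}+B_{1})$-MMP over $Z$
$$(X_{1},B_{1})\dashrightarrow\cdots\dashrightarrow(X_{m},B_{m})$$
with $K_{X_{m}}+B_{m}=K_{X_{m}}+\Delta_{m}+A_{m}$ semi-ample over $Z$, and with the MMP an isomorphism on a neighborhood of ${\rm Nlc}(X,\Delta)$.

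Next I would argue that the MMP does not change the relevant graded sheaf of algebras. Each step $(X_{i},B_{i})\overset{\varphi_{i}}{\to}W_{i}\overset{\varphi'_{i}}{\leftarrow}(X_{i+1},B_{i+1})$ has $\varphi_{i}$ birational and $\varphi'_{i}$ small birational, so the composite birational map $\psi\colon X_{1}\dashrightarrow X_{m}$ is a birational contraction. A birational contraction between normal varieties does not change the space of global sections of (pushforwards of) round-downs of multiples of a pseudo-effective divisor: more precisely, for the birational contraction $\psi$ one has a natural isomorphism $\pi_{*}\mathcal{O}_{X}(\lfloor m(K_{X}+\Delta+A)\rfloor)\cong\pi_{m*}\mathcal{O}_{X_{m}}(\lfloor m(K_{X_{m}}+\Delta_{m}+A_{m})\rfloor)$ for all $m\geq 0$. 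This is the standard fact that $\psi_{*}$ identifies the relative section rings of $K+B$ and $\psi_{*}(K+B)$; it follows by taking a common resolution $W\to X_{1}$, $W\to X_{m}$, pulling back, and using that the discrepancy inequality in the definition of a minimal model (Definition~\ref{defn--minmodel}), together with $\varphi'_{i}$ being small, forces the exceptional correction divisors on both sides to agree up to effective divisors contracted over the base; the round-down then absorbs these, giving the isomorphism of sheaves of algebras. I would spell this out just enough to conclude the graded algebra isomorphism
$$\bigoplus_{m\in\mathbb{Z}_{\geq 0}}\pi_{*}\mathcal{O}_{X}(\lfloor m(K_{X}+\Delta+A)\rfloor)\;\cong\;\bigoplus_{m\in\mathbb{Z}_{\geq 0}}\pi_{m*}\mathcal{O}_{X_{m}}(\lfloor m(K_{X_{m}}+\Delta_{m}+A_{m})\rfloor).$$

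Finally, on $X_{m}$ the divisor $K_{X_{m}}+\Delta_{m}+A_{m}$ is a $\pi_{m}$-semi-ample $\mathbb{Q}$-divisor (it is $\mathbb{Q}$-Cartier because $\Delta$ and $A$ are $\mathbb{Q}$-divisors and this is preserved along the MMP, and it is semi-ample over $Z$ by the theorem). For a semi-ample $\mathbb{Q}$-Cartier $\mathbb{Q}$-divisor over a quasi-projective $Z$, the associated graded sheaf of $\pi_{m*}\mathcal{O}_{X_{m}}$-algebras $\bigoplus_{m}\pi_{m*}\mathcal{O}_{X_{m}}(\lfloor m(K_{X_{m}}+\Delta_{m}+A_{m})\rfloor)$ is finitely generated: passing to the contraction $X_{m}\to V$ over $Z$ defined by a sufficiently divisible multiple, this algebra becomes, up to taking a Veronese subalgebra and the standard finite-generation argument, the pushforward of a relatively very ample line bundle, whose section ring is finitely generated over $\pi_{*}\mathcal{O}_{X}=\pi_{V*}\mathcal{O}_{V}$ by classical projective geometry. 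Combining with the isomorphism above yields finite generation of the original algebra. The main obstacle I anticipate is the bookkeeping in the second step: carefully checking that the round-down $\lfloor m(K_{X}+\Delta+A)\rfloor$ behaves well under the birational contraction, i.e.\ that the non-isomorphism locus and the small maps $\varphi'_{i}$ do not introduce a discrepancy between the two sheaves; this requires invoking the precise form of Definition~\ref{defn--minmodel} (the coefficient/discrepancy matching for divisors on $X_{m}$ and the strict inequality for $\psi$-exceptional divisors) rather than just general nonsense, but it is routine given those facts.
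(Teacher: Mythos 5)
Your proposal is correct and follows exactly the route the paper intends: the paper's own proof is just ``this immediately follows from Theorem \ref{thm--termination-lcmmp-main},'' and the details you supply (invariance of the relative section algebra under the birational contraction produced by that theorem, plus finite generation for the semi-ample $\mathbb{Q}$-divisor $K_{X_m}+B_m$ at the end) are the standard ones being left implicit.
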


\begin{proof}
This immediately follows from Theorem \ref{thm--termination-lcmmp-main}.  
\end{proof}

\section{Examples}\label{sec--example}

In this section we collect some examples. 

The first example shows that a step of an MMP for normal pairs does not always exist and the abundance conjecture for normal pairs does not hold in general.

\begin{exam}\label{const--for-exam}
Let $E$ be an elliptic curve with a very ample divisor $H$ and define $$X:=\mathbb{P}_{E}(\mathcal{O}_{E}\oplus \mathcal{O}_{E}(-H))\overset{f}{\longrightarrow} E.$$ 
Let $S$ be the unique section of $\mathcal{O}_{X}(1)$. 
Then $K_{X}+S+(S+f^{*}H) \sim0$ and $S+f^{*}H$ is base point free. 
The contraction 
$$\pi \colon X \to Z$$ induced by $S+ f^{*}H$
 is birational and we have ${\rm Ex}(\pi)=S$. 
Thus $-S$ is $\pi$-ample. 
Let $D$ be a non-torsion Cartier divisor on $E$ of degree zero. 
Since $f^{*}D+S+f^{*}H$ is nef and big, we can write 
$f^{*}D+S+f^{*}H \sim_{\mathbb{Q}}A+B$
such that $A$ is an effective ample $\mathbb{Q}$-divisor and $B$ is an effective $\mathbb{Q}$-divisor on $X$. 
Now we have
$$K_{X}+S+B+A \sim_{\mathbb{Q}}f^{*}D$$
and $D$ is not semi-ample. 

\begin{itemize}
\item
With notation as above, we consider $\pi \colon X \to Z$ and the pair $(X,2S+B+A)$. 
Since $-S$ is $\pi$-ample, $\pi$ is a $(K_{X}+2S+B+A)$-negative extremal contraction. 
However, the divisor
$\pi_{*}(K_{X}+2S+B+A)\sim_{\mathbb{Q}}\pi_{*}f^{*}D$
is not $\mathbb{R}$-Cartier since $rH$ is not $\mathbb{R}$-linearly equivalent to $D$ for any $r \in \mathbb{R}$ (cf.~\cite[Proposition 7.2.8]{fujino-book}). 
This example implies that we cannot always construct a step of an MMP even in the case of surfaces.
\item
With notation as above, consider the pair $(X,S+B+A)$. 
By construction, $K_{X}+S+B+A$ is nef but not semi-ample. 
This pair shows that the abundance conjecture does not always hold for normal pairs with polarizations. 
\end{itemize}
\end{exam}

The next example shows that the non-nef locus of the log canonical $\mathbb{R}$-divisor of a normal pair is not necessarily Zariski closed. 
In particular, minimal models of normal pairs do not always exist. 

\begin{exam}\label{exam-nonneflocus}
For a blow up $V \to \mathbb{P}^{3}$ at nine very general points and
$$X:=\mathbb{P}_{V}(\mathcal{O}_{V}\oplus \mathcal{O}_{V}(1))\overset{f}{\longrightarrow} V,$$
where $\mathcal{O}_{V}(1)$ is very ample line bundle on $V$, Lesieutre \cite[Theorem 1.1]{lesieutre} constructed a big $\mathbb{R}$-divisor $D$ on $X$ such that
$$\boldsymbol{\rm B}_{-}(D)=\bigcup_{\substack{\text{$A$: ample}\\\text{$D+A$: $\mathbb{Q}$-Cartier}}}{\rm Bs}|D+A|_{\mathbb{Q}}$$
is a countable union of curves. 
By construction, there is an effective $\mathbb{R}$-divisor $\Delta_{V}$ on $V$ such that $K_{V}+\Delta_{V} \sim_{\mathbb{R}}0$, and therefore $K_{X}+\Delta \sim_{\mathbb{R}}0$ for some effective $\mathbb{R}$-divisor $\Delta$ on $X$. 
We can write $D\sim_{\mathbb{R}} A+B$ such that $A$ is an effective ample $\mathbb{R}$-divisor and $B$ is an effective $\mathbb{R}$-divisor on $X$. 
Then
$$K_{X}+\Delta+B+A \sim_{\mathbb{R}}D.$$
Moreover, by \cite[V, 1.3 Theorem]{nakayama}, it follows that $\boldsymbol{\rm B}_{-}(D)$ coincides with the non-nef locus of $D$. 
Therefore, the pair $(X,\Delta+B+A)$ satisfies the condition that the non-nef locus of $K_{X}+\Delta+B+A$ is not Zariski closed. 
If $(X,\Delta+B+A)$ has a minimal model $(X',\Gamma')$, taking a common resolution $g \colon Y \to X$ and $g' \colon Y \to X'$ of $X \dashrightarrow X'$, then we can write
$$g^{*}(K_{X}+\Delta+B+A)=g'^{*}(K_{X'}+\Gamma')+E$$
for some effective $g'$-exceptional $\mathbb{R}$-divisor $E$ on $Y$. 
Since $K_{X'}+\Gamma'$ is nef, we have
$${\rm NNef}(g^{*}(K_{X}+\Delta+B+A))={\rm NNef}(g'^{*}(K_{X'}+\Gamma')+E)={\rm Supp}\,E.$$
Then ${\rm NNef}(K_{X}+\Delta+B+A)=g({\rm NNef}(g^{*}(K_{X}+\Delta+B+A)))=g({\rm Supp}\,E)$ is Zariski closed, a contradiction. 
Therefore, $(X,\Delta+B+A)$ does not have a minimal model.  
\end{exam}


\end{document}